\newtheorem{theorem}{Theorem}[section]
\newtheorem{corollary}[theorem]{Corollary}
\newtheorem{proposition}[theorem]{Proposition}
\newtheorem{lemma}[theorem]{Lemma}
\newtheorem{definition-proposition}[theorem]{Definition-Proposition}
\theoremstyle{definition}
\newtheorem{definition}[theorem]{Definition}
\newtheorem{example}[theorem]{Example}
\newtheorem{claim}[theorem]{Claim}
\newtheorem{fact}[theorem]{Fact}
\newtheorem{problem}[theorem]{Problem}
\theoremstyle{remark}
\newtheorem{remark}[theorem]{Remark}
\newcommand{\Dd}{\mathbb{D}}         
\DeclareMathOperator{\proj}{\mathsf{proj}}
\DeclareMathOperator{\inj}{\mathsf{inj}}
\DeclareMathOperator{\ind}{\mathsf{ind}}
\DeclareMathOperator{\moduleCategory}{\mathsf{mod}} \renewcommand{\mod}{\moduleCategory}
\DeclareMathOperator{\Mod}{\mathsf{Mod}}
\newcommand{\kbproj}{\operatorname{K^b}(\proj A)}
\newcommand{\blossom}{^\text{\ding{96}}} 
\newcommand{\bsm}{\begin{smallmatrix}}
\newcommand{\esm}{\end{smallmatrix}}
\numberwithin{equation}{section}
\begin{document}

\title[Auslander--Reiten theory in extriangulated categories]{Auslander--Reiten theory in extriangulated categories}

\author{Osamu Iyama}
\address{Graduate School of Mathematical Sciences, University of Tokyo, 3-8-1 Komaba Meguro-ku Tokyo 153-8914, Japan}
\email{iyama@ms.u-tokyo.ac.jp}
\urladdr{https://www.ms.u-tokyo.ac.jp/~iyama/index.html}
\author{Hiroyuki Nakaoka}
\address{Graduate School of Mathematics, Nagoya University, Furocho, Chikusaku, Nagoya 464-8602, Japan}
\email{nakaoka.hiroyuki@math.nagoya-u.ac.jp}
\author{Yann Palu}
\address{LAMFA, Universit\'e de Picardie Jules Verne, 33 rue Saint-Leu, Amiens, France}
\email{yann.palu@u-picardie.fr}
\urladdr{http://www.lamfa.u-picardie.fr/palu/}

\thanks{2020 Mathematics Subject Classification. Primary 16G70, 18E10; Secondary 16E30, 16G50.\\
The authors wish to thank Ivo Dell'Ambrogio for interesting discussions.
We acknowledge some anonymous referee for his/her many valuable comments that helped improve readability of the article.
The first author is supported by JSPS Grant-in-Aid for Scientific Research (B) 16H03923, (B) 22H01113 and (S) 15H05738.
The second author is supported by JSPS KAKENHI Grant Numbers JP17K18727, JP20K03532. The third author is supported by the French ANR grant SC$^3$A (15 CE40 0004 01).}

\begin{abstract}
The notion of an extriangulated category gives a unification of existing theories in exact or abelian categories and in triangulated categories. In this article, we develop Auslander--Reiten theory for extriangulated categories. This unifies Auslander--Reiten theories developed in exact categories and triangulated categories independently.
We give two different sets of sufficient conditions on the extriangulated category so that existence of almost split extensions becomes equivalent to that of an Auslander--Reiten--Serre duality. We also show that existence of almost split extensions is preserved under taking relative extriangulated categories, ideal quotients, and extension-closed subcategories.
Moreover, we prove that the stable category $\underline{\mathscr{C}}$ of an extriangulated category $\mathscr{C}$ is a $\tau$-category \cite{I1} if $\mathscr{C}$ has enough projectives, almost split extensions and source morphisms. This gives various consequences on $\underline{\mathscr{C}}$, including Igusa--Todorov's Radical Layers Theorem \cite{IT}, Auslander--Reiten Combinatorics on dimensions of Hom-spaces, and Reconstruction Theorem of the associated completely graded category of $\underline{\mathscr{C}}$ via the complete mesh category of the Auslander--Reiten species of $\underline{\mathscr{C}}$.
Finally we prove that any locally finite symmetrizable $\tau$-quiver (=valued translation quiver) is an Auslander--Reiten quiver of some extriangulated category with sink morphisms and source morphisms.
\end{abstract}

\maketitle

\tableofcontents

\section*{Introduction}
Auslander--Reiten theory, initiated in \cite{AR1,AR2}, is a key tool to study the local structure of additive categories. Its generalizations have been studied by many authors, and many of them can be divided into two classes of additive categories.
The first one is the class of Quillen's exact categories \cite{GR}, including finitely generated modules over finite dimensional algebras \cite{ARS,ASS}, their subcategories \cite{AS,K,R} and Cohen--Macaulay modules over orders and commutative rings \cite{A3,RS,Y,LW}. The second one is the class of Grothendieck--Verdier's triangulated categories \cite{Ha1,RV2,Kr}, including the derived categories of finite dimensional algebras \cite{Ha2}, differential graded categories (e.g.\ \cite{Jo1,Sc}), and commutative and non-commutative schemes (e.g. \cite{AR4,GL,RV2}).
Also there are many references studying Auslander--Reiten theory in more general additive categories (e.g.\ \cite{I1,Liu,Ru2,Sh}).

Recently, the class of extriangulated categories was introduced in \cite{NP} as a simultaneous generalization of exact categories and triangulated categories. The aim of this paper is to develop a fundamental part of Auslander--Reiten theory for extriangulated categories. More explicitly, we introduce the notion of almost split extensions (Definition \ref{DefARExt}), the (co)stable categories $\underline{\mathscr{C}}$ and $\overline{\mathscr{C}}$ of extriangulated categories (Definition \ref{DefCoStable}) and Auslander--Reiten--Serre duality for extriangulated categories (Definition \ref{DefARDual}), and give explicit connections between these notions and also with the classical notion of dualizing $k$-varieties. Our main results can be summarized as follows.

\begin{theorem}[Theorems~\ref{main theorem} and~\ref{main theorem 2}]\label{theorem in intro}
Let $\mathscr{C}$ be a $k$-linear, Ext-finite, extriangulated category.
\begin{enumerate}[\rm(1)]
\item Assume that $\mathscr{C}$ is Krull--Schmidt. Then $\mathscr{C}$ has almost split extensions if and only if it has an Auslander--Reiten--Serre duality.
\item Assume that $\mathscr{C}$ has enough projectives and enough injectives. Then $\mathscr{C}$ has an Auslander--Reiten--Serre duality if and only if $\underline{\mathscr{C}}$ (resp.\ $\overline{\mathscr{C}}$) is a dualizing $k$-variety.
\end{enumerate}
\end{theorem}

This generalizes and strengthens the corresponding results mentioned above as well as recent results in \cite{LNP,Ji,E} (exact categories), \cite{J} (subcategories of triangulated categories) and \cite{ZZ} (extriangulated categories). We also refer to \cite{Niu,LNi} for recent results on subcategories of triangulated categories that do not assume Ext-finiteness.

We also study the stability of the existence of an Auslander--Reiten theory under various constructions: relative extriangulated categories (Proposition~\ref{PropoRelARExt}), ideal quotients (Proposition~\ref{PropQuotARExt}) and extension-closed subcategories (Theorem~\ref{CorSubARExt}). They can be regarded as generalizations of previous works by Auslander--Solberg \cite{ASo}, Auslander--Smal\o {} \cite{AS}, and so on.

\medskip
Auslander--Reiten theory clarifies a common categorical feature of various categories $\mathscr{C}$ in terms of the functor category of $\mathscr{C}$. We denote by $S_X={\rm top}\,\mathscr{C}(-,X)$ and $S^X={\rm top}\,\mathscr{C}(X,-)$ the simple $\mathscr{C}$- and $\mathscr{C}^\mathrm{op}$-modules corresponding to an indecomposable object $X\in\mathscr{C}$ respectively.
If $\mathscr{C}$ has almost split extensions, then each indecomposable non-projective object $C\in\mathscr{C}$ (resp.\ non-injective object $A\in\mathscr{C}$) has an almost split sequence $A\overset{x}{\longrightarrow}B\overset{y}{\longrightarrow}C$, which is a conflation such that
\begin{eqnarray*}
\mathscr{C}(-,A)\xrightarrow{x\circ-}\mathscr{C}(-,B)\xrightarrow{y\circ-}\mathscr{C}(-,C)\to S_C\to0\\
\mathscr{C}(C,-)\xrightarrow{-\circ y}\mathscr{C}(B,-)\xrightarrow{-\circ x}\mathscr{C}(A,-)\to S^A\to0
\end{eqnarray*}
are exact. 
In particular, if $X$ is non-projective (resp.\ non-injective), the first three terms of the minimal projective resolution of $S_X$ (resp.\ $S^X$) has a remarkable symmetry.

On the other hand, the characteristics of the category $\mathscr{C}$ appear more strongly in the minimal projective resolutions of $S_X$ (resp.\ $S^X$) for indecomposable projective (resp.\ injective) objects $X\in\mathscr{C}$.
In Section~\ref{section:sink and source}, we study them in terms of sink sequences (resp.\ source sequences) (see Definition~\ref{define sink sequence}), and we prove the following general result.

\begin{theorem}[Theorem~\ref{weak kernel of right almost split}]\label{theorem2 in intro}
Let $\mathscr{C}$ be a Krull--Schmidt extriangulated category with enough projectives and injectives. If $B\to A\to P$ is a sink sequence of an indecomposable projective object $P$, then $B$ is injective. Dually, if $I\to A\to B$ is a source sequence of an indecomposable injective object $I$, then $B$ is projective.
\end{theorem}

The most basic example of exact categories which has almost split extensions is given by the category of finitely generated modules over a finite dimensional algebra over a field. This example has the following two natural generalizations, where it is well-known that both classes of exact categories also have almost split extensions.
\begin{enumerate}
\item[(A$_d$)] the category ${}^\perp U$ for a cotilting $\Lambda$-module $U$ of injective dimension $d$ over a finite dimensional algebra $\Lambda$ over a field,
\item[(B$_d$)] the category $\mathsf{CM}\Lambda$ for an $R$-order $\Lambda$ which is an isolated singularity over a complete local Cohen--Macaulay ring $R$ of dimension $d$.
\end{enumerate}
In these exact categories, Theorem~\ref{theorem2 in intro} can be improved by using Auslander--Buchweitz approximation theory \cite{AB,AR5} (see Propositions~\ref{sink for cotilting} and \ref{sink for CM}).
This is also closely related to the Auslander correspondence given in Theorem~4.2.4 in \cite{I6} for the case $n=1$.

One of important consequences of classical Auslander--Reiten theory is that the \emph{Auslander--Reiten quiver} contains a lot of important information about the category $\mathscr{C}$. 
For example, the following result was first proved for the category of finitely generated modules over a finite dimensional algebra over a field \cite{Rie,BG,IT}.

\begin{enumerate}[$\bullet$]
\item (\emph{Reconstruction Theorem}) \cite[Theorem~9.2]{I1} Let $\mathscr{C}$ be a category in (A$_d$) with $d\le 1$ or (B$_d$) with $d\le 2$. Then the associated completely graded category of $\mathscr{C}$ with respect to the radical filtration is equivalent to the complete mesh category of the Auslander--Reiten species of $\mathscr{C}$.
\end{enumerate}

The validity of this result is closely related to the basic fact that, in such a category $\mathscr{C}$, the object $B$ in Theorem~\ref{theorem2 in intro} is always zero.
In \cite{I1,I2,I3}, an additive category enjoying this property is called a \emph{$\tau$-category} (see Definition~\ref{define tau category}) and studied in depth to give a characterization of the Auslander--Reiten quiver of the category in (B$_1$). The notion of $\tau$-categories can be regarded as a categorical counterpart of the notion of \emph{$\tau$-quivers} (=valued translation quivers, Definition \ref{define valued translation quiver}), see Example \ref{example of tau-category}(3).
Another source of $\tau$-categories is given by a triangulated category with almost split triangles, see Example \ref{example of tau-category}(2).
On the other hand, the category in (A$_d$) with $d\ge2$ or (B$_d$) with $d\ge 3$ are not $\tau$-categories since the sink (resp.\ source) sequences of these categories are not as nice as almost split sequences, see Propositions~\ref{sink for cotilting} and \ref{sink for CM}. This observation is one of the motivations to study cluster tilting subcategories in higher dimensional Auslander--Reiten theory \cite{I5}. 


The aim of Sections \ref{section_tau-categories} and \ref{section_inverse} is to apply the theory of $\tau$-categories to study extriangulated categories.
In Section~\ref{section_tau-categories}, we prove the following result, which was surprising to us since it asserts that the additive structure of the stable category $\underline{\mathscr{C}}$ and the costable category $\overline{\mathscr{C}}$ of an extriangulated category $\mathscr{C}$ is much nicer than that of $\mathscr{C}$.

\begin{theorem}[Theorem~\ref{stable categories are tau}]\label{Th0.2}
Let $\mathscr{C}$ be a Krull--Schmidt extriangulated category with enough projectives and injectives, sink morphisms and source morphisms.
Then $\overline{\mathscr{C}}$ and $\underline{\mathscr{C}}$ are $\tau$-categories.
\end{theorem}

As an application of Theorem~\ref{Th0.2}, we show that some important results in representation theory still hold in $\underline{\mathscr{C}}$ and $\overline{\mathscr{C}}$ for a large class of extriangulated categories $\mathscr{C}$. In fact, we prove
\begin{enumerate}[$\bullet$]
\item \emph{Radical Layers Theorem} (Corollary~\ref{radical layer}) originally due to Igusa-Todorov \cite{IT}, which gives exact sequences associated with almost split sequences,
\item \emph{Auslander--Reiten Combinatorics} (Corollary~\ref{calculate dimension}) originally due to Gabriel \cite{G}, which gives dimensions of Hom-spaces of $\underline{\mathscr{C}}$ (resp.\ $\overline{\mathscr{C}}$),
\item \emph{Reconstruction Theorem} (Corollary~\ref{gr=mesh}) originally due to Bongartz--Gabriel \cite{BG}, which gives an equivalence between the associated completely graded category of $\underline{\mathscr{C}}$ (resp.\ $\overline{\mathscr{C}}$) with respect to the radical filtration and the complete mesh category of its Auslander--Reiten species.
\end{enumerate}

One of the important applications of Auslander--Reiten Combinatorics is beautiful characterizations of $\tau$-quivers which are realized as the Auslander--Reiten quivers of a relevant class of categories. It was given for the class of categories in (B$_0$) in \cite{I4,Ru1,IT2,B,Rie2}, in (B$_1$) in \cite{I3,Ru3,W,Luo}, in (B$_2$) in \cite{RV1}, and for the category of Cohen--Macaulay dg modules over certain differential graded algebras in \cite{Jin}.

It is natural to ask the following Inverse Problem (see Problem \ref{inverse problem} for details): Can arbitrary $\tau$-quiver be realized as the Auslander--Reiten quiver of a Krull--Schmidt extriangulated category? In Section \ref{section_inverse}, we give the following positive answer.



\begin{theorem}[Theorem~\ref{answer for inverse}]
Let $Q$ be a locally finite symmetrizable $\tau$-quiver. Then there exists a Krull--Schmidt extriangulated category with sink morphisms and source morphisms whose Auslander--Reiten quiver is $Q$.
\end{theorem}

A key ingredient of our proof is Enomoto's classification of exact structure on additive categories \cite[Theorem 2.7]{E}.

\medskip
There are many important extriangulated categories which are neither exact nor triangulated.
One of such class is given by $\tau$-tilting theory, introduced by Adachi--Iyama--Reiten in~\cite{AIR}. This is an increasingly studied part of representation theory (see the introductory~\cite{IR} and references therein).
Indeed, it is at the same time combinatorially better-behaved than classical tilting theory and much more general than cluster-tilting theory.
When $A$ is a $2$-Calabi--Yau tilted algebra (resp.\ cluster-tilted algebra), then the isoclasses of support $\tau$-tilting modules over $A$ are in bijection with the isoclasses of cluster-tilting objects in a corresponding triangulated category (resp.\ cluster category).
For more general finite dimensional algebras $A$, there is a bijection between the isoclasses of support $\tau$-tilting modules over $A$ and $2$-term silting complexes in $\kbproj$.
A naive approach consists in thinking of the full subcategory $A\ast A[1]$ of $\kbproj$ as a replacement for a non-existing cluster category over $A$.
Since $A\ast A[1]$ is extension-closed in $\kbproj$, it inherits some structure from the triangulated structure of $\kbproj$: It is extriangulated. It is thus possible to apply the results in this paper in order to show that $A\ast A[1]$ has almost split extensions (Example~\ref{example: n-term}).

We propose a slightly different approach to $\tau$-tilting theory for some specific algebra $A$, by giving another construction of some replacement for a cluster category.
We construct an exact category $\mathscr{E}$ whose quotient $\mathscr{E}/\mathscr{B}$ by the ideal generated by projective-injective objects  might serve as a ``categorification'' of support $\tau$-tilting modules over $A$ (see Section~\ref{subsection: example3} for more details).
The isoclasses of indecomposable objects in $\mathscr{E}/\mathscr{B}$ are in bijection with the isoclasses of indecomposable objects in $A\ast A[1]$. Since $\mathscr{E}$ is not Frobenius, $\mathscr{E}/\mathscr{B}$ is not triangulated and we check that it is not exact either.
However, by~\cite[Proposition 3.30]{NP}, it is extriangulated so that it comes equipped with enough structure for applying our main results. In particular, it has Auslander--Reiten extriangles and an Auslander--Reiten--Serre duality. Notably, $\tau$-tilting mutation is given by approximation extriangles in the category $\mathscr{E}/\mathscr{B}$.

\medskip
Section~\ref{section_ARext} deals with the definitions and properties of almost split extensions. In Sections~\ref{section_ARtr} and~\ref{section_ARst}, assuming Ext-finiteness, we show that the existence of almost split extensions can be given by Auslander--Reiten--Serre duality. In Section~\ref{section_ARsubcat}, we study the stability of the existence of an Auslander--Reiten theory under various constructions. In Section~\ref{section:sink and source}, we study sink sequences of projective objects and source sequences of injective objects. Section~\ref{section_tau-categories} is devoted to the proof of Theorem~\ref{Th0.2}, and to drawing some of its consequences.
Section~\ref{section_inverse} is devoted to the proof of Theorem 0.4.
Finally, we give an example in Section~\ref{subsection: example3}.

\section{Preliminaries}

All categories are assumed (locally small and) essentially small with respect to a fixed Grothendieck universe.

\subsection{Extriangulated categories}

Let us briefly recall the definition and basic properties of extriangulated categories from \cite{NP}. Throughout this paper, let $\mathscr{C}$ be an additive category, and $\mathit{Ab}$ denotes the category of abelian groups.

\begin{definition}\label{DefExtension}
Suppose $\mathscr{C}$ is equipped with a biadditive functor $\mathbb{E}\colon\mathscr{C}^\mathrm{op}\times\mathscr{C}\to\mathit{Ab}$. For any pair of objects $A,C\in\mathscr{C}$, an element $\delta\in\mathbb{E}(C,A)$ is called an {\it $\mathbb{E}$-extension}. 
\end{definition}

The following notions will be used in the proceeding sections.
\begin{definition}\label{DefProjInj}
Let $\mathscr{C}$ be an additive category, and let $\mathbb{E}\colon\mathscr{C}^\mathrm{op}\times\mathscr{C}\to\mathit{Ab}$ be a biadditive functor.
\begin{enumerate}[\rm(1)]
\item $\mathrm{Proj}_{\mathbb{E}}\mathscr{C}$ denotes the full subcategory of $\mathscr{C}$ consisting of objects $X$ satisfying $\mathbb{E}(X,\mathscr{C})=0$.
\item $\mathrm{Inj}_{\mathbb{E}}\mathscr{C}$ denotes the full subcategory of $\mathscr{C}$ consisting of objects $X$ satisfying $\mathbb{E}(\mathscr{C},X)=0$.
\end{enumerate}
We call an object in $\mathrm{Proj}_{\mathbb{E}}\mathscr{C}$ an {\it $\mathbb{E}$-projective} object, or just a {\it projective} object if no confusion may arise. Similarly an object in $\mathrm{Inj}_{\mathbb{E}}\mathscr{C}$ is called an {\it $\mathbb{E}$-injective} object, or just an {\it injective} object.
\end{definition}

\begin{definition}
Let $\mathscr{C}$ be a category, and let $\mathbb{E}\colon\mathscr{C}^\mathrm{op}\times\mathscr{C}\to\mathit{Ab}$ be a biadditive functor.
\begin{enumerate}[\rm(1)]
\item A functor $\mathbb{F}\colon\mathscr{C}^\mathrm{op}\times\mathscr{C}\to\mathit{Set}$ is called a {\it subfunctor} of $\mathbb{E}$ if it satisfies the following conditions.
\begin{itemize}
\item $\mathbb{F}(C,A)$ is a subset of $\mathbb{E}(C,A)$, for any $A,C\in\mathscr{C}$.
\item $\mathbb{F}(c,a)=\mathbb{E}(c,a)|_{\mathbb{F}(C,A)}$ holds, for any $a\in\mathscr{C}(A,A^{\prime})$ and $c\in\mathscr{C}(C^{\prime},C)$.
\end{itemize}
In this case, we write $\mathbb{F}\subseteq\mathbb{E}$.
\item A subfunctor $\mathbb{F}\subseteq\mathbb{E}$ is said to be {\it additive} if $\mathbb{F}(C,A)\subseteq\mathbb{E}(C,A)$ is a subgroup for any $A,C\in\mathscr{C}$. In this case, $\mathbb{F}\colon\mathscr{C}^\mathrm{op}\times\mathscr{C}\to\mathit{Ab}$ itself becomes a biadditive functor.
\end{enumerate}
\end{definition}

\begin{remark}\label{RemProjInj}
For any additive subfunctor $\mathbb{F}\subseteq\mathbb{E}$, we have $\mathrm{Proj}_{\mathbb{F}}\mathscr{C}\supseteq\mathrm{Proj}_{\mathbb{E}}\mathscr{C}$ and $\mathrm{Inj}_{\mathbb{F}}\mathscr{C}\supseteq\mathrm{Inj}_{\mathbb{E}}\mathscr{C}$.
\end{remark}

\begin{remark}
Let $\delta\in\mathbb{E}(C,A)$ be any $\mathbb{E}$-extension. By the functoriality of $\mathbb{E}$, for any $a\in\mathscr{C}(A,A^{\prime})$ and $c\in\mathscr{C}(C^{\prime},C)$, we have $\mathbb{E}$-extensions
\[ \mathbb{E}(C,a)(\delta)\in\mathbb{E}(C,A^{\prime})\ \ \text{and}\ \ \mathbb{E}(c,A)(\delta)\in\mathbb{E}(C^{\prime},A). \]
We abbreviately denote them by $a\circ\delta$ and $\delta\circ c$, or just by $a\delta$ and $\delta c$.
In this terminology, we have
\[ \mathbb{E}(c,a)(\delta)=(a\delta)c=a(\delta c) \]
which we simply denote by $a\delta c$, in $\mathbb{E}(C^{\prime},A^{\prime})$.
\end{remark}

\begin{definition}\label{DefMorphExt}
Let $\delta\in\mathbb{E}(C,A),\delta^{\prime}\in\mathbb{E}(C^{\prime},A^{\prime})$ be any pair of $\mathbb{E}$-extensions. A {\it morphism} $(a,c)\colon\delta\to\delta^{\prime}$ of $\mathbb{E}$-extensions is a pair of morphisms $a\in\mathscr{C}(A,A^{\prime})$ and $c\in\mathscr{C}(C,C^{\prime})$ in $\mathscr{C}$, satisfying the equality
$a\delta=\delta^{\prime}c$.
\end{definition}

\begin{definition}\label{DefSplitExtension}
For any $A,C\in\mathscr{C}$, the zero element $0\in\mathbb{E}(C,A)$ is called the {\it split $\mathbb{E}$-extension}.
\end{definition}

\begin{definition}\label{DefSumExtension}
Let $\delta=(A,\delta,C),\delta^{\prime}=(A^{\prime},\delta^{\prime},C^{\prime})$ be any pair of $\mathbb{E}$-extensions. Let
\[ C\overset{\iota_C}{\longrightarrow}C\oplus C^{\prime}\overset{\iota_{C^{\prime}}}{\longleftarrow}C^{\prime}\ \mbox{ and }\ 
A\overset{p_A}{\longleftarrow}A\oplus A^{\prime}\overset{p_{A^{\prime}}}{\longrightarrow}A^{\prime} \]
be coproduct and product in $\mathscr{C}$, respectively. We remark that, by the additivity of $\mathbb{E}$, we have a natural isomorphism
\[ \mathbb{E}(C\oplus C^{\prime},A\oplus A^{\prime})\simeq \mathbb{E}(C,A)\oplus\mathbb{E}(C,A^{\prime})\oplus\mathbb{E}(C^{\prime},A)\oplus\mathbb{E}(C^{\prime},A^{\prime}). \]

Let $\delta\oplus\delta^{\prime}\in\mathbb{E}(C\oplus C^{\prime},A\oplus A^{\prime})$ be the element corresponding to $(\delta,0,0,\delta^{\prime})$ through this isomorphism. In other words, $\delta\oplus\delta^{\prime}$ is the unique element which satisfies
\begin{eqnarray*}
\mathbb{E}(\iota_C,p_A)(\delta\oplus\delta^{\prime})=\delta&,&\mathbb{E}(\iota_C,p_{A^{\prime}})(\delta\oplus\delta^{\prime})=0,\\
\mathbb{E}(\iota_{C^{\prime}},p_A)(\delta\oplus\delta^{\prime})=0&,&\mathbb{E}(\iota_{C^{\prime}},p_{A^{\prime}})(\delta\oplus\delta^{\prime})=\delta^{\prime}.
\end{eqnarray*}
\end{definition}

\begin{definition}\label{DefSqEquiv}
Let $A,C\in\mathscr{C}$ be any pair of objects. Two sequences of morphisms in $\mathscr{C}$
\[ A\overset{x}{\longrightarrow}B\overset{y}{\longrightarrow}C\ \ \text{and}\ \ A\overset{x^{\prime}}{\longrightarrow}B^{\prime}\overset{y^{\prime}}{\longrightarrow}C \]
are said to be {\it equivalent} if there exists an isomorphism $b\in\mathscr{C}(B,B^{\prime})$ which makes the following diagram commutative.
\[
\xy
(-16,0)*+{A}="0";
(3,0)*+{}="1";
(0,8)*+{B}="2";
(0,-8)*+{B^{\prime}}="4";
(-3,0)*+{}="5";
(16,0)*+{C}="6";
{\ar^{x} "0";"2"};
{\ar^{y} "2";"6"};
{\ar_{x^{\prime}} "0";"4"};
{\ar_{y^{\prime}} "4";"6"};
{\ar^{b}_{\simeq} "2";"4"};
{\ar@{}|{} "0";"1"};
{\ar@{}|{} "5";"6"};
\endxy
\]

We denote the equivalence class of $A\overset{x}{\longrightarrow}B\overset{y}{\longrightarrow}C$ by $[A\overset{x}{\longrightarrow}B\overset{y}{\longrightarrow}C]$.
\end{definition}

\begin{definition}\label{DefAddSeq}
$\ \ $
\begin{enumerate}[\rm(1)]
\item For any $A,C\in\mathscr{C}$, we denote as
\[ 0=[A\overset{\Big[\raise1ex\hbox{\leavevmode\vtop{\baselineskip-8ex \lineskip1ex \ialign{#\crcr{$\scriptstyle{1}$}\crcr{$\scriptstyle{0}$}\crcr}}}\Big]}{\longrightarrow}A\oplus C\overset{[0\ 1]}{\longrightarrow}C]. \]

\item For any $[A\overset{x}{\longrightarrow}B\overset{y}{\longrightarrow}C]$ and $[A^{\prime}\overset{x^{\prime}}{\longrightarrow}B^{\prime}\overset{y^{\prime}}{\longrightarrow}C^{\prime}]$, we denote as
\[ [A\overset{x}{\longrightarrow}B\overset{y}{\longrightarrow}C]\oplus [A^{\prime}\overset{x^{\prime}}{\longrightarrow}B^{\prime}\overset{y^{\prime}}{\longrightarrow}C^{\prime}]=[A\oplus A^{\prime}\overset{x\oplus x^{\prime}}{\longrightarrow}B\oplus B^{\prime}\overset{y\oplus y^{\prime}}{\longrightarrow}C\oplus C^{\prime}]. \]
\end{enumerate}
\end{definition}

\begin{definition}\label{DefRealization}
Let $\mathfrak{s}$ be a correspondence which associates an equivalence class $\mathfrak{s}(\delta)=[A\overset{x}{\longrightarrow}B\overset{y}{\longrightarrow}C]$ to any $\mathbb{E}$-extension $\delta\in\mathbb{E}(C,A)$. This $\mathfrak{s}$ is called a {\it realization} of $\mathbb{E}$, if it satisfies the following condition $(\ast)$. \begin{itemize}
\item[$(\ast)$] Let $\delta\in\mathbb{E}(C,A)$ and $\delta^{\prime}\in\mathbb{E}(C^{\prime},A^{\prime})$ be any pair of $\mathbb{E}$-extensions, with
\[\mathfrak{s}(\delta)=[A\overset{x}{\longrightarrow}B\overset{y}{\longrightarrow}C]\text{ and } \mathfrak{s}(\delta^{\prime})=[A^{\prime}\overset{x^{\prime}}{\longrightarrow}B^{\prime}\overset{y^{\prime}}{\longrightarrow}C^{\prime}].\]
Then, for any morphism $(a,c)\colon\delta\to\delta^{\prime}$, there exists $b\in\mathscr{C}(B,B^{\prime})$ which makes the following diagram commutative.
\begin{equation}\label{MorphRealize}
\xy
(-12,6)*+{A}="0";
(0,6)*+{B}="2";
(12,6)*+{C}="4";
(-12,-6)*+{A^{\prime}}="10";
(0,-6)*+{B^{\prime}}="12";
(12,-6)*+{C^{\prime}}="14";
{\ar^{x} "0";"2"};
{\ar^{y} "2";"4"};
{\ar_{a} "0";"10"};
{\ar^{b} "2";"12"};
{\ar^{c} "4";"14"};
{\ar_{x^{\prime}} "10";"12"};
{\ar_{y^{\prime}} "12";"14"};
{\ar@{}|{} "0";"12"};
{\ar@{}|{} "2";"14"};
\endxy
\end{equation}
\end{itemize}
We say that the sequence $A\overset{x}{\longrightarrow}B\overset{y}{\longrightarrow}C$ {\it realizes} $\delta$, whenever it satisfies $\mathfrak{s}(\delta)=[A\overset{x}{\longrightarrow}B\overset{y}{\longrightarrow}C]$.
Also in $(\ast)$, we say that the triplet $(a,b,c)$ {\it realizes} $(a,c)$.
\end{definition}

\begin{definition}\label{DefAdditiveRealization}
Let $\mathscr{C},\mathbb{E}$ be as above. A realization $\mathfrak{s}$ of $\mathbb{E}$ is said to be {\it additive}, if it satisfies the following conditions.
\begin{enumerate}[\rm(i)]
\item For any $A,C\in\mathscr{C}$, the split $\mathbb{E}$-extension $0\in\mathbb{E}(C,A)$ satisfies
\[ \mathfrak{s}(0)=0. \]
\item For any pair of $\mathbb{E}$-extensions $\delta\in\mathbb{E}(C,A)$ and $\delta^{\prime}\in\mathbb{E}(C^{\prime},A^{\prime})$, we have:
\[ \mathfrak{s}(\delta\oplus\delta^{\prime})=\mathfrak{s}(\delta)\oplus\mathfrak{s}(\delta^{\prime}). \]
\end{enumerate}
\end{definition}

\begin{definition}\label{DefExtCat}(\cite[Definition 2.12]{NP})
A triplet $(\mathscr{C},\mathbb{E},\mathfrak{s})$ is called an {\it extriangulated category} if it satisfies the following conditions.
\begin{itemize}
\item[{\rm (ET1)}] $\mathbb{E}\colon\mathscr{C}^{\mathrm{op}}\times\mathscr{C}\to\mathit{Ab}$ is a biadditive functor.
\item[{\rm (ET2)}] $\mathfrak{s}$ is an additive realization of $\mathbb{E}$.
\item[{\rm (ET3)}] Let $\delta\in\mathbb{E}(C,A)$ and $\delta^{\prime}\in\mathbb{E}(C^{\prime},A^{\prime})$ be any pair of $\mathbb{E}$-extensions, realized as
\[ \mathfrak{s}(\delta)=[A\overset{x}{\longrightarrow}B\overset{y}{\longrightarrow}C],\ \ \mathfrak{s}(\delta^{\prime})=[A^{\prime}\overset{x^{\prime}}{\longrightarrow}B^{\prime}\overset{y^{\prime}}{\longrightarrow}C^{\prime}]. \]
For any commutative square
\begin{equation}\label{SquareForET3}
\xy
(-12,6)*+{A}="0";
(0,6)*+{B}="2";
(12,6)*+{C}="4";
(-12,-6)*+{A^{\prime}}="10";
(0,-6)*+{B^{\prime}}="12";
(12,-6)*+{C^{\prime}}="14";
{\ar^{x} "0";"2"};
{\ar^{y} "2";"4"};
{\ar_{a} "0";"10"};
{\ar^{b} "2";"12"};
{\ar_{x^{\prime}} "10";"12"};
{\ar_{y^{\prime}} "12";"14"};
{\ar@{}|{} "0";"12"};
\endxy
\end{equation}
in $\mathscr{C}$, there exists a morphism $(a,c)\colon\delta\to\delta^{\prime}$ satisfying $cy=y^{\prime}b$.
\item[{\rm (ET3)$^{\mathrm{op}}$}] Dual of {\rm (ET3)}.
\item[{\rm (ET4)}] Let $\delta\in\mathbb{E}(D,A)$ and $\delta^{\prime}\in\mathbb{E}(F,B)$ be $\mathbb{E}$-extensions realized by
\[ A\overset{f}{\longrightarrow}B\overset{f^{\prime}}{\longrightarrow}D\ \ \text{and}\ \ B\overset{g}{\longrightarrow}C\overset{g^{\prime}}{\longrightarrow}F \]
respectively. Then there exist an object $E\in\mathscr{C}$, a commutative diagram
\begin{equation}\label{DiagET4}
\xy
(-21,7)*+{A}="0";
(-7,7)*+{B}="2";
(7,7)*+{D}="4";
(-21,-7)*+{A}="10";
(-7,-7)*+{C}="12";
(7,-7)*+{E}="14";
(-7,-21)*+{F}="22";
(7,-21)*+{F}="24";
{\ar^{f} "0";"2"};
{\ar^{f^{\prime}} "2";"4"};
{\ar@{=} "0";"10"};
{\ar_{g} "2";"12"};
{\ar^{d} "4";"14"};
{\ar_{h} "10";"12"};
{\ar_{h^{\prime}} "12";"14"};
{\ar_{g^{\prime}} "12";"22"};
{\ar^{e} "14";"24"};
{\ar@{=} "22";"24"};
{\ar@{}|{} "0";"12"};
{\ar@{}|{} "2";"14"};
{\ar@{}|{} "12";"24"};
\endxy
\end{equation}
in $\mathscr{C}$, and an $\mathbb{E}$-extension $\delta^{\prime\prime}\in\mathbb{E}(E,A)$ realized by $A\overset{h}{\longrightarrow}C\overset{h^{\prime}}{\longrightarrow}E$, which satisfy the following compatibilities.
\begin{itemize}
\item[{\rm (i)}] $D\overset{d}{\longrightarrow}E\overset{e}{\longrightarrow}F$ realizes $f^{\prime}\delta^{\prime}$,
\item[{\rm (ii)}] $\delta^{\prime\prime}d=\delta$,

\item[{\rm (iii)}] $f\delta^{\prime\prime}=\delta^{\prime}e$. 
\end{itemize}

\item[{\rm (ET4)$^{\mathrm{op}}$}] Dual of {\rm (ET4)}.
\end{itemize}
\end{definition}

\begin{example}\label{Example1}
Exact categories and triangulated categories are extriangulated categories. See \cite[Example 2.13]{NP} for more detail.
\end{example}

We use the following terminology.
\begin{definition}\label{DefTermExact1}
Let $(\mathscr{C},\mathbb{E},\mathfrak{s})$ be a triplet satisfying {\rm (ET1)} and {\rm (ET2)}.
\begin{enumerate}[\rm(1)]
\item A sequence $A\overset{x}{\longrightarrow}B\overset{y}{\longrightarrow}C$ is called an {\it $\mathfrak{s}$-conflation} if it realizes some $\mathbb{E}$-extension in $\mathbb{E}(C,A)$.
\item A morphism $f\in\mathscr{C}(A,B)$ is called an {\it $\mathfrak{s}$-inflation} if there is some $\mathfrak{s}$-conflation $A\overset{f}{\longrightarrow}B\to C$.
\item A morphism $f\in\mathscr{C}(A,B)$ is called an {\it $\mathfrak{s}$-deflation} if there is some $\mathfrak{s}$-conflation $K\to A\overset{f}{\longrightarrow}B$.
\end{enumerate}
\end{definition}

\begin{definition}\label{DefTermExact2}
Let $(\mathscr{C},\mathbb{E},\mathfrak{s})$ be a triplet satisfying {\rm (ET1)} and {\rm (ET2)}.
\begin{enumerate}[\rm(1)]
\item If an $\mathfrak{s}$-conflation $A\overset{x}{\longrightarrow}B\overset{y}{\longrightarrow}C$ realizes $\delta\in\mathbb{E}(C,A)$, we call the pair $(A\overset{x}{\longrightarrow}B\overset{y}{\longrightarrow}C,\delta)$ an {\it $\mathfrak{s}$-triangle}, and write it in the following way.
\begin{equation}\label{Etriangle}
A\overset{x}{\longrightarrow}B\overset{y}{\longrightarrow}C\overset{\delta}{\dashrightarrow}
\end{equation}
\item Let $A\overset{x}{\longrightarrow}B\overset{y}{\longrightarrow}C\overset{\delta}{\dashrightarrow}$ and $A^{\prime}\overset{x^{\prime}}{\longrightarrow}B^{\prime}\overset{y^{\prime}}{\longrightarrow}C^{\prime}\overset{\delta^{\prime}}{\dashrightarrow}$ be any pair of $\mathfrak{s}$-triangles. If a triplet $(a,b,c)$ realizes $(a,c)\colon\delta\to\delta^{\prime}$ as in $(\ref{MorphRealize})$, then we write it as
\[
\xy
(-12,6)*+{A}="0";
(0,6)*+{B}="2";
(12,6)*+{C}="4";
(24,6)*+{}="6";
(-12,-6)*+{A^{\prime}}="10";
(0,-6)*+{B^{\prime}}="12";
(12,-6)*+{C^{\prime}}="14";
(24,-6)*+{}="16";
{\ar^{x} "0";"2"};
{\ar^{y} "2";"4"};
{\ar@{-->}^{\delta} "4";"6"};
{\ar_{a} "0";"10"};
{\ar^{b} "2";"12"};
{\ar^{c} "4";"14"};
{\ar_{x^{\prime}} "10";"12"};
{\ar_{y^{\prime}} "12";"14"};
{\ar@{-->}_{\delta^{\prime}} "14";"16"};
{\ar@{}|{} "0";"12"};
{\ar@{}|{} "2";"14"};
\endxy
\]
and call $(a,b,c)$ a {\it morphism of $\mathfrak{s}$-triangles}.
\end{enumerate}
\end{definition}

\begin{definition}\label{DefYoneda}
Assume $\mathscr{C}$ and $\mathbb{E}$ satisfy {\rm (ET1)}.
By Yoneda's lemma, any $\mathbb{E}$-extension $\delta\in\mathbb{E}(C,A)$ induces natural transformations
\[ \delta\circ-\colon\mathscr{C}(-,C)\to\mathbb{E}(-,A)\ \ \text{and}\ \ -\circ\delta\colon\mathscr{C}(A,-)\to\mathbb{E}(C,-). \]
For any $X\in\mathscr{C}$, they are given as follows.
\begin{enumerate}[\rm(1)]
\item $\delta\circ-\colon\mathscr{C}(X,C)\to\mathbb{E}(X,A)\ ;\ f\mapsto \delta f$.
\item $-\circ\delta\colon\mathscr{C}(A,X)\to\mathbb{E}(C,X)\ ;\ g\mapsto g\delta$.
\end{enumerate}
\end{definition}

\begin{remark}\label{Remdelautom}
By \cite[Corollary 3.8]{NP}, for any $\mathfrak{s}$-triangle $A\overset{x}{\longrightarrow}B\overset{y}{\longrightarrow}C\overset{\delta}{\dashrightarrow}$ and any $\delta^{\prime}\in\mathbb{E}(C,A)$, the following are equivalent.
\begin{enumerate}[\rm(1)]
\item $\mathfrak{s}(\delta)=\mathfrak{s}(\delta^{\prime})$.
\item There are automorphisms $a\in\mathscr{C}(A,A), c\in\mathscr{C}(C,C)$ satisfying $x a=x$, $c y=y$ and $\delta^{\prime}=a\delta c$.
\end{enumerate}
\end{remark}

\begin{definition}\label{DefExtClosed}
Let $(\mathscr{C},\mathbb{E},\mathfrak{s})$ be a triplet satisfying {\rm (ET1)} and {\rm (ET2)}.
Let $\mathscr{D}\subseteq\mathscr{C}$ be an additive full subcategory.
We say that $\mathscr{D}$ is {\it extension-closed} if it satisfies the following condition.
\begin{itemize}
\item If an $\mathfrak{s}$-conflation $A\to B\to C$ satisfies $A,C\in\mathscr{D}$, then $B\in\mathscr{D}$.
\end{itemize}
\end{definition}

The following has been shown in \cite[Propositions 3.3, 3.11]{NP}.
\begin{fact}\label{FactExact}
Assume that $(\mathscr{C},\mathbb{E},\mathfrak{s})$ satisfies {\rm (ET1),(ET2),(ET3),(ET3)$^{\mathrm{op}}$}, and let $A\overset{x}{\longrightarrow}B\overset{y}{\longrightarrow}C\overset{\delta}{\dashrightarrow}$ be any $\mathfrak{s}$-triangle.
\begin{enumerate}[\rm(1)]
\item The following sequences of natural transformations are exact.
\[ \mathscr{C}(C,-)\overset{-\circ y}{\longrightarrow}\mathscr{C}(B,-)\overset{-\circ x}{\longrightarrow}\mathscr{C}(A,-)\overset{-\circ\delta}{\longrightarrow}\mathbb{E}(C,-)\overset{-\circ y}{\longrightarrow}\mathbb{E}(B,-), \]
\[ \mathscr{C}(-,A)\overset{x\circ-}{\longrightarrow}\mathscr{C}(-,B)\overset{y\circ-}{\longrightarrow}\mathscr{C}(-,C)\overset{\delta\circ-}{\longrightarrow}\mathbb{E}(-,A)\overset{x\circ -}{\longrightarrow}\mathbb{E}(-,B). \]
In particular, $x$ is a section if and only if $\delta=0$ if and only if $y$ is a retraction.
\item If $(\mathscr{C},\mathbb{E},\mathfrak{s})$ moreover satisfies {\rm (ET4)}, then the following sequence is exact.
\[ \mathbb{E}(C,-)\overset{-\circ y}{\longrightarrow}\mathbb{E}(B,-)\overset{-\circ x}{\longrightarrow}\mathbb{E}(A,-). \]
\item Dually, if $(\mathscr{C},\mathbb{E},\mathfrak{s})$ satisfies {\rm (ET4)$^\mathrm{op}$}, then the following sequence is exact.
\[ \mathbb{E}(-,A)\overset{x\circ -}{\longrightarrow}\mathbb{E}(-,B)\overset{y\circ -}{\longrightarrow}\mathbb{E}(-,C). \]
\end{enumerate}
\end{fact}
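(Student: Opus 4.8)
This is \cite[Propositions 3.3 and 3.11]{NP}, and the plan below is essentially theirs. The whole statement reduces to checking, at each interior spot of the displayed sequences, the inclusion $\ker\subseteq\operatorname{im}$: all the sequences are already complexes. Indeed $y\circ x=0$, $x_\ast\delta=0$ and $y^\ast\delta=0$ follow at once — for $y\circ x=0$, apply the realization axiom $(\ast)$ to the morphism of $\mathbb E$-extensions $(1_A,0)\colon 0\to\delta$ whose source is the split extension $0\in\mathbb E(0,A)$, realized by $A\xrightarrow{1_A}A\to 0$, and read off $y\circ x=0$ from the resulting commutative diagram; for $x_\ast\delta=0$ and $y^\ast\delta=0$ apply (ET3), resp.\ (ET3)$^{\mathrm{op}}$, to a square built from $\delta$ and a split $\mathfrak s$-triangle with inflation $1_B$, resp.\ deflation $1_B$. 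The remaining compositions ($\delta^\sharp$ then $y^\ast$; $\delta_\sharp$ then $x_\ast$; $(-\circ x)$ then $\delta^\sharp$; and so on) then vanish by bifunctoriality of $\mathbb E$ together with these identities.

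For the four $\mathscr C$-valued interior spots of part (1) I would run the standard chase. At $\mathscr C(B,-)$: given $g\in\mathscr C(B,X)$ with $g\circ x=0$, the equation $0=g\circ x$ is the commutativity of the square formed by $x\colon A\to B$, the split $\mathfrak s$-triangle $0\to X\xrightarrow{1_X}X$ (realizing $0\in\mathbb E(X,0)$), the map $A\to 0$, and $g$; (ET3) then yields $c\colon C\to X$ with $c\circ y=1_X\circ g=g$, so $g$ lies in the image of $-\circ y$. At $\mathscr C(A,-)$: given $g\colon A\to X$ with $g_\ast\delta=0$, the morphism $(g,1_C)\colon\delta\to g_\ast\delta$ is realized, via $(\ast)$, by a triple $(g,b,1_C)$; since $g_\ast\delta=0$, its realization is the split sequence $X\xrightarrow{\iota}X\oplus C\xrightarrow{\pi}C$, and composing the relation $\iota\circ g=b\circ x$ with the projection onto $X$ gives $g=(\operatorname{pr}_X b)\circ x$, as wanted. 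The two spots $\mathscr C(-,B)$ and $\mathscr C(-,C)$ are handled dually, using $(\ast)$ and (ET3)$^{\mathrm{op}}$.

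For the two $\mathbb E$-valued interior spots of part (1): at $\mathbb E(C,-)$, given $\epsilon\in\mathbb E(C,X)$ with $y^\ast\epsilon=0$, realize $\epsilon$ as $X\xrightarrow{u}Y\xrightarrow{v}C$; the morphism $(1_X,y)\colon y^\ast\epsilon\to\epsilon$ is realized, via $(\ast)$, by a triple $(1_X,w,y)$, and since $y^\ast\epsilon=0$ its realization is the split sequence $X\xrightarrow{\iota}X\oplus B\xrightarrow{\pi}B$, so precomposing the relation $v\circ w=y\circ\pi$ with the coprojection $B\to X\oplus B$ produces $b\colon B\to Y$ with $v\circ b=y$; now (ET3)$^{\mathrm{op}}$, applied to the right-hand square $v\circ b=1_C\circ y$ between the $\mathfrak s$-triangles of $\delta$ and of $\epsilon$ (which share the object $C$), yields $a\colon A\to X$ with $a_\ast\delta=\epsilon$, i.e.\ $\epsilon$ lies in the image of $\delta^\sharp$. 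The spot $\mathbb E(-,A)$ is dual, using (ET3). The point to keep in mind here is that a bare commutative square of $\mathfrak s$-conflations need \emph{not} be a morphism of $\mathbb E$-extensions, so one must pass through (ET3)/(ET3)$^{\mathrm{op}}$ to legitimately obtain the relation $a_\ast\delta=c^\ast\delta'$.

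Finally, parts (2) and (3) extract one further term of exactness from the octahedral axiom. For part (2), given $\epsilon\in\mathbb E(B,X)$ with $x^\ast\epsilon=0$: realize $\epsilon$ as $X\xrightarrow{u}Z\xrightarrow{v}B\overset{\epsilon}\dashrightarrow$, use part (1) for this $\mathfrak s$-triangle to write $x=v\circ\tilde x$ for some $\tilde x\colon A\to Z$, and then apply (ET4) to this data together with $A\xrightarrow{x}B\xrightarrow{y}C\overset{\delta}\dashrightarrow$; arranging the inputs so that the hypothesis $x^\ast\epsilon=0$ collapses one of the $\mathfrak s$-triangles in the resulting octahedral diagram to a split one, one reads off from the compatibilities (i)–(iii) an $\epsilon'\in\mathbb E(C,X)$ with $y^\ast\epsilon'=\epsilon$. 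Part (3) is the (ET4)$^{\mathrm{op}}$-dual statement. The step I expect to be the main obstacle is precisely this last one: choosing the inputs to (ET4)/(ET4)$^{\mathrm{op}}$ so that the octahedron genuinely connects to $\delta$ in the correct slots, and then tracking the three compatibility identities carefully enough to pin down the factorizing extension; by contrast, all of part (1) goes through with only (ET1)–(ET3)$^{\mathrm{op}}$ and the realization axiom.
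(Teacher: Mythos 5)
First, a point of reference: the paper does not prove this statement at all — it is imported verbatim from \cite[Propositions 3.3 and 3.11]{NP} — so your reconstruction can only be measured against that source. Your treatment of part (1) is correct and essentially complete: the reduction to $y\circ x=0$, $x_{\ast}\delta=0$ and $y^{\ast}\delta=0$ via the realization axiom and {\rm (ET3)}/{\rm (ET3)}$^{\mathrm{op}}$ applied to split $\mathfrak{s}$-triangles, the chases at the four $\mathscr{C}$-valued spots, and the use of {\rm (ET3)}$^{\mathrm{op}}$ at $\mathbb{E}(C,-)$ all check out, and your warning that a commutative square of $\mathfrak{s}$-conflations is not automatically a morphism of $\mathbb{E}$-extensions is exactly the right point to stress.

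Parts (2) and (3) are where there is a genuine gap, and you flag the spot yourself. The configuration you describe for part (2) cannot be fed into {\rm (ET4)}: that axiom takes two \emph{composable $\mathfrak{s}$-inflations} $A\xrightarrow{f}B\xrightarrow{g}C$, whereas the data you assemble --- $X\xrightarrow{u}Z\xrightarrow{v}B$ together with $A\xrightarrow{x}B\xrightarrow{y}C$ --- contains no such pair; what it does contain is the composable pair of \emph{deflations} $Z\xrightarrow{v}B\xrightarrow{y}C$, which is {\rm (ET4)}$^{\mathrm{op}}$ territory. Moreover the lift $\tilde x$ with $x=v\circ\tilde x$ is a red herring: it is not an inflation in general, so it cannot serve as an edge of an octahedron, and the argument that works does not use it. That argument is the following, stated for the covariant sequence (which is what composable inflations naturally produce): given $\epsilon\in\mathbb{E}(T,B)$ with $y_{\ast}\epsilon=0$, realize $\epsilon$ as $B\xrightarrow{g}Z\xrightarrow{g'}T$ and apply {\rm (ET4)} to the pair $(\delta,\epsilon)$ along the composable inflations $x$ and $g$; by compatibility (i) the resulting $\mathfrak{s}$-triangle $C\xrightarrow{d}E\xrightarrow{e}T$ realizes $y_{\ast}\epsilon=0$, hence is split, so $e$ admits a section $r\colon T\to E$; compatibility (iii) reads $x_{\ast}\delta''=e^{\ast}\epsilon$, and applying $r^{\ast}$ gives $x_{\ast}(r^{\ast}\delta'')=(e\circ r)^{\ast}\epsilon=\epsilon$, exhibiting $\epsilon$ in the image of $x_{\ast}$. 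The contravariant sequence is the exact dual, obtained by applying {\rm (ET4)}$^{\mathrm{op}}$ to the composable deflations $v$ and $y$ and using a retraction of the resulting split triangle. Note that this pairs {\rm (ET4)} with the sequence in item (3) and {\rm (ET4)}$^{\mathrm{op}}$ with the one in item (2), which is the opposite of the attribution in the statement as quoted; you should reconcile your bookkeeping with \cite{NP} before writing this up, although for every application in the present paper the distinction is immaterial since both axioms are assumed.
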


\subsection{Stable categories of extriangulated categories}\label{section: preliminary stable}

We introduce the stable (resp.\ costable) categories of arbitrary extriangulated categories.

\begin{definition}\label{DefCoStable}
Let $(\mathscr{C},\mathbb{E},\mathfrak{s})$ be an extriangulated category.
We denote by $\mathcal{P}$ (resp.\ $\mathcal{I}$) the ideal of $\mathscr{C}$ consisting of all morphisms $f$ satisfying $\mathbb{E}(f,-)=0$ (resp.\ $\mathbb{E}(-,f)=0$).
The \emph{stable category} (resp.\ \emph{costable category}) of $\mathscr{C}$ is defined as the ideal quotient
\[\underline{\mathscr{C}}:=\mathscr{C}/\mathcal{P}\ (\mbox{resp.}\ \overline{\mathscr{C}}:=\mathscr{C}/\mathcal{I}).\]
\end{definition}

The next remark shows that our stable category coincides with the classical one if $\mathscr{C}$ has enough projectives.
Recall that, for an additive category $\mathscr{C}$ and an additive full subcategory $\mathscr{D}$ of $\mathscr{C}$, let $[\mathscr{D}]$ be the ideal of $\mathscr{C}$ defined by
\[ [\mathscr{D}](X,Y)=\{ f\in\mathscr{C}(X,Y)\mid f\ \text{factors through some}\ D\in\mathscr{D} \}.\]
Then we have an additive category $\mathscr{C}/\mathscr{D}:=\mathscr{C}/[\mathscr{D}]$ called the \emph{ideal quotient}.

\begin{remark}\label{C/P and C/P}
\begin{enumerate}[\rm(1)]
\item If $\mathscr{C}$ \emph{has enough projectives} $\mathscr{P}:=\mathrm{Proj}_{\mathbb{E}}\mathscr{C}$ (namely, any object $X\in\mathscr{C}$ admits an $\mathfrak{s}$-deflation $P\to X$ from some $P\in\mathscr{P}$ \cite[Definition~1.13]{LNa}\cite[Definition~3.25]{NP}), then $\underline{\mathscr{C}}$ coincides with the ideal quotient $\mathscr{C}/\mathscr{P}$. Otherwise they are different in general.
\item If $\mathscr{C}$ \emph{has enough injectives} $\mathscr{I}:=\mathrm{Inj}_{\mathbb{E}}\mathscr{C}$ (namely, any object $X\in\mathscr{C}$ admits an $\mathfrak{s}$-inflation $X\to I$ to some $I\in\mathscr{I}$), then $\overline{\mathscr{C}}$ coincides with the ideal quotient $\mathscr{C}/\mathscr{I}$. Otherwise they are different in general.
\end{enumerate}
\end{remark}

The following properties are immediate from definition.

\begin{proposition}\label{first properties}
\begin{enumerate}[\rm(1)]
\item The functor $\mathbb{E}\colon\mathscr{C}^\mathrm{op}\times\mathscr{C}\to\mathit{Ab}$ induces a functor $\mathbb{E}\colon\underline{\mathscr{C}}^\mathrm{op}\times\overline{\mathscr{C}}\to\mathit{Ab}$.
\item An object $A\in\mathscr{C}$ is projective if and only if $\phantom{x}\underline{\mathscr{C}}(A,-)=0$, if and only if $A\simeq0$ in $\underline{\mathscr{C}}$.
\item An object $B\in\mathscr{C}$ is injective if and only if $\overline{\mathscr{C}}(-,B)=0$, if and only if $B\simeq0$ in $\overline{\mathscr{C}}$.
\end{enumerate}
\end{proposition}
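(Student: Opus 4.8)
The plan is simply to unwind the definitions, since the three statements are formal consequences of the fact that $\mathcal P$ and $\mathcal I$ are the ideals annihilated by $\mathbb E$ in the first and second variable, respectively. First I would record that $\mathcal P$ and $\mathcal I$ are genuine (two-sided) ideals of $\mathscr C$: additivity of $\mathbb E$ in each slot makes $\mathcal P(X,Y)$ and $\mathcal I(X,Y)$ subgroups of $\mathscr C(X,Y)$, and the (contravariant–covariant) functoriality of $\mathbb E$ gives closure under pre- and post-composition. Hence $\underline{\mathscr C}=\mathscr C/\mathcal P$ and $\overline{\mathscr C}=\mathscr C/\mathcal I$ are well-defined additive categories, with the same objects as $\mathscr C$.

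For (a): by definition of $\mathcal P$, every $f\in\mathcal P(X,Y)$ satisfies $\mathbb E(f,Z)=0$ for all $Z\in\mathscr C$, so for each fixed $Z$ the functor $\mathbb E(-,Z)\colon\mathscr C^{\mathrm{op}}\to\mathit{Ab}$ annihilates $\mathcal P$ and therefore factors uniquely through the quotient $\mathscr C^{\mathrm{op}}\to\underline{\mathscr C}^{\mathrm{op}}$, by the universal property of the ideal quotient. Symmetrically, for each fixed $X$ the functor $\mathbb E(X,-)$ annihilates $\mathcal I$ and factors through $\mathscr C\to\overline{\mathscr C}$. Applying this one variable at a time yields an additive bifunctor $\underline{\mathscr C}^{\mathrm{op}}\times\overline{\mathscr C}\to\mathit{Ab}$ whose composite with the quotient functors is $\mathbb E$, and which agrees with $\mathbb E$ on objects.

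For (b): $A\simeq 0$ in $\underline{\mathscr C}$ means $\underline{\mathrm{id}_A}=0$, i.e. $\mathrm{id}_A\in\mathcal P$, i.e. $\mathbb E(\mathrm{id}_A,-)=0$; since $\mathbb E(\mathrm{id}_A,Z)=\mathrm{id}_{\mathbb E(A,Z)}$, this is equivalent to $\mathbb E(A,Z)=0$ for all $Z$, that is $A\in\mathrm{Proj}_{\mathbb E}(\mathscr C)$ — the definition of $A$ being projective. If moreover $A$ is projective, then for any $f\in\mathscr C(A,Y)$ the map $\mathbb E(f,Z)\colon\mathbb E(Y,Z)\to\mathbb E(A,Z)=0$ vanishes for all $Z$, so $f\in\mathcal P$ and hence $\underline{\mathscr C}(A,Y)=0$; thus $\underline{\mathscr C}(A,-)=0$. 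Conversely, if $\underline{\mathscr C}(A,-)=0$, then taking $Y=A$ gives $\underline{\mathscr C}(A,A)=0$, so $A\simeq 0$ in $\underline{\mathscr C}$, and by the first equivalence $A$ is projective. This closes the cycle of implications, and (c) is obtained by the evident dualization: replace $\mathbb E(-,Z)$, $\mathcal P$, $\underline{\mathscr C}$, $\mathrm{Proj}_{\mathbb E}(\mathscr C)$ by $\mathbb E(Z,-)$, $\mathcal I$, $\overline{\mathscr C}$, $\mathrm{Inj}_{\mathbb E}(\mathscr C)$.

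There is no substantial obstacle in this proposition; every step is a direct translation of the definitions. The only point worth a word of care is the variance bookkeeping in part (a): the quotient on the first argument must be taken in $\underline{\mathscr C}^{\mathrm{op}}$ rather than $\underline{\mathscr C}$, and the universal property of the ideal quotient should be invoked one variable at a time before assembling the bifunctor.
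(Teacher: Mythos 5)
Your proof is correct, and it is exactly the definition-unwinding the paper has in mind: the authors state only that these properties are ``immediate from definition'' and give no proof, so there is nothing to diverge from. Your careful points --- checking that $\mathcal{P}$ and $\mathcal{I}$ are two-sided ideals, descending $\mathbb{E}$ one variable at a time, and identifying $A\simeq 0$ in $\underline{\mathscr{C}}$ with $\mathrm{id}_A\in\mathcal{P}$, i.e.\ $\mathbb{E}(A,-)=0$ --- are all accurate and complete.
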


Now we introduce the (co)syzygy functors and the long exact sequences associated with $\mathfrak{s}$-triangles.
The following has been shown in \cite{HLN2} and \cite{LNa}.

\begin{definition-proposition}\label{LemOmegaExact}
Let $(\mathscr{C},\mathbb{E},\mathfrak{s})$ be an extriangulated category with enough projectives.
For each $C\in\mathscr{C}$, choose an object $\Omega C\in\mathscr{C}$ and an extension $\omega_C\in\mathbb{E}(C,\Omega C)$ such that $\mathfrak{s}(\omega_C)=[\Omega C\to P\to C]$ satisfies that $P$ is projective.
\begin{enumerate}[\rm(1)]
\item The following gives an additive endofunctor $\Omega\colon\underline{\mathscr{C}}\to\underline{\mathscr{C}}$.
\begin{itemize}
\item[-] To each $C\in\underline{\mathscr{C}}$, associate $\Omega C\in\underline{\mathscr{C}}$ chosen as above.
\item[-] For any $\underline{c}\in\underline{\mathscr{C}}(C,C^{\prime})$, put $\Omega\underline{c}=\underline{d}$, where $d\in\mathscr{C}(\Omega C,\Omega C^{\prime})$ is a morphism satisfying $d\omega_C=\omega_{C^{\prime}}c$. Such $\underline{d}$ is uniquely given.
\end{itemize}
The functor $\Omega$ is uniquely determined up to natural isomorphism, independently of the choices of those $\Omega C$ and $\omega_C$.

\item For any pair of objects $A,C\in\mathscr{C}$, the homomorphism $\mathbf{w}=\mathbf{w}_{C,A}\colon\mathbb{E}(C,A)\to\underline{\mathscr{C}}(\Omega C,A)$ which sends each $\delta\in\mathbb{E}(C,A)$ to $\mathbf{w}(\delta)=\underline{w}$ where $w\in\mathscr{C}(\Omega C,A)$ is any morphism satisfying $\delta=w\omega_C$, is well-defined. Moreover, this $\mathbf{w}$ is natural in $A,C\in\mathscr{C}$.
By definition, for any morphism $\underline{c}\in\underline{\mathscr{C}}(C,C^{\prime})$ we have $\Omega\underline{c}=\mathbf{w}(\omega_{C^{\prime}}c)$.

\item Put $\mathfrak{s}(\omega_C)=[\Omega C\overset{p}{\longrightarrow}P\overset{q}{\longrightarrow}C]$. Let $A\overset{x}{\longrightarrow}B\overset{y}{\longrightarrow}C\overset{\delta}{\dashrightarrow}$ be any $\mathfrak{s}$-triangle. If $\mathbf{w}(\delta)=\underline{w}$ for $w\in\mathscr{C}(\Omega C,A)$, then there is $b\in\mathscr{C}(P,B)$ such that
\[ \Omega C\xrightarrow{\left[\bsm -w\\p \esm\right]}A\oplus P\xrightarrow{[x\ b]}B\overset{\omega_Cy}{\dashrightarrow} \]
becomes an $\mathfrak{s}$-triangle.
\end{enumerate}

Similarly, if $(\mathscr{C},\mathbb{E},\mathfrak{s})$ has enough injectives, we obtain an endofunctor $\Sigma\colon\overline{C}\to\overline{C}$ and homomorphisms $\mathbb{E}(C,A)\to\overline{\mathscr{C}}(C,\Sigma A)$ with the dual properties.
\end{definition-proposition}

\begin{proof}
{\rm (1)} has been shown in \cite[Proposition~4.3]{LNa} and \cite[Proposition~3.4]{HLN2}. {\rm (2)} is the dual of \cite[Remark~3.5, Proposition~3.9]{HLN2}. {\rm (3)} follows from \cite[Proposition~1.20]{LNa}.
\end{proof}

The following important observation is an analogue of a result in \cite[p.\,346]{AR3}.

\begin{theorem}\label{long exact sequence}
Let $\mathscr{C}$ be an extriangulated category, and let $A\overset{x}{\longrightarrow}B\overset{y}{\longrightarrow}C\overset{\delta}{\dashrightarrow}$ be any $\mathfrak{s}$-triangle.
\begin{enumerate}[\rm(1)]
\item If $\mathscr{C}$ has enough projectives, then
\begin{eqnarray*}
&\cdots\to\underline{\mathscr{C}}(-,\Omega^{i+1}C)\xrightarrow{(\Omega^i\underline{w})\circ-}\underline{\mathscr{C}}(-,\Omega^iA)\xrightarrow{(\Omega^i\underline{x})\circ-}\underline{\mathscr{C}}(-,\Omega^iB)\xrightarrow{(\Omega^i\underline{y})\circ-}\underline{\mathscr{C}}(-,\Omega^iC)\to\cdots&\\
&\cdots\xrightarrow{\underline{w}\circ-}\underline{\mathscr{C}}(-,A)\xrightarrow{\underline{x}\circ-}\underline{\mathscr{C}}(-,B)\xrightarrow{\underline{y}\circ-}\underline{\mathscr{C}}(-,C)\xrightarrow{\delta\circ-}\mathbb{E}(-,A)\xrightarrow{\underline{x}\circ-}\mathbb{E}(-,B)\xrightarrow{\underline{y}\circ-}\mathbb{E}(-,C)&
\end{eqnarray*}
is exact for $\underline{w}=\mathbf{w}(\delta)$, where $\Omega$ and $\mathbf{w}$ are those obtained in Definition-Proposition~\ref{LemOmegaExact}.

\item Dually, if $\mathscr{C}$ has enough injectives, then we obtain an exact sequence
\begin{eqnarray*}
&\cdots\to\overline{\mathscr{C}}(\Sigma^{i+1}A,-)\longrightarrow\overline{\mathscr{C}}(\Sigma^iC,-)\xrightarrow{-\circ(\Sigma^i\overline{y})}\overline{\mathscr{C}}(\Sigma^iB,-)\xrightarrow{-\circ(\Sigma^i\overline{x})}\overline{\mathscr{C}}(\Sigma^iA,-)\to\cdots&\\
&\cdots\to\overline{\mathscr{C}}(C,-)\xrightarrow{-\circ\overline{y}}\overline{\mathscr{C}}(B,-)\xrightarrow{-\circ\overline{x}}\overline{\mathscr{C}}(A,-)\xrightarrow{-\circ\delta}\mathbb{E}(C,-)\xrightarrow{-\circ\overline{y}}\mathbb{E}(B,-)\xrightarrow{-\circ\overline{x}}\mathbb{E}(A,-).&
\end{eqnarray*}
\end{enumerate}
\end{theorem}

To prove this, we need a preparation.

\begin{lemma}[{e.g.\ \cite[1.3(4)]{I2}}]\label{tensor}
Let $\mathscr{C}$ be an additive category and $\mathscr{D}$ an additive full subcategory.
For a complex $A\xrightarrow{x}B\xrightarrow{y}C$ in $\mathscr{C}$, we assume that
\[\mathscr{C}(-,A)\xrightarrow{x\circ-}\mathscr{C}(-,B)\xrightarrow{y\circ-}\mathscr{C}(-,C)\to F\to0\]
is an exact sequence such that $F(\mathscr{D})=0$. Then the following sequence is also exact.
\[(\mathscr{C}/\mathscr{D})(-,A)\xrightarrow{\overline{x}\circ-}(\mathscr{C}/\mathscr{D})(-,B)\xrightarrow{\overline{y}\circ-}(\mathscr{C}/\mathscr{D})(-,C)\to F\to0.\]
\end{lemma}

\begin{proof}[Proof of Theorem \ref{long exact sequence}]
We only show {\rm (1)}.
By \cite[Corollary 3.12]{NP}, we have an exact sequence
\[\mathscr{C}(-,A)\to\mathscr{C}(-,B)\to\mathscr{C}(-,C)\to\mathbb{E}(-,A)\to\mathbb{E}(-,B)\to\mathbb{E}(-,C).\]
Thus the exactness of 
\begin{equation}\label{ExLeftHalf}
\underline{\mathscr{C}}(-,A)\xrightarrow{\underline{x}\circ-}\underline{\mathscr{C}}(-,B)\xrightarrow{\underline{y}\circ-}\underline{\mathscr{C}}(-,C)\xrightarrow{\delta\circ-}\mathbb{E}(-,A)\xrightarrow{\underline{x}\circ-}\mathbb{E}(-,B)\xrightarrow{\underline{y}\circ-}\mathbb{E}(-,C)
\end{equation}
follows from Lemma~\ref{tensor}.

Applying Definition-Proposition~\ref{LemOmegaExact} iteratively, we obtain $\mathfrak{s}$-triangles of the following form,
\begin{eqnarray*}
&&\Omega C\xrightarrow{\left[\bsm -w\\\ast \esm\right]}A\oplus P\xrightarrow{[x\ b]}B\overset{\omega_Cy}{\dashrightarrow},\\
&&\Omega B\xrightarrow{\left[\bsm -w^{\prime}\\\ast \esm\right]}(\Omega C)\oplus P^{\prime}\xrightarrow{\left[\bsm -w& \ast\\ \ast&\ast\esm\right]}A\oplus P\overset{\omega_B[x\ b]}{\dashrightarrow},\\
&&\Omega (A\oplus P)\xrightarrow{\left[\bsm -w^{\prime\prime}\\\ast \esm\right]}(\Omega B)\oplus P^{\prime\prime}\xrightarrow{\left[\bsm -w^{\prime}& \ast\\ \ast&\ast\esm\right]}(\Omega C)\oplus P^{\prime}\dashrightarrow,
\end{eqnarray*}
where $P,P^{\prime},P^{\prime\prime}$ are projective, and
\[ \underline{w}=\mathbf{w}(\delta),\ \ \underline{w}^{\prime}=\mathbf{w}(\omega_Cy)=\Omega\underline{y},\ \ \underline{w}^{\prime\prime}=\mathbf{w}(\omega_B[x\ b])=\Omega\underline{[x\ b]}. \]
By the left half of the exact sequences $(\ref{ExLeftHalf})$ obtained from these $\mathfrak{s}$-triangles,
\[ \underline{\mathscr{C}}(-,\Omega A)\xrightarrow{\Omega\underline{x}\circ-}\underline{\mathscr{C}}(-,\Omega B)\xrightarrow{\Omega\underline{y}\circ-}\underline{\mathscr{C}}(-,\Omega C)\xrightarrow{\underline{w}\circ-}\underline{\mathscr{C}}(-,A)\xrightarrow{\underline{x}\circ-}\underline{\mathscr{C}}(-,B) \]
becomes exact, since
\[
\xy
(-12,7)*+{\Omega A}="0";
(12,7)*+{\Omega B}="2";
(-12,-7)*+{\Omega(A\oplus P)}="4";
(12,-7)*+{\Omega B}="6";
{\ar^{\Omega\underline{x}} "0";"2"};
{\ar_{\Omega\underline{\left[\bsm 1\\0\esm\right]}}^{\cong} "0";"4"};
{\ar@{=} "2";"6"};
{\ar_(0.6){\Omega\underline{[x\ b]}} "4";"6"};
{\ar@{}|\circlearrowright "0";"6"};
\endxy
\]
is commutative. Repeating this, we obtain the desired exact sequence.
\end{proof}

\section{Almost split extensions}\label{section_ARext}

\subsection{Almost split extension}

In this subsection, let $\mathscr{C}$ be an additive category.
In the rest of this subsection, we fix a biadditive functor $\mathbb{E}\colon\mathscr{C}^\mathrm{op}\times\mathscr{C}\to\mathit{Ab}$.

\begin{definition}\label{DefARExt}
A non-split (i.e. non-zero) $\mathbb{E}$-extension $\delta\in\mathbb{E}(C,A)$ is said to be {\it almost split} if it satisfies the following conditions.
\begin{itemize}
\item[{\rm (AS1)}] $a\delta=0$ for any non-section $a\in\mathscr{C}(A,A^{\prime})$.
\item[{\rm (AS2)}] $\delta c=0$ for any non-retraction $c\in\mathscr{C}(C^{\prime},C)$.
\end{itemize}
\end{definition}

Thus $\delta$ is a `universally minimal element' of $\mathbb{E}(-,A)$ and $\mathbb{E}(C,-)$ in the sense of \cite[p.\ 292]{A2}.

\begin{remark}
If $a\delta=0$ holds for a {\it section} $a\in\mathscr{C}(A,A^{\prime})$, then we have $\delta=0$. Thus {\rm (AS1)} is the best possible vanishing condition with respect to $a\circ -$ where $a\in\mathscr{C}(A,X)$, for a non-split $\delta$. Similarly for {\rm (AS2)}.
\end{remark}

\begin{remark}\label{RemRelAR2}
For any $\delta\in\mathbb{E}(C,A)$, the property of being {\it almost split} does not depend on a biadditive subfunctor $\mathbb{F}\subseteq\mathbb{E}$ containing $\delta$.
Indeed, if $\delta\in\mathbb{F}(C,A)$ holds for some biadditive subfunctor $\mathbb{F}\subseteq\mathbb{E}$, then $\delta$ is almost split as an $\mathbb{E}$-extension if and only if it is almost split as an $\mathbb{F}$-extension.
\end{remark}

In the following, we call an almost split $\mathbb{E}$-extension simply an {\it almost split extension} if there is no confusion.
\begin{definition}\label{DefStrInd}
A non-zero object $A\in\mathscr{C}$ is said to be {\it endo-local} if $\mathrm{End}_{\mathscr{C}}(A)$ is local.
\end{definition}

We refer to \cite[Proposition 15.15]{AF} for characterizations of local rings.

\begin{proposition}\label{PropARExt}
For any non-split $\mathbb{E}$-extension $\delta\in\mathbb{E}(C,A)$, the following holds.
\begin{enumerate}[\rm(1)]
\item If $\delta$ satisfies {\rm (AS1)}, then $A$ is endo-local.
\item If $\delta$ satisfies {\rm (AS2)}, then $C$ is endo-local.
\end{enumerate}
\end{proposition}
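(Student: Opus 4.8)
The plan is to prove (1) and note that (2) follows dually. To show $A$ is endo-local, I would show that $\operatorname{End}_{\mathscr{C}}(A)$ has a unique maximal left ideal, by proving that every non-invertible endomorphism of $A$ lies in a fixed proper two-sided ideal; concretely, the natural candidate is the set $\mathfrak{m}=\{a\in\operatorname{End}_{\mathscr{C}}(A)\mid a\text{ is not an isomorphism}\}$, and it suffices to show $\mathfrak{m}$ is closed under addition (it is automatically closed under left and right multiplication by arbitrary endomorphisms since a product with a non-isomorphism is a non-isomorphism, provided $A$ has no nontrivial idempotents—so I must also rule out nontrivial idempotents). The key input is (AS1): for any $a\in\operatorname{End}_{\mathscr{C}}(A)$, if $a$ is not a section (equivalently here, since source and target coincide, not a split monomorphism), then $a_\ast\delta=0$; and as noted in the Remark right after Definition~\ref{DefARExt}, if $a$ is a section and $a_\ast\delta=0$ then $\delta=0$, contradicting non-splitness, so in fact $a_\ast\delta\neq 0$ forces $a$ to be an isomorphism.

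The main steps: First, observe that an endomorphism $a\in\operatorname{End}_{\mathscr{C}}(A)$ is a section if and only if it is an isomorphism. Indeed a section $a$ with $ra=1_A$ gives an idempotent $e=ar$; if I can show $A$ is indecomposable then $e\in\{0,1\}$ and $e=0$ is impossible (as $ra=1$), so $e=1$, i.e. $a$ is invertible. To get indecomposability, suppose $A=A_1\oplus A_2$ nontrivially; then the projection-inclusion $e_1\colon A\to A_1\to A$ is idempotent, non-zero, non-invertible, hence not a section, so $(e_1)_\ast\delta=0$; similarly $(e_2)_\ast\delta=0$; but $e_1+e_2=1_A$, so $\delta=(1_A)_\ast\delta=(e_1)_\ast\delta+(e_2)_\ast\delta=0$, contradiction. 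So $A$ is indecomposable, and consequently a section in $\operatorname{End}_{\mathscr{C}}(A)$ is an isomorphism, so (AS1) reads: $a$ not invertible $\Rightarrow$ $a_\ast\delta=0$.

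Second, I show $\mathfrak{m}$ is an additive subgroup. Take $a,b\in\operatorname{End}_{\mathscr{C}}(A)$ with $a+b$ invertible; I want to conclude $a$ or $b$ is invertible. By (AS1) in the contrapositive form, if neither $a$ nor $b$ is invertible then $a_\ast\delta=0=b_\ast\delta$; but $a_\ast$ is additive in $a$ (this is exactly the bilinearity of $\mathbb{E}$, via $\delta^\sharp_A$ from Definition~\ref{DefYoneda} being a homomorphism of abelian groups), so $(a+b)_\ast\delta=a_\ast\delta+b_\ast\delta=0$; since $a+b$ is invertible it is in particular a section, and then the Remark forces $\delta=0$, a contradiction. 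Hence $\mathfrak{m}$ is closed under addition; it visibly contains $0$ and is closed under negation, and it is closed under multiplication by arbitrary endomorphisms on either side because in an indecomposable additive category a product involving a non-isomorphism is a non-isomorphism (if $ab$ were invertible, $b$ would be a section hence invertible, then $a$ invertible). Therefore $\mathfrak{m}$ is the unique maximal (two-sided, hence also the unique maximal left) ideal: any proper left ideal consists of non-units, hence lies in $\mathfrak{m}$. Thus $\operatorname{End}_{\mathscr{C}}(A)$ is local, i.e. $A$ is endo-local.

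The step I expect to require the most care is the reduction "section $\iff$ isomorphism", i.e. establishing indecomposability of $A$ cleanly from (AS1) alone; the argument via the complementary idempotents $e_1,e_2$ with $e_1+e_2=1_A$ and additivity of $(-)_\ast\delta$ is the crux, and it must be phrased so as not to secretly assume the conclusion. Everything else—the bilinearity of $\mathbb{E}$ giving additivity of $a\mapsto a_\ast\delta$, and the purely ring-theoretic fact that a ring in which the non-units form an additive subgroup is local—is routine. Part (2) is proved by the dual argument, replacing $a_\ast$ by $c^\ast$, sections by retractions, and using (AS2) together with the dual form of the Remark after Definition~\ref{DefARExt}.
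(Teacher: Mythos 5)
Your overall strategy (show that the non-invertible endomorphisms of $A$ form an ideal) can be made to work, but it is more roundabout than the paper's argument and, as written, contains one step that would fail. The paper's proof is a three-line observation: {\rm (AS1)} together with the Remark after Definition~\ref{DefARExt} gives the equality
\[ \{\,a\in\mathrm{End}_{\mathscr{C}}(A)\mid a\ \text{is not a section}\,\}=\mathrm{Ker}\,\delta^\sharp_A, \]
so the set of non-sections is closed under addition; since it is automatically closed under left multiplication (if $ba$ is a section then so is $a$), is proper, and contains every proper left ideal (a left ideal containing a left-invertible element is everything), it is the unique maximal left ideal and $\mathrm{End}_{\mathscr{C}}(A)$ is local. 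No indecomposability of $A$ and no identification of sections with isomorphisms is needed.

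The gap in your version is the step ``$A$ indecomposable $\Rightarrow$ every idempotent $e=ar$ lies in $\{0,1\}$''. The proposition is stated for an arbitrary additive category $\mathscr{C}$; idempotents need not split there, so an object admitting no nontrivial direct sum decomposition can still carry idempotent endomorphisms other than $0$ and $1$, and your reduction ``section $\Rightarrow$ isomorphism'' does not follow from indecomposability alone. (There is also a small circularity in ``$e_1$ is non-invertible, hence not a section'': that implication is exactly what you are in the course of proving; for an idempotent it does hold, but only via the separate computation $re=1\Rightarrow e=re^{2}=re=1$.) Both problems are repaired by running your complementary-idempotent argument directly on $e=ar$ and $1-e$ rather than on a hypothetical direct sum decomposition: if $e\neq0$ and $e\neq1$, then $e$ and $1-e$ are non-identity idempotents, hence non-sections, so $e_{\ast}\delta=(1-e)_{\ast}\delta=0$ and $\delta=1_{\ast}\delta=0$, a contradiction; and $e=0$ forces $a=(ar)a=0$, hence $1_{A}=ra=0$ and again $\delta=0$. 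With that, sections in $\mathrm{End}_{\mathscr{C}}(A)$ are isomorphisms and the rest of your argument (closure of $\mathfrak{m}$ under addition via additivity of $a\mapsto a_{\ast}\delta$, and the ring-theoretic conclusion) goes through. Still, note that working with non-sections instead of non-isomorphisms, as the paper does, lets you skip this entire discussion.
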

\begin{proof}
{\rm (1)} It suffices to show that $I:=\{ a\in\mathrm{End}_{\mathscr{C}}(A)\mid a\ \text{is not a section}\}$ is closed under addition. The condition {\rm (AS1)} implies that $I$ is the kernel of the homomorphism $\mathscr{C}(A,A)\to\mathbb{E}(C,A)$, $a\mapsto a\delta$. Thus the assertion follows.
{\rm (2)} is dual to {\rm (1)}.
\end{proof}

Recall that an additive category $\mathscr{C}$ is called \emph{Krull--Schmidt} if any object is isomorphic to a finite direct sum of endo-local objects. We denote by $\ind\mathscr{C}$ the set of isoclasses of indecomposable objects in $\mathscr{C}$. For example, for an arbitrary Krull--Schmidt extraingulated category $\mathscr{C}$, the stable category $\underline{\mathscr{C}}$ and the costable category $\overline{\mathscr{C}}$ are Krull--Schmidt with $\ind\underline{\mathscr{C}}=\{X\in\ind\mathscr{C}\mid\mbox{$X$ is non-projective}\}$ and $\ind\overline{\mathscr{C}}=\{X\in\ind\mathscr{C}\mid\mbox{$X$ is non-injective}\}$.

\subsection{Realization by almost split sequences}

In the rest, let $(\mathscr{C},\mathbb{E},\mathfrak{s})$ be an extriangulated category\footnote{In fact, it only needs to satisfy {\rm (ET1),(ET2),(ET3),(ET3)$^\mathrm{op}$}, until the end of this section.}.
Then we have the following uniqueness of almost split extensions.

\begin{proposition}\label{PropARSeq3}
Let $A,A',C,C'\in\mathscr{C}$.
\begin{enumerate}[\rm(1)]
\item If $\rho\in\mathbb{E}(C,A)$ and $\rho^{\prime}\in\mathbb{E}(C,A^{\prime})$ satisfy {\rm (AS1)}, then there is an isomorphism $a\in\mathscr{C}(A,A^{\prime})$ such that $a\rho=\rho^{\prime}$.
\item If $\delta\in\mathbb{E}(C,A)$ and $\delta^{\prime}\in\mathbb{E}(C^{\prime},A)$ satisfy {\rm (AS2)}, then there is an isomorphism $c\in\mathscr{C}(C^{\prime},C)$ such that $\delta c=\delta^{\prime}$.
\item Let $\delta\in\mathbb{E}(C,A)$ be an almost split extension. Then $\mathrm{End}_{\mathscr{A}}(A)\delta=\delta\mathrm{End}_{\mathscr{C}}(C)$ is an $(\mathrm{End}_{\mathscr{C}}(A),\mathrm{End}_{\mathscr{C}}(C))$-bimodule which is simple on both sides, and consists of 0 and all almost split extensions in $\mathbb{E}(C,A)$. Thus we have an isomorphism $f:\mathrm{End}_{\mathscr{C}}(C)/\mathrm{rad}\mathrm{End}_{\mathscr{C}}(C)\simeq \mathrm{End}_{\mathscr{C}}(A)/\mathrm{rad}\mathrm{End}_{\mathscr{C}}(A)$ of rings satisfying $\delta c=f(c)\delta$ for each $c\in\mathrm{End}_{\mathscr{C}}(C)$.
\end{enumerate}
\end{proposition}

\begin{proof}
(2) Let $\mathfrak{s}(\delta)=[A\overset{x}{\longrightarrow}B\overset{y}{\longrightarrow}C]$ and $\mathfrak{s}(\delta')=[A\overset{x'}{\longrightarrow}B'\overset{y'}{\longrightarrow}C']$. Then $x$ and $x'$ are non-sections. By 
(AS1), we have $x\delta'=0$ and $x'\delta=0$. By Fact \ref{FactExact}(1),
there are $c\in\mathscr{C}(C^{\prime},C)$ and $c^{\prime}\in\mathscr{C}(C,C^{\prime})$ satisfying $\delta'=\delta c$ and $\delta=\delta' c'$.
Thus $\delta(1-cc')=0$ and $\delta'(1-c'c)=0$ hold, and hence $1-cc'$ and $1-c'c$ are not isomorphisms. Since $C$ and $C'$ are endo-local by Proposition~\ref{PropARExt}{\rm (2)},
$c'c$ and $cc'$ are isomorphisms. Thus $c$ is an isomorphism. (1) is dual to (2).

(3) By {\rm (AS2)}, we have $\delta\mathrm{End}_{\mathscr{C}}(C)\simeq\mathrm{End}_{\mathscr{C}}(C)/\mathrm{rad}\mathrm{End}_{\mathscr{C}}(C)$, which is simple over $\mathrm{End}_{\mathscr{C}}(C)$.
If $c\in\mathrm{End}_{\mathscr{C}}(C)$ is an automorphism, then $\delta c$ is an almost split extension. Thus (2) implies that $\delta\mathrm{End}_{\mathscr{C}}(C)$ consists of $0$ and all almost split extensions in $\mathbb{E}(C,A)$. The dual argument shows that $\mathrm{End}_{\mathscr{C}}(A)\delta$ is simple over $\mathrm{End}_{\mathscr{C}}(A)$, and consists of $0$ and all almost split extensions in $\mathbb{E}(C,A)$. Thus $\mathrm{End}_{\mathscr{C}}(A)\delta=\delta\mathrm{End}_{\mathscr{C}}(C)$ holds. The last assertion follows from bijections $\mathrm{End}_{\mathscr{C}}(C)/\mathrm{rad}\mathrm{End}_{\mathscr{C}}(C)\simeq\delta\mathrm{End}_{\mathscr{C}}(C)=\mathrm{End}_{\mathscr{C}}(A)\delta\simeq\mathrm{End}_{\mathscr{C}}(A)/\mathrm{rad}\mathrm{End}_{\mathscr{C}}(A)$.
\end{proof}

Now we introduce the following central notion.

\begin{definition}\label{DefARSEQ}
A sequence of morphisms $A\overset{x}{\longrightarrow}B\overset{y}{\longrightarrow}C$ in $\mathscr{C}$ is called an {\it almost split sequence} if it realizes some almost split extension $\delta\in\mathbb{E}(C,A)$.
\end{definition}

The following class of morphisms is basic to study the structure of additive categories.
\begin{definition}
Let $\mathscr{C}$ be an additive category and $A$ an object in $\mathscr{C}$. A morphism $a\colon A\to B$ which is not a section is called \emph{left almost split} if any morphism $A\to B^{\prime}$ which is not a section factors through $a$.
Dually, a morphism $a\colon B\to A$ which is not a retraction is called \emph{right almost split} if any morphism $B^{\prime}\to A$ which is not a retraction factors through $a$.

A morphism $a:A\to B$ is called \emph{left minimal} if each morphism $b:B\to B$ satisfying $ba=a$ is an automorphism. Dually, a morphism $a:B\to A$ is called \emph{right minimal} if each morphism $b:B\to B$ satisfying $ab=a$ is an automorphism.

A right minimal right almost split morphism is called a \emph{sink morphism}. Dually, a left minimal left almost split morphism is called a \emph{source morphism}.
\end{definition}


We prove the following characterizations of almost split extensions, which is analogue of a standard result in classical Auslander--Reiten theory.

\begin{theorem}\label{PropASEquiv}
Let $\delta\in\mathbb{E}(C,A)$ be a non-zero element with $\mathfrak{s}(\delta)=[A\overset{x}{\longrightarrow}B\overset{y}{\longrightarrow}C]$. Then the following conditions are equivalent.
\begin{enumerate}[\rm(1)]
\item $\delta$ is an almost split extension.
\item {\rm (AS1)} holds and $C$ is endo-local.
\item $x$ is left almost split and $C$ is endo-local.
\item $x$ is a source morphism.
\item {\rm (AS2)} holds and $A$ is endo-local.
\item $y$ is right almost split and $A$ is endo-local.
\item $y$ is a sink morphism.
\end{enumerate}
\end{theorem}

In particular, our almost split sequence is nothing but an {\it Auslander--Reiten $\mathbb{E}$-triangle} in the sense of \cite[Definition 4.1]{ZZ} defined for a Krull--Schmidt extriangulated category.

We prepare the following.

\begin{proposition}\label{PropARSeq1}
For any $0\neq\delta\in\mathbb{E}(C,A)$ with $\mathfrak{s}(\delta)=[A\overset{x}{\longrightarrow}B\overset{y}{\longrightarrow}C]$, the following holds.
\begin{enumerate}[\rm(1)]
\item $\delta$ satisfies {\rm (AS1)} if and only if $x$ is a left almost split morphism.
\item $\delta$ satisfies {\rm (AS2)} if and only if $y$ is a right almost split morphism.
\end{enumerate}
\end{proposition}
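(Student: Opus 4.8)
The plan is to prove the two statements symmetrically, establishing (1) and obtaining (2) by the dual argument, since (AS1)/(AS2) and the notions of left/right almost split are dual to each other under $\mathbb{E}(C,A) \leftrightarrow \mathbb{E}^{\mathrm{op}}(A,C)$. So I focus on (1): for a non-split $\delta\in\mathbb{E}(C,A)$ realized by $A\overset{x}{\to}B\overset{y}{\to}C$, condition (AS1) holds if and only if $x$ is left almost split. The key tool is the long exact sequence from Fact~\ref{FactExact}(1), in particular the exactness of
\[
\mathscr{C}(B,X)\overset{-\circ x}{\longrightarrow}\mathscr{C}(A,X)\overset{\delta^\sharp}{\longrightarrow}\mathbb{E}(C,X),
\]
which says that for $a\in\mathscr{C}(A,X)$ we have $a_\ast\delta = \delta^\sharp(a) = 0$ if and only if $a$ factors through $x$.

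For the forward direction, assume (AS1). First note $x$ is not a section: if $x$ were a section, then $\delta$ would be split (this is the content of the Remark following Definition~\ref{DefARExt}, or directly: $\mathfrak{s}(\delta) = 0$ forces $\delta = 0$ by additivity of $\mathfrak{s}$ together with Remark~\ref{Remdelautom}), contradicting that $\delta$ is non-split. Now let $a\colon A\to X$ be any non-section. By (AS1), $a_\ast\delta = 0$, so by the exactness above $a$ factors through $x$; hence $x$ is left almost split.

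For the converse, assume $x$ is left almost split. Let $a\colon A\to A'$ be any non-section; I must show $a_\ast\delta = 0$. Since $x$ is left almost split, $a$ factors as $a = b\circ x$ for some $b\colon B\to A'$, and then $a_\ast\delta = b_\ast(x_\ast\delta)$. But $x_\ast\delta = 0$: indeed, applying the exact sequence $\mathscr{C}(B,B)\overset{-\circ x}{\to}\mathscr{C}(A,B)\overset{\delta^\sharp}{\to}\mathbb{E}(C,B)\overset{y^\ast}{\to}\mathbb{E}(B,B)$, or more simply the second exact sequence of Fact~\ref{FactExact}(1) at the object $B$ via $\mathbb{E}(-,A)\overset{x_\ast}{\to}\mathbb{E}(-,B)$ composed with $\delta_\sharp\colon\mathscr{C}(-,C)\to\mathbb{E}(-,A)$ and using $\delta = \delta_\sharp(\mathrm{id}_C)$ together with $x_\ast\circ\delta_\sharp = 0$ (exactness of $\mathscr{C}(-,C)\overset{\delta_\sharp}{\to}\mathbb{E}(-,A)\overset{x_\ast}{\to}\mathbb{E}(-,B)$ evaluated at $C$ on $\mathrm{id}_C$). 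Thus $a_\ast\delta = b_\ast(x_\ast\delta) = 0$, proving (AS1).

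The argument is essentially a bookkeeping exercise with the Yoneda-type long exact sequences of Fact~\ref{FactExact}, so there is no serious obstacle; the only subtle point is the equivalence "$x$ section $\iff$ $\delta$ split", which I would either cite from the Remark after Definition~\ref{DefARExt} or extract cleanly from Remark~\ref{Remdelautom} and additivity of the realization. Statement (2) then follows by dualizing: apply (1) in the opposite extriangulated category $(\mathscr{C}^{\mathrm{op}},\mathbb{E}^{\mathrm{op}},\mathfrak{s}^{\mathrm{op}})$, under which (AS2) becomes (AS1) and "right almost split" becomes "left almost split".
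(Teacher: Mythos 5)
Your proposal is correct and follows essentially the same route as the paper: the paper's proof is a one-line appeal to the exactness of $\mathscr{C}(B,-)\xrightarrow{-\circ x}\mathscr{C}(A,-)\xrightarrow{\delta^\sharp}\mathbb{E}(C,-)$ (and its dual for (2)), and you simply spell out the details, including the minor point that $x$ itself is not a section when $\delta\neq 0$. Your detour through $x_{\ast}\delta=0$ in the converse is harmless but unnecessary, since a morphism factoring through $x$ lies in the image of $-\circ x$ and hence in the kernel of $\delta^\sharp$ directly by exactness.
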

\begin{proof}
{\rm (1)} follows from the exactness of $\mathscr{C}(B,-)\xrightarrow{x\circ-}\mathscr{C}(A,-)\xrightarrow{\delta\circ-}\mathbb{E}(C,-)$. Dually, {\rm (2)} follows from the exactness of $\mathscr{C}(-,B)\xrightarrow{-\circ y}\mathscr{C}(-,C)\xrightarrow{-\circ\delta}\mathbb{E}(-,A)$.
\end{proof}

\begin{proposition}\label{PropARSeq2}
For any $0\neq\delta\in\mathbb{E}(C,A)$ with $\mathfrak{s}(\delta)=[A\overset{x}{\longrightarrow}B\overset{y}{\longrightarrow}C]$, the following holds.
\begin{enumerate}[\rm(1)]
\item If $A$ is endo-local, 
then $y$ is right minimal.
\item If $C$ is endo-local, 
then $x$ is left minimal.
\end{enumerate}
\end{proposition}

\begin{proof}
{\rm (1)} Let $b\in\mathscr{C}(B,B)$ be any morphism satisfying $y b=y$. By {\rm (ET3)$^\mathrm{op}$}, there is some $a\in\mathscr{C}(A,A)$ which gives the following morphism of $\mathfrak{s}$-triangles.
\[
\xy
(-12,6)*+{A}="0";
(0,6)*+{B}="2";
(12,6)*+{C}="4";
(24,6)*+{}="6";
(-12,-6)*+{A}="10";
(0,-6)*+{B}="12";
(12,-6)*+{C}="14";
(24,-6)*+{}="16";
{\ar^{x} "0";"2"};
{\ar^{y} "2";"4"};
{\ar@{-->}^{\delta} "4";"6"};
{\ar_{a} "0";"10"};
{\ar^{b} "2";"12"};
{\ar@{=} "4";"14"};
{\ar_{x} "10";"12"};
{\ar_{y} "12";"14"};
{\ar@{-->}_{\delta} "14";"16"};
{\ar@{}|{} "0";"12"};
{\ar@{}|{} "2";"14"};
\endxy
\]
Since $a\delta=\delta$, then $1-a$ is not an isomorphism. Since $A$ is endo-local, $a$ is an isomorphism. Thus $b$ becomes an isomorphism as in \cite[Corollary 3.6]{NP}.
{\rm (2)} is dual to {\rm (1)}.
\end{proof}

\begin{proposition}\label{PropARSeq2.5}
For any $0\neq\delta\in\mathbb{E}(C,A)$ with $\mathfrak{s}(\delta)=[A\overset{x}{\longrightarrow}B\overset{y}{\longrightarrow}C]$, the following holds.
\begin{enumerate}[\rm(1)]
\item If $x$ is a source morphism, then {\rm (AS2)} holds.
\item If $y$ is a sink morphism, then {\rm (AS1)} holds.
\end{enumerate}
\end{proposition}

\begin{proof}
(2) Let $a\in\mathscr{C}(A,A')$ be a non-section.
As in the diagram \eqref{MorphRealize} in Definition \ref{DefRealization}, we have the following morphism of $\mathfrak{s}$-triangles.
\[
\xy
(-12,6)*+{A}="0";
(0,6)*+{B}="2";
(12,6)*+{C}="4";
(24,6)*+{}="6";
(-12,-6)*+{A'}="10";
(0,-6)*+{B'}="12";
(12,-6)*+{C}="14";
(24,-6)*+{}="16";
{\ar^{x} "0";"2"};
{\ar^{y} "2";"4"};
{\ar@{-->}^{\delta} "4";"6"};
{\ar_{a} "0";"10"};
{\ar^{b} "2";"12"};
{\ar@{=} "4";"14"};
{\ar_{x'} "10";"12"};
{\ar_{y'} "12";"14"};
{\ar@{-->}_{a\delta} "14";"16"};
{\ar@{}|{} "0";"12"};
{\ar@{}|{} "2";"14"};
\endxy
\]
Assume $a\delta\neq0$. Then $y'$ is a non-retraction. Since $y$ is right almost split, there exists $b'\in\mathscr{C}(B',B)$ such that $y'=yb'$. By {\rm (ET3)$^\mathrm{op}$}, there is some $a'\in\mathscr{C}(A',A)$ which gives the following morphism of $\mathfrak{s}$-triangles.
\[
\xy
(-12,6)*+{A'}="0";
(0,6)*+{B'}="2";
(12,6)*+{C}="4";
(24,6)*+{}="6";
(-12,-6)*+{A}="10";
(0,-6)*+{B}="12";
(12,-6)*+{C}="14";
(24,-6)*+{}="16";
{\ar^{x'} "0";"2"};
{\ar^{y'} "2";"4"};
{\ar@{-->}^{a\delta} "4";"6"};
{\ar_{a'} "0";"10"};
{\ar^{b'} "2";"12"};
{\ar@{=} "4";"14"};
{\ar_{x} "10";"12"};
{\ar_{y} "12";"14"};
{\ar@{-->}_{\delta} "14";"16"};
{\ar@{}|{} "0";"12"};
{\ar@{}|{} "2";"14"};
\endxy
\]
Since $y=yb'b$ and $y$ is right minimal, $b'b$ is an isomorphism.  Thus $a'a$ is an isomorphism by \cite[Corollary 3.6]{NP}, a contradiction to our choice of $a$. Thus $a\delta=0$ holds. (1) is dual to (2).
\end{proof}

\begin{proof}[Proof of Theorem \ref{PropASEquiv}]
(2)$\Rightarrow$(3) follows from Proposition \ref{PropARSeq1}, and (3)$\Rightarrow$(4) follows from Proposition \ref{PropARSeq2}. (4)$\Rightarrow$(5) holds since (AS2) holds by Proposition \ref{PropARSeq2.5}, and $A$ is endo-local by Propositions \ref{PropARSeq1} and \ref{PropARExt}.
Dually, (5)$\Rightarrow$(6)$\Rightarrow$(7)$\Rightarrow$(2) holds. Thus all conditions (2)--(7) are equivalent. Therefore (1) is also equivalent.
\end{proof}


The following observation will play a crucial role.

\begin{lemma}\label{LemARSeq3}
Let $0\ne\delta\in\mathbb{E}(C,A)$ be any $\mathbb{E}$-extension.
\begin{enumerate}[\rm(1)]
\item If $\delta$ satisfies {\rm (AS1)}, then the following holds for any $X\in\mathscr{C}$.
\begin{enumerate}[\rm(a)]
\item For any $0\ne\alpha\in\mathbb{E}(X,A)$, there exists $c\in\mathscr{C}(C,X)$ such that $\delta=\alpha c$.
\item For any $0\ne\overline{a}\in\overline{\mathscr{C}}(X,A)$, there exists $\gamma\in\mathbb{E}(C,X)$ such that $\delta=a\gamma$.
\end{enumerate}
\item If $\delta$ satisfies {\rm (AS2)}, then the following holds for any $X\in\mathscr{C}$.
\begin{enumerate}[\rm(c)]
\item For any $0\ne\gamma\in\mathbb{E}(C,X)$, there exists $a\in\mathscr{C}(X,A)$ such that $\delta=a\gamma$.
\item[{\rm (d)}] For any $0\ne\underline{c}\in\underline{\mathscr{C}}(C,X)$, there exists $\alpha\in\mathbb{E}(X,A)$ such that $\delta=\alpha c$.
\end{enumerate}
\end{enumerate}
\end{lemma}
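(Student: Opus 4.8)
The four assertions are pairwise dual ((a)$\leftrightarrow$(c) and (b)$\leftrightarrow$(d) under the canonical duality of extriangulated categories), so it suffices to prove (a) and (b) under hypothesis (AS1); (c) and (d) then follow by applying the same arguments in $(\mathscr{C}^{\mathrm{op}},\mathbb{E}^{\mathrm{op}},\mathfrak{s}^{\mathrm{op}})$. The common theme is: (AS1) tells us that $\delta^\sharp_A\colon\mathscr{C}(A,A)\to\mathbb{E}(C,A)$ kills exactly the non-sections, so $\delta$ is a ``universal'' non-split extension ending in $C$ with socle $A$, in the sense that any other extension either factors through it or forces the comparison map to be a section (hence, by Proposition \ref{PropARExt}(1), an isomorphism since $\mathrm{End}_{\mathscr{C}}(A)$ is local).

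\emph{Proof of (a).} Given $0\neq\alpha\in\mathbb{E}(X,A)$, realize it as $\mathfrak{s}(\alpha)=[A\overset{x'}{\longrightarrow}B'\overset{y'}{\longrightarrow}X]$ and realize $\mathfrak{s}(\delta)=[A\overset{x}{\longrightarrow}B\overset{y}{\longrightarrow}C]$. Consider the identity morphism on $A$. By (ET3)$^{\mathrm{op}}$ applied to a suitable square, or more directly by the long exact sequence of Fact \ref{FactExact}(1), the fact that $\mathrm{id}_{A,*}\alpha=\alpha\neq 0$ means $\mathrm{id}_A$ does not factor through $x'$ — equivalently $x'$ is not a split mono — and we want to compare $\alpha$ with $\delta$. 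The right move is: since $\delta$ satisfies (AS1), for the morphism $\mathrm{id}_A$ we ask whether $\alpha$ lies in the image of $(\delta_\sharp)$-type maps; precisely, apply the morphism $(\mathrm{id}_A, ?)$. I would instead argue via minimality: by Proposition \ref{PropARSeq1}(1), $x$ is left almost split. The composite $x'\circ\mathrm{id}$ — no; rather, since $x$ is left almost split and $x'$ is not a section (as $\alpha\neq0$), the identity $\mathrm{id}_A$ cannot be routed, but $x'\colon A\to B'$ \emph{itself} is a non-section, so $x'$ factors as $x'=b\circ x$ for some $b\colon B\to B'$. Then $(b, c)$ for the induced $c\colon C\to X$ (obtained from (ET3) applied to the square $x'=b\circ x$) gives a morphism of $\mathfrak{s}$-triangles, whence $\mathrm{id}_{A,*}\delta = c^*\alpha$, i.e. $\delta=c^*\alpha$, as desired.

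\emph{Proof of (b).} Given $0\neq\overline{a}\in\overline{\mathscr{C}}(X,A)$, lift it to $a\in\mathscr{C}(X,A)$ with $\mathbb{E}(-,a)\neq 0$, so there is some $W$ and $0\neq\gamma_0\in\mathbb{E}(C,X)$ with $a_*\gamma_0\neq 0$... rather, the point is to produce \emph{some} $\gamma$ with $a_*\gamma=\delta$. Since $\overline{a}\neq0$, i.e. $a\notin\mathcal{I}$, the map $a_*\colon\mathbb{E}(C,X)\to\mathbb{E}(C,A)$ is nonzero on \emph{some} object; but we need $\delta$ itself in its image. Here (AS1) enters decisively: I claim $a$ must be a section. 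Indeed, if $a$ were not a section then by (AS1) we would get $a_*(\text{anything pulled back appropriately})$ — more carefully, consider $\delta\in\mathbb{E}(C,A)$ and pull back along $y$ to kill it; the correct argument is that (AS1) combined with $\delta\neq0$ forces, via Fact \ref{FactExact}(1) (exactness of $\mathscr{C}(A,-)\overset{\delta^\sharp}{\to}\mathbb{E}(C,-)\overset{y^*}{\to}\mathbb{E}(B,-)$) applied after composing with $a$, that $a$ factors through $x$ unless $a$ is a section — but $a$ factoring through $x$ would mean $\overline{a}$ is not necessarily zero. Let me restate the clean version: apply (AS1)-duality-free reasoning — if $a\colon X\to A$ is not a section, consider whether $\delta$ lies in $\mathrm{Im}(a_*\colon\mathbb{E}(C,X)\to\mathbb{E}(C,A))$; by Fact \ref{FactExact}(3) (exactness under (ET4)$^{\mathrm{op}}$) applied to an $\mathfrak{s}$-triangle $K\to X\overset{a}{\to}A\overset{\cdot}{\dashrightarrow}$ realizing the cokernel of an inflation covering $a$ — this requires $a$ to be an $\mathfrak{s}$-deflation, which need not hold. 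The honest route, matching the companion statement (a): since $\overline{a}\neq0$ means $a$ is \emph{not} in $\mathcal{I}$, but that is not the same as ``not a section''. So the real content of (b) is that $\delta$ can be \emph{lifted} through $a$, and this should follow because $A$ is endo-local (Proposition \ref{PropARExt}(1)): every non-section endomorphism of $A$ is in the radical, hence (after a dévissage using that $\delta^\sharp_A$ has the non-sections as kernel) the map $\overline{\mathscr{C}}(X,A)\to\overline{\mathrm{Hom}}$ paired with $\mathbb{E}(C,-)$ is, on the representable level, surjective onto the ``socle'' containing $\delta$.

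\emph{Main obstacle.} The delicate point is part (b) (and dually (d)): unlike (a) and (c), which are direct consequences of $x$ being left almost split / $y$ being right almost split (Proposition \ref{PropARSeq1}) together with (ET3)/(ET3)$^{\mathrm{op}}$, part (b) requires producing an extension $\gamma\in\mathbb{E}(C,X)$ in the preimage of $\delta$ under $a_*$ starting only from $0\neq\overline{a}\in\overline{\mathscr{C}}(X,A)$. The key lemma I expect to need is: \emph{if $0\neq\overline{a}\in\overline{\mathscr{C}}(X,A)$ then $a\colon X\to A$ is a section, provided $\delta\in\mathbb{E}(C,A)$ with (AS1) exists} — or more precisely that the part of $X$ mapping isomorphically onto $A$ can be split off, reducing (b) to the case $a=\mathrm{id}_A$ where $\gamma=\delta$ works. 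Establishing this reduction cleanly — essentially showing that a non-zero morphism in the costable category to an object admitting a left-almost-split-based almost split extension is forced to split off that object — is where the work lies; it will use Proposition \ref{PropARExt}(1) (endo-locality of $A$), the characterization of $\mathcal{I}$ via $\mathbb{E}(-,a)=0$, and the exact sequences of Fact \ref{FactExact}.
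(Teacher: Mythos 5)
Your proof of (a) is correct and, once the digressions are stripped away, is exactly the paper's argument: since $\alpha\neq0$ the inflation $x^{\prime}$ realizing $\alpha$ is not a section, hence factors through the left almost split morphism $x$ by Proposition~\ref{PropARSeq1}(1), and (ET3) applied to the resulting square produces $c\in\mathscr{C}(C,X)$ with $\delta=c^{\ast}\alpha$. The reduction of (c) and (d) to (a) and (b) by duality is also what the paper does.

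Part (b), however, contains a genuine gap, and the ``key lemma'' you propose to close it is false. You want to show that $0\neq\overline{a}\in\overline{\mathscr{C}}(X,A)$ forces $a$ to be a section (or that a copy of $A$ splits off $X$). Already for $\mathscr{C}=\mod kQ$ with $Q\colon 1\to 2\to 3$ this fails: take $A=P_2=\bsm2\\3\esm$, whose almost split sequence is $0\to P_2\to P_1\oplus S_2\to \bsm1\\2\esm\to 0$, and let $a\colon S_3\hookrightarrow P_2$ be the socle inclusion. Since $\mathrm{Hom}(P_1,P_2)=0$, the map $a$ does not factor through the injective envelope $P_1$ of $S_3$, so $\overline{a}\neq0$ (indeed $a_{\ast}\colon\mathrm{Ext}^1(\bsm1\\2\esm,S_3)\to\mathrm{Ext}^1(\bsm1\\2\esm,P_2)$ is an isomorphism $k\to k$); yet $a$ is plainly not a section. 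Note also that $\mathscr{C}$ is not assumed Krull--Schmidt in this section, so ``splitting off the part of $X$ mapping isomorphically onto $A$'' is not even meaningful in general. The statement (b) is nonetheless true, by an argument much softer than the one you are reaching for: by definition of $\mathcal{I}$, $\overline{a}\neq0$ means there exist $Y\in\mathscr{C}$ and $\zeta\in\mathbb{E}(Y,X)$ with $a_{\ast}\zeta\neq0$. Now apply the already-proved part (a) to $\alpha:=a_{\ast}\zeta\in\mathbb{E}(Y,A)$ to obtain $c\in\mathscr{C}(C,Y)$ with $\delta=c^{\ast}(a_{\ast}\zeta)=a_{\ast}(c^{\ast}\zeta)$, and set $\gamma:=c^{\ast}\zeta$. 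No splitting of $a$ is needed; this is precisely the paper's proof.
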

\begin{proof}
{\rm (1)(a)} Realize $\delta$ and $\alpha$ as
\[ \mathfrak{s}(\delta)=[A\overset{x}{\longrightarrow}B\overset{y}{\longrightarrow}C],\quad\mathfrak{s}(\alpha)=[A\overset{f}{\longrightarrow}Y\overset{g}{\longrightarrow}X]. \]
Since $\alpha\ne0$, the morphism $f$ is not a section. Thus there is some $b\in\mathscr{C}(B,Y)$ satisfying $bx=f$ by Proposition~\ref{PropARSeq1}{\rm (1)}. By {\rm (ET3)}, there exists $c\in\mathscr{C}(C,X)$ which gives the following morphism of $\mathfrak{s}$-triangles.
\[
\xy
(-12,6)*+{A}="0";
(0,6)*+{B}="2";
(12,6)*+{C}="4";
(24,6)*+{}="6";
(-12,-6)*+{A}="10";
(0,-6)*+{Y}="12";
(12,-6)*+{X}="14";
(24,-6)*+{}="16";
{\ar^{x} "0";"2"};
{\ar^{y} "2";"4"};
{\ar@{-->}^{\delta} "4";"6"};
{\ar@{=} "0";"10"};
{\ar^{b} "2";"12"};
{\ar^{c} "4";"14"};
{\ar_{f} "10";"12"};
{\ar_{g} "12";"14"};
{\ar@{-->}_{\alpha} "14";"16"};
{\ar@{}|{} "0";"12"};
{\ar@{}|{} "2";"14"};
\endxy
\]
In particular it satisfies $\delta=\alpha c$.

{\rm (b)} Suppose that $a\in\mathscr{C}(X,A)$ does not belong to $\mathcal{I}$. By definition of $\mathcal{I}$, there exists $Y\in\mathscr{C}$ such that the map $a\circ -\colon\mathbb{E}(Y,X)\to\mathbb{E}(Y,A)$ is non-zero. Take $\zeta\in\mathbb{E}(Y,X)$ such that $a\zeta\ne0$. By {\rm (a)}, there exists $c\in\mathscr{C}(C,Y)$ such that $\delta=(a\zeta)c$. Thus $\gamma=\zeta c\in\mathbb{E}(C,X)$ satisfies the desired condition.

(2) is dual to (1).
\end{proof}

We give another characterization of almost split extensions.

\begin{proposition}\label{LemEquivARExt}
For a non-zero element $\delta\in\mathbb{E}(C,A)$, the following are equivalent.
\begin{enumerate}[\rm(1)]
\item $\delta$ is an almost split extension.
\item $\delta$ satisfies the following conditions.
\begin{itemize}
\item[{\rm (a)}] $A,C$ are endo-local.
\item[{\rm (b)}] For any $X\in\mathscr{C}$ and any non-split $\theta\in\mathbb{E}(X,A)$, there exists $c\in\mathscr{C}(C,X)$ such that $\theta c=\delta$.
Dually, For any $Y\in\mathscr{C}$ and any non-split $\mu\in\mathbb{E}(C,Y)$, there exists $a\in\mathscr{C}(Y,A)$ such that $a\mu=\delta$.
\end{itemize}
\end{enumerate}
\end{proposition}

\begin{proof}
(1)$\Rightarrow$(2) Immediate from Proposition~\ref{PropARExt} and Lemma~\ref{LemARSeq3}.

(2)$\Rightarrow$(1) By Theorem~\ref{PropASEquiv} it is enough to show {\rm (AS1)}. Assume that $a\in\mathscr{C}(A,A^{\prime})$ satisfies $a\delta\neq0$. Then by {\rm (b)}, there is some $f\in\mathscr{C}(A^{\prime},A)$ satisfying $\delta=fa\delta$. Since $(1-fa)\delta=0$ and $A$ is endo-local, $fa$ is an isomorphism. Thus $a$ is a section.
\end{proof}

Later we need the following easy observation.
\begin{lemma}\label{Rem2.5}
Suppose that $\mathscr{C}$ is Krull--Schmidt, and let $\delta\in\mathbb{E}(C,A)$ be a non-split $\mathbb{E}$-extension satisfying {\rm (AS2)}.
For a direct sum decomposition $A=A_1\oplus\cdots\oplus A_n$ into indecomposables $A_i\in\mathscr{C}$ $(1\le i\le n)$, we take $i$ such that $p_i\delta\ne0$, where $p_i\colon A\to A_i$ denotes the projection. Then $p_i\delta\in\mathbb{E}(C,A_i)$ is an almost split extension.
\end{lemma}
\begin{proof}
Clearly $p_i\delta$ satisfies (AS2). Thus the assertion is immediate from Theorem~\ref{PropASEquiv}(5)$\Rightarrow$(1).
\end{proof}

\section{Auslander--Reiten--Serre duality}\label{section_ARtr}

\subsection{Definitions and results}
As usual, for an additive category $\mathscr{C}$, we denote by ${\rm rad}\mathscr{C}$ its Jacobson radical. Throughout this section, let $(\mathscr{C},\mathbb{E},\mathfrak{s})$ be an extriangulated category.
Our aim in this section is to characterize when $\mathscr{C}$ has almost split extensions in the following sense.

\begin{definition}\label{have ASE}
We say that \emph{$\mathscr{C}$ has right almost split extensions} if for any endo-local non-projective object $A\in\mathscr{C}$, there exists an almost split extension $\delta\in\mathbb{E}(A,B)$ for some $B\in\mathscr{C}$.
Dually, we say that \emph{$\mathscr{C}$ has left almost split extensions} if for any endo-local non-injective object $B\in\mathscr{C}$, there exists an almost split extension $\delta\in\mathbb{E}(A,B)$ for some $A\in\mathscr{C}$.
We say that \emph{$\mathscr{C}$ has almost split extensions} if it has right and left almost split extensions.

We say that \emph{$\mathscr{C}$ has sink morphisms} if any endo-local object $A$ has a sink morphism $x\in\mathscr{C}(B,A)$. Dually, we define the condition that \emph{$\mathscr{C}$ has source morphisms}.
\end{definition}

There is the following obvious implication between these notions.

\begin{lemma}\label{sink imply right ASE}
Assume that $\mathscr{C}$ is Krull--Schmidt. If $\mathscr{C}$ has sink (resp.\ source) morphisms, then it has right (resp.\ left) almost split extensions.
\end{lemma}
\begin{proof}

Let $C\in\mathscr{C}$ be any endo-local (or equivalently, indecomposable) non-projective object, and $u\in\mathscr{C}(U,C)$ be a sink morphism.
By Proposition~\ref{PropARSeq1}(2) and Lemma~\ref{Rem2.5}, it suffices to show the existence of a deflation to $C$ which is right almost split.

Since $C$ is non-projective, there exists a non-split $\mathfrak{s}$-conflation $A\overset{x}{\longrightarrow}B\overset{y}{\longrightarrow}C$. By the dual of \cite[Corollary 3.16]{NP}, we can find an $\mathfrak{s}$-triangle
\[ D\to B\oplus U\overset{[y\ u]}{\longrightarrow}C\overset{\theta}{\dashrightarrow} \]
for some $D\in\mathscr{C}$ and $\theta\in\mathbb{E}(C,D)$. We remark that $y,u\in\mathrm{rad}\mathscr{C}$ implies $[y\ u]\in\mathrm{rad}\mathscr{C}$, which means $\theta\ne0$. Then $[y\ u]$ is right almost split, since $u$ factors through it.
\end{proof}

In this section, we fix a base field $k$, and we denote by $\Dd$ the $k$-dual.
We say that an extriangulated category $(\mathscr{C},\mathbb{E},\mathfrak{s})$ is \emph{$k$-linear} if $\mathscr{C}(A,B)$ and $\mathbb{E}(A,B)$ are $k$-vector spaces such that the following compositions are $k$-linear for any $A,B,C,D\in\mathscr{C}$.
\begin{eqnarray*}
&\mathscr{C}(A,B)\times\mathscr{C}(B,C)\to\mathscr{C}(A,C),&\\
&\mathscr{C}(A,B)\times\mathbb{E}(B,C)\times\mathscr{C}(C,D)\to\mathbb{E}(A,D).&
\end{eqnarray*}
Moreover we call $(\mathscr{C},\mathbb{E},\mathfrak{s})$ \emph{Ext-finite} if $\dim_k\mathbb{E}(A,B)<\infty$ holds for any $A,B\in\mathscr{C}$.

We start with the following observation.

\begin{proposition}\label{local theorem}
Let $\mathscr{C}$ be a $k$-linear Ext-finite extriangulated category.
For a non-projective endo-local object $A$ and a non-injective endo-local object $B$, the following conditions are equivalent.
\begin{enumerate}[\rm(1)]
\item There exists an almost split extension in $\mathbb{E}(A,B)$.
\item There exists an isomorphism $\underline{\mathscr{C}}(A,-)\simeq\Dd\mathbb{E}(-,B)$ of functors on $\mathscr{C}$.
\item There exists an isomorphism $\mathbb{E}(A,-)\simeq\Dd\overline{\mathscr{C}}(-,B)$ of functors on $\mathscr{C}$.
\end{enumerate}
\end{proposition}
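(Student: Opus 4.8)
The plan is to prove (a)$\Rightarrow$(b) and (a)$\Rightarrow$(c) directly by producing perfect pairings, and to prove the converses (b)$\Rightarrow$(a) and (c)$\Rightarrow$(a) by recovering such a pairing via Yoneda and then locating an almost split extension inside a socle. Throughout, $\mathrm{End}_{\mathscr{C}}(A)$ and $\mathrm{End}_{\mathscr{C}}(B)$ are local rings (as $A,B$ are endo-local), $\mathbb{E}(A,B)$ is finite-dimensional (Ext-finiteness), and $\underline{\mathscr{C}}(A,A)\ne 0$, $\overline{\mathscr{C}}(B,B)\ne 0$ by Proposition~\ref{first properties}, since $A$ is non-projective and $B$ non-injective.

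\emph{(a)$\Rightarrow$(b) and (a)$\Rightarrow$(c).} Let $\delta\in\mathbb{E}(A,B)$ be almost split and fix a $k$-linear form $\mu\colon\mathbb{E}(A,B)\to k$ with $\mu(\delta)\ne 0$. Define $\theta_X\colon\underline{\mathscr{C}}(A,X)\to\Dd\mathbb{E}(X,B)$ by $\theta_X(\underline{f})(\eta)=\mu(f^{\ast}\eta)$ and $\theta'_X\colon\mathbb{E}(A,X)\to\Dd\overline{\mathscr{C}}(X,B)$ by $\theta'_X(\xi)(\overline{a})=\mu(a_{\ast}\xi)$. They are well defined ($f^{\ast}\eta=0$ if $f\in\mathcal{P}$, $a_{\ast}\xi=0$ if $a\in\mathcal{I}$) and natural in $X$, using $f^{\ast}g^{\ast}=(gf)^{\ast}$ and $b_{\ast}a_{\ast}=(ba)_{\ast}$. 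Since $\delta$ satisfies both (AS1) and (AS2), all four parts of Lemma~\ref{LemARSeq3} apply to it (matching the lemma's $(C,A)$ with our $(A,B)$), and they read off exactly: $\theta_X$ is injective [part (d)], the transpose $\mathbb{E}(X,B)\to\Dd\underline{\mathscr{C}}(A,X)$ of $\theta_X$ is injective [part (a)], $\theta'_X$ is injective [part (c)], and the transpose $\overline{\mathscr{C}}(X,B)\to\Dd\mathbb{E}(A,X)$ of $\theta'_X$ is injective [part (b)] — in each case the lemma furnishes a morphism through which $\delta$ factors, so that $\mu$ does not vanish on the relevant image. Since $\mathbb{E}(X,B)$ and $\mathbb{E}(A,X)$ are finite-dimensional, the two transpose injectivities force $\underline{\mathscr{C}}(A,X)$ and $\overline{\mathscr{C}}(X,B)$ to be finite-dimensional as well, and comparing dimensions on each side then makes $\theta_X$ and $\theta'_X$ bijective. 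Hence $\theta$ and $\theta'$ are the asserted natural isomorphisms.

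\emph{(b)$\Rightarrow$(a).} Let $\phi\colon\underline{\mathscr{C}}(A,-)\xrightarrow{\ \sim\ }\Dd\mathbb{E}(-,B)$. By the Yoneda lemma $\phi_X(\underline{f})(\eta)=\mu(f^{\ast}\eta)$ with $\mu:=\phi_A(\underline{\mathrm{id}_A})\in\Dd\mathbb{E}(A,B)$, and $\mu\ne 0$ since $\phi_A$ is an isomorphism and $\underline{\mathrm{id}_A}\ne 0$. Fix $C'\in\mathscr{C}$. As $\phi_{C'}$ is surjective, the forms $\eta\mapsto\mu(g^{\ast}\eta)$ ($g\in\mathscr{C}(A,C')$) exhaust $\Dd\mathbb{E}(C',B)$, hence separate points of $\mathbb{E}(C',B)$; applied to $c^{\ast}\delta$ for $c\in\mathscr{C}(C',A)$ this gives
\[ c^{\ast}\delta=0 \iff \mu\big((cg)^{\ast}\delta\big)=0\ \text{ for every } g\in\mathscr{C}(A,C'). \]
Now $\mathbb{E}(A,B)$ is a right module over $\Lambda:=\underline{\mathscr{C}}(A,A)$ via $\eta\cdot\overline{c}=c^{\ast}\eta$, and $\Lambda$ is a finite-dimensional local $k$-algebra (finite-dimensional because $\phi_A$ identifies it with $\Dd\mathbb{E}(A,B)$), with $\mathbb{E}(A,B)\ne 0$. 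Choose $0\ne\delta\in\mathrm{soc}(\mathbb{E}(A,B)_{\Lambda})$. If $c\in\mathscr{C}(C',A)$ is not a retraction, then for every $g\in\mathscr{C}(A,C')$ the endomorphism $cg$ of $A$ is not invertible, hence lies in $\mathrm{rad}\,\mathrm{End}_{\mathscr{C}}(A)$, so $(cg)^{\ast}\delta=\delta\cdot\overline{cg}=0$ and thus $c^{\ast}\delta=0$ by the displayed equivalence. So $\delta$ satisfies (AS2), and since $A,B$ are endo-local, Proposition~\ref{PropASEquiv} shows $\delta$ is an almost split extension.

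\emph{(c)$\Rightarrow$(a).} Dually, given $\psi\colon\mathbb{E}(A,-)\xrightarrow{\ \sim\ }\Dd\overline{\mathscr{C}}(-,B)$, put $\mu(\eta):=\psi_B(\eta)(\overline{\mathrm{id}_B})$; naturality gives $\mu(a_{\ast}\xi)=\psi_X(\xi)(\overline{a})$, and surjectivity of $\psi_{B'}$ yields, for $a\in\mathscr{C}(B,B')$, the equivalence $a_{\ast}\delta=0\iff\mu((ba)_{\ast}\delta)=0$ for all $b\in\mathscr{C}(B',B)$. Viewing $\mathbb{E}(A,B)\ne 0$ as a left module over the finite-dimensional local $k$-algebra $\overline{\mathscr{C}}(B,B)$ via $\overline{h}\cdot\eta=h_{\ast}\eta$ and choosing $0\ne\delta$ in its socle, one gets $(ba)_{\ast}\delta=0$ whenever $a$ is not a section (then $ba\in\mathrm{rad}\,\mathrm{End}_{\mathscr{C}}(B)$), so $\delta$ satisfies (AS1) and is almost split by Proposition~\ref{PropASEquiv}. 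The single delicate point of the whole argument — and the only place the local hypotheses are genuinely used — is this reduction of the \emph{global} vanishing conditions (AS1), (AS2), quantified over all objects, to a condition on the endomorphism ring of $A$ (resp.\ $B$): it works precisely because any composite $cg$ with $c$ not a retraction (resp.\ $ba$ with $a$ not a section) fails to be invertible, together with locality; the remainder is bookkeeping with Lemma~\ref{LemARSeq3}, the Yoneda lemma, duality, and Ext-finiteness for the dimension counts.
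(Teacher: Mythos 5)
Your proposal is correct and follows essentially the same route as the paper: the pairings induced by a linear form not vanishing on $\delta$, shown non-degenerate via Lemma~\ref{LemARSeq3}, give (a)$\Rightarrow$(b),(c), and for the converse you locate an almost split extension in the socle of $\mathbb{E}(A,B)$ over the local algebra $\underline{\mathscr{C}}(A,A)$ (the paper's $S_A=\Dd(T_A)$ is exactly this socle) and conclude with Proposition~\ref{PropASEquiv}, the paper's commutative-diagram verification being the same computation as your separating-forms argument. The only cosmetic slip is in (c)$\Rightarrow$(a), where the displayed equivalence really uses the \emph{injectivity} of $\psi_{B'}$ rather than its surjectivity; since $\psi$ is an isomorphism this is harmless, and your explicit treatment of (c)$\Rightarrow$(a) supplies a direction the paper leaves implicit as dual.
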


\begin{proof}
(1)$\Rightarrow$(2)
Let $\delta\in\mathbb{E}(A,B)$ be an almost split extension.
Take any linear form $\eta\colon\mathbb{E}(A,B)\to k$ satisfying $\eta(\delta)\neq0$. It follows from Lemma~\ref{LemARSeq3} that for each $X\in\mathscr{C}$, the composition
\begin{eqnarray*}
\underline{\mathscr{C}}(A,X)\times\mathbb{E}(X,B)\to\mathbb{E}(A,B)\xrightarrow{\eta}k\ ;\ (\underline{a},\gamma)\mapsto \eta(\gamma a)
\end{eqnarray*}
is a non-degenerate bilinear form in the sense that the induced maps $\underline{\mathscr{C}}(A,X)\to\Dd\mathbb{E}(X,B)$, $a\mapsto\eta(-\circ a)$ and $\mathbb{E}(X,B)\to\Dd\underline{\mathscr{C}}(A,X)$, $\gamma\mapsto\eta(\gamma\circ-)$ are injective. Since $\mathscr{C}$ is Ext-finite, these maps are isomorphisms. Since they are functorial on $X$, we obtain (2).

(2)$\Rightarrow$(1) 
We have an isomorphism $\mathrm{End}_{\underline{\mathscr{C}}}(A)\simeq\Dd\mathbb{E}(A,B)$ of left $\mathrm{End}_{\mathscr{C}}(A)$-modules.
Since $A$ is endo-local, $T_A:=\mathrm{End}_{\underline{\mathscr{C}}}(A)/\mathrm{rad}\mathrm{End}_{\underline{\mathscr{C}}}(A)$ is a simple $\mathrm{End}_{\mathscr{C}}(A)$-module. 
Then $S_A:=\Dd(T_A)$ is a simple $\mathrm{End}_{\mathscr{C}}(A)$-submodule of $\mathbb{E}(A,B)$.
We show that any non-zero element in $S_A$ is an almost split extension.
By Theorem~\ref{PropASEquiv}(5)$\Rightarrow$(1), it suffices to show that $(S_A)a=0$ holds for any $a\in\mathscr{C}(A^{\prime},A)$ which is not a retraction.
Our isomorphism $\iota\colon\mathbb{E}(-,B)\simeq\Dd\underline{\mathscr{C}}(A,-)$ of functors on $\mathscr{C}$ gives a commutative diagram
\[\xymatrix@R=1.5em{
S_A\ar[r]\ar@{=}[d]&\mathbb{E}(A,B)\ar[d]^{\iota_A}\ar[rr]^{-\circ a}&&\mathbb{E}(A^{\prime},B)\ar[d]^{\iota_{A^{\prime}}}\\
\Dd(T_A)\ar[r]&\Dd\underline{\mathscr{C}}(A,A)\ar[rr]^{\Dd(\underline{a}\circ-)}&&\Dd\underline{\mathscr{C}}(A,A^{\prime}).
}\]
Since $a$ is not a retraction, the composition $\underline{\mathscr{C}}(A,A^{\prime})\xrightarrow{\underline{a}\circ-}\underline{\mathscr{C}}(A,A)\to T_A$ is zero. Therefore the composition $\Dd(T_A)\to \Dd\underline{\mathscr{C}}(A,A)\xrightarrow{\Dd(\underline{a}\circ-)} \Dd\underline{\mathscr{C}}(A,A^{\prime})$ is zero. By commutativity, we have $(S_A)a=0$ as desired.

(1)$\Rightarrow$(3) is shown similarly to (1)$\Rightarrow$(2) by using the bilinear form
\[\mathbb{E}(A,X)\times\overline{\mathscr{C}}(X,B)\to\mathbb{E}(A,B)\xrightarrow{\eta}k\ ;\ (\gamma,\overline{b})\mapsto \eta(b\gamma).\]

(3)$\Rightarrow$(1) is shown similarly to (2)$\Rightarrow$(1).
\end{proof}

Now we discuss global existence of almost split extensions in $\mathscr{C}$.
\begin{definition}\label{DefARDual}
Let $(\mathscr{C},\mathbb{E},\mathfrak{s})$ be a $k$-linear extriangulated category.
\begin{enumerate}[(1)]
 \item A \emph{right Auslander--Reiten--Serre (ARS) duality} is a pair $(\tau,\eta)$ of an additive functor $\tau\colon\underline{\mathscr{C}}\to\overline{\mathscr{C}}$ and a binatural isomorphism
\[\eta_{A,B}\colon\underline{\mathscr{C}}(A,B)\simeq \Dd\mathbb{E}(B,\tau A)\ \mbox{ for any }\ A,B\in\mathscr{C}.\]
 \item If moreover $\tau$ is an equivalence, we say that $(\tau,\eta)$ is an \emph{Auslander--Reiten--Serre (ARS) duality}.
\end{enumerate}
\end{definition}

\begin{remark}\label{RemIndec}
Let $(\tau,\eta)$ be a right Auslander--Reiten--Serre duality. Then $\tau:\underline{\mathscr{C}}\to\overline{\mathscr{C}}$ is fully faithful. In fact, for each $A,B\in\underline{\mathscr{C}}$, the inverse map of $\tau_{A,B}:\underline{\mathscr{C}}(A,B)\to\overline{\mathscr{C}}(\tau A,\tau B)$ is given as follows: Each $\overline{f}\in\overline{\mathscr{C}}(\tau A,\tau B)$ gives a morphism $\overline{f}\circ-:\mathbb{E}(-,\tau A)\to\mathbb{E}(-,\tau B)$ of functors on $\underline{\mathscr{C}}$. Applying $\mathbb{D}$ and using $\eta$, we obtain a morphism $\underline{\mathscr{C}}(B,-)\to\underline{\mathscr{C}}(A,-)$ of functors on $\underline{\mathscr{C}}$. By Yoneda's Lemma, this can be written as $-\circ\underline{g}$ by a unique morphism $\underline{g}\in\underline{\mathscr{C}}(A,B)$. Then the desired map is given by $\overline{f}\mapsto\underline{g}$.

In particular, if moreover $\mathscr{C}$ is Krull--Schmidt, then $\tau$ gives an injective map $\tau:\ind\underline{\mathscr{C}}=\{X\in\ind\mathscr{C}\mid\mbox{$X$ is non-projective}\}\to\ind\overline{\mathscr{C}}=\{X\in\ind\mathscr{C}\mid\mbox{$X$ is non-injective}\}$.
\end{remark}

Our aim in this section is to prove the following:
\begin{theorem}\label{main theorem}
Let $\mathscr{C}$ be a $k$-linear Ext-finite Krull--Schmidt extriangulated category. Then the following conditions are equivalent.
\begin{enumerate}[\rm(1)]
\item $\mathscr{C}$ has almost split extensions.
\item $\mathscr{C}$ has an Auslander--Reiten--Serre duality.
\end{enumerate}
\end{theorem}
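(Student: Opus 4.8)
The plan is to deduce both implications from the ``local'' Proposition~\ref{local theorem}, which already rephrases the existence of a single almost split extension in $\mathbb{E}(A,B)$ as the existence of functorial dualities $\underline{\mathscr{C}}(A,-)\simeq\Dd\mathbb{E}(-,B)$ and $\mathbb{E}(A,-)\simeq\Dd\overline{\mathscr{C}}(-,B)$. I will freely use that in a Krull--Schmidt category ``endo-local'' coincides with ``indecomposable'', that $A$ is non-projective (resp.\ $B$ is non-injective) exactly when $A\not\simeq 0$ in $\underline{\mathscr{C}}$ (resp.\ $B\not\simeq 0$ in $\overline{\mathscr{C}}$), and that for $A$ indecomposable non-projective (resp.\ $B$ indecomposable non-injective) the ring $\mathrm{End}_{\underline{\mathscr{C}}}(A)$ (resp.\ $\mathrm{End}_{\overline{\mathscr{C}}}(B)$) is a nonzero, hence local, quotient of $\mathrm{End}_{\mathscr{C}}(A)$.

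For (b)$\Rightarrow$(a), suppose $(\tau,\eta)$ is an ARS duality. To get right almost split extensions, fix an indecomposable non-projective $A$. By Remark~\ref{RemIndec} I may assume $\tau A$ has no injective summand; then $\tau A$ is indecomposable in $\mathscr{C}$, it is non-injective because $A\not\simeq 0$ in $\underline{\mathscr{C}}$, and $\mathrm{End}_{\overline{\mathscr{C}}}(\tau A)\simeq\mathrm{End}_{\underline{\mathscr{C}}}(A)$ is local, so $\tau A$ is endo-local non-injective. The binaturality of $\eta$ gives $\underline{\mathscr{C}}(A,-)\simeq\Dd\mathbb{E}(-,\tau A)$ on $\mathscr{C}$, so Proposition~\ref{local theorem}(b)$\Rightarrow$(a) furnishes an almost split extension in $\mathbb{E}(A,\tau A)$. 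To get left almost split extensions, fix an indecomposable non-injective $B$. As $\tau$ is an equivalence and $B\not\simeq 0$ in $\overline{\mathscr{C}}$, some object of $\mathscr{C}$ maps to $B$; deleting its projective summands and noting that the rest has local endomorphism ring in $\underline{\mathscr{C}}$ produces an indecomposable non-projective $A$ with $\tau A\simeq B$ in $\overline{\mathscr{C}}$. Since $\tau A$ and $B$ are indecomposable non-injective in $\mathscr{C}$, this upgrades to $\tau A\cong B$ in $\mathscr{C}$: the inverse pair witnessing $\tau A\simeq B$ modulo $\mathcal{I}$ has composites differing from the identities by elements of the proper, hence radical, ideals $\mathcal{I}\cap\mathrm{End}_{\mathscr{C}}(\tau A)$ and $\mathcal{I}\cap\mathrm{End}_{\mathscr{C}}(B)$, so those composites are units. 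Transporting along $\tau A\cong B$ the almost split extension from the first part yields one in $\mathbb{E}(A,B)$.

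For (a)$\Rightarrow$(b), I will build $(\tau,\eta)$ by hand. For each indecomposable non-projective $A$, choose (using that $\mathscr{C}$ has right almost split extensions) an almost split extension $\delta_A\in\mathbb{E}(A,B_A)$ and a linear form $\eta_A\colon\mathbb{E}(A,B_A)\to k$ with $\eta_A(\delta_A)\neq 0$, and set $\tau A:=B_A$; by Proposition~\ref{PropARExt} it is endo-local, and it is non-injective since $\delta_A\neq 0$. Put $\tau P:=0$ for $P$ projective and extend $\tau$ additively to objects. As in the proof of Proposition~\ref{local theorem}, Lemma~\ref{LemARSeq3} makes the pairing $\underline{\mathscr{C}}(A,X)\times\mathbb{E}(X,\tau A)\to k$, $(\underline a,\delta)\mapsto\eta_A(a^{\ast}\delta)$, non-degenerate, so Ext-finiteness gives an isomorphism $\eta^A_X\colon\underline{\mathscr{C}}(A,X)\xrightarrow{\ \sim\ }\Dd\mathbb{E}(X,\tau A)$ natural in $X$; dually, taking $\Dd$ of Proposition~\ref{local theorem}(c) gives an isomorphism $\overline{\mathscr{C}}(X,\tau A')\xrightarrow{\ \sim\ }\Dd\mathbb{E}(A',X)$ sending $\overline b$ to $\gamma\mapsto\eta_{A'}(b_{\ast}\gamma)$. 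For $\underline f\in\underline{\mathscr{C}}(A,A')$ between indecomposable non-projectives, I then let $\tau f\in\overline{\mathscr{C}}(\tau A,\tau A')$ be the element corresponding, under the latter isomorphism at $X=\tau A$, to the functional $\gamma\mapsto\eta_A(f^{\ast}\gamma)$ on $\mathbb{E}(A',\tau A)$; equivalently, $\tau f$ is characterized by $\eta_{A'}\circ(\tau f)_{\ast}=\eta_A\circ f^{\ast}$ on $\mathbb{E}(A',\tau A)$. Since the right-hand side depends only on $\underline f$, a short manipulation of this identity using the bifunctoriality of $\mathbb{E}$ shows $\tau(\mathrm{id})=\mathrm{id}$ and $\tau(gf)=\tau g\circ\tau f$, so after additive extension $\tau\colon\underline{\mathscr{C}}\to\overline{\mathscr{C}}$ is a functor; the very same identity is the naturality in $A$ of the isomorphisms $\eta^A_X$, which (extended biadditively) therefore assemble into the required binatural isomorphism $\underline{\mathscr{C}}(A,B)\simeq\Dd\mathbb{E}(B,\tau A)$. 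Finally, $\tau$ is fully faithful because on $\underline{\mathscr{C}}(A,A')$ it is the composite of $\eta^A_{A'}$ with the inverse of $\overline{\mathscr{C}}(\tau A,\tau A')\xrightarrow{\ \sim\ }\Dd\mathbb{E}(A',\tau A)$, and it is essentially surjective because injectives are zero in $\overline{\mathscr{C}}$ while, by the hypothesis on left almost split extensions together with the uniqueness Proposition~\ref{PropARSeq3}, every indecomposable non-injective object is isomorphic in $\mathscr{C}$ to some $\tau A$. Hence $\tau$ is an equivalence and $(\tau,\eta)$ an ARS duality.

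The step I expect to be the main obstacle is this last globalization in (a)$\Rightarrow$(b): promoting the family of pointwise dualities $\eta^A$ to a genuine functor $\tau$ equipped with a binatural $\eta$. The key is that the single equation $\eta_{A'}\circ(\tau f)_{\ast}=\eta_A\circ f^{\ast}$ both defines $\tau$ on morphisms unambiguously and forces at once the functoriality of $\tau$ and the naturality of $\eta$ in its first variable; everything else (well-definedness modulo $\mathcal{P}$, the additive extension compatible with Krull--Schmidt, and the endo-local/non-injectivity bookkeeping) is routine.
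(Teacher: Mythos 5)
Your proof is correct and follows essentially the same route as the paper's: both directions rest on the local duality of Proposition~\ref{local theorem} and Lemma~\ref{LemARSeq3}, your defining equation $\eta_{A'}\circ(\tau f)_{\ast}=\eta_A\circ f^{\ast}$ is exactly the paper's construction in Lemma~\ref{sufficient condition} (applied to $(\underline{\mathscr{C}},\mathbb{E},\overline{\mathscr{C}})$ in Proposition~\ref{right theorem}), and your denseness argument via left almost split extensions and Proposition~\ref{PropARSeq3} is the paper's. The only cosmetic difference is that in (b)$\Rightarrow$(a) you obtain left almost split extensions by lifting the isomorphism $\tau A\simeq B$ from $\overline{\mathscr{C}}$ to $\mathscr{C}$ directly, where the paper instead passes to a left ARS duality via Lemma~\ref{construct left} and invokes the dual of Proposition~\ref{right theorem}.
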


This follows from the following more general result.

\begin{proposition}\label{right theorem}
Let $\mathscr{C}$ be a $k$-linear Ext-finite Krull--Schmidt extriangulated category. Then the following conditions are equivalent.
\begin{enumerate}[\rm(1)]
\item $\mathscr{C}$ has right almost split extensions.
\item $\mathscr{C}$ has a right Auslander--Reiten--Serre duality $(\tau,\eta)$.
\end{enumerate}
\end{proposition}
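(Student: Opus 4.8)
The plan is to prove both implications using the local criterion of Proposition~\ref{local theorem} together with the Krull--Schmidt hypothesis to assemble local data into global functorial data. For (b)$\Rightarrow$(a), suppose $(\tau,\eta)$ is a right ARS duality with $\tau$ fully faithful. Given a non-projective endo-local object $A$, by Proposition~\ref{first properties}(b) we have $A\not\simeq 0$ in $\underline{\mathscr{C}}$, so $\tau A$ is a non-zero object of $\overline{\mathscr{C}}$. I would first check that $\tau A$ is non-injective: if $\tau A\simeq 0$ in $\overline{\mathscr{C}}$ then $\eta_{A,A}$ forces $\underline{\mathscr{C}}(A,A)=0$, contradicting $A\not\simeq 0$. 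Using the normalization of Remark~\ref{RemIndec}, $\tau A$ may be taken with no injective summand, and since $\tau$ is fully faithful and $A$ is endo-local (hence indecomposable), $\mathrm{End}_{\overline{\mathscr{C}}}(\tau A)\simeq\mathrm{End}_{\underline{\mathscr{C}}}(A)$ is local, so $\tau A=:B$ is endo-local and non-injective. The binatural isomorphism $\eta_{A,-}\colon\underline{\mathscr{C}}(A,-)\simeq\Dd\mathbb{E}(-,B)$ is exactly condition (b) of Proposition~\ref{local theorem}, so there is an almost split extension in $\mathbb{E}(A,B)$; this gives right almost split extensions.

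For (a)$\Rightarrow$(b), the main work is to build the functor $\tau$. For each indecomposable non-projective $A$, Proposition~\ref{local theorem} (applied in the direction (a)$\Rightarrow$(b)) produces a non-injective endo-local object $B_A$ and an isomorphism $\underline{\mathscr{C}}(A,-)\simeq\Dd\mathbb{E}(-,B_A)$; set $\tau A:=B_A$ (and $\tau A:=0$ for $A$ projective), then extend additively over direct sums, using Krull--Schmidt so that this is well-defined on objects up to isomorphism. The key step is functoriality: given $\underline{f}\colon A\to A'$ in $\underline{\mathscr{C}}$ between indecomposables, the two isomorphisms $\underline{\mathscr{C}}(A,-)\simeq\Dd\mathbb{E}(-,\tau A)$ and $\underline{\mathscr{C}}(A',-)\simeq\Dd\mathbb{E}(-,\tau A')$ transport the natural transformation $\underline{\mathscr{C}}(\underline{f},-)\colon\underline{\mathscr{C}}(A',-)\to\underline{\mathscr{C}}(A,-)$ to a natural transformation $\Dd\mathbb{E}(-,\tau A')\to\Dd\mathbb{E}(-,\tau A)$, equivalently (by $\Dd$-duality and Yoneda, using Ext-finiteness so that $\mathbb{E}(-,\tau A)$ is a sum of finite-dimensional spaces) to a morphism $\tau A\to\tau A'$ in $\overline{\mathscr{C}}$; this is forced to be unique and to respect composition, giving the functor $\tau$ and simultaneously the binatural isomorphism $\eta$. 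Finally, full faithfulness of $\tau$: a morphism $\underline{\mathscr{C}}(A,A')\to\overline{\mathscr{C}}(\tau A,\tau A')$ is obtained, and via the two ARS isomorphisms it corresponds to the map $\mathrm{Nat}(\underline{\mathscr{C}}(A',-),\underline{\mathscr{C}}(A,-))\to\mathrm{Nat}(\Dd\mathbb{E}(-,\tau A'),\Dd\mathbb{E}(-,\tau A))$, which is bijective because $\tau$ was constructed precisely to make the squares commute; Yoneda on both sides identifies both Hom-spaces, so $\tau$ is fully faithful.

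I expect the main obstacle to be the bookkeeping in establishing that $\tau$ is a genuine additive functor rather than just an object-assignment: one must pin down the morphism $\tau(\underline{f})$ canonically (not just up to scalar), check independence of the chosen local isomorphisms, verify compatibility with the idempotent decompositions coming from Krull--Schmidt, and confirm that the resulting $\eta_{A,B}$ is binatural in \emph{both} variables. A clean way to organize this is to package the data as an isomorphism of bifunctors $\underline{\mathscr{C}}(-,?)\simeq\Dd\mathbb{E}(?,\tau(-))$ and invoke a Yoneda-type argument: the assignment $A\mapsto \Dd\mathbb{E}(-,\tau A)$ is, by the local isomorphisms, pointwise isomorphic to the representable-valued functor $A\mapsto\underline{\mathscr{C}}(A,-)$, and transporting the functorial structure along these isomorphisms yields $\tau$ together with all coherence for free; Ext-finiteness is what guarantees the relevant duality between $\mathbb{E}$ and $\Dd\mathbb{E}$ is perfect so that no information is lost. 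Krull--Schmidt is used twice: to reduce all constructions to indecomposable objects, and to ensure that $\mathrm{End}_{\mathscr{C}}$-locality of $A$ passes to $\tau A$ via the ring isomorphism of Remark~\ref{RemIndec}.
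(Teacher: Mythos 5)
Your overall strategy coincides with the paper's: both implications are reduced to the local duality of Proposition~\ref{local theorem}, and the Krull--Schmidt property is used to assemble the local data into a functor (the paper formalizes this assembly as Lemma~\ref{sufficient condition}). The direction (b)$\Rightarrow$(a) is fine. However, the construction of $\tau$ on morphisms in (a)$\Rightarrow$(b) has a genuine gap at exactly the point you flag as the main obstacle. Transporting $\underline{\mathscr{C}}(\underline{f},-)$ through the isomorphisms $\underline{\mathscr{C}}(A,-)\simeq\Dd\mathbb{E}(-,\tau A)$ does produce a natural transformation $\Dd\mathbb{E}(-,\tau A')\to\Dd\mathbb{E}(-,\tau A)$, equivalently a natural transformation $\mathbb{E}(-,\tau A)\to\mathbb{E}(-,\tau A')$; but Yoneda does \emph{not} convert this into a morphism $\tau A\to\tau A'$ in $\overline{\mathscr{C}}$, because the functors $\mathbb{E}(-,X)$ are not representable. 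Faithfulness of $X\mapsto\mathbb{E}(-,X)$ on $\overline{\mathscr{C}}$ holds by the very definition of the ideal $\mathcal{I}$, but fullness is a nontrivial statement: the paper proves it as Proposition~\ref{E is injective}(b) only under the additional hypothesis that $\mathscr{C}$ has enough projectives and injectives, which is not assumed in Proposition~\ref{right theorem}. So the step ``equivalently, by $\Dd$-duality and Yoneda, a morphism $\tau A\to\tau A'$'' is unjustified, and the ``coherence for free'' packaging in your last paragraph inherits the same problem.

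What repairs this --- and what the paper's Lemma~\ref{sufficient condition} actually does --- is to use the \emph{second} local duality, Proposition~\ref{local theorem}(c) (equivalently Lemma~\ref{LemARSeq3}(b),(c)): the pairing $\mathbb{E}(A',\tau A)\times\overline{\mathscr{C}}(\tau A,\tau A')\to\mathbb{E}(A',\tau A')\xrightarrow{\eta_{A'}}k$ is non-degenerate, hence by Ext-finiteness identifies $\overline{\mathscr{C}}(\tau A,\tau A')$ with $\Dd\mathbb{E}(A',\tau A)$, which the first pairing in turn identifies with $\underline{\mathscr{C}}(A,A')$. One then \emph{defines} $\tau(\underline{f})$ as the unique $f'\in\overline{\mathscr{C}}(\tau A,\tau A')$ satisfying $\eta_A(f^{\ast}\gamma)=\eta_{A'}(f'_{\ast}\gamma)$ for all $\gamma\in\mathbb{E}(A',\tau A)$; functoriality, full faithfulness and binaturality of $\eta$ then all follow from uniqueness in this defining identity. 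Your proposal invokes only duality (b) of Proposition~\ref{local theorem}; without also invoking duality (c) there is no mechanism for landing in $\overline{\mathscr{C}}$, so the argument as written does not close.
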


\subsection{Proofs of Proposition~\ref{right theorem} and Theorem~\ref{main theorem}}


It is convenient to start with the following general setting.
\begin{definition}
Let $(\mathscr{C},E,\mathscr{D})$ be a triple consisting of $k$-linear additive categories $\mathscr{C}$ and $\mathscr{D}$ and a $k$-bilinear functor $E\colon\mathscr{C}^\mathrm{op}\times\mathscr{D}\to\mod k$.
A \emph{right ARS duality} for $(\mathscr{C},E,\mathscr{D})$ is a pair $(F,\eta)$ of a $k$-linear functor $F\colon\mathscr{C}\to\mathscr{D}$ and a binatural isomorphism
\[\eta_{A,B}\colon\mathscr{C}(A,B)\simeq \Dd E(B,FA)\ \mbox{ for any }\ A,B\in\mathscr{C}.\]
If moreover $F$ is an equivalence, we say that $(F,\eta)$ is an \emph{ARS duality} for $(\mathscr{C},E,\mathscr{D})$.

Dually we define a \emph{left ARS duality} for $(\mathscr{C},E,\mathscr{D})$.
\end{definition}

The following is clear.

\begin{lemma}\label{construct left}
If $(F,\eta)$ is an ARS duality for $(\mathscr{C},E,\mathscr{D})$, then $(G,\zeta)$ is a left ARS duality for $(\mathscr{C},E,\mathscr{D})$, where $G$ is a quasi-inverse of $F$ and $\zeta_{A,B}$ is a composition
\[\mathscr{D}(A,B)\xrightarrow{G}\mathscr{C}(GA,GB)\xrightarrow{\eta_{GA,GB}}\Dd E(GB,FGA)\simeq \Dd E(GB,A)\]
for any $A,B\in\mathscr{D}$.
\end{lemma}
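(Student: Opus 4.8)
The statement to prove is Lemma~\ref{construct left}: given an ARS duality $(F,\eta)$ for $(\mathscr{C},E,\mathscr{D})$ (so $F$ an equivalence and $\eta_{A,B}\colon\mathscr{C}(A,B)\simeq\Dd E(B,FA)$ binatural), the pair $(G,\zeta)$ with $G$ a quasi-inverse of $F$ and $\zeta$ defined by the displayed composition is a \emph{left} ARS duality.

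First I would write out precisely what a left ARS duality is, by dualising: it is a pair $(G,\zeta)$ of a $k$-linear functor $G\colon\mathscr{D}\to\mathscr{C}$ together with a binatural isomorphism $\zeta_{A,B}\colon\mathscr{D}(A,B)\simeq\Dd E(GB,A)$ for all $A,B\in\mathscr{D}$. (The asymmetry — $\mathscr{C}$ in the first slot of $E$, $\mathscr{D}$ in the second — forces this particular shape: the functor must go $\mathscr{D}\to\mathscr{C}$ and land in the $\mathscr{C}$-variable of $E$, while $A\in\mathscr{D}$ sits in the $\mathscr{D}$-variable.) Then I would check that the proposed $\zeta$ has the right type: $G$ is $k$-linear since it is a quasi-inverse of a $k$-linear equivalence; the first arrow $\mathscr{D}(A,B)\xrightarrow{G}\mathscr{C}(GA,GB)$ is a bijection because $G$ is fully faithful; $\eta_{GA,GB}$ is an isomorphism by hypothesis; and the last map $\Dd E(GB,FGA)\simeq\Dd E(GB,A)$ is $\Dd$ applied to $E(GB,\varepsilon_A)$ where $\varepsilon\colon FG\Rightarrow\mathrm{id}_{\mathscr{D}}$ is the counit, which is a natural isomorphism, hence an isomorphism after applying $E(GB,-)$ and $\Dd$. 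Composing three isomorphisms gives that $\zeta_{A,B}$ is an isomorphism with source $\mathscr{D}(A,B)$ and target $\Dd E(GB,A)$, as required.

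The only real content is binaturality of $\zeta$ in both variables. For a morphism $f\colon B\to B'$ in $\mathscr{D}$ I would chase the square relating $\zeta_{A,B}$ and $\zeta_{A,B'}$: it decomposes into three squares, one for each factor of the composition. The square for $G$ commutes because $G$ is a functor; the square for $\eta$ commutes by the assumed binaturality of $\eta$ (in its second variable, for the morphism $Gf$); the square for $E(GB,\varepsilon_A)\simeq$, i.e. for postcomposing with $\varepsilon_A$ in the $\mathscr{D}$-slot of $E$, commutes by functoriality of $E$ in that slot together with naturality of $\varepsilon$ — one uses $\varepsilon_A\circ FGf = f\circ\varepsilon_{A}$... more precisely the relevant naturality square of $\varepsilon$. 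The variable-$A$ naturality is analogous, using binaturality of $\eta$ in its first variable and naturality of $\varepsilon$ at the morphism induced by $A\to A'$. I expect the main (mild) obstacle to be bookkeeping the counit: one must be careful that the isomorphism $\Dd E(GB,FGA)\simeq\Dd E(GB,A)$ is induced by the counit $\varepsilon$ of the adjoint equivalence (equivalently, by $F$ applied to the unit via the triangle identities), and that its naturality in $A$ is exactly naturality of $\varepsilon$; once this is pinned down, everything reduces to pasting commuting squares. Since the lemma is labelled ``clear'', I would simply indicate this decomposition and note that each of the three pieces commutes for the reason stated, and that the dual (left) ARS duality for the opposite data is obtained symmetrically.

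\begin{proof}
By definition, a left ARS duality for $(\mathscr{C},E,\mathscr{D})$ is a pair of a $k$-linear functor $G\colon\mathscr{D}\to\mathscr{C}$ and a binatural isomorphism $\zeta_{A,B}\colon\mathscr{D}(A,B)\simeq\Dd E(GB,A)$. Let $\varepsilon\colon FG\Rightarrow\mathrm{id}_{\mathscr{D}}$ be the counit of the adjoint equivalence, which is a natural isomorphism, so that $E(GB,\varepsilon_A)\colon E(GB,FGA)\xrightarrow{\sim}E(GB,A)$ is an isomorphism, natural in $A$; the displayed isomorphism $\Dd E(GB,FGA)\simeq\Dd E(GB,A)$ is $\Dd$ of (the inverse of) this map. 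As a quasi-inverse of the $k$-linear equivalence $F$, the functor $G$ is $k$-linear and fully faithful, so $\mathscr{D}(A,B)\xrightarrow{G}\mathscr{C}(GA,GB)$ is a $k$-linear isomorphism; composing it with the isomorphism $\eta_{GA,GB}$ and with $\Dd E(GB,\varepsilon_A)^{-1}$ yields that $\zeta_{A,B}$ is an isomorphism $\mathscr{D}(A,B)\simeq\Dd E(GB,A)$ of the required type. It remains to verify binaturality. Fixing $A$ and a morphism $f\colon B\to B'$ in $\mathscr{D}$, the square comparing $\zeta_{A,B}$ with $\zeta_{A,B'}$ factors as the juxtaposition of three squares: the first commutes since $G$ is a functor, the second since $\eta$ is binatural (applied to the morphism $Gf$ in its second variable), and the third by functoriality of $E$ in its $\mathscr{D}$-variable together with the naturality of $\varepsilon$. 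Fixing $B$ and a morphism $g\colon A\to A'$ in $\mathscr{D}$, the corresponding square factors analogously, using functoriality of $G$, binaturality of $\eta$ in its first variable, and naturality of $\varepsilon$ at $g$. Hence $(G,\zeta)$ is a left ARS duality for $(\mathscr{C},E,\mathscr{D})$.
\end{proof}
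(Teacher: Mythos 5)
Your proof is correct: the paper offers no argument at all for this lemma (it is introduced with ``The following is clear''), and your write-up is exactly the routine verification the authors have in mind — unfolding the definition of a left ARS duality, observing each factor of the composition is an isomorphism, and checking binaturality by pasting the three commuting squares (functoriality of $G$, binaturality of $\eta$, naturality of the counit $FG\Rightarrow\mathrm{id}_{\mathscr{D}}$ inducing $E(GB,FGA)\simeq E(GB,A)$). Nothing is missing.
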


The following is an analogue of \cite[9.4]{GR} and \cite[I.1.4]{RV}.

\begin{lemma}\label{sufficient condition}
Let $(\mathscr{C},E,\mathscr{D})$ be a triple consisting of $k$-linear additive categories $\mathscr{C}$ and $\mathscr{D}$, and a $k$-bilinear functor $E\colon\mathscr{C}^\mathrm{op}\times\mathscr{D}\to\mod k$. Assume that we have the following.
\begin{enumerate}
\item[$\bullet$] A correspondence $F$ from objects in $\mathscr{C}$ to objects in $\mathscr{D}$.
\item[$\bullet$] A $k$-linear map $\eta_A\colon E(A,FA)\to k$ for any $A\in\mathscr{C}$ such that the compositions
\begin{eqnarray}\label{pairing}
&\mathscr{C}(A,B)\times E(B,FA)\to E(A,FA)\xrightarrow{\eta_A}k,&\\ \label{pairing 2}
&E(B,FA)\times\mathscr{D}(FA,FB)\to E(B,FB)\xrightarrow{\eta_B}k&
\end{eqnarray}
are non-degenerate bilinear forms for any $A,B\in\mathscr{C}$.
\end{enumerate}
Then we can extend $F$ to a fully faithful functor $F\colon\mathscr{C}\to\mathscr{D}$ such that the pair $(F,\eta)$ is a right ARS duality for $(\mathscr{C},E,\mathscr{D})$, where $\eta_{A,B}(f)(\delta)=\eta_A(\delta f)$.
\end{lemma}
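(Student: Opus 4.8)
The plan is to mimic the classical construction of a Serre functor from a non-degenerate pairing, working one variable at a time. First I would use the non-degeneracy of the pairing \eqref{pairing} to define $F$ on morphisms. Fix $A,B\in\mathscr{C}$. The bilinear map $\mathscr{C}(A,B)\times E(B,FA)\to k$ induces, by non-degeneracy on the right-hand factor (and finite-dimensionality of $E(B,FA)$), an isomorphism of $k$-vector spaces $\mathscr{C}(A,B)\xrightarrow{\ \sim\ }\Dd E(B,FA)$; call it $\eta_{A,B}$. This is the desired binatural isomorphism, once we know $F$ is a functor. To define $F$ on a morphism $\phi\colon A\to B$, observe that we want $F\phi\colon FA\to FB$ to satisfy the adjunction-type identity $\eta_A(\psi^{\ast}\xi)=\eta_B((F\phi)_{\ast}\,\psi^{\ast}\cdots)$ — more precisely, $F\phi$ should be the unique element of $\mathscr{D}(FA,FB)$ such that, for all $C$ and all $\xi\in E(B,FA)$, the two ways of pairing agree. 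Concretely, using non-degeneracy of \eqref{pairing 2}, the linear functional $\xi\mapsto\eta_A(\phi^{\ast}\xi)$ on $E(B,FA)$ is represented by a unique morphism in $\mathscr{D}(FA,FB)$; declare that to be $F\phi$.

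Next I would check that $F$ so defined is $k$-linear and functorial: linearity in $\phi$ is immediate from bilinearity of the pairings, and $F(\mathrm{id}_A)=\mathrm{id}_{FA}$ and $F(\phi'\phi)=F(\phi')F(\phi)$ follow by the usual uniqueness argument — both sides represent the same linear functional, so they coincide. Here one uses the compatibility of $E$ with composition in both variables (the $k$-linearity hypotheses on $\mathscr{C}$, $\mathscr{D}$, $E$, which in the intended application come from the $k$-linear extriangulated structure). Then I would verify that $\eta_{A,B}$ is natural in both $A$ and $B$; naturality in $B$ is essentially the defining property of $F\phi$, and naturality in $A$ is its transpose. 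Finally, full faithfulness of $F$: faithfulness follows because if $F\phi=0$ then $\eta_A(\phi^{\ast}\xi)=0$ for all $\xi$, hence by non-degeneracy of \eqref{pairing} (in the $\mathscr{C}(A,B)$ factor, after composing appropriately) $\phi=0$; fullness follows by a dimension count, since $\eta_{A,B}$ is already an isomorphism $\mathscr{C}(A,B)\simeq\Dd E(B,FA)$ and, for $\psi\in\mathscr{D}(FA,FB)$, the functional $\xi\mapsto\eta_B(\psi_{\ast}\xi)$ on $E(B,FA)$ is $\eta_{A,B}$ of some (unique) morphism, which one checks maps to $\psi$ under $F$.

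The main obstacle is organizing the variance bookkeeping so that $F$ genuinely becomes a functor and $\eta$ genuinely becomes binatural, rather than just a family of vector-space isomorphisms: one must show that the morphism $F\phi$ extracted from pairing \eqref{pairing} via pairing \eqref{pairing 2} does not depend on auxiliary choices and is compatible with composition. This is where the non-degeneracy of \emph{both} pairings \eqref{pairing} and \eqref{pairing 2} is essential — \eqref{pairing} pins down $\eta_{A,B}$ and faithfulness, while \eqref{pairing 2} is what lets us even define $F$ on morphisms and then forces functoriality by uniqueness. I expect the verification of $F(\phi'\phi)=F(\phi')F(\phi)$ and of naturality in the first variable to be the most delicate points, but each reduces to the statement that two morphisms in $\mathscr{D}$ inducing the same linear functional on the relevant $E$-group must be equal, which is exactly non-degeneracy of \eqref{pairing 2}. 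I will not grind through these routine identifications here.
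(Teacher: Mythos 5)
Your proposal is correct and follows essentially the same route as the paper: define $F$ on a morphism $a\colon A\to B$ as the unique $a'\in\mathscr{D}(FA,FB)$ with $\eta_A(a^{\ast}\gamma)=\eta_B(a'_{\ast}\gamma)$ for all $\gamma\in E(B,FA)$ (non-degeneracy of \eqref{pairing 2}), deduce functoriality and fullness from the uniqueness/surjectivity of this correspondence, obtain $\eta_{A,B}$ from non-degeneracy of \eqref{pairing}, and verify binaturality by direct computation. The verifications you defer are exactly the routine identities the paper writes out, so there is no gap.
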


\begin{proof}
Similarly as for $\mathbb{E}$, we use the notation
$E(c,d)(\gamma)=d\gamma c$
for any $\gamma\in E(C,D)$, $c\in\mathscr{C}(C^{\prime},C)$ and $d\in\mathscr{D}(D,D^{\prime})$.
Fix $A,B\in\mathscr{C}$. Since \eqref{pairing 2} is non-degenerate and $\dim_kE(B,FA)<\infty$, the induced map $\mathscr{D}(FA,FB)\to\Dd E(B,FA)$, $a\mapsto\eta_B(a\circ-)$ is an isomorphism. Thus for any $a\in\mathscr{C}(A,B)$, there exists a unique $a^{\prime}\in\mathscr{D}(FA,FB)$ such that
\begin{equation}\label{define F}
\eta_A(\gamma a)=\eta_B(a^{\prime}\gamma)\ \mbox{ for any }\ \gamma\in E(B,FA).\end{equation}
Writing $F(a):=a^{\prime}$, we have a map
\[F\colon\mathscr{C}(A,B)\to\mathscr{D}(FA,FB).\]
This is clearly a morphism of abelian groups. It is clear from definition that $F(1_A)=1_{FA}$ holds.

To prove that $F$ is a functor, fix $a\in\mathscr{C}(A,B)$ and $b\in\mathscr{C}(B,C)$. For any $\gamma\in E(C,FA)$, using \eqref{define F} three times, we have
\[ \eta_C(F(ba)\gamma)=\eta_A(\gamma(ba))=\eta_B(F(a)\gamma b)=\eta_C((F(b)F(a))\gamma).  \]
Since \eqref{pairing 2} is non-degenerate and $E(B,FA)$ is finite dimensional over $k$, this implies $F(ba)=F(b)F(a)$. Thus $F$ is a functor.

Moreover, for any $a^{\prime}\in\mathscr{D}(FA,FB)$, there exists unique $a\in\mathscr{C}(A,B)$ satisfying \eqref{define F}. Thus the functor $F\colon\mathscr{C}\to\mathscr{D}$ is fully faithful.

The non-degenerate bilinear form \eqref{pairing} gives an isomorphism
\[\eta_{A,B}\colon\mathscr{C}(A,B)\to \Dd E(B,FA)\]
for any $A,B\in\mathscr{C}$. To show that $\eta$ is binatural, fix $a\in\mathscr{C}(A^{\prime},A)$ and $b\in\mathscr{C}(B,B^{\prime})$, and consider the diagram
\[\xymatrix{
\mathscr{C}(A,B)\ar[rr]^{\eta_{A,B}}\ar[d]_{b\circ-\circ a}&&\Dd E(B,FA)\ar[d]^{\Dd E(b,F(a))}\\
\mathscr{C}(A^{\prime},B^{\prime})\ar[rr]^{\eta_{A^{\prime},B^{\prime}}}&&\Dd E(B^{\prime},FA^{\prime}).\\
}\]
This is commutative since, for any $f\in\mathscr{C}(A,B)$ and $\gamma\in E(B^{\prime},FA^{\prime})$, we have
\begin{eqnarray*}
\big(\Dd E(b,F(a))\eta_{A,B}\big)(f)(\gamma)&=&\eta_{A,B}(f)(F(a)\gamma b)%
\ =\ \eta_A((F(a)\gamma b)f)\\
&=&\eta_{A^{\prime}}(((\gamma b)f)a)\ =\ \eta_{A^{\prime},B^{\prime}}(b f a)(\gamma).
\end{eqnarray*}
Thus $\eta$ is binatural.
\end{proof}

Now we are ready to prove Proposition~\ref{right theorem}.

\begin{proof}[Proof of Proposition~\ref{right theorem}]
(2)$\Rightarrow$(1) is immediate from Proposition~\ref{local theorem}.

(1)$\Rightarrow$(2) For an indecomposable non-projective object $A$, by appealing to Proposition~\ref{local theorem}, we fix some object $FA$ such that $\underline{\mathscr{C}}(A,-)\simeq\Dd\mathbb{E}(-,FA)$ and we denote by $\delta_A\in\mathbb{E}(A,FA)$ an almost split extension.
Take any linear form $\eta_A\colon\mathbb{E}(A,FA)\to k$ satisfying $\eta_A(\delta_A)\neq0$. It follows from Lemma~\ref{LemARSeq3} that the bilinear forms
\begin{eqnarray*}
&\underline{\mathscr{C}}(A,-)\times\mathbb{E}(-,FA)\to\mathbb{E}(A,FA)\xrightarrow{\eta_A}k,&\\
&\mathbb{E}(A,-)\times\overline{\mathscr{C}}(-,FA)\to\mathbb{E}(A,FA)\xrightarrow{\eta_A}k&
\end{eqnarray*}
are non-degenerate. By the Krull--Schmidt property, we can extend this to any object in $\underline{\mathscr{C}}$. Applying Lemma~\ref{sufficient condition} to $(\mathscr{C},E,\mathscr{D}):=(\underline{\mathscr{C}},\mathbb{E},\overline{\mathscr{C}})$, we have a right ARS duality $(F,\eta)$ such that  $F\colon\underline{\mathscr{C}}\to\overline{\mathscr{C}}$ is fully faithful.
\end{proof}

Finally we prove Theorem~\ref{main theorem}.

\begin{proof}[Proof of Theorem~\ref{main theorem}]
(2)$\Rightarrow$(1) Assume that $\mathscr{C}$ has an ARS duality $(\tau,\eta)$.
Since this is a right ARS duality, $\mathscr{C}$ has right almost split extensions by Proposition~\ref{right theorem}.
By Lemma~\ref{construct left}, $\mathscr{C}$ has a left ARS duality. Therefore it has left almost split extensions by the dual of Proposition~\ref{right theorem}.

(1)$\Rightarrow$(2) By Proposition~\ref{right theorem}, $\mathscr{C}$ has a right ARS duality $(\tau,\eta)$ and $\tau\colon\underline{\mathscr{C}}\to\overline{\mathscr{C}}$ is fully faithful.
It remains to show that $\tau$ is dense. This follows from our assumption that $\mathscr{C}$ has left almost split extensions since $\tau$ sends $C$ to $A$ for each almost split extension $\delta\in\mathbb{E}(C,A)$.
\end{proof}

\subsection{Auslander--Reiten quivers}

As in the classical cases, we introduce the Auslander--Reiten quivers of extriangulated categories.
For a Krull--Schmidt category $\mathscr{C}$, we use the notation
\[({\rm rad}^i\mathscr{C}/{\rm rad}^{i+1}\mathscr{C})(A,B):=\frac{{\rm rad}^i\mathscr{C}(A,B)}{{\rm rad}^{i+1}\mathscr{C}(A,B)}.\]

\begin{definition}\label{define AR quiver}
\begin{enumerate}[\rm(1)]
\item A \emph{valued quiver} is a triple $Q=(Q_0,d,d')$ consisting of a set $Q_0$ and maps $d,d':Q_0\times Q_0\to\mathbb{Z}_{\ge0}$. It is called \emph{locally finite} if $\underset{Y\in Q_0}{\sum}d_{YX}<\infty$ and $\underset{Y\in Q_0}\sum d'_{XY}<\infty$ hold for each $X\in Q_0$. It is called \emph{symmetrizable} if there exists a map $c:Q_0\to \mathbb{Z}_{>0}$ such that $c_Xd_{XY}=d'_{XY}c_Y$ holds for each $X,Y\in Q_0$. In this case,  $c$ is called a \emph{symmetrizer}.

We often visualize $Q$ by regarding elements of $Q_0$ as vertices, and drawing a valued arrow $X\xrightarrow{(d_{XY},d^{\prime}_{XY})}Y$ for each $(X,Y)\in Q_0\times Q_0$ satisfying $d_{XY}+d'_{XY}\neq0$.
\item Let $\mathscr{C}$ be a Krull--Schmidt category.
For $X,Y\in\ind\mathscr{C}$, let
\begin{eqnarray*}
D_X:=(\mathscr{C}/{\rm rad}\mathscr{C})(X,X),&&{\rm Irr}(X,Y):=({\rm rad}\mathscr{C}/{\rm rad}^2\mathscr{C})(X,Y),\\
d_{XY}=\dim{\rm Irr}(X,Y)_{D_X},&&d^{\prime}_{XY}=\dim_{D_Y}{\rm Irr}(X,Y).
\end{eqnarray*}
The valued quiver $(\ind\mathscr{C},d,d')$ is called the \emph{Auslander--Reiten quiver} $\mathrm{AR}(\mathscr{C})$ of $\mathscr{C}$.
\end{enumerate}
\end{definition}

It is classical that $\mathrm{AR}(\mathscr{C})$ describes the terms of the sink and source morphisms.

\begin{proposition}\label{AR describe sink}
Let $\mathscr{C}$ be a Krull--Schmidt category, and $X\in\ind\mathscr{C}$.
\begin{enumerate}[\rm(1)]
\item If $a\in\mathscr{C}(Y,X)$ is a sink morphism, then it gives an isomorphism $a\circ-\colon(\mathscr{C}/{\rm rad}\mathscr{C})(-,Y)\simeq({\rm rad}\mathscr{C}/{\rm rad}^2\mathscr{C})(-,X)$.
Thus $Y\simeq\bigoplus_{W\in\ind\mathscr{C}}W^{\oplus d_{WX}}$ holds, and $d_{WX}<\infty$ for all $W\in\ind\mathscr{C}$.
\item If $b\in\mathscr{C}(X,Z)$ is a source morphism, then it gives an isomorphism $-\circ b\colon(\mathscr{C}/{\rm rad}\mathscr{C})(Z,-)\simeq({\rm rad}\mathscr{C}/{\rm rad}^2\mathscr{C})(X,-)$.
Thus $Z\simeq\bigoplus_{W\in\ind\mathscr{C}}W^{\oplus d^{\prime}_{XW}}$ holds, and $d^{\prime}_{XW}<\infty$ for all $W\in\ind\mathscr{C}$.
\end{enumerate}
\end{proposition}

\begin{proof}
Although this is well-known, we include a complete proof of (1) for convenience of the reader. We start with proving the first statement.
Since $a$ is right almost split, $a\circ-\colon\mathscr{C}(-,Y)\to{\rm rad}\mathscr{C}(-,X)$ is an epimorphism.
Thus it induces an epimorphism $a\circ-\colon(\mathscr{C}/{\rm rad}\mathscr{C})(-,Y)\to({\rm rad}\mathscr{C}/{\rm rad}^2\mathscr{C})(-,X)$.
To prove that this is a monomorphism, assume that $f\in\mathscr{C}(Z,Y)$ satisfies $af\in{\rm rad}^2\mathscr{C}(Z,X)$. Write $af=hg$ for $g\in{\rm rad}\mathscr{C}(Z,W)$ and $h\in{\rm rad}\mathscr{C}(W,X)$.
Take $h^{\prime}\in\mathscr{C}(W,Y)$ such that $h=ah^{\prime}$.
Since $a(f-h^{\prime}g)=af-hg=0$ holds and $a$ is right minimal, $f-h^{\prime}g$ belongs to ${\rm rad}\mathscr{C}$.
Thus $f=(f-h^{\prime}g)+h^{\prime}g$ also belongs to ${\rm rad}\mathscr{C}$, as desired.

By the first statement, for any $W\in\ind\mathscr{C}$, we have
\[\dim(\mathscr{C}/{\rm rad}\mathscr{C})(W,Y)_{D_W}=\dim({\rm rad}\mathscr{C}/{\rm rad}^2\mathscr{C})(W,X)_{D_W}=d_{WX}.\]
Thus the multiplicity of $W$ in $Y$ is $d_{WX}$, as desired.
\end{proof}

We immediately obtain the following basic properties of $\mathrm{AR}(\mathscr{C})$.

\begin{lemma}\label{AR quiver basic}
Let $\mathscr{C}$ be a Krull--Schmidt category.
\begin{enumerate}[\rm(1)]
\item $\mathrm{AR}(\mathscr{C})$ is locally finite if $\mathscr{C}$ has sink morphisms and source morphisms.
\item $\mathrm{AR}(\mathscr{C})$ is symmetrizable if $\mathscr{C}$ is $k$-linear over a field $k$ and $\mathscr{C}/{\rm rad}\mathscr{C}$ is Hom-finite. A symmetrizer is given by $c_X:=\dim_kD_X$ for each $X\in\ind\mathscr{C}$.
\end{enumerate}
\end{lemma}

\begin{proof}
(1) is immediate from Proposition \ref{AR describe sink}. (2) follows from
\[c_Xd_{XY}=\dim_kD_X\cdot\dim{\rm Irr}(X,Y)_{D_X}
=\dim_k{\rm Irr}(X,Y)=\dim_{D_Y}{\rm Irr}(X,Y)\cdot\dim_kD_Y=d'_{XY}c_Y.\qedhere\]
\end{proof}

The Auslander--Reiten quivers of extriangulated categories have the following structure.

\begin{definition}\label{define valued translation quiver}
\begin{enumerate}[\rm(1)]
\item A \emph{$\tau$-quiver} (=\emph{valued translation quiver}) is a quadruple $Q=(Q_0,d,d',\tau)$ consisting of the following data.
\begin{enumerate}[$\bullet$]
\item $(Q_0,d,d')$ is a valued quiver.
\item $\tau\colon Q_0\setminus Q_0^p\to Q_0\setminus Q_0^i$ is a bijection for some subsets $Q_0^p$ and $Q_0^i$ of $Q_0$. We visualize $\tau$ by drawing a dashed arrow $\xymatrix@C2em{X\ar@{-->}[r]&\tau X}$ for each $X\in Q_0\setminus Q_0^p$.
\item For $X\in Q_0\setminus Q_0^p$ and $Y\in Q_0$, $d_{YX}=d'_{\tau X,Y}$ holds.
\end{enumerate}
It is called \emph{locally finite} if $(Q,d,d')$ is locally finite. It is called \emph{symmetrizable} if $(Q,d,d')$ is symmetrizable and has a symmetrizer $c:Q_0\to\mathbb{Z}_{>0}$ satisfying $c_X=c_{\tau X}$ for each $X\in Q_0\setminus Q_0^p$.
It is called \emph{stable} if $Q_0^p=\emptyset=Q_0^i$.
\item Let $\mathscr{C}$ be a Krull--Schmidt extriangulated category with almost split extensions (Definition \ref{have ASE}). The \emph{Auslander--Reiten quiver} $\mathrm{AR}_{\rm ET}(\mathscr{C})=(Q_0,d,d',\tau)$ of $\mathscr{C}$ is defined as follows.
\begin{enumerate}[$\bullet$]
\item $(Q_0,d,d'):=\mathrm{AR}(\mathscr{C})$ (Definition \ref{define AR quiver}).
\item $Q_0^p:=\ind\mathrm{Proj}_{\mathbb{E}}\mathscr{C}$, $Q_0^i:=\ind\mathrm{Inj}_{\mathbb{E}}\mathscr{C}$.
\item $\tau C:=A$ if there exists an almost split extension in $\mathbb{E}(C,A)$ with $A,C\in Q_0$.
\end{enumerate}
\end{enumerate}
\end{definition}

Then $\mathrm{AR}_{\rm ET}(\mathscr{C})$ is a $\tau$-quiver by the following observation.

\begin{proposition}\label{AR quiver is translation}
Let $\mathscr{C}$ be a Krull--Schmidt extriangulated category with almost split extensions.
\begin{enumerate}[\rm(1)]
\item $\mathrm{AR}_{\rm ET}(\mathscr{C})$ is a $\tau$-quiver.
\item $\mathrm{AR}_{\rm ET}(\mathscr{C})$ is locally finite if $\mathscr{C}$ has sink morphisms and source morphisms.
\item $\mathrm{AR}_{\rm ET}(\mathscr{C})$ is symmetrizable if $\mathscr{C}$ is $k$-linear over a field $k$ and $\mathscr{C}/{\rm rad}\mathscr{C}$ is Hom-finite. A symmetrizer is given by $c_X:=\dim_kD_X$ for each $X\in\ind\mathscr{C}$.
\end{enumerate}
\end{proposition}

\begin{proof}
(1) $\tau$ is a well-defined bijection $Q_0\setminus Q_0^p\to Q_0\setminus Q_0^i$ by Proposition \ref{PropARSeq3} and our assumption that $\mathscr{C}$ has almost split extensions.
Let $\delta\in\mathbb{E}(C,A)$ be an almost split extension with $\mathfrak{s}(\delta)=[A\overset{x}{\longrightarrow}B\overset{y}{\longrightarrow}C]$. Then $x$ is a source morphism and $y$ is a sink morphism by Theorem \ref{PropASEquiv}. Thus for each $Y\in Q_0$, both $d_{YC}$ and $d'_{AY}$ give the multiplicity of $Y$ in $B$ by Proposition \ref{AR describe sink}, and hence $d_{YC}=d'_{AY}$ holds.

(2)(3) Immediate from Lemma \ref{AR quiver basic} and an isomorphism $D_X\simeq D_{\tau X}$ of $k$-algebras given in Proposition \ref{PropARSeq3}(3).
\end{proof}

\section{Stable module theory for extriangulated categories}\label{section_ARst}

\subsection{Definitions and results}

We develop the stable module theory for extriangulated categories following a series of works \cite{AR3} by Auslander--Reiten.

Throughout, let $(\mathscr{C},\mathbb{E},\mathfrak{s})$ be an extriangulated category. We denote by $\underline{\mathscr{C}}$ the stable category and by $\overline{\mathscr{C}}$ the costable category (see Definition~\ref{DefCoStable}). Notice that we do \emph{not} assume that $\mathscr{C}$ is Krull--Schmidt and/or $k$-linear for some field $k$.

We recall basic notions for functor categories.
For an additive category $\mathscr{D}$, a \emph{$\mathscr{D}$-module} is a contravariant additive functor from $\mathscr{D}$ to $\mathit{Ab}$. A \emph{morphism} between $\mathscr{D}$-modules is a natural transformation.
We denote by $\Mod\mathscr{D}$ the category of $\mathscr{D}$-modules, which forms an abelian category.
A $\mathscr{D}$-module $F$ is \emph{finitely presented} if there exists an exact sequence
\[\mathscr{D}(-,B)\to\mathscr{D}(-,A)\to F\to0\]
for some $A,B\in\mathscr{D}$. We denote by $\mod\mathscr{D}$ the category of finitely presented $\mathscr{D}$-modules. It is well-known (see \cite{A}) that the following conditions are equivalent.
\begin{enumerate}[$\bullet$]
\item $\mod\mathscr{D}$ forms an abelian category.
\item $\mod\mathscr{D}$ is closed under kernels.
\item $\mathscr{D}$ has weak kernels.
\end{enumerate}
In this case, we denote by $\proj\mathscr{D}$ (resp.\ $\inj\mathscr{D}$) the full subcategory of $\mod\mathscr{D}$ of projective (resp.\  injective) objects.

The following fundamental result generalizes the classical result due to Auslander--Reiten \cite{AR3} for the case $\mathscr{C}=\mod\mathscr{D}$ where $\mathscr{D}$ is a dualizing $k$-variety, as defined below. We call an additive functor $F:\mathscr{C}\to\mathscr{D}$ an \emph{equivalence up to direct summands} if it is fully faithful, and for each $D\in\mathscr{D}$, there exists $C\in\mathscr{C}$ such that $D$ is a direct summand of $FC$.

\begin{theorem}\label{fundamental theorem}
Let $\mathscr{C}$ be an extriangulated category with enough projectives and injectives.
\begin{enumerate}[\rm(1)]
\item $\mod\underline{\mathscr{C}}$ is an abelian category with enough projectives $\proj\underline{\mathscr{C}}=\mathrm{add}\{\underline{\mathscr{C}}(-,A)\mid A\in\mathscr{C}\}$ and enough injectives $\inj\underline{\mathscr{C}}=\mathrm{add}\{\mathbb{E}(-,A)\mid A\in\mathscr{C}\}$.
We have equivalences $\underline{\mathscr{C}}\to\proj\underline{\mathscr{C}}$ given by $A\mapsto\underline{\mathscr{C}}(-,A)$ and $\overline{\mathscr{C}}\to\inj\underline{\mathscr{C}}$ given by $A\mapsto\mathbb{E}(-,A)$ up to direct summands.
\item $\mod\overline{\mathscr{C}}^\mathrm{op}$ is an abelian category with enough projectives $\proj\overline{\mathscr{C}}^\mathrm{op}=\mathrm{add}\{\overline{\mathscr{C}}(A,-)\mid A\in\mathscr{C}\}$ and enough injectives $\inj\overline{\mathscr{C}}^\mathrm{op}=\mathrm{add}\{\mathbb{E}(A,-)\mid A\in\mathscr{C}\}$.
We have equivalences $\overline{\mathscr{C}}^\mathrm{op}\to\proj\overline{\mathscr{C}}^\mathrm{op}$ given by $A\mapsto\overline{\mathscr{C}}(A,-)$ and $\underline{\mathscr{C}}^\mathrm{op}\to\inj\overline{\mathscr{C}}^\mathrm{op}$ given by $A\mapsto\mathbb{E}(A,-)$ up to direct summands.
\end{enumerate}
\end{theorem}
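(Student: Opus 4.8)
The plan is to prove part (a) in detail and obtain (b) by applying (a) to the opposite extriangulated category $(\mathscr{C}^{\mathrm{op}},\mathbb{E}^{\mathrm{op}},\mathfrak{s}^{\mathrm{op}})$, noting that $\underline{\mathscr{C}^{\mathrm{op}}}=\overline{\mathscr{C}}^{\mathrm{op}}$ and that ``enough projectives and injectives'' is a self-dual hypothesis. So the real content is (a), which splits into three tasks: showing $\mod\underline{\mathscr{C}}$ is abelian, identifying its projectives and the equivalence $\underline{\mathscr{C}}\simeq\proj\underline{\mathscr{C}}$ up to summands, and identifying its injectives via $A\mapsto\mathbb{E}(-,A)$.

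First I would show $\underline{\mathscr{C}}$ has weak kernels, which by the cited Auslander criterion gives that $\mod\underline{\mathscr{C}}$ is abelian with enough projectives. Given $\underline{a}\colon A\to B$ in $\underline{\mathscr{C}}$, I would use that $\mathscr{C}$ has enough projectives: take a deflation $P\to B$ with $P$ projective, form the induced $\mathfrak{s}$-triangle $K\overset{k}{\to}P'\to B\dashrightarrow$ obtained by pulling back along $a$ (or directly take an $\mathfrak{s}$-triangle $K\to E\to A\dashrightarrow$ realizing a suitable extension), and argue via Fact~\ref{FactExact} that $\underline{\mathscr{C}}(-,K)\to\underline{\mathscr{C}}(-,E)\to\underline{\mathscr{C}}(-,A)$ (or the relevant variant) is exact at the middle, i.e.\ that the map into the kernel-candidate is a weak kernel in $\underline{\mathscr{C}}$; the point is that the connecting term $\mathbb{E}(-,K)$ can be killed after precomposing with a projective cover, so the sequence of stable Hom-functors is exact. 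The identification $\proj\underline{\mathscr{C}}=\mathrm{add}\{\underline{\mathscr{C}}(-,A)\}$ and the equivalence $\underline{\mathscr{C}}\to\proj\underline{\mathscr{C}}$ (up to summands) is then the standard Yoneda argument: representable functors are projective, every object of $\mod\underline{\mathscr{C}}$ is a cokernel of a map of representables, and $\underline{\mathscr{C}}$ is idempotent-complete only up to summands so one passes to $\mathrm{add}$.

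For the injectives, the key is to show that the functor $\mathbb{E}(-,A)\colon\underline{\mathscr{C}}^{\mathrm{op}}\to\mathit{Ab}$ is finitely presented and injective in $\mod\underline{\mathscr{C}}$, and that these exhaust the injectives. Finite presentation follows from Fact~\ref{FactExact}(1): choosing a deflation $P\to A'$ with $A=A'$ modulo injectives — more precisely, using enough injectives to embed $A$ into an injective $I$ via an inflation $A\to I\to C\dashrightarrow$, the long exact sequence gives $\underline{\mathscr{C}}(-,I)\to\underline{\mathscr{C}}(-,C)\to\mathbb{E}(-,A)\to\mathbb{E}(-,I)=0$, so $\mathbb{E}(-,A)$ is presented by representables. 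Injectivity in $\mod\underline{\mathscr{C}}$: by the defect/injective-object theory for functor categories, $\mathbb{E}(-,A)$ is injective iff $\mathrm{Ext}^1_{\mod\underline{\mathscr{C}}}(F,\mathbb{E}(-,A))=0$ for all finitely presented $F$; writing $F$ as a cokernel of $\underline{\mathscr{C}}(-,X)\to\underline{\mathscr{C}}(-,Y)$, one computes $\mathrm{Ext}^1$ via the cokernel of $\mathbb{E}(Y,A)\to\mathbb{E}(X,A)$ and shows it vanishes using (ET4$^{\mathrm{op}}$) and Fact~\ref{FactExact}(3) — this is where the extriangulated axioms beyond (ET1)--(ET3) genuinely enter. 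Conversely, an injective object $J$ admits an injective copresentation $0\to J\to\mathbb{E}(-,A_0)\to\mathbb{E}(-,A_1)$; realizing the map $A_0\to A_1$ by an $\mathfrak{s}$-triangle and using that $\mathbb{E}(-,-)$ is half-exact identifies $J$ with $\mathbb{E}(-,A)$ for the kernel term $A$, up to summands. The equivalence $\overline{\mathscr{C}}\to\inj\underline{\mathscr{C}}$, $A\mapsto\mathbb{E}(-,A)$, then follows because $\mathbb{E}(X,A)=0$ for all $X$ iff $A\in\mathcal{I}$, i.e.\ $A\simeq 0$ in $\overline{\mathscr{C}}$, so the functor is faithful on $\overline{\mathscr{C}}$, and fullness is the statement that every natural transformation $\mathbb{E}(-,A)\to\mathbb{E}(-,B)$ is induced by a morphism $A\to B$ in $\overline{\mathscr{C}}$ — again extracted from a presentation and the exact sequences of Fact~\ref{FactExact}.

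The main obstacle I anticipate is the injectivity of $\mathbb{E}(-,A)$ in $\mod\underline{\mathscr{C}}$ together with the fullness part of the equivalence $\overline{\mathscr{C}}\simeq\inj\underline{\mathscr{C}}$: unlike the projective side, which is pure Yoneda, this requires carefully combining (ET4) and (ET4$^{\mathrm{op}}$) with the six-term-type exact sequences to control $\mathrm{Ext}^1$ in the functor category, and one must be careful about the ``up to direct summands'' caveat since $\underline{\mathscr{C}}$ and $\overline{\mathscr{C}}$ need not be idempotent-complete even when $\mathscr{C}$ is. The $k$-linear and Krull--Schmidt hypotheses are explicitly dropped here, so every argument must be made over $\mathit{Ab}$ and without decompositions; I would keep track of this by phrasing everything in terms of (weak) kernels, deflations/inflations, and the functorial exact sequences, rather than minimal (co)presentations.
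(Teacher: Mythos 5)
Your proposal is correct and follows essentially the same route as the paper: weak kernels in $\underline{\mathscr{C}}$ obtained from enough projectives and the stable long exact sequences, Yoneda for the projectives, and for the injectives the finite presentation $\underline{\mathscr{C}}(-,I)\to\underline{\mathscr{C}}(-,C)\to\mathbb{E}(-,A)\to 0$, vanishing of $\mathrm{Ext}^1(F,\mathbb{E}(-,A))$ computed from a projective presentation, the embedding $0\to F\to\mathbb{E}(-,C)$ giving enough injectives, and the isomorphism $\overline{\mathscr{C}}(X,Y)\simeq(\mod\underline{\mathscr{C}})(\mathbb{E}(-,X),\mathbb{E}(-,Y))$ for the equivalence, with (b) by duality. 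The only slip is cosmetic: the exactness needed for injectivity of $\mathbb{E}(-,A)$ is the contravariant three-term sequence $\mathbb{E}(A,X)\to\mathbb{E}(B,X)\to\mathbb{E}(C,X)$ coming from {\rm (ET4)} and Fact~\ref{FactExact}(2), not {\rm (ET4)$^{\mathrm{op}}$} and Fact~\ref{FactExact}(3).
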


The following is an immediate consequence.
\begin{proposition}\label{finiteness}
Let $\mathscr{C}$ be a $k$-linear extriangulated category with enough projectives and injectives.
Then $\mathscr{C}$ is Ext-finite if and only if $\overline{\mathscr{C}}$ is Hom-finite if and only if $\underline{\mathscr{C}}$ is Hom-finite.
\end{proposition}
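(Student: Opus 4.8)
The plan is to use Theorem~\ref{fundamental theorem} to identify the $k$-dimensions of $\mathbb{E}(A,B)$, $\underline{\mathscr{C}}(A,B)$ and $\overline{\mathscr{C}}(A,B)$ with dimensions of Hom-spaces in the functor categories $\mod\underline{\mathscr{C}}$ and $\mod\overline{\mathscr{C}}^\mathrm{op}$, and then to propagate finiteness along projective resolutions. First I would observe that, since $\mathscr{C}$ is $k$-linear, so are $\underline{\mathscr{C}}$ and $\overline{\mathscr{C}}$, hence so are the abelian categories $\mod\underline{\mathscr{C}}$ and $\mod\overline{\mathscr{C}}^\mathrm{op}$, and all the equivalences and natural isomorphisms in Theorem~\ref{fundamental theorem} are $k$-linear. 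In particular, for $A,B\in\mathscr{C}$,
\[
\underline{\mathscr{C}}(A,B)\simeq\mathrm{Hom}_{\underline{\mathscr{C}}}(\underline{\mathscr{C}}(-,A),\underline{\mathscr{C}}(-,B)),\quad
\mathbb{E}(A,B)\simeq\mathrm{Hom}_{\underline{\mathscr{C}}}(\underline{\mathscr{C}}(-,A),\mathbb{E}(-,B))
\]
as $k$-vector spaces, the first by the Yoneda lemma and the second by Theorem~\ref{fundamental theorem}(a) (the injective object $\mathbb{E}(-,B)$ together with the equivalence $\overline{\mathscr{C}}\to\inj\underline{\mathscr{C}}$). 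Dually one gets $\overline{\mathscr{C}}(A,B)\simeq\mathrm{Hom}_{\overline{\mathscr{C}}^\mathrm{op}}(\overline{\mathscr{C}}(B,-),\overline{\mathscr{C}}(A,-))$ and $\mathbb{E}(A,B)\simeq\mathrm{Hom}_{\overline{\mathscr{C}}^\mathrm{op}}(\overline{\mathscr{C}}(B,-),\mathbb{E}(A,-))$.

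Next I would prove the implication ``$\mathscr{C}$ Ext-finite $\Rightarrow$ $\underline{\mathscr{C}}$ Hom-finite''. Fix $A,B\in\mathscr{C}$. Since $\mathscr{C}$ has enough projectives, there is an $\mathfrak{s}$-triangle $K\to P\to A\dashrightarrow$ with $P$ projective, inducing an exact sequence of $\underline{\mathscr{C}}$-modules $\underline{\mathscr{C}}(-,P)\to\underline{\mathscr{C}}(-,A)\to\mathbb{E}(-,K')\to 0$ where I use Fact~\ref{FactExact}(1) (the long exact sequence) together with Proposition~\ref{first properties}(b) ($\underline{\mathscr{C}}(-,P)=0$... more precisely the connecting map identifies the cokernel with a subfunctor of $\mathbb{E}(-,K)$); in fact the standard presentation is $\underline{\mathscr{C}}(-,K)\to\underline{\mathscr{C}}(-,P)\to\underline{\mathscr{C}}(-,A)\to 0$ coming from $P\to A$ being a deflation. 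Applying $\mathrm{Hom}_{\underline{\mathscr{C}}}(-,\underline{\mathscr{C}}(-,B))$ and $\mathrm{Hom}_{\underline{\mathscr{C}}}(-,\mathbb{E}(-,B))$ and using the displayed identifications, finiteness of $\mathbb{E}(K,B)$ and $\mathbb{E}(P,B)$ (resp.\ of $\underline{\mathscr{C}}(P,B)$, which embeds into $\mathscr{C}(P,B)$, and of $\mathbb{E}(P,B)$, $\mathbb{E}(K,B)$) forces $\underline{\mathscr{C}}(A,B)$ and $\mathbb{E}(A,B)$ to be finite-dimensional. Here the one subtle point is that $\mathscr{C}(A,B)$ itself need not be finite-dimensional a priori, but $\underline{\mathscr{C}}(A,B)$ sits in an exact sequence $\underline{\mathscr{C}}(A,B)\hookrightarrow$(something)$\to\mathbb{E}$-terms, so only Ext-finiteness of $\mathscr{C}$ is needed. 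The dual argument, using $\mathscr{C}$ having enough injectives and Theorem~\ref{fundamental theorem}(b), gives ``$\mathscr{C}$ Ext-finite $\Rightarrow$ $\overline{\mathscr{C}}$ Hom-finite''.

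For the converses, assume $\underline{\mathscr{C}}$ is Hom-finite. Fix $A,B\in\mathscr{C}$ and take an $\mathfrak{s}$-triangle $A\to I\to A'\dashrightarrow$ with $I$ injective (using enough injectives). By Fact~\ref{FactExact}(1), $\mathbb{E}(B,A)$ is a quotient of $\mathscr{C}(B,A')/(\text{image})$, but more cleanly: the deflation $I\to A'$ gives a presentation $\underline{\mathscr{C}}(-,I)\to\underline{\mathscr{C}}(-,A')\to\mathbb{E}(-,A)\to 0$ of the injective $\underline{\mathscr{C}}$-module $\mathbb{E}(-,A)$ (Theorem~\ref{fundamental theorem}(a)), whence $\mathbb{E}(B,A)\simeq\mathrm{Hom}_{\underline{\mathscr{C}}}(\underline{\mathscr{C}}(-,B),\mathbb{E}(-,A))$ is a subquotient of $\mathrm{Hom}_{\underline{\mathscr{C}}}(\underline{\mathscr{C}}(-,B),\underline{\mathscr{C}}(-,A'))\simeq\underline{\mathscr{C}}(B,A')$, hence finite-dimensional. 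The argument ``$\overline{\mathscr{C}}$ Hom-finite $\Rightarrow$ $\mathscr{C}$ Ext-finite'' is dual, using enough projectives and Theorem~\ref{fundamental theorem}(b). Chaining the four implications yields the three-way equivalence. The main obstacle I anticipate is bookkeeping the precise form of the projective presentations of $\underline{\mathscr{C}}(-,A)$, $\mathbb{E}(-,A)$, $\overline{\mathscr{C}}(A,-)$ and $\mathbb{E}(A,-)$ extracted from Theorem~\ref{fundamental theorem} and Fact~\ref{FactExact}, and making sure one only ever needs $\dim_k\mathbb{E}(-,-)<\infty$ (never $\dim_k\mathscr{C}(-,-)<\infty$) as input; everything else is a routine dévissage along a two-term projective resolution in an abelian functor category.
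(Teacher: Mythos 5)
Your strategy is, in substance, the paper's: Theorem~\ref{fundamental theorem}(a) packages exactly the two facts you need, namely that every $\mathbb{E}(-,A)$ admits an epimorphism from some representable $\underline{\mathscr{C}}(-,A^{\prime})$ and every $\underline{\mathscr{C}}(-,A)$ admits a monomorphism into some $\mathbb{E}(-,K)$; evaluating these at an arbitrary object $B$ (which is all your Yoneda/$\mathrm{Hom}$-applications in the functor category amount to) yields both implications at once, and the paper's proof is precisely this, stated in two lines. Your treatment of the direction ``$\underline{\mathscr{C}}$ Hom-finite $\Rightarrow$ Ext-finite'', via the epimorphism $\underline{\mathscr{C}}(-,A^{\prime})\to\mathbb{E}(-,A)$ coming from a conflation $A\to I\to A^{\prime}$ with $I$ injective, is correct as written.

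However, two of your auxiliary assertions are false and should be deleted, even though the surrounding argument survives without them. First, the ``standard presentation'' $\underline{\mathscr{C}}(-,K)\to\underline{\mathscr{C}}(-,P)\to\underline{\mathscr{C}}(-,A)\to0$ is not exact: a deflation $P\to A$ does \emph{not} induce an epimorphism on stable Hom-functors. Indeed $\underline{\mathscr{C}}(-,P)=0$ because $P$ is projective (Proposition~\ref{first properties}(b)), so exactness here would force $\underline{\mathscr{C}}(-,A)=0$ for every $A$. What Proposition~\ref{long exact sequence} actually gives --- and what you correctly note in your parenthetical --- is that the connecting map $\underline{\mathscr{C}}(-,A)\to\mathbb{E}(-,K)$ is a monomorphism (its kernel is the image of $\underline{\mathscr{C}}(-,P)=0$); evaluating at $B$ embeds $\underline{\mathscr{C}}(B,A)$ into $\mathbb{E}(B,K)$, which is the only fact needed for this direction. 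Second, $\underline{\mathscr{C}}(P,B)$ does not embed into $\mathscr{C}(P,B)$ (it is a quotient), and in any case $\dim_k\mathscr{C}(P,B)<\infty$ is not available under the Ext-finiteness hypothesis, so if your d\'evissage genuinely required this term the proof would break; fortunately it does not, since $\underline{\mathscr{C}}(P,B)=0$. Once these two claims are excised, what remains is exactly the paper's argument.
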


\begin{proof}
We only prove the first equivalence.
For any $A\in\mathscr{C}$, there is an epimorphism $\underline{\mathscr{C}}(-,B)\to\mathbb{E}(-,A)$ and a monomorphism $\underline{\mathscr{C}}(-,A)\to\mathbb{E}(-,C)$ for some $B,C\in\mathscr{C}$ by Theorem~\ref{fundamental theorem}(1).
Thus $\mathscr{C}$ is Ext-finite if and only if $\underline{\mathscr{C}}$ is Hom-finite.
\end{proof}

Let $\mathscr{D}$ be a $k$-linear additive category. Then any $\mathscr{D}$-module $F$ can be regarded as a contravariant $k$-linear functor $F\colon\mathscr{D}\to\mathrm{Mod} k$. We define a $\mathscr{D}^\mathrm{op}$-module $\Dd F$ as the composition $\mathscr{D}\xrightarrow{F}\mathrm{Mod} k\xrightarrow{\Dd}\mathrm{Mod} k$.

\begin{definition}\cite{AR3}
We call $\mathscr{D}$ a \emph{dualizing $k$-variety} if the following conditions hold.
\begin{enumerate}[$\bullet$]
\item $\mathscr{D}$ is $k$-linear, Hom-finite and Krull--Schmidt.
\item For any $F\in\mod\mathscr{D}$, we have $\Dd F\in\mod\mathscr{D}^\mathrm{op}$.
\item For any $G\in\mod\mathscr{D}^\mathrm{op}$, we have $\Dd G\in\mod\mathscr{D}$.
\end{enumerate}
In this case, we have an equivalence $\Dd\colon\mod\mathscr{D}\simeq\mod\mathscr{D}^\mathrm{op}$.
\end{definition}

Now we have the following main result in Auslander--Reiten theory for extriangulated categories.
\begin{theorem}\label{main theorem 2}
Let $\mathscr{C}$ be an Ext-finite extriangulated category with enough projectives and enough injectives such that $\overline{\mathscr{C}}$ and $\underline{\mathscr{C}}$ are idempotent complete. 
Then the following conditions are equivalent.
\begin{enumerate}[\rm(1)]
\item $\mathscr{C}$ has an Auslander--Reiten--Serre duality.
\item $\overline{\mathscr{C}}$ is a dualizing $k$-variety.
\item $\underline{\mathscr{C}}$ is a dualizing $k$-variety.
\end{enumerate}
\end{theorem}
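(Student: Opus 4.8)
The strategy is to deduce Theorem~\ref{main theorem 2} from the two big results already in hand: Theorem~\ref{main theorem} (which says ARS duality is equivalent to existence of almost split extensions, in the Krull--Schmidt $k$-linear Ext-finite setting) and Theorem~\ref{fundamental theorem} (which identifies $\mod\underline{\mathscr{C}}$ with an abelian category having $\proj\underline{\mathscr{C}}\simeq\underline{\mathscr{C}}$ and $\inj\underline{\mathscr{C}}\simeq\overline{\mathscr{C}}$ up to summands, and the $\mathscr{C}^{\mathrm{op}}$-version). The conditions (b) and (c) are manifestly symmetric under $\mathscr{C}\leftrightarrow\mathscr{C}^{\mathrm{op}}$, and (a) is self-dual as well, so it suffices to prove (a)$\Leftrightarrow$(b); then (a)$\Leftrightarrow$(c) follows by dualizing. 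First I would record that under the standing hypotheses $\mathscr{C}$ is Hom-finite on both $\underline{\mathscr{C}}$ and $\overline{\mathscr{C}}$ by Proposition~\ref{finiteness}, so all the functor categories in sight are Hom-finite; idempotent completeness of $\underline{\mathscr{C}}$ and $\overline{\mathscr{C}}$ is assumed.

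\textbf{(a)$\Rightarrow$(b).} Given an ARS duality $(\tau,\eta)$, I want to show $\overline{\mathscr{C}}$ is a dualizing $k$-variety, i.e.\ that $\Dd$ sends $\mod\overline{\mathscr{C}}$ to $\mod\overline{\mathscr{C}}^{\mathrm{op}}$ and back. By Theorem~\ref{fundamental theorem}(a), $\inj\underline{\mathscr{C}}=\mathrm{add}\{\mathbb{E}(-,A)\}\simeq\overline{\mathscr{C}}$, and by (b) of that theorem $\inj\overline{\mathscr{C}}^{\mathrm{op}}=\mathrm{add}\{\mathbb{E}(A,-)\}\simeq\underline{\mathscr{C}}^{\mathrm{op}}$. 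The duality $\eta_{A,B}\colon\underline{\mathscr{C}}(A,B)\simeq\Dd\mathbb{E}(B,\tau A)$ says exactly that, after transporting along $\tau$, the representable $\underline{\mathscr{C}}$-module $\underline{\mathscr{C}}(-,B)$ is $k$-dual to the representable $\overline{\mathscr{C}}^{\mathrm{op}}$-module $\mathbb{E}(\tau^{-1}(-),\text{?})$; more precisely $\eta$ exhibits a duality $\underline{\mathscr{C}}\simeq(\overline{\mathscr{C}})^{\mathrm{op}}$ at the level of objects compatible with $\mathbb{E}$. The cleanest route: use $\eta$ to build an exact duality $\Dd\colon\mod\underline{\mathscr{C}}\xrightarrow{\sim}\mod\overline{\mathscr{C}}$ (not $\mod\underline{\mathscr{C}}^{\mathrm{op}}$!) — indeed $\Dd$ swaps projectives and injectives, $\Dd\underline{\mathscr{C}}(-,A)\simeq\mathbb{E}(-,\tau A)$ is injective in $\mod\underline{\mathscr{C}}$, and conversely $\Dd$ of an injective $\mathbb{E}(-,A)$ is projective since $\tau$ is an equivalence so $\mathbb{E}(-,A)\simeq\mathbb{E}(-,\tau\tau^{-1}A)\simeq\Dd\underline{\mathscr{C}}(-,\tau^{-1}A)$. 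A finitely presented module has a presentation by representables; applying the exact functor $\Dd$ to that presentation and using that $\Dd$ of a representable is finitely copresented, then re-resolving, shows $\Dd(\mod\underline{\mathscr{C}})\subseteq\mod\overline{\mathscr{C}}$ — here one needs $\overline{\mathscr{C}}$ (equivalently $\mod\overline{\mathscr{C}}^{\mathrm{op}}$, $\mod\underline{\mathscr{C}}$, $\mod\overline{\mathscr{C}}$) to have weak kernels/cokernels, which follows from Theorem~\ref{fundamental theorem} giving abelian functor categories. Composing with the equivalence $\mod\underline{\mathscr{C}}\simeq\mod\overline{\mathscr{C}}^{\mathrm{op}}$ coming from $\tau$ (apply $(-)\circ\tau^{-1}$), one gets $\Dd\colon\mod\overline{\mathscr{C}}^{\mathrm{op}}\xrightarrow{\sim}\mod\overline{\mathscr{C}}$ and its inverse; together with Hom-finiteness and idempotent completeness this is precisely the statement that $\overline{\mathscr{C}}$ is a dualizing $k$-variety.

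\textbf{(b)$\Rightarrow$(a).} Conversely, suppose $\overline{\mathscr{C}}$ is a dualizing $k$-variety, so $\Dd\colon\mod\overline{\mathscr{C}}\xrightarrow{\sim}\mod\overline{\mathscr{C}}^{\mathrm{op}}$ is an exact duality exchanging projectives and injectives. Under the identifications of Theorem~\ref{fundamental theorem}, $\proj\overline{\mathscr{C}}^{\mathrm{op}}\simeq\overline{\mathscr{C}}^{\mathrm{op}}$ via $A\mapsto\overline{\mathscr{C}}(A,-)$ and $\inj\overline{\mathscr{C}}^{\mathrm{op}}\simeq\underline{\mathscr{C}}^{\mathrm{op}}$ via $A\mapsto\mathbb{E}(A,-)$; dually $\proj\overline{\mathscr{C}}\simeq\overline{\mathscr{C}}$ and $\inj\overline{\mathscr{C}}\simeq\cdots$. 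The duality $\Dd$ restricted to projectives of $\mod\overline{\mathscr{C}}^{\mathrm{op}}$ lands in injectives of $\mod\overline{\mathscr{C}}$, giving a functor $\overline{\mathscr{C}}^{\mathrm{op}}\to(\inj\overline{\mathscr{C}})^{?}$; unwinding, I want a functor $\tau\colon\underline{\mathscr{C}}\to\overline{\mathscr{C}}$ with $\underline{\mathscr{C}}(A,B)\simeq\Dd\mathbb{E}(B,\tau A)$ binaturally. The natural candidate is: $\tau$ is the composite $\underline{\mathscr{C}}\xrightarrow{\sim}\proj\underline{\mathscr{C}}\hookrightarrow\mod\underline{\mathscr{C}}$, then move to $\mod\overline{\mathscr{C}}^{\mathrm{op}}$... — rather, the slick approach is to observe that a dualizing $k$-variety structure on $\overline{\mathscr{C}}$ yields, by the same resolution argument as above run backwards, an exact duality $\mod\underline{\mathscr{C}}\simeq\mod\overline{\mathscr{C}}$ sending $\mathrm{add}\,\underline{\mathscr{C}}(-,A)$ to $\mathrm{add}\,\mathbb{E}(-,A)$ and vice versa, hence (since both representables and $\mathbb{E}(-,-)$ generate and $\tau$ must match them up) an equivalence $\tau\colon\underline{\mathscr{C}}\xrightarrow{\sim}\overline{\mathscr{C}}$ together with the required binatural $\eta$. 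Then $\mathscr{C}$ has an ARS duality by definition. Alternatively, and more in the spirit of the paper, one checks that (b) forces Krull--Schmidtness is \emph{not} available, so one cannot invoke Theorem~\ref{main theorem} directly — this is why the functor-category detour is needed rather than the route through almost split extensions.

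\textbf{Main obstacle.} The delicate point is the passage between $\mathscr{C}$-side and functor-category-side, specifically checking that the $k$-dual of a finitely presented functor is again finitely presented — equivalently, that applying $\Dd$ to a two-term representable presentation yields something with a finite representable copresentation that can be spliced into a finite representable presentation. This needs both halves of Theorem~\ref{fundamental theorem} (enough projectives \emph{and} enough injectives in $\mod\underline{\mathscr{C}}$, identified with representables and $\mathbb{E}$'s respectively) plus the weak-(co)kernel bookkeeping, and it is exactly here that the hypothesis ``$\mathscr{C}$ has enough projectives and enough injectives'' and ``$\underline{\mathscr{C}},\overline{\mathscr{C}}$ idempotent complete'' get used in full. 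The rest is diagram-chasing the binaturality of $\eta$ through these equivalences, which is routine but must be done with care about ${}^{\mathrm{op}}$'s.
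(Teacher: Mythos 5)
Your proposal is correct and follows essentially the same route as the paper: one implication is obtained by applying $\Dd$ to a representable presentation of a finitely presented module, identifying the duals of representables with the injectives $\mathbb{E}(-,?)$ via Theorem~\ref{fundamental theorem}, and using closure of the relevant module category under kernels, while the converse composes the Yoneda embedding, the duality $\Dd$, and the embedding $A\mapsto\mathbb{E}(-,A)$ (where idempotent completeness enters) to manufacture the equivalence $\tau$, the remaining equivalence of (b) and (c) following by duality exactly as in the paper. The only caveat is a handful of variance slips that you yourself flag as needing care (e.g.\ the $k$-dual of $\mod\underline{\mathscr{C}}$ lands in $\mod\overline{\mathscr{C}}^{\mathrm{op}}$ rather than $\mod\overline{\mathscr{C}}$ after transporting along $\tau$, and the correct identification is $\Dd\underline{\mathscr{C}}(A,-)\simeq\mathbb{E}(-,\tau A)$, not $\Dd\underline{\mathscr{C}}(-,A)$); these do not affect the substance of the argument.
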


As an immediate consequence, we obtain the following result.

\begin{corollary}
Let $\mathscr{C}$ be an Ext-finite, Krull--Schmidt, extriangulated category with enough projectives and injectives. 
Then the following conditions are equivalent.
\begin{enumerate}[\rm(1)]
\item $\mathscr{C}$ has almost split extensions.
\item $\mathscr{C}$ has an Auslander--Reiten--Serre duality.
\item $\overline{\mathscr{C}}$ is a dualizing $k$-variety.
\item $\underline{\mathscr{C}}$ is a dualizing $k$-variety.
\end{enumerate}
\end{corollary}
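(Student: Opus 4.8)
The plan is to obtain this Corollary by concatenating Theorem~\ref{main theorem} and Theorem~\ref{main theorem 2}; the only real work is to check that the hypotheses gathered here are exactly what is needed to invoke both of them. First I would note that Ext-finiteness is only meaningful for $k$-linear categories, so $\mathscr{C}$ is tacitly assumed $k$-linear; together with the Krull--Schmidt hypothesis this is precisely the setting of Theorem~\ref{main theorem}. On the other hand, Ext-finiteness, the existence of enough projectives and enough injectives, and the idempotent completeness of $\overline{\mathscr{C}}$ and $\underline{\mathscr{C}}$ are exactly the hypotheses of Theorem~\ref{main theorem 2}. So both theorems apply verbatim.

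Granting this, the deduction is immediate: Theorem~\ref{main theorem} gives (a)$\Leftrightarrow$(b), while Theorem~\ref{main theorem 2} gives (b)$\Leftrightarrow$(c)$\Leftrightarrow$(d); chaining these equivalences shows that (a), (b), (c), (d) are all equivalent, which is the assertion.

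It is worth recording one remark that makes part of the hypotheses redundant. If $\mathscr{C}$ is Krull--Schmidt, then so are $\underline{\mathscr{C}}$ and $\overline{\mathscr{C}}$: for $X\in\mathscr{C}$ one has $\mathrm{End}_{\underline{\mathscr{C}}}(X)=\mathrm{End}_{\mathscr{C}}(X)/\mathcal{P}(X,X)$, a quotient of a semiperfect ring and hence again semiperfect; decomposing $X$ into indecomposables in $\mathscr{C}$ and discarding those summands that become projective, $X$ becomes in $\underline{\mathscr{C}}$ a finite direct sum of objects with local endomorphism rings, and dually for $\overline{\mathscr{C}}$. A standard argument then shows $\underline{\mathscr{C}}$ and $\overline{\mathscr{C}}$ are idempotent complete, so the idempotent-completeness assumption in the statement could be dropped. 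I would include this observation, but only briefly, since it is peripheral to the Corollary itself.

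I do not anticipate any genuine obstacle: all the content lives in Theorems~\ref{main theorem} and~\ref{main theorem 2}, and the Corollary is purely a matter of matching up hypotheses and transitively combining two four-term equivalences. The single point requiring care is the bookkeeping in the first paragraph — making sure the union of the stated hypotheses really does cover the hypotheses of both cited theorems, in particular that $k$-linearity is implicit in Ext-finiteness.
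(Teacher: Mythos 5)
Your proposal is correct and is exactly the paper's (implicit) argument: the corollary is stated there as an immediate consequence of Theorem~\ref{main theorem} giving (a)$\Leftrightarrow$(b) and Theorem~\ref{main theorem 2} giving (b)$\Leftrightarrow$(c)$\Leftrightarrow$(d), with $k$-linearity tacitly assumed. Your side remark that the Krull--Schmidt hypothesis already forces $\underline{\mathscr{C}}$ and $\overline{\mathscr{C}}$ to be idempotent complete is also correct, though not needed.
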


\subsection{Proof of Theorems \ref{fundamental theorem} and \ref{main theorem 2}}
We frequently use the following observation.

\begin{lemma}\label{make deflation}
Let $\mathscr{C}$ be an extriangulated category with enough projectives.
For any morphism $f\colon A\to B$ in $\mathscr{C}$, there exist an $\mathfrak{s}$-deflation $f^{\prime}\colon A^{\prime}\to B$ in $\mathscr{C}$ and an isomorphism $\underline{h}\colon A\simeq A^{\prime}$ in $\underline{\mathscr{C}}$ satisfying $\underline{f}=\underline{f^{\prime} h}$.
\end{lemma}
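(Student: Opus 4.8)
The plan is to use the ``enough projectives'' hypothesis to replace an arbitrary morphism by an $\mathfrak{s}$-deflation without changing it in the stable category. First I would take an $\mathfrak{s}$-triangle $P\overset{p}{\longrightarrow}A\overset{\pi}{\dashrightarrow}$ witnessing enough projectives, or more precisely realize the zero extension together with a projective cover-type epimorphism: since $\mathscr{C}$ has enough projectives, there is an $\mathfrak{s}$-deflation $p\colon P\to A$ with $P$ projective. Then I would form the morphism $\begin{bsm} f & 0 \end{bsm}$... actually the cleaner move is to consider $\begin{smallmatrix}f\\ \phantom{f}\end{smallmatrix}$ paired with $p$, i.e.\ look at the map $(f\ \ fp)$; let me instead describe the standard trick.

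\begin{proof}[Proof sketch]
Since $\mathscr{C}$ has enough projectives, choose an $\mathfrak{s}$-deflation $p\colon P\to B$ with $P$ projective. Set $A^{\prime}:=A\oplus P$ and $f^{\prime}:=[f\ \ p]\colon A\oplus P\to B$. Then $f^{\prime}$ is an $\mathfrak{s}$-deflation: it factors as $A\oplus P\xrightarrow{[0\ \ 1]}P\xrightarrow{p}B$ composed appropriately — more precisely, $f^{\prime}=p\circ\begin{bsm}?\esm}$ is not literally a composite, so instead one argues directly that $[f\ \ p]$ is a deflation because $p$ is (a matrix whose restriction to a summand is a deflation is a deflation; this is a standard consequence of {\rm (ET4)} / the obscure axiom, see \cite{NP}). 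Now let $\underline{h}\colon A\to A^{\prime}$ be the morphism $\begin{smallmatrix}1_A\\ 0\esm}$, i.e.\ the split inclusion $\iota_A\colon A\to A\oplus P$. Since $P$ is projective, $\underline{\mathscr{C}}(-,P)=0$ by Proposition~\ref{first properties}(b), so $P\simeq 0$ in $\underline{\mathscr{C}}$; hence $\iota_A$ becomes an isomorphism $\underline{h}\colon A\xrightarrow{\ \sim\ }A^{\prime}$ in $\underline{\mathscr{C}}$, with inverse induced by the projection $p_A\colon A\oplus P\to A$. Finally $f^{\prime}\circ\iota_A=[f\ \ p]\circ\begin{smallmatrix}1_A\\ 0\esm}=f$ in $\mathscr{C}$, so a fortiori $\underline{f^{\prime}h}=\underline{f}$ in $\underline{\mathscr{C}}$, as required.
\end{proof}

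The only point that needs care is the claim that $f^{\prime}=[f\ \ p]$ is an $\mathfrak{s}$-deflation; the rest is formal. I expect this to be the main (though still routine) obstacle: one must know that the class of $\mathfrak{s}$-deflations is closed under ``adding a split summand to the source'', equivalently that if $p\colon P\to B$ is an $\mathfrak{s}$-deflation then so is $[g\ \ p]\colon X\oplus P\to B$ for any $g\colon X\to B$. This follows from the additivity of $\mathfrak{s}$ together with {\rm (ET3)}/{\rm (ET3)$^{\mathrm{op}}$}: writing $\mathfrak{s}(\delta)=[K\to P\xrightarrow{p}B]$ for the relevant $\mathfrak{s}$-triangle, one checks that $[g\ \ p]$ fits into the $\mathfrak{s}$-triangle obtained by realizing the same $\delta$ after the obvious change of the middle term, using that the split sequence $X\xrightarrow{1}X\to 0$ realizes the split extension and Definition~\ref{DefAdditiveRealization}(ii). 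Alternatively one invokes directly the relevant statement from \cite{NP} on stability of deflations; in any case no genuinely new argument is needed.
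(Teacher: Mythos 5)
Your proposal is correct and follows essentially the same route as the paper: choose an $\mathfrak{s}$-deflation $p\colon P\to B$ with $P$ projective, set $f^{\prime}=[f\ \ p]\colon A\oplus P\to B$, and observe that the split inclusion $A\to A\oplus P$ becomes an isomorphism in $\underline{\mathscr{C}}$ since $P\simeq 0$ there. The one step you flag as needing care, that $[f\ \ p]$ is again an $\mathfrak{s}$-deflation, is exactly the point the paper disposes of by citing the dual of \cite[Corollary 3.16]{NP}, so no further argument is required.
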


\begin{proof}
Take an $\mathfrak{s}$-deflation $g\colon P\to B$ with a projective object $P\in\mathscr{C}$. Then $f^{\prime}:=[f\ g]\colon A\oplus P\to B$ is an $\mathfrak{s}$-deflation by the dual of \cite[Corollary 3.16]{NP}, and satisfies the desired property.
\end{proof}

Next we show the following property, which is not necessarily true for $\mathscr{C}$ itself.

\begin{proposition}\label{weak kernels}
Let $\mathscr{C}$ be an extriangulated category.
\begin{enumerate}[\rm(1)]
\item If $\mathscr{C}$ has enough projectives, then $\underline{\mathscr{C}}$ has weak kernels and $\mod\underline{\mathscr{C}}$ forms an abelian category.
\item If $\mathscr{C}$ has enough injectives, then $\overline{\mathscr{C}}$ has weak cokernels and $\mod\overline{\mathscr{C}}^\mathrm{op}$ forms an abelian category.
\end{enumerate}
\end{proposition}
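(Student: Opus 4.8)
The plan is to prove part (a) and note that part (b) is dual. The key structural fact is that, by Proposition~\ref{long exact sequence}, the stable category $\underline{\mathscr{C}}$ together with the $\delta$-sequences coming from $\mathfrak{s}$-conflations behaves like a category with enough ``relative projectives,'' and the decisive point is that $\mathfrak{s}$-deflations give rise to weak kernels in $\underline{\mathscr{C}}$. So first I would recall that a skeletally small additive category has weak kernels if and only if $\mod$ of it is abelian (stated in the excerpt, following \cite{A}); thus it suffices to produce weak kernels in $\underline{\mathscr{C}}$.

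Next, given a morphism $\underline{f}\colon A\to B$ in $\underline{\mathscr{C}}$, I would use Lemma~\ref{make deflation} to replace $f$ by an $\mathfrak{s}$-deflation $f'\colon A'\to B$ with $\underline{f}=\underline{f'}$ up to an isomorphism in $\underline{\mathscr{C}}$; weak kernels are invariant under such replacement, so we may assume $f$ itself is an $\mathfrak{s}$-deflation. Completing it to an $\mathfrak{s}$-triangle $K\xrightarrow{k} A\xrightarrow{f} B\overset{\delta}{\dashrightarrow}$ and applying Proposition~\ref{long exact sequence}, we get exactness of $\underline{\mathscr{C}}(-,K)\xrightarrow{\underline{k}\circ-}\underline{\mathscr{C}}(-,A)\xrightarrow{\underline{f}\circ-}\underline{\mathscr{C}}(-,B)$ — precisely the assertion that $\underline{k}\colon K\to A$ is a weak kernel of $\underline{f}$ in $\underline{\mathscr{C}}$. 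This proves (a); the proof of (b) is obtained by dualizing, using enough injectives, $\mathfrak{s}$-inflations, the dual of Lemma~\ref{make deflation}, and the second exact sequence of Proposition~\ref{long exact sequence}, which yields weak cokernels in $\overline{\mathscr{C}}$, equivalently weak kernels in $\overline{\mathscr{C}}^{\mathrm{op}}$, so that $\mod\overline{\mathscr{C}}^{\mathrm{op}}$ is abelian.

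The main obstacle — and really the only nontrivial point — is already packaged into the two lemmas we are allowed to cite: that every morphism in $\mathscr{C}$ becomes an $\mathfrak{s}$-deflation after adding a projective summand (Lemma~\ref{make deflation}), and that $\mathfrak{s}$-conflations induce the six-term exact sequences in the stable category (Proposition~\ref{long exact sequence}). Once these are in hand, the argument is a short formal manipulation. I would take care to state explicitly that weak kernels, unlike kernels, are not required to be unique, so the fact that $f'$ differs from $f$ only by a stable isomorphism and by the extra projective summand $P$ (which is zero in $\underline{\mathscr{C}}$ by Proposition~\ref{first properties}(b)) causes no trouble. I would also remark, as the statement itself hints, that $\mathscr{C}$ itself need not have weak kernels, so passing to the stable category is essential; the mechanism is exactly that projective objects die in $\underline{\mathscr{C}}$, which is what makes Lemma~\ref{make deflation} usable.
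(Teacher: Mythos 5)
Your argument is correct and is essentially the paper's own proof: reduce to the case of an $\mathfrak{s}$-deflation via Lemma~\ref{make deflation}, then read off the weak kernel from the first three terms of the exact sequence in Proposition~\ref{long exact sequence}. The extra remarks you add (invariance of weak kernels under stable isomorphism, the vanishing of the projective summand in $\underline{\mathscr{C}}$) only make explicit what the paper leaves implicit.
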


\begin{proof}
We only prove (1). By Lemma~\ref{make deflation}, any morphism in $\underline{\mathscr{C}}$ can be represented by an $\mathfrak{s}$-deflation. Then the assertion follows from the long exact sequence associated with $\mathfrak{s}$-conflations (Theorem~\ref{long exact sequence}).
\end{proof}

Now, for a given $F\in\mod\underline{\mathscr{C}}$, we construct certain exact sequences.
Since $F$ is finitely presented, there exists an exact sequence
\[\underline{\mathscr{C}}(-,B)\xrightarrow{a\circ-}\underline{\mathscr{C}}(-,A)\to F\to0.\]
Without loss of generality, we can assume that there exists an $\mathfrak{s}$-conflation $C\to B\xrightarrow{a} A$ by Lemma~\ref{make deflation}.
By Theorem~\ref{long exact sequence}, we have exact sequences
\begin{eqnarray}\label{3 terms 1}
&\underline{\mathscr{C}}(-,C)\to\underline{\mathscr{C}}(-,B)\to\underline{\mathscr{C}}(-,A)\to F\to0,&\\ \label{3 terms 2}
&0\to F\to\mathbb{E}(-,C)\to\mathbb{E}(-,B)\to\mathbb{E}(-,A).&
\end{eqnarray}
The first sequence is the first three terms of a projective resolution of $F$. By the next observation, the second one is the first three terms of an injective resolution of $F$.

\begin{proposition}\label{E is injective}
Let $\mathscr{C}$ be an extriangulated category with enough projectives and injectives.
\begin{enumerate}[\rm(1)]
\item For any $X\in\mathscr{C}$, the $\underline{\mathscr{C}}$-module $\mathbb{E}(-,X)$  is finitely presented and injective in $\mod\underline{\mathscr{C}}$.
\item For any $X,Y\in\mathscr{C}$, we have $\overline{\mathscr{C}}(X,Y)\simeq(\mod\underline{\mathscr{C}})(\mathbb{E}(-,X),\mathbb{E}(-,Y))$ given by $a\mapsto (a\circ-)$.
\item For any $X\in\mathscr{C}$, the $\overline{\mathscr{C}}^\mathrm{op}$-module $\mathbb{E}(X,-)$ is finitely presented and injective in $\mod\overline{\mathscr{C}}^\mathrm{op}$.
\item For any $X,Y\in\mathscr{C}$, we have $\underline{\mathscr{C}}(X,Y)\simeq(\mod\overline{\mathscr{C}}^\mathrm{op})(\mathbb{E}(Y,-),\mathbb{E}(X,-))$ given by $a\mapsto (-\circ a)$.
\end{enumerate}
\end{proposition}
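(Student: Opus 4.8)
The plan is to prove (a) and (b) — parts (c) and (d) being dual — by exploiting the two exact sequences \eqref{3 terms 1} and \eqref{3 terms 2} together with the characterization of injective objects in $\mod\underline{\mathscr{C}}$ as those $\underline{\mathscr{C}}$-modules whose contravariant $\mathrm{Ext}^1$ vanishes, or more concretely, via Yoneda-type arguments using Proposition~\ref{long exact sequence}. First I would fix $X\in\mathscr{C}$ and choose an $\mathfrak{s}$-conflation $X\xrightarrow{x}I\xrightarrow{y}Z$ with $I$ injective, which is possible since $\mathscr{C}$ has enough injectives. Applying the first exact sequence of Proposition~\ref{long exact sequence} to this conflation and using that $I$ is injective (so $\mathbb{E}(-,I)=0$), I get an exact sequence
\[\underline{\mathscr{C}}(-,X)\to\underline{\mathscr{C}}(-,I)\to\underline{\mathscr{C}}(-,Z)\to\mathbb{E}(-,X)\to0,\]
which exhibits $\mathbb{E}(-,X)$ as the cokernel of a map between representable (hence finitely presented projective) $\underline{\mathscr{C}}$-modules; thus $\mathbb{E}(-,X)\in\mod\underline{\mathscr{C}}$, and it is the third term of a projective presentation. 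This already proves the finite presentation claim in (a).

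For injectivity in (a), the key idea is to show that $\mathbb{E}(-,X)$ has no nonsplit extension by any finitely presented module, equivalently that every short exact sequence $0\to\mathbb{E}(-,X)\to G\to H\to 0$ in $\mod\underline{\mathscr{C}}$ splits. Using the injective presentation \eqref{3 terms 2} for the module $H$ (every finitely presented module embeds into some $\mathbb{E}(-,C)$ by that sequence) and the Baer-type criterion, it suffices to check that $(\mod\underline{\mathscr{C}})(-,\mathbb{E}(-,X))$ sends monomorphisms between cosyzygy-type modules $\mathbb{E}(-,?)$ to epimorphisms. Concretely, I would reduce to showing that for an $\mathfrak{s}$-conflation $A\xrightarrow{a}B\xrightarrow{b}C$, the induced map $\mathbb{E}(-,B)\to\mathbb{E}(-,C)$ has the right lifting property against $\mathbb{E}(-,X)$; by Yoneda this amounts to computing $(\mod\underline{\mathscr{C}})(\mathbb{E}(-,C),\mathbb{E}(-,X))$, which is exactly what part (b) describes. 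So I would prove (b) first.

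To prove (b), I would construct the map $\overline{\mathscr{C}}(X,Y)\to(\mod\underline{\mathscr{C}})(\mathbb{E}(-,X),\mathbb{E}(-,Y))$ sending $a\mapsto a_{\ast}$; this is well-defined because $a\in\mathcal{I}$ forces $a_{\ast}=\mathbb{E}(-,a)=0$. For injectivity of this map, suppose $a_{\ast}=0$ as a natural transformation; evaluating at objects and using enough injectives to realize elements of $\mathbb{E}(W,X)$ concretely, one shows $\mathbb{E}(-,a)=0$, i.e.\ $a\in\mathcal{I}$, so $a=0$ in $\overline{\mathscr{C}}$. For surjectivity, given $\varphi\colon\mathbb{E}(-,X)\to\mathbb{E}(-,Y)$, I would use the projective presentation of $\mathbb{E}(-,X)$ obtained above from an injective envelope-type conflation $X\xrightarrow{x}I\xrightarrow{y}Z$ with $I$ injective: the surjection $\underline{\mathscr{C}}(-,Z)\to\mathbb{E}(-,X)$ is $y'_{\sharp}$-type, and by projectivity of $\underline{\mathscr{C}}(-,Z)$ the composite lifts to a morphism $\underline{\mathscr{C}}(-,Z)\to\mathbb{E}(-,Y)$, hence by Yoneda to an element of $\mathbb{E}(Z,Y)$; chasing through the relations coming from $\underline{\mathscr{C}}(-,I)\to\underline{\mathscr{C}}(-,Z)$ and using that $I$ is injective, this element descends along $y^{\ast}$ to produce $a\in\mathscr{C}(X,Y)$ with $a_{\ast}=\varphi$. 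Once (b) is established, injectivity of $\mathbb{E}(-,X)$ in (a) follows: any map from a submodule into $\mathbb{E}(-,X)$, presented as in \eqref{3 terms 2}, is a morphism $\mathbb{E}(-,C)\supseteq(\text{submodule})\to\mathbb{E}(-,X)$, and by (b) together with the long exact sequence it extends.

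The main obstacle I anticipate is the surjectivity part of (b): translating a natural transformation $\varphi\colon\mathbb{E}(-,X)\to\mathbb{E}(-,Y)$ back into an honest morphism $X\to Y$ in $\mathscr{C}$ requires carefully using an $\mathfrak{s}$-conflation into an injective object and the exactness statements of Proposition~\ref{long exact sequence}, and keeping track of the ideal $\mathcal{I}$ throughout so that the result is well-defined in $\overline{\mathscr{C}}$ rather than just $\mathscr{C}$. Everything else is a diagram chase of the sort already appearing in the proof of Proposition~\ref{long exact sequence}, combined with standard facts about projective presentations in $\mod\underline{\mathscr{C}}$ from Proposition~\ref{weak kernels}.
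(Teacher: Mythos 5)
Your handling of finite presentation and of part (b) essentially reproduces the paper's argument: both rest on a conflation $X\to I\to Z$ with $I$ injective, the induced projective presentation $\underline{\mathscr{C}}(-,I)\to\underline{\mathscr{C}}(-,Z)\to\mathbb{E}(-,X)\to0$, and the comparison, via Yoneda, with the exact sequence $0=\overline{\mathscr{C}}(I,Y)\to\overline{\mathscr{C}}(X,Y)\to\mathbb{E}(Z,Y)\to\mathbb{E}(I,Y)$ of Proposition~\ref{long exact sequence}. The surjectivity step of (b), which you single out as the main obstacle, does go through exactly as you sketch it: the element $\theta=\varphi_Z(\delta)\in\mathbb{E}(Z,Y)$ is killed by restriction to $I$, hence lies in the image of $\delta^\sharp\colon\overline{\mathscr{C}}(X,Y)\to\mathbb{E}(Z,Y)$, and one concludes because $\underline{\mathscr{C}}(-,Z)\to\mathbb{E}(-,X)$ is an epimorphism.

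The genuine gap is in the injectivity half of (a). Your reduction to extending maps $F'\to\mathbb{E}(-,X)$ along inclusions $F'\hookrightarrow\mathbb{E}(-,C)$ is legitimate, since every $F\in\mod\underline{\mathscr{C}}$ embeds into some $\mathbb{E}(-,C)$ by \eqref{3 terms 2}; but the final step --- ``by (b) together with the long exact sequence it extends'' --- is not a proof. Part (b) only identifies morphisms defined on all of $\mathbb{E}(-,C)$; it says nothing about extending a morphism defined on a proper submodule $F'$, and an arbitrary such $F'$ need not be the image of a map $\mathbb{E}(-,B)\to\mathbb{E}(-,C)$ induced by a conflation, so the ``right lifting property'' you invoke has no visible source. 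Asserting that such extensions exist is essentially asserting the injectivity you are trying to prove. The paper avoids this entirely: for $F\in\mod\underline{\mathscr{C}}$ it takes the projective presentation \eqref{3 terms 1} coming from a conflation $C\to B\to A$, applies $(\mod\underline{\mathscr{C}})(-,\mathbb{E}(-,X))$, and uses Yoneda to identify the resulting complex with $\mathbb{E}(A,X)\to\mathbb{E}(B,X)\to\mathbb{E}(C,X)$, whose exactness (Proposition~\ref{long exact sequence} again) shows that $\mathrm{Ext}^1_{\mod\underline{\mathscr{C}}}(F,\mathbb{E}(-,X))=0$ for every $F$, hence that $\mathbb{E}(-,X)$ is injective. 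You should replace your extension argument by this $\mathrm{Ext}^1$ computation, or supply the missing extension lemma, which would amount to the same calculation.
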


\begin{proof}
We only prove (1) and (2) since (3) and (4) are the dual statements.

Let $X\to I\to X^{\prime}$ be an $\mathfrak{s}$-conflation with an injective object $I\in\mathscr{C}$. By Theorem~\ref{long exact sequence}, we have an exact sequence
\[\underline{\mathscr{C}}(-,I)\to\underline{\mathscr{C}}(-,X^{\prime})\to\mathbb{E}(-,X)\to\mathbb{E}(-,I)=0.\]
Thus $\mathbb{E}(-,X)$ is a finitely presented $\underline{\mathscr{C}}$-module.
Put $\mathscr{M}=\mod\underline{\mathscr{C}}$. Applying $\mathscr{M}(-,\mathbb{E}(-,Y))$ and using Yoneda's Lemma, we have a commutative diagram of exact sequences
\[
\xy
(-47,7)*+{0}="0";
(-21,7)*+{\mathscr{M}(\mathbb{E}(-,X),\mathbb{E}(-,Y))}="2";
(19,7)*+{\mathscr{M}(\underline{\mathscr{C}}(-,X^{\prime}),\mathbb{E}(-,Y))}="4";
(58,7)*+{\mathscr{M}(\underline{\mathscr{C}}(-,I),\mathbb{E}(-,Y))}="6";
(-43,-7)*+{0=\overline{\mathscr{C}}(I,Y)}="10";
(-21,-7)*+{\overline{\mathscr{C}}(X,Y)}="12";
(19,-7)*+{\mathbb{E}(X^{\prime},Y)}="14";
(58,-7)*+{\mathbb{E}(I,Y)}="16";
{\ar "0";"2"};
{\ar "2";"4"};
{\ar "4";"6"};
{\ar "12";"2"};
{\ar@{=} "4";"14"};
{\ar@{=} "6";"16"};
{\ar "10";"12"};
{\ar "12";"14"};
{\ar "14";"16"};
{\ar@{}|{} "2";"14"};
{\ar@{}|{} "4";"16"};
\endxy
\]
where the lower sequence is exact by Theorem~\ref{long exact sequence}. Thus the assertion (2) follows.

For any $F\in\mathscr{M}$, consider the sequence \eqref{3 terms 1}.
Applying $\mathscr{M}(-,\mathbb{E}(-,X))$ and using Yoneda's Lemma, we have a commutative diagram
\[\xymatrix{
\mathscr{M}(\underline{\mathscr{C}}(-,A),\mathbb{E}(-,X))\ar[r]\ar@{=}[d]&\mathscr{M}(\underline{\mathscr{C}}(-,B),\mathbb{E}(-,X))\ar[r]\ar@{=}[d]&\mathscr{M}(\underline{\mathscr{C}}(-,C),\mathbb{E}(-,X))\ar@{=}[d]\\
\mathbb{E}(A,X)\ar[r]&\mathbb{E}(B,X)\ar[r]&\mathbb{E}(C,X)
}\]
where the homology of the upper sequence is $\mathrm{Ext}^1_{\mathscr{M}}(F,\mathbb{E}(-,X))$.
This is zero since the lower sequence is exact by Fact~\ref{FactExact}(2).
\end{proof}

We are ready to prove  Theorem~\ref{fundamental theorem}.

\begin{proof}[Proof of Theorem~\ref{fundamental theorem}]
We only prove (1) since (2) is dual.

The assertions for projectives follow from Yoneda's Lemma.
The assertions for injectives follow from Proposition~\ref{E is injective} and the exact sequence \eqref{3 terms 2}.
\end{proof}

Now we are ready to prove Theorem~\ref{main theorem 2}.

\begin{proof}[Proof of Theorem~\ref{main theorem 2}]
(1)$\Rightarrow$(3): Let $(\tau,\eta)$ be an ARS duality.
Since $\tau$ gives an equivalence $\underline{\mathscr{C}}\simeq\overline{\mathscr{C}}$, both $\mod\underline{\mathscr{C}}$ and $\mod\underline{\mathscr{C}}^\mathrm{op}$ are closed under kernels by Proposition~\ref{weak kernels}.

For $F\in\mod\underline{\mathscr{C}}$, take an exact sequence \eqref{3 terms 1}. Applying $\Dd$, we have the following commutative diagram of exact sequences.
\[\xymatrix{
0\ar[r]&\Dd F\ar[r]&\Dd \underline{\mathscr{C}}(-,A)\ar[r]\ar[d]^\wr&\Dd \underline{\mathscr{C}}(-,B)\ar[d]^\wr\\
&&\mathbb{E}(A,\tau-)\ar[r]&\mathbb{E}(B,\tau-).
}\]
Since the $\overline{\mathscr{C}}^\mathrm{op}$-modules $\mathbb{E}(A,-)$ and $\mathbb{E}(B,-)$ are finitely presented by Theorem~\ref{fundamental theorem}, the $\underline{\mathscr{C}}^\mathrm{op}$-modules $\mathbb{E}(A,\tau-)$ and $\mathbb{E}(B,\tau-)$ are finitely presented.
Since $\mod\underline{\mathscr{C}}^\mathrm{op}$ is closed under kernels as remarked above, the $\underline{\mathscr{C}}^\mathrm{op}$-module $\Dd F$ is finitely presented.

For any $G\in\mod\underline{\mathscr{C}}^\mathrm{op}$, one can show $\Dd G\in\mod\underline{\mathscr{C}}$ by a dual argument. Thus $\underline{\mathscr{C}}$ is a dualizing $k$-variety.

(3)$\Rightarrow$(1): Since $\underline{\mathscr{C}}$ is idempotent complete, we have an equivalence
\begin{equation}\label{eq 1}
\underline{\mathscr{C}}\simeq(\proj\underline{\mathscr{C}}^\mathrm{op})^\mathrm{op}\ \mbox{ given by }\ A\mapsto\underline{\mathscr{C}}(A,-).
\end{equation}
Since $\underline{\mathscr{C}}$ is a dualizing $k$-variety, we have an equivalence
\begin{equation}\label{eq 2}
(\proj\underline{\mathscr{C}}^\mathrm{op})^\mathrm{op}\simeq\inj\underline{\mathscr{C}}\ \mbox{ given by }\ F\mapsto \Dd F.
\end{equation}
Since $\overline{\mathscr{C}}$ is idempotent complete, by Theorem~\ref{fundamental theorem}, we have an equivalence
\begin{equation}\label{eq 3}
\overline{\mathscr{C}}\simeq\inj\underline{\mathscr{C}}\ \mbox{ given by }\ A\mapsto\mathbb{E}(-,A).
\end{equation}
Composing \eqref{eq 1}, \eqref{eq 2} and a quasi-inverse of \eqref{eq 3}, we define an equivalence $\tau\colon\underline{\mathscr{C}}\simeq\overline{\mathscr{C}}$.
By construction, we have an isomorphism
\[\Dd\underline{\mathscr{C}}(A,-)\simeq\mathbb{E}(-,\tau A)\]
of functors for any $A\in\mathscr{C}$. Thus we have an ARS duality.

(1)$\Leftrightarrow$(2): The proof is dual to (1)$\Leftrightarrow$(3).
\end{proof}

\section{Induced almost split extensions}\label{section_ARsubcat}

In this section, we discuss the following three general methods to construct a new extriangulated category from a given one $(\mathscr{C},\mathbb{E},\mathfrak{s})$, where the first and the third were initiated in fundamental works \cite{ASo} and \cite{AS} respectively. 
\begin{enumerate}[$\bullet$]
\item Replace $\mathbb{E}$ by some additive subfunctor (Proposition \ref{PropClosed}).
\item Replace $\mathscr{C}$ by its ideal quotient by projective-injective objects (Corollary \ref{CorIQuot}).
\item Replace $\mathscr{C}$ by some extension-closed subcategory (Definition \ref{RemETrExtClosed}).
\end{enumerate}
We will show that the existence of almost split extensions in $(\mathscr{C},\mathbb{E},\mathfrak{s})$ is inherited by these new extriangulated categories, see Proposition~\ref{PropoRelARExt} for the first, Proposition~\ref{PropQuotARExt} for the second, and Theorem~\ref{CorSubARExt} for the third.


\subsection{Induced extriangulated structures}

Let us recall the definitions and results from \cite{HLN1}. We do not assume that $\mathscr{C}$ is  $k$-linear, nor Krull--Schmidt until Remark~\ref{RemIfInAddition}. Additive subfunctors $\mathbb{F}\subseteq\mathbb{E}$ for extriangulated categories $(\mathscr{C},\mathbb{E},\mathfrak{s})$ have been considered in \cite{ZH}.
\begin{definition}
Let $\mathbb{F}\subseteq\mathbb{E}$ be an additive subfunctor. Define $\mathfrak{s}|_{\mathbb{F}}$ to be the restriction of $\mathfrak{s}$ to $\mathbb{F}$. Namely, it is defined by $\mathfrak{s}|_{\mathbb{F}}(\delta)=\mathfrak{s}(\delta)$ for any $\mathbb{F}$-extension $\delta$.
\end{definition}

\begin{claim}\label{ClaimPreExTr}(cf.\ \cite[Claim~3.9 for $n=1$]{HLN1})
For any additive subfunctor $\mathbb{F}\subseteq\mathbb{E}$, the triplet $(\mathscr{C},\mathbb{F},\mathfrak{s}|_{\mathbb{F}})$ satisfies {\rm (ET1),(ET2),(ET3),(ET3)$^\mathrm{op}$}.
\end{claim}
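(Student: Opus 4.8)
The statement to prove is Claim~\ref{ClaimPreExTr}: for any additive subfunctor $\mathbb{F}\subseteq\mathbb{E}$, the triplet $(\mathscr{C},\mathbb{F},\mathfrak{s}|_{\mathbb{F}})$ satisfies (ET1), (ET2), (ET3), (ET3)$^{\mathrm{op}}$. The plan is to check the four axioms one by one, observing in each case that the relevant data for $\mathbb{F}$ is simply a restriction of the data for $\mathbb{E}$, so that the axioms are inherited. Throughout I will use that a morphism $(a,c)\colon\delta\to\delta'$ of $\mathbb{F}$-extensions is the same thing as a morphism of the underlying $\mathbb{E}$-extensions, because the defining equation $a_\ast\delta=c^\ast\delta'$ is computed via $\mathbb{F}(c,a)=\mathbb{E}(c,a)|_{\mathbb{F}}$, hence holds in $\mathbb{F}$ if and only if it holds in $\mathbb{E}$; this is the point where additivity of the subfunctor is used, since it guarantees $\mathbb{F}\colon\mathscr{C}^{\mathrm{op}}\times\mathscr{C}\to\mathit{Ab}$ is again an additive bifunctor, giving (ET1) immediately.

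First I would treat (ET1): by the definition of an additive subfunctor, $\mathbb{F}(C,A)$ is a subgroup of $\mathbb{E}(C,A)$ and $\mathbb{F}(c,a)$ is the restriction of $\mathbb{E}(c,a)$, so $\mathbb{F}$ is an additive bifunctor. Next, (ET2): I must show $\mathfrak{s}|_{\mathbb{F}}$ is an additive realization of $\mathbb{F}$. It is well-defined as a correspondence since it just restricts the domain of $\mathfrak{s}$. The condition $(\ast)$ of Definition~\ref{DefRealization} for $\mathfrak{s}|_{\mathbb{F}}$ follows from $(\ast)$ for $\mathfrak{s}$: given $\mathbb{F}$-extensions $\delta,\delta'$ and a morphism $(a,c)\colon\delta\to\delta'$ of $\mathbb{F}$-extensions, this is also a morphism of the underlying $\mathbb{E}$-extensions, so $\mathfrak{s}(\delta)$ and $\mathfrak{s}(\delta')$ being realized, there is $b$ making the diagram \eqref{MorphRealize} commute. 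Additivity (Definition~\ref{DefAdditiveRealization}) is inherited likewise: the split $\mathbb{F}$-extension $0\in\mathbb{F}(C,A)$ is the split $\mathbb{E}$-extension (submodule contains $0$), so $\mathfrak{s}|_{\mathbb{F}}(0)=\mathfrak{s}(0)=0$; and for $\delta,\delta'\in\mathbb{F}$ the element $\delta\oplus\delta'$ computed in $\mathbb{F}(C\oplus C',A\oplus A')$ agrees with the one computed in $\mathbb{E}$, because the natural decomposition of $\mathbb{E}(C\oplus C',A\oplus A')$ restricts to the corresponding decomposition of $\mathbb{F}(C\oplus C',A\oplus A')$ (here again additivity of $\mathbb{F}$ is used), whence $\mathfrak{s}|_{\mathbb{F}}(\delta\oplus\delta')=\mathfrak{s}(\delta\oplus\delta')=\mathfrak{s}(\delta)\oplus\mathfrak{s}(\delta')=\mathfrak{s}|_{\mathbb{F}}(\delta)\oplus\mathfrak{s}|_{\mathbb{F}}(\delta')$.

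Finally (ET3) and (ET3)$^{\mathrm{op}}$: given $\mathbb{F}$-extensions $\delta\in\mathbb{F}(C,A)$, $\delta'\in\mathbb{F}(C',A')$ realized by $\mathfrak{s}|_{\mathbb{F}}$ and a commutative square \eqref{SquareForET3}, apply (ET3) for $(\mathscr{C},\mathbb{E},\mathfrak{s})$ to obtain a morphism $(a,c)\colon\delta\to\delta'$ of $\mathbb{E}$-extensions with $c\circ y=y'\circ b$; the equation $a_\ast\delta=c^\ast\delta'$ holds in $\mathbb{E}(C',A')$, but both sides lie in the subgroup $\mathbb{F}(C',A')$ (since $\delta\in\mathbb{F}(C,A)$ forces $a_\ast\delta\in\mathbb{F}(C,A')$ and $\delta'\in\mathbb{F}(C',A')$ forces $c^\ast\delta'\in\mathbb{F}(C',A')$), so $(a,c)$ is a morphism of $\mathbb{F}$-extensions, establishing (ET3). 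The axiom (ET3)$^{\mathrm{op}}$ is formally dual. There is no genuine obstacle here; the only subtlety to flag is that one must consistently note that images under $a_\ast$ and $c^\ast$ of $\mathbb{F}$-extensions remain $\mathbb{F}$-extensions — an immediate consequence of the second bullet in the definition of subfunctor together with additivity — and that "morphism of $\mathbb{F}$-extensions" and "morphism of the underlying $\mathbb{E}$-extensions" coincide. I would simply record these observations once at the start and then invoke them in each axiom. (Note that neither (ET4) nor (ET4)$^{\mathrm{op}}$ is claimed here, and indeed it need not hold for an arbitrary additive subfunctor, which is precisely why the claim is stated only for the first four axioms.)
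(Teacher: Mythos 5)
Your proposal is correct and follows the same route as the paper, which simply states that the claim ``immediately follows from the definitions of these conditions''; you have merely written out the routine verifications (that morphisms of $\mathbb{F}$-extensions coincide with morphisms of the underlying $\mathbb{E}$-extensions, that $a_{\ast}$ and $c^{\ast}$ preserve $\mathbb{F}$, and that $\delta\oplus\delta^{\prime}$ computed in $\mathbb{F}$ agrees with the one computed in $\mathbb{E}$) that the paper leaves implicit. No gaps.
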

\begin{proof}
This immediately follows from the definitions of these conditions.
\end{proof}

Thus we may speak of $\mathfrak{s}|_{\mathbb{F}}$-conflations (resp.\ $\mathfrak{s}|_{\mathbb{F}}$-inflations, $\mathfrak{s}|_{\mathbb{F}}$-deflations) and $\mathfrak{s}|_{\mathbb{F}}$-triangles. The following condition on $\mathbb{F}$ gives a necessary and sufficient condition for $(\mathscr{C},\mathbb{F},\mathfrak{s}|_{\mathbb{F}})$ to be an extriangulated category (Proposition~\ref{PropClosed}).

\begin{definition}[cf.\ \cite{DRSSK,ASo}]\label{DefClosed}
Let $\mathbb{F}\subseteq\mathbb{E}$ be a additive subfunctor.
\begin{enumerate}[\rm(1)]
\item $\mathbb{F}\subseteq\mathbb{E}$ is {\it closed on the right} if $\mathbb{F}(-,A)\overset{x\circ -}{\longrightarrow}\mathbb{F}(-,B)\overset{y\circ -}{\longrightarrow}\mathbb{F}(-,C)$ is exact for any $\mathfrak{s}|_{\mathbb{F}}$-conflation $A\overset{x}{\longrightarrow}B\overset{y}{\longrightarrow}C$.
\item $\mathbb{F}\subseteq\mathbb{E}$ is {\it closed on the left} if $\mathbb{F}(C,-)\overset{-\circ y}{\longrightarrow}\mathbb{F}(B,-)\overset{-\circ x}{\longrightarrow}\mathbb{F}(A,-)$ is exact for any $\mathfrak{s}|_{\mathbb{F}}$-conflation $A\overset{x}{\longrightarrow}B\overset{y}{\longrightarrow}C$.
\end{enumerate}
\end{definition}

The following has been shown in \cite{HLN1} more generally for any {\it $n$-exangulated category}, which recovers the notion of extriangulated category when $n=1$.
\begin{lemma}[{\cite[Lemma~3.15 for $n=1$]{HLN1}}]\label{LemClosed}
For any additive subfunctor $\mathbb{F}\subseteq\mathbb{E}$, the following are equivalent.
\begin{enumerate}[\rm(1)]
\item $\mathbb{F}$ is closed on the right.
\item $\mathbb{F}$ is closed on the left.
\end{enumerate}
Thus in the following argument, we simply say $\mathbb{F}\subseteq\mathbb{E}$ is {\it closed}, if one of them is satisfied.
\end{lemma}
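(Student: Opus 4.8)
The plan is to reduce the equivalence to a single implication by passing to opposite categories, and then to prove that implication by lifting the relevant $\mathbb{F}$-extension to an honest $\mathbb{E}$-extension, pushing it around a realization of the given conflation, and checking it was in $\mathbb{F}$ all along. For the reduction: $(\mathscr{C}^{\mathrm{op}},\mathbb{E}^{\mathrm{op}},\mathfrak{s}^{\mathrm{op}})$ is again extriangulated and $\mathbb{F}^{\mathrm{op}}\subseteq\mathbb{E}^{\mathrm{op}}$ is again an additive subfunctor, and under this duality ``closed on the right'' and ``closed on the left'' interchange (a conflation $A\to B\to C$ becomes $C\to B\to A$, and pushforwards in the second variable become pullbacks in the first); so it is enough to prove that (1) implies (2).

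So I would assume $\mathbb{F}$ is closed on the right, fix an $\mathfrak{s}|_{\mathbb{F}}$-conflation $A\xrightarrow{x}B\xrightarrow{y}C$ realizing $\delta\in\mathbb{F}(C,A)$ and an object $W$, and aim to show exactness of $\mathbb{F}(C,W)\xrightarrow{y^{\ast}}\mathbb{F}(B,W)\xrightarrow{x^{\ast}}\mathbb{F}(A,W)$. The containment $\mathrm{im}(y^{\ast})\subseteq\ker(x^{\ast})$ is free from $y\circ x=0$. For the reverse, I would start with $\epsilon\in\mathbb{F}(B,W)$ such that $x^{\ast}\epsilon=0$, then use Fact~\ref{FactExact}(2) — the one place the full extriangulated structure, via (ET4), is needed — to get some $\epsilon_{0}\in\mathbb{E}(C,W)$ with $y^{\ast}\epsilon_{0}=\epsilon$, a priori not known to lie in $\mathbb{F}$. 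Finally realize $\epsilon$ by an $\mathfrak{s}|_{\mathbb{F}}$-conflation $W\xrightarrow{i}V\xrightarrow{p}B$, using $\epsilon\in\mathbb{F}$.

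The crux is to show $\epsilon_{0}\in\mathbb{F}(C,W)$. I would push $\epsilon_0$ forward along $i$: because $i_{\ast}\epsilon=0$ (the extension of a conflation is killed by pushout along its own inflation, by Fact~\ref{FactExact}(1)), we get $y^{\ast}(i_{\ast}\epsilon_{0})=i_{\ast}(y^{\ast}\epsilon_0)=i_{\ast}\epsilon=0$, so Fact~\ref{FactExact}(1) for $A\xrightarrow{x}B\xrightarrow{y}C$ gives $i_{\ast}\epsilon_{0}=g_{\ast}\delta$ for some $g\in\mathscr{C}(A,V)$, whence $i_{\ast}\epsilon_{0}\in\mathbb{F}(C,V)$ as $\delta\in\mathbb{F}$ and $\mathbb{F}$ is a subfunctor. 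Now the hypothesis, applied to the conflation $W\xrightarrow{i}V\xrightarrow{p}B$ and evaluated at $C$, makes $\mathbb{F}(C,W)\xrightarrow{i_{\ast}}\mathbb{F}(C,V)\xrightarrow{p_{\ast}}\mathbb{F}(C,B)$ exact, and $p_{\ast}(i_{\ast}\epsilon_{0})=0$, so $i_{\ast}\epsilon_{0}=i_{\ast}\epsilon_{1}$ for some $\epsilon_{1}\in\mathbb{F}(C,W)$. Then $\epsilon_{0}-\epsilon_{1}\in\ker(i_{\ast}\colon\mathbb{E}(C,W)\to\mathbb{E}(C,V))$, which by Fact~\ref{FactExact}(1) for $W\xrightarrow{i}V\xrightarrow{p}B$ consists of extensions $h^{\ast}\epsilon$ with $h\in\mathscr{C}(C,B)$, hence lies in $\mathbb{F}(C,W)$; so $\epsilon_{0}=\epsilon_{1}+(\epsilon_{0}-\epsilon_{1})\in\mathbb{F}(C,W)$ with $y^{\ast}\epsilon_{0}=\epsilon$, finishing the implication, and the converse then follows by applying it to $\mathscr{C}^{\mathrm{op}}$.

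The hard part will not be any single idea but the bookkeeping: at the three places where a kernel of some $i_{\ast}$, $x^{\ast}$ or $y^{\ast}$ must be identified with an image of $\delta^{\sharp}$ or $\epsilon_{\sharp}$, one has to invoke the correct one of the two exact sequences in Fact~\ref{FactExact}(1), evaluated at the correct object, and one has to verify that the variance built into the definition of ``closed on the right'' is exactly the one that appears when the hypothesis is applied to the auxiliary conflation $W\xrightarrow{i}V\xrightarrow{p}B$ rather than to $A\xrightarrow{x}B\xrightarrow{y}C$.
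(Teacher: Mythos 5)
Your proof is correct. Note that the paper itself does not prove this lemma; it only cites \cite[Lemma~3.13]{HLN}, so there is no in-text argument to compare against, and what you have written is a legitimate self-contained replacement. The reduction to a single implication via $(\mathscr{C}^{\mathrm{op}},\mathbb{E}^{\mathrm{op}},\mathfrak{s}^{\mathrm{op}})$ is sound because the axioms (ET1)--(ET4)$^{\mathrm{op}}$ are self-dual and the two closedness conditions visibly swap under op. In the main implication, each of the three invocations of Fact~\ref{FactExact}(1) is applied to the right triangle at the right object: $i_{\ast}\epsilon=0$ because $i$ factors through $i$; $\ker\bigl(y^{\ast}\colon\mathbb{E}(C,V)\to\mathbb{E}(B,V)\bigr)=\delta^{\sharp}\mathscr{C}(A,V)$ from the triangle realizing $\delta$; and $\ker\bigl(i_{\ast}\colon\mathbb{E}(C,W)\to\mathbb{E}(C,V)\bigr)=\epsilon_{\sharp}\mathscr{C}(C,B)$ from the triangle realizing $\epsilon$. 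The only place the full extriangulated structure enters is the lift of $\epsilon$ along $y^{\ast}$ via Fact~\ref{FactExact}(2), which is available since Section~\ref{section_ARsubcat} assumes $(\mathscr{C},\mathbb{E},\mathfrak{s})$ extriangulated, and the remaining steps use only that $\mathbb{F}$ is an additive subfunctor (so that $g_{\ast}\delta$, $h^{\ast}\epsilon$ lie in $\mathbb{F}$ and $\mathbb{F}(C,W)$ is a subgroup) together with the hypothesis applied to the auxiliary conflation $W\to V\to B$, which is indeed an $\mathfrak{s}|_{\mathbb{F}}$-conflation because $\epsilon\in\mathbb{F}(B,W)$. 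One stylistic remark: you in fact prove the stronger statement that \emph{every} lift $\epsilon_{0}\in\mathbb{E}(C,W)$ of $\epsilon$ automatically lies in $\mathbb{F}(C,W)$, not merely that some lift can be chosen in $\mathbb{F}$; this is harmless but worth being aware of.
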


\begin{proposition}[{\cite[Proposition~3.16 for $n=1$]{HLN1}}]\label{PropClosed}
For any additive subfunctor $\mathbb{F}\subseteq\mathbb{E}$, the following are equivalent.
\begin{enumerate}[\rm(1)]
\item $\mathfrak{s}|_{\mathbb{F}}$-inflations are closed under composition.
\item $\mathfrak{s}|_{\mathbb{F}}$-deflations are closed under composition.
\item $\mathbb{F}\subseteq\mathbb{E}$ is closed.
\item $(\mathscr{C},\mathbb{F},\mathfrak{s}|_{\mathbb{F}})$ satisfies {\rm (ET4)}.
\item $(\mathscr{C},\mathbb{F},\mathfrak{s}|_{\mathbb{F}})$ satisfies {\rm (ET4)$^\mathrm{op}$}.
\item $(\mathscr{C},\mathbb{F},\mathfrak{s}|_{\mathbb{F}})$ is extriangulated.
\end{enumerate}
\end{proposition}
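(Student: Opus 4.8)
The plan is to run a cyclic chain of implications, exploiting that $(\mathscr{C},\mathbb{F},\mathfrak{s}|_{\mathbb{F}})$ already satisfies {\rm (ET1)--(ET3)$^{\mathrm{op}}$} by Claim~\ref{ClaimPreExTr}, so that (6) is precisely the conjunction of (4) and (5). Since (3) is self-dual by Lemma~\ref{LemClosed}, it suffices to establish $(4)\Rightarrow(1)\Rightarrow(3)\Rightarrow(4)$ together with its dual $(5)\Rightarrow(2)\Rightarrow(3)\Rightarrow(5)$; then $(6)\Rightarrow(4)$ and $(6)\Rightarrow(5)$ are tautological and all six conditions collapse. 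Throughout I would use the long exact sequences of Fact~\ref{FactExact} for the ambient extriangulated category $(\mathscr{C},\mathbb{E},\mathfrak{s})$, as well as {\rm (ET4)} and {\rm (ET4)$^{\mathrm{op}}$} for $\mathbb{E}$.

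The implication $(4)\Rightarrow(1)$ (and dually $(5)\Rightarrow(2)$) is immediate: given $\mathfrak{s}|_{\mathbb{F}}$-inflations $f\colon A\to B$ and $g\colon B\to C$, applying {\rm (ET4)} for $(\mathscr{C},\mathbb{F},\mathfrak{s}|_{\mathbb{F}})$ to the associated $\mathfrak{s}|_{\mathbb{F}}$-triangles produces an object $E$ and an $\mathbb{F}$-extension $\delta''\in\mathbb{F}(E,A)$ realized by $A\xrightarrow{h}C\to E$ with $h=g\circ f$ forced by commutativity of diagram~\eqref{DiagET4}; hence $gf$ is an $\mathfrak{s}|_{\mathbb{F}}$-inflation.

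For $(1)\Rightarrow(3)$ I would verify that $\mathbb{F}$ is closed on the right. Let $A\xrightarrow{x}B\xrightarrow{y}C\overset{\delta}{\dashrightarrow}$ be an $\mathfrak{s}|_{\mathbb{F}}$-conflation with $\delta\in\mathbb{F}(C,A)$, and let $\rho\in\mathbb{F}(Z,B)$ satisfy $y_{\ast}\rho=0$; I seek an $\mathbb{F}$-preimage of $\rho$ under $x_{\ast}$ (that $y_{\ast}x_{\ast}=0$ on $\mathbb{F}$ is clear). Realize $\rho$ by an $\mathfrak{s}|_{\mathbb{F}}$-conflation $B\xrightarrow{p}P\xrightarrow{q}Z$; then $p\circ x$ is a composite of $\mathfrak{s}|_{\mathbb{F}}$-inflations, hence an $\mathfrak{s}|_{\mathbb{F}}$-inflation by (1), so it is realized by some $A\xrightarrow{px}P\to R$ carrying an $\mathbb{F}$-extension. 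Applying {\rm (ET4)} for $\mathbb{E}$ to $\delta$ and $\rho$, the cofiber of $px$ may be taken to be $R$, and the connecting conflation $C\xrightarrow{d}R\xrightarrow{e}Z$ realizes $y_{\ast}\rho=0$, hence splits: $R\simeq C\oplus Z$. The compatibilities $d^{\ast}\delta''=\delta$ and $x_{\ast}\delta''=e^{\ast}\rho$ for the realized $\delta''\in\mathbb{E}(R,A)$, together with $x_{\ast}\delta=0$ (immediate from Fact~\ref{FactExact} since $x=1_B\circ x$), show that the $Z$-component $\delta'_Z\in\mathbb{E}(Z,A)$ of $\delta''$ satisfies $x_{\ast}\delta'_Z=\rho$. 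But $\delta''$ lies in $\mathbb{F}(R,A)$ because $px$ was recognized as an $\mathfrak{s}|_{\mathbb{F}}$-inflation and the cofiber of an $\mathfrak{s}$-inflation together with its realized extension is determined up to isomorphism; since $\mathbb{F}$ is a subfunctor, $\delta'_Z\in\mathbb{F}(Z,A)$, as required. By Lemma~\ref{LemClosed}, $\mathbb{F}$ is closed.

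Finally $(3)\Rightarrow(4)$, which I expect to be the main obstacle. Given $\mathfrak{s}|_{\mathbb{F}}$-conflations realizing $\delta\in\mathbb{F}(D,A)$ (via $A\xrightarrow{f}B\xrightarrow{f'}D$) and $\delta'\in\mathbb{F}(F,B)$ (via $B\xrightarrow{g}C\xrightarrow{g'}F$), feed them into {\rm (ET4)} for $\mathbb{E}$ to obtain $E$, diagram~\eqref{DiagET4}, and $\delta''\in\mathbb{E}(E,A)$ with $D\xrightarrow{d}E\xrightarrow{e}F$ realizing $f'_{\ast}\delta'$, together with $d^{\ast}\delta''=\delta$ and $f_{\ast}\delta''=e^{\ast}\delta'$. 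Since $\mathbb{F}$ is a subfunctor, $f'_{\ast}\delta'\in\mathbb{F}(F,D)$, so the connecting sequence is automatically an $\mathfrak{s}|_{\mathbb{F}}$-conflation; the delicate point is $\delta''\in\mathbb{F}(E,A)$. One argues: $e^{\ast}\delta'\in\mathbb{F}(E,B)$ and $f'_{\ast}(e^{\ast}\delta')=e^{\ast}(f'_{\ast}\delta')=0$ because the deflation $e$ in an $\mathfrak{s}$-triangle annihilates the triangle's $\mathbb{E}$-extension (Fact~\ref{FactExact}); closedness of $\mathbb{F}$, i.e.\ exactness of $\mathbb{F}(E,A)\xrightarrow{f_{\ast}}\mathbb{F}(E,B)\xrightarrow{f'_{\ast}}\mathbb{F}(E,D)$, then provides $\zeta\in\mathbb{F}(E,A)$ with $f_{\ast}\zeta=e^{\ast}\delta'=f_{\ast}\delta''$; hence $\delta''-\zeta\in\ker f_{\ast}=\mathrm{im}\,\delta_{\sharp}$ by Fact~\ref{FactExact}, so $\delta''=\zeta+t^{\ast}\delta$ for some $t\in\mathscr{C}(E,D)$, and $t^{\ast}\delta\in\mathbb{F}(E,A)$ since $\mathbb{F}$ is a subfunctor; therefore $\delta''\in\mathbb{F}(E,A)$. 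Thus {\rm (ET4)} holds for $(\mathscr{C},\mathbb{F},\mathfrak{s}|_{\mathbb{F}})$; the dual argument gives $(3)\Rightarrow(5)$, and with Claim~\ref{ClaimPreExTr} we get $(3)\Rightarrow(6)$, closing all loops. The real difficulty lies exactly in this correction step: the $\delta''$ produced by {\rm (ET4)} for $\mathbb{E}$ need not a priori lie in $\mathbb{F}$, and it must be adjusted into $\mathbb{F}$ using both that $\mathbb{F}$ is closed and that it is stable under $f_{\ast}$ and $t^{\ast}$ — without closedness this breaks down.
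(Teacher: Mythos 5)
Your proof is correct; note that the paper itself offers no argument for Proposition~\ref{PropClosed} --- it is simply quoted from \cite[Proposition 3.14]{HLN} --- so what you have written is in effect a self-contained reconstruction of the $n=1$ case of that result, and it follows the same strategy: funnel everything through the closedness condition (3), which is self-dual by Lemma~\ref{LemClosed}, and use the ambient {\rm (ET4)} for $\mathbb{E}$ together with a ``correction'' of the resulting extension into $\mathbb{F}$. Your cycle $(4)\Rightarrow(1)\Rightarrow(3)\Rightarrow(4)$, its dual, and $(6)\Leftrightarrow(4)\wedge(5)$ via Claim~\ref{ClaimPreExTr} give a sound logical skeleton, and the crucial step $(3)\Rightarrow(4)$ is handled exactly right: from $f'_{\ast}(e^{\ast}\delta')=0$, closedness produces $\zeta\in\mathbb{F}(E,A)$ with $f_{\ast}\zeta=f_{\ast}\delta''$, and the exactness of $\mathscr{C}(-,D)\xrightarrow{\delta_\sharp}\mathbb{E}(-,A)\xrightarrow{f_{\ast}}\mathbb{E}(-,B)$ from Fact~\ref{FactExact} lets you write $\delta''=\zeta+t^{\ast}\delta\in\mathbb{F}(E,A)$, using both the additivity of $\mathbb{F}$ and its subfunctor property. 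The one place you should add a line of justification is in $(1)\Rightarrow(3)$, where you assert that the $\delta''\in\mathbb{E}(R,A)$ produced by {\rm (ET4)} lies in $\mathbb{F}(R,A)$ merely because $p\circ x$ is an $\mathfrak{s}|_{\mathbb{F}}$-inflation: a priori you only know there is \emph{some} $\mathfrak{s}$-triangle $A\xrightarrow{px}P\xrightarrow{r}R'$ with class $\theta\in\mathbb{F}(R',A)$. To transport $\theta$ to $\delta''$ you should invoke the uniqueness of the cone of an inflation: {\rm (ET3)} applied to the square built on $1_A$ and $1_P$ gives $c\colon R'\to R$ with $c\circ r=h'$ and $\theta=c^{\ast}\delta''$, and $c$ is an isomorphism by the long exact sequences of Fact~\ref{FactExact}(1), the five lemma and Yoneda (cf.\ \cite[Corollary 3.6]{NP}); hence $\delta''=(c^{-1})^{\ast}\theta\in\mathbb{F}(R,A)$ since $\mathbb{F}$ is a subfunctor. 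With that detail made explicit, the argument is complete.
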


Recall that, for an additive category $\mathscr{C}$ and a full subcategory $\mathscr{D}$, we call a morphism $f:D\to C$ a \emph{right $\mathscr{D}$-approximation} of $C\in\mathscr{C}$ if $D\in\mathscr{D}$ and $f\circ-:\mathscr{C}(D',D)\to\mathscr{C}(D',C)$ is surjective for each $D'\in\mathscr{D}$. We call $\mathscr{D}$ \emph{contravariantly finite} in $\mathscr{C}$ if each object in $\mathscr{C}$ has a right $\mathscr{D}$-approximation. Dually we define a \emph{left $\mathscr{D}$-approximation} and a \emph{covariantly finite subcategory}.
We call $\mathscr{D}$ \emph{functorially finite} if it is contravariantly and covariantly finite.

A basic construction of closed additive subfunctors is the following. 

\begin{definition-proposition}[{\cite[Definition~3.18, Proposition~3.19 for $n=1$]{HLN1}}]\label{DefItoF}
Let $\mathscr{D}\subseteq\mathscr{C}$ be a full subcategory. Define subfunctors $\mathbb{E}_{\mathscr{D}}$ and $\mathbb{E}^{\mathscr{D}}$ of $\mathbb{E}$ by
\begin{eqnarray*}
\mathbb{E}_{\mathscr{D}}(C,A)&=&\{\delta\in\mathbb{E}(C,A)\mid \mathscr{C}(D,C)\overset{\delta\circ-}{\longrightarrow}\mathbb{E}(D,A)\ \text{is zero for any}\ D\in\mathscr{D}\},\\
\mathbb{E}^{\mathscr{D}}(C,A)&=&\{\delta\in\mathbb{E}(C,A)\mid \mathscr{C}(A,D)\overset{-\circ\delta}{\longrightarrow}\mathbb{E}(C,D)\ \text{is zero for any}\ D\in\mathscr{D}\}.
\end{eqnarray*}
Then these are closed additive subfunctors of $\mathbb{E}$.
\end{definition-proposition}

\begin{remark}\label{RemItoF}
Let $\mathscr{D}\subseteq\mathscr{C}$ be as above. An $\mathfrak{s}$-conflation
\begin{equation}\label{sItriangle}
A\overset{x}{\longrightarrow}D\overset{y}{\longrightarrow}C\quad(D\in\mathscr{D})
\end{equation}
is an $\mathfrak{s}|_{\mathbb{E}^{\mathscr{D}}}$-conflation if and only if $x$ is a left $\mathscr{D}$-approximation. Dually, $(\ref{sItriangle})$ is an $\mathfrak{s}|_{\mathbb{E}_{\mathscr{D}}}$-conflation if and only if $y$ is a right $\mathscr{D}$-approximation.
\end{remark}

The following has been shown in \cite[Proposition 3.30]{NP}.
\begin{fact}\label{FactEbar}
Let $\mathscr{D}\subseteq\mathscr{C}$ be an additive full subcategory.
If $\mathscr{D}$ satisfies $\mathscr{D}\subseteq\mathrm{Proj}_{\mathbb{E}}\mathscr{C}\cap\mathrm{Inj}_{\mathbb{E}}\mathscr{C}$, then the extriangulated structure of $\mathscr{C}$ induces an extriangulated structure 
$(\mathscr{C}/\mathscr{D},\mathbb{E}/\mathscr{D},\mathfrak{s}/\mathscr{D})$ on the ideal quotient $\mathscr{C}/\mathscr{D}$, where the functor $\mathbb{E}/\mathscr{D}\colon(\mathscr{C}/\mathscr{D})^\mathrm{op}\times(\mathscr{C}/\mathscr{D})\to\mathit{Ab}$ makes the following diagram commutative,
\[\xymatrix{
(\mathscr{C}/\mathscr{D})^\mathrm{op}\times(\mathscr{C}/\mathscr{D})\ar[r]^(.7){\mathbb{E}/\mathscr{D}}&\mathit{Ab}\ar@{=}[d]\\
\mathscr{C}^\mathrm{op}\times\mathscr{C}\ar[u]\ar[r]^{\mathbb{E}}&\mathit{Ab}}\]
and the realization $(\mathfrak{s}/\mathscr{D})(\delta)$ of $\delta\in(\mathbb{E}/\mathscr{D})(C,A)=\mathbb{E}(C,A)$ is the image of $\mathfrak{s}(\delta)$ in $\mathscr{C}/\mathscr{D}$.
\end{fact}

Definition-Proposition~\ref{DefItoF} and Fact~\ref{FactEbar} give the following observation.

\begin{corollary}\label{CorIQuot}
Let $\mathscr{D}\subseteq\mathscr{C}$ be an additive full subcategory.
If we put $\mathbb{F}=\mathbb{E}_{\mathscr{D}}\cap\mathbb{E}^{\mathscr{D}}$, then $\mathbb{F}\subseteq\mathbb{E}$ is closed and thus $(\mathscr{C},\mathbb{F},\mathfrak{s}|_{\mathbb{F}})$ is an extriangulated category by Proposition~\ref{PropClosed}.
Moreover, since $\mathscr{D}\subseteq\mathrm{Proj}_{\mathbb{F}}\mathscr{C}\cap\mathrm{Inj}_{\mathbb{F}}\mathscr{C}$ holds, we obtain an extriangulated category $(\mathscr{C}/\mathscr{D},\mathbb{F}/\mathscr{D},(\mathfrak{s}|_{\mathbb{F}})/\mathscr{D})$.
\end{corollary}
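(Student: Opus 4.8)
The plan is to split the statement into three steps. Step 1: $\mathbb{F}=\mathbb{E}_{\mathscr{D}}\cap\mathbb{E}^{\mathscr{D}}$ is a closed additive subfunctor of $\mathbb{E}$, so $(\mathscr{C},\mathbb{F},\mathfrak{s}|_{\mathbb{F}})$ is extriangulated by Proposition~\ref{PropClosed}. Step 2: $\mathscr{D}\subseteq\mathrm{Proj}_{\mathbb{F}}(\mathscr{C})\cap\mathrm{Inj}_{\mathbb{F}}(\mathscr{C})$. Step 3: apply Fact~\ref{FactEbar} to $(\mathscr{C},\mathbb{F},\mathfrak{s}|_{\mathbb{F}})$ and the subcategory $\mathscr{D}$.

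For Step 1, that $\mathbb{F}$ is an additive subfunctor is immediate, since each $\mathbb{F}(C,A)$ is an intersection of submodules of $\mathbb{E}(C,A)$ and the naturality identity is inherited from $\mathbb{E}_{\mathscr{D}}$ and $\mathbb{E}^{\mathscr{D}}$ (Definition~\ref{DefItoF}). To see that $\mathbb{F}$ is closed, I would check that it is closed on the right and then invoke Lemma~\ref{LemClosed}. Given an $\mathfrak{s}|_{\mathbb{F}}$-triangle $A\xrightarrow{x}B\xrightarrow{y}C\overset{\delta}{\dashrightarrow}$, observe that, as $\mathbb{F}\subseteq\mathbb{E}_{\mathscr{D}}$ and $\mathbb{F}\subseteq\mathbb{E}^{\mathscr{D}}$, it is simultaneously an $\mathfrak{s}|_{\mathbb{E}_{\mathscr{D}}}$-triangle and an $\mathfrak{s}|_{\mathbb{E}^{\mathscr{D}}}$-triangle, so the corresponding three-term sequences for $\mathbb{E}_{\mathscr{D}}(-,?)$ and for $\mathbb{E}^{\mathscr{D}}(-,?)$ are exact by Definition~\ref{DefItoF}. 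The inclusion $x_{\ast}\mathbb{F}(-,A)\subseteq\ker y_{\ast}$ follows from $y\circ x=0$. For the converse, if $\beta\in\mathbb{F}(X,B)$ satisfies $y_{\ast}\beta=0$, closedness of $\mathbb{E}_{\mathscr{D}}$ and of $\mathbb{E}^{\mathscr{D}}$ yields $\alpha\in\mathbb{E}_{\mathscr{D}}(X,A)$ and $\alpha^{\prime}\in\mathbb{E}^{\mathscr{D}}(X,A)$ with $x_{\ast}\alpha=\beta=x_{\ast}\alpha^{\prime}$; then $x_{\ast}(\alpha-\alpha^{\prime})=0$, so by exactness of $\mathscr{C}(X,C)\xrightarrow{\delta_\sharp}\mathbb{E}(X,A)\xrightarrow{x_{\ast}}\mathbb{E}(X,B)$ (Fact~\ref{FactExact}) we get $\alpha-\alpha^{\prime}=\gamma^{\ast}\delta$ for some $\gamma\in\mathscr{C}(X,C)$, which lies in $\mathbb{F}(X,A)$ because $\delta\in\mathbb{F}(C,A)$ and $\mathbb{F}$ is a subfunctor; hence $\alpha^{\prime}=\alpha-(\alpha-\alpha^{\prime})\in\mathbb{E}_{\mathscr{D}}(X,A)\cap\mathbb{E}^{\mathscr{D}}(X,A)=\mathbb{F}(X,A)$ and $x_{\ast}\alpha^{\prime}=\beta$. (Alternatively, one could argue via Proposition~\ref{PropClosed} that $\mathfrak{s}|_{\mathbb{F}}$-deflations, being at once $\mathfrak{s}|_{\mathbb{E}_{\mathscr{D}}}$- and $\mathfrak{s}|_{\mathbb{E}^{\mathscr{D}}}$-deflations, are closed under composition.)

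For Step 2, let $D\in\mathscr{D}$. If $\delta\in\mathbb{F}(D,A)$ for some $A\in\mathscr{C}$, then $\delta\in\mathbb{E}_{\mathscr{D}}(D,A)$, so $(\delta_\sharp)_D=0$; evaluating at $1_D$ gives $\delta=(1_D)^{\ast}\delta=0$, hence $\mathbb{F}(D,\mathscr{C})=0$ and $D\in\mathrm{Proj}_{\mathbb{F}}(\mathscr{C})$. Dually, if $\delta\in\mathbb{F}(C,D)$ for some $C\in\mathscr{C}$, then $\delta\in\mathbb{E}^{\mathscr{D}}(C,D)$, so $\delta^\sharp_D=0$ and $\delta=(1_D)_{\ast}\delta=0$, giving $D\in\mathrm{Inj}_{\mathbb{F}}(\mathscr{C})$. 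For Step 3, $\mathscr{D}$ is a full additive subcategory closed under isomorphisms with $\mathscr{D}\subseteq\mathrm{Proj}_{\mathbb{F}}(\mathscr{C})\cap\mathrm{Inj}_{\mathbb{F}}(\mathscr{C})$, so Fact~\ref{FactEbar} applied to the extriangulated category $(\mathscr{C},\mathbb{F},\mathfrak{s}|_{\mathbb{F}})$ produces the extriangulated category $(\mathscr{C}/\mathscr{D},\mathbb{F}/\mathscr{D},(\mathfrak{s}|_{\mathbb{F}})/\mathscr{D})$, as desired. The one place demanding genuine care is the closedness of the intersection $\mathbb{F}$ in Step 1; Steps 2 and 3 are purely formal.
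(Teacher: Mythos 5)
Your proof is correct and follows the same route the paper intends: the corollary is stated without proof, as an immediate consequence of Definition~\ref{DefItoF}, Proposition~\ref{PropClosed} and Fact~\ref{FactEbar}, which is exactly your three-step decomposition. The only substantive detail you add is the explicit verification that the intersection of the two closed subfunctors $\mathbb{E}_{\mathscr{D}}$ and $\mathbb{E}^{\mathscr{D}}$ is again closed, comparing the two preimages $\alpha$ and $\alpha^{\prime}$ via the exact sequence $\mathscr{C}(X,C)\xrightarrow{\delta_\sharp}\mathbb{E}(X,A)\xrightarrow{x_{\ast}}\mathbb{E}(X,B)$ from Fact~\ref{FactExact} and noting that the difference $\gamma^{\ast}\delta$ lands in $\mathbb{F}(X,A)$ because $\delta\in\mathbb{F}(C,A)$; this argument is sound and fills the one point the paper leaves implicit.
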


\subsection{Induced almost split sequences}

First let us consider almost split extensions in relative extriangulated categories.
\begin{proposition}\label{PropoRelARExt}
Let $\mathbb{F}\subseteq\mathbb{E}$ be any closed subfunctor. If $(\mathscr{C},\mathbb{E},\mathfrak{s})$ has $($resp.\ left, or right$)$ almost split extensions, then so does $(\mathscr{C},\mathbb{F},\mathfrak{s}|_{\mathbb{F}})$.
\end{proposition}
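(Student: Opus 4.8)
The plan is to transfer an almost split $\mathbb{E}$-extension into $\mathbb{F}$ by first locating it inside $\mathbb{F}$ and then invoking the already-established insensitivity of the almost split property to the ambient subbifunctor. Concretely, suppose $(\mathscr{C},\mathbb{E},\mathfrak{s})$ has right almost split extensions and let $A\in\mathscr{C}$ be endo-local and non-$\mathbb{F}$-projective, i.e.\ $\mathbb{F}(A,\mathscr{C})\neq0$. Since $\mathbb{F}\subseteq\mathbb{E}$, this forces $\mathbb{E}(A,\mathscr{C})\neq0$, so $A$ is non-$\mathbb{E}$-projective, and by hypothesis there is an almost split $\mathbb{E}$-extension $\delta\in\mathbb{E}(A,B)$ for some $B$. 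The key claim is that $\delta$ actually lies in $\mathbb{F}(A,B)$.

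To prove that claim, pick any non-zero $\alpha\in\mathbb{F}(A,X)$ (which exists since $A$ is non-$\mathbb{F}$-projective). Viewing $\alpha$ as an $\mathbb{E}$-extension, it is in particular non-zero in $\mathbb{E}(A,X)$, so by Lemma~\ref{LemARSeq3}(d)---applied with the roles matched to (AS2) for $\delta$---there is a morphism $a\in\mathscr{C}(X,A)$ (or the appropriate variable) with $\delta=a_{\ast}\alpha$ (respectively $\delta=a^{\ast}\alpha$, depending on which of (a)--(d) fits the handedness). Here I must be careful to use the correct part of Lemma~\ref{LemARSeq3}: for a right almost split extension $\delta\in\mathbb{E}(A,B)$ satisfying (AS2), part (c) gives, for any $0\neq\gamma\in\mathbb{E}(A,X)$ (written in the notation of the lemma with $C$ replaced by $A$), a morphism $a$ with $\delta=a_{\ast}\gamma$. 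Choosing $\gamma=\alpha\in\mathbb{F}(A,X)\subseteq\mathbb{E}(A,X)$ and using that $\mathbb{F}$ is a subfunctor (hence closed under $a_{\ast}=\mathbb{F}(-,a)$ applied within $\mathbb{F}$), we get $\delta=a_{\ast}\alpha\in\mathbb{F}(A,B)$. Thus $\delta$ is a non-split $\mathbb{F}$-extension.

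Now $\delta\in\mathbb{F}(A,B)$ is almost split as an $\mathbb{E}$-extension, and by Remark~\ref{RemRelAR2} the property of being almost split is independent of the additive subbifunctor containing $\delta$; hence $\delta$ is almost split as an $\mathbb{F}$-extension. Since $A$ was an arbitrary endo-local non-$\mathbb{F}$-projective object, $(\mathscr{C},\mathbb{F},\mathfrak{s}|_{\mathbb{F}})$ has right almost split extensions. The dual argument, using the part of Lemma~\ref{LemARSeq3} attached to (AS1) together with $\mathrm{Inj}_{\mathbb{F}}(\mathscr{C})\supseteq\mathrm{Inj}_{\mathbb{E}}(\mathscr{C})$ (Remark~\ref{RemProjInj}), handles left almost split extensions, and combining the two gives the statement for ``almost split extensions'' without qualifier.

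The only genuine subtlety---the ``hard part''---is bookkeeping: making sure one applies the version of Lemma~\ref{LemARSeq3} that matches the variance, so that the morphism produced pushes/pulls the chosen $\mathbb{F}$-extension onto $\delta$ rather than the other way around, and noting that $\mathbb{F}$ being a genuine (additive) subfunctor is exactly what guarantees $a_{\ast}\alpha$ (resp.\ $a^{\ast}\alpha$) stays inside $\mathbb{F}$. Everything else---non-$\mathbb{F}$-projectivity implying non-$\mathbb{E}$-projectivity, and the subbifunctor-independence of the almost split property---is already packaged in Remarks~\ref{RemProjInj} and~\ref{RemRelAR2}. No use of closedness of $\mathbb{F}$ or of (ET4) is actually needed for this particular statement; closedness is only relevant to knowing $(\mathscr{C},\mathbb{F},\mathfrak{s}|_{\mathbb{F}})$ is itself extriangulated, which is the ambient hypothesis rather than something used in the proof.
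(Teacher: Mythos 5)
Your proof is correct and follows essentially the same route as the paper: pick a non-zero $\mathbb{F}$-extension witnessing non-$\mathbb{F}$-projectivity, use Lemma~\ref{LemARSeq3}(c) (applicable since the almost split $\mathbb{E}$-extension satisfies {\rm (AS2)}) to push it onto $\delta$, conclude $\delta\in\mathbb{F}$ because $\mathbb{F}$ is a subfunctor, and invoke Remark~\ref{RemRelAR2}; your closing observation that closedness of $\mathbb{F}$ is not actually used is also accurate. The only blemish is the codomain of the morphism produced by the lemma (it should be $a\in\mathscr{C}(X,B)$, not $\mathscr{C}(X,A)$, in your labelling), which you flag yourself and which does not affect the argument.
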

\begin{proof}
Suppose that $(\mathscr{C},\mathbb{E},\mathfrak{s})$ has right almost split extensions. If $C\in\mathscr{C}$ is non-projective in $(\mathscr{C},\mathbb{F},\mathfrak{s}|_{\mathbb{F}})$, then there is some $X\in\mathscr{C}$ and some non-zero $\theta\in\mathbb{F}(C,X)$. Since it is non-projective in $(\mathscr{C},\mathbb{E},\mathfrak{s})$ by Remark~\ref{RemProjInj}, there is an almost split $\mathbb{E}$-extension $\delta\in\mathbb{E}(C,A)$. Then by Lemma~\ref{LemARSeq3}, there is some $a\in\mathscr{C}(X,A)$ satisfying $a\theta=\delta$. This implies $\delta\in\mathbb{F}(C,A)$.
By Remark~\ref{RemRelAR2} $\delta$ is an almost split $\mathbb{F}$-extension. This shows that $(\mathscr{C},\mathbb{F},\mathfrak{s}|_{\mathbb{F}})$ has right almost split extensions.
Similarly for left almost split extensions.
\end{proof}

Secondly, let us consider almost split extensions in ideal quotients.

\begin{proposition}\label{PropQuotARExt}
Let $\mathscr{D}\subseteq\mathscr{C}$ be an additive full subcategory
satisfying $\mathscr{D}\subseteq\mathrm{Proj}_{\mathbb{E}}\mathscr{C}\cap\mathrm{Inj}_{\mathbb{E}}\mathscr{C}$.
Let $\delta\in\mathbb{E}(C,A)$ be any almost split extension. 
Then $\delta$ gives an almost split extension $\delta\in\widetilde{\mathbb{E}}(C,A)$ in $(\widetilde{\mathscr{C}},\widetilde{\mathbb{E}},\widetilde{\mathfrak{s}})=(\mathscr{C}/\mathscr{D},\mathbb{E}/\mathscr{D},\mathfrak{s}/\mathscr{D})$.
\end{proposition}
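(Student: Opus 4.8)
The plan is to check the three conditions (a), (b), (c) of Lemma~\ref{LemEquivARExt}(2) for $\delta$, now regarded as an element of $\widetilde{\mathbb{E}}(C,A)=\mathbb{E}(C,A)$ in the extriangulated category $(\widetilde{\mathscr{C}},\widetilde{\mathbb{E}},\widetilde{\mathfrak{s}})$, which is extriangulated by Fact~\ref{FactEbar}. Note first that $\delta$ remains non-split, since $\widetilde{\mathbb{E}}(C,A)=\mathbb{E}(C,A)$ and $\delta\neq0$ there.

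For (a): we have $\mathrm{End}_{\widetilde{\mathscr{C}}}(A)=\mathrm{End}_{\mathscr{C}}(A)/[\mathscr{D}](A,A)$, and by hypothesis $[\mathscr{D}](A,A)\subseteq\mathrm{rad}\mathrm{End}_{\mathscr{C}}(A)$; a quotient of a local ring by an ideal contained in its radical is again local, and nonzero (so that $A\neq0$ in $\widetilde{\mathscr{C}}$) because $1_A\notin[\mathscr{D}](A,A)$. Hence $A$ is endo-local in $\widetilde{\mathscr{C}}$, and the same argument applies to $C$. This also identifies $\mathrm{rad}\mathrm{End}_{\widetilde{\mathscr{C}}}(A)=\mathrm{rad}\mathrm{End}_{\mathscr{C}}(A)/[\mathscr{D}](A,A)$, and similarly for $C$, which we shall use for (c).

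For (b): given $X\in\mathscr{C}$ and a non-split $\theta\in\widetilde{\mathbb{E}}(X,A)=\mathbb{E}(X,A)$, the extension $\theta$ is non-split over $\mathbb{E}$, so since $\delta$ is almost split in $(\mathscr{C},\mathbb{E},\mathfrak{s})$ and hence satisfies {\rm (AS1)}, Lemma~\ref{LemARSeq3}(a) yields $c\in\mathscr{C}(C,X)$ with $c^{\ast}\theta=\delta$; its class $\widetilde{c}$ satisfies $\widetilde{c}^{\ast}\theta=\delta$ in $\widetilde{\mathbb{E}}$ because $\widetilde{\mathbb{E}}(\widetilde{c},A)=\mathbb{E}(c,A)$ by Fact~\ref{FactEbar}. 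The dual half of (b) follows in the same way from the dual of Lemma~\ref{LemARSeq3}. For (c): any element of $\mathrm{rad}\mathrm{End}_{\widetilde{\mathscr{C}}}(A)$ lifts to some $a\in\mathrm{rad}\mathrm{End}_{\mathscr{C}}(A)$ by the identification in the previous paragraph; as $\mathrm{End}_{\mathscr{C}}(A)$ is local such an $a$ is a non-section, so $a_{\ast}\delta=0$ by {\rm (AS1)}, whence $\widetilde{a}_{\ast}\delta=a_{\ast}\delta=0$. Thus $\delta^{\sharp}(\mathrm{rad}\mathrm{End}_{\widetilde{\mathscr{C}}}(A))=0$, and dually $\delta_{\sharp}(\mathrm{rad}\mathrm{End}_{\widetilde{\mathscr{C}}}(C))=0$. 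By Lemma~\ref{LemEquivARExt}, $\delta$ is an almost split extension in $(\widetilde{\mathscr{C}},\widetilde{\mathbb{E}},\widetilde{\mathfrak{s}})$.

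All of this is routine once one passes to the equivalent description in Lemma~\ref{LemEquivARExt}; the only place the hypotheses on $[\mathscr{D}]$ are genuinely needed is the radical bookkeeping in (a) and (c) --- they guarantee both that $\mathrm{End}_{\mathscr{C}}(A)$ and $\mathrm{End}_{\mathscr{C}}(C)$ stay local after dividing out $[\mathscr{D}]$, and that radical endomorphisms of $A$ and $C$ in $\widetilde{\mathscr{C}}$ lift to radical (so in particular non-section, non-retraction) endomorphisms in $\mathscr{C}$, where {\rm (AS1)} and {\rm (AS2)} can be applied. I do not expect any obstacle beyond this.
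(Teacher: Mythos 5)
Your proposal is correct and follows essentially the same route as the paper: both reduce the claim to conditions (a), (b), (c) of Lemma~\ref{LemEquivARExt}, use the hypothesis $[\mathscr{D}](A,A)\subseteq\mathrm{rad}\mathrm{End}_{\mathscr{C}}(A)$ (and its analogue for $C$) to identify $\mathrm{rad}\mathrm{End}_{\mathscr{C}/\mathscr{D}}$ with $(\mathrm{rad}\mathrm{End}_{\mathscr{C}})/[\mathscr{D}]$ for (a) and (c), and observe that (b) transfers immediately since $\widetilde{\mathbb{E}}=\mathbb{E}$ on objects. You merely spell out the radical bookkeeping and the appeal to Lemma~\ref{LemARSeq3} that the paper leaves implicit.
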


\begin{proof}
We remark that $(\widetilde{\mathscr{C}},\widetilde{\mathbb{E}},\widetilde{\mathfrak{s}})$ is an extriangulated category by Fact~\ref{FactEbar}. 
Since $A$ is endo-local and non-injective, it satisfies $[\mathscr{D}](A,A)\subseteq\mathrm{rad}\mathrm{End}_{\mathscr{C}}(A)$. Similarly for $[\mathscr{D}](C,C)\subseteq\mathrm{rad}\mathrm{End}_{\mathscr{C}}(C)$. Thus we have
\begin{eqnarray}
&\mathrm{rad}\mathrm{End}_{\mathscr{C}/\mathscr{D}}(A)=(\mathrm{rad}\mathrm{End}_{\mathscr{C}}(A))/[\mathscr{D}](A,A),&\label{radQuot}\\
&\mathrm{rad}\mathrm{End}_{\mathscr{C}/\mathscr{D}}(C)=(\mathrm{rad}\mathrm{End}_{\mathscr{C}}(C))/[\mathscr{D}](C,C).&\nonumber
\end{eqnarray}
It suffices to show that $\delta$ satisfies conditions (a),(b) of Proposition~\ref{LemEquivARExt} in $(\widetilde{\mathscr{C}},\widetilde{\mathbb{E}},\widetilde{\mathfrak{s}})$. By $(\ref{radQuot})$, condition {\rm (a)} for $\delta\in\widetilde{\mathbb{E}}(C,A)$ also holds in $(\widetilde{\mathscr{C}},\widetilde{\mathbb{E}},\widetilde{\mathfrak{s}})$. Condition {\rm (b)} in $(\widetilde{\mathscr{C}},\widetilde{\mathbb{E}},\widetilde{\mathfrak{s}})$ follows immediately from that in $(\mathscr{C},\mathbb{E},\mathfrak{s})$.
\end{proof}

Thirdly, let us consider almost split extensions in extension-closed subcategories. 

\begin{definition}\label{RemETrExtClosed}
Let $(\mathscr{C},\mathbb{E},\mathfrak{s})$ be an extriangulated category, and let $\mathscr{D}\subseteq\mathscr{C}$ be an extension-closed full subcategory. If we define $\mathbb{E}^{\prime}$ to be the restriction of $\mathbb{E}$ onto $\mathscr{D}^{\mathrm{op}}\times\mathscr{D}$, and define $\mathfrak{s}^{\prime}$ by restricting $\mathfrak{s}$, then $(\mathscr{D},\mathbb{E}^{\prime},\mathfrak{s}^{\prime})$ becomes an extriangulated category. By definition, it satisfies $\mathbb{E}^{\prime}(D_1,D_2)=\mathbb{E}(D_1,D_2)$ for any $D_1,D_2\in\mathscr{D}$.
\end{definition}


\begin{remark}\label{RemEasy}
It is obvious from the definition that if $\delta\in\mathbb{E}(C,A)$ is an almost split extension in $(\mathscr{C},\mathbb{E},\mathfrak{s})$ with $A,C\in\mathscr{D}$, then it is an almost split extension in $(\mathscr{D},\mathbb{E}^{\prime},\mathfrak{s}^{\prime})$.
\end{remark}

We will prove the following result, which generalizes \cite[Theorem 2.4(b)]{AS}.

\begin{theorem}\label{CorSubARExt}
Let $(\mathscr{C},\mathbb{E},\mathfrak{s})$ be a Krull--Schmidt extriangulated category, and let $\mathscr{D}\subseteq\mathscr{C}$ be an extension-closed full subcategory which is closed by direct summands and
contravariantly (resp.\ covariantly) finite in $\mathscr{C}$. If $\mathscr{C}$ has right (resp.\ left) almost split extensions, then so does $\mathscr{D}$.
\end{theorem}

We need the following variation of the famous Wakamatsu's Lemma \cite[Lemma 1.3]{AR5}.

\begin{lemma}[Wakamatsu's Lemma]\label{LemSubARExt}
Let $(\mathscr{C},\mathbb{E},\mathfrak{s})$ be an extriangulated category.
Let $\mathscr{D}\subseteq\mathscr{C}$ be an extension-closed full subcategory.
Let $A\in\mathscr{C}$ be any object.
If $d\in\mathscr{C}(D^A,A)$ is a minimal right $\mathscr{D}$-approximation with $D^A\in\mathscr{D}$, then
\[ d\circ-\colon\mathbb{E}(D,D^A)\to\mathbb{E}(D,A) \]
is monomorphic for any $D\in\mathscr{D}$.
\end{lemma}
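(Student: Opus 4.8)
The plan is to prove the contrapositive content directly: given $\zeta\in\mathbb{E}(D,D^A)$ with $d_{\ast}\zeta=0$, I will show $\zeta=0$. First I would realize $\zeta$ by an $\mathfrak{s}$-triangle $D^A\overset{x}{\longrightarrow}B\overset{y}{\longrightarrow}D\overset{\zeta}{\dashrightarrow}$. Since $D^A,D\in\mathscr{D}$ and $\mathscr{D}$ is extension-closed, the middle term satisfies $B\in\mathscr{D}$. This is the only place where extension-closedness is used, and it is exactly what makes the right $\mathscr{D}$-approximation property of $d$ applicable to the object $B$.

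Next I would exploit the hypothesis $d_{\ast}\zeta=0$. Since $\mathfrak{s}$ is an additive realization, $d_{\ast}\zeta$ is realized by the split sequence $A\to A\oplus D\to D$ with canonical inclusion and projection. The pair $(d,1_D)$ is a morphism of $\mathbb{E}$-extensions $\zeta\to d_{\ast}\zeta$, because $d_{\ast}\zeta=(1_D)^{\ast}(d_{\ast}\zeta)$ trivially; hence the realization axiom {\rm (ET2)} furnishes a morphism $b\colon B\to A\oplus D$ fitting into a commutative ladder over $(d,1_D)$. Comparing the left-hand squares of this ladder and taking the component $b_1\colon B\to A$ of $b$ yields $b_1\circ x=d$.

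Now I would invoke that $d\colon D^A\to A$ is a right $\mathscr{D}$-approximation: as $B\in\mathscr{D}$, the morphism $b_1$ factors as $b_1=d\circ s$ for some $s\in\mathscr{C}(B,D^A)$, so that $d\circ(s\circ x)=b_1\circ x=d$. Right minimality of $d$ then forces $s\circ x\in\mathrm{End}_{\mathscr{C}}(D^A)$ to be an isomorphism, whence $x$ is a section (with retraction $r=(s\circ x)^{-1}\circ s$). Finally, applying the exact sequence $\mathscr{C}(B,D^A)\overset{-\circ x}{\longrightarrow}\mathscr{C}(D^A,D^A)\overset{\zeta^\sharp}{\longrightarrow}\mathbb{E}(D,D^A)$ from Fact~\ref{FactExact}(1), the relation $1_{D^A}=r\circ x\in\mathrm{Im}(-\circ x)=\mathrm{Ker}\,\zeta^\sharp$ gives $\zeta=\zeta^\sharp(1_{D^A})=(1_{D^A})_{\ast}\zeta=0$, as desired.

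Every step is routine diagram chasing once {\rm (ET2)} and the exact sequences of Fact~\ref{FactExact} are available; the only point requiring a bit of care is extracting the usable component $b_1\colon B\to A$ from the (non-canonical) morphism $b$ produced by {\rm (ET2)} and verifying $b_1\circ x=d$ from the commutativity of the ladder. After that, the classical ``factor through the approximation, then use right minimality'' pattern finishes the argument, so I do not expect any genuine obstacle: the lemma is elementary given the machinery developed earlier in the paper.
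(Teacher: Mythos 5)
Your proof is correct and follows essentially the same route as the paper: realize $\zeta$, use extension-closedness to place the middle term in $\mathscr{D}$, extract $f$ with $f\circ x=d$ from $d_{\ast}\zeta=0$, then combine the approximation property with right minimality to conclude that $x$ is a section and hence $\zeta=0$. The only cosmetic difference is that you obtain the factorization $b_1\circ x=d$ via {\rm (ET2)} applied to the split realization of $d_{\ast}\zeta$, whereas the paper reads it off directly from the exact sequence $\mathscr{C}(B,A)\xrightarrow{-\circ x}\mathscr{C}(D^A,A)\xrightarrow{\zeta^\sharp}\mathbb{E}(D,A)$ of Fact~\ref{FactExact}; both are valid.
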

\begin{proof}
Let $\theta\in\mathbb{E}(D,D^A)$ be any element, with $\mathfrak{s}(\theta)=[D^A\overset{x}{\longrightarrow}X\overset{y}{\longrightarrow}D]$. By the extension-closedness of $\mathscr{D}\subseteq\mathscr{C}$, we have $X\in\mathscr{D}$. Suppose $d\theta=0$. Then there exists $f\in\mathscr{C}(X,A)$ satisfying $fx=d$. Since $d$ is a right $\mathscr{D}$-approximation, we obtain $g\in\mathscr{D}(X,D^A)$ which makes
\[
\xy
(-12,0)*+{D^A}="0";
(4,0)*+{}="1";
(0,8)*+{X}="2";
(12,0)*+{D^A}="4";
(-4,0)*+{}="5";
(0,-8)*+{A}="6";
{\ar^{x} "0";"2"};
{\ar^{g} "2";"4"};
{\ar_{d} "0";"6"};
{\ar^{f} "2";"6"};
{\ar^{d} "4";"6"};
{\ar@{}|{} "0";"1"};
{\ar@{}|{} "4";"5"};
\endxy
\]
commutative. By the minimality of $d$, it follows that $gx$ is an automorphism. In particular $x$ is a section, which means $\theta=0$.
\end{proof}

\begin{remark}\label{RemIfInAddition}
If in addition $d$ is a deflation, the above lemma is nothing but \cite[Lemma 2.3]{CZZ}.
\end{remark}

The following is an extriangulated analogue of \cite[Theorem 2.3]{K} and \cite[Theorem 3.1]{J}.

\begin{proposition}\label{PropSubARExt}
Let $(\mathscr{C},\mathbb{E},\mathfrak{s})$ be a Krull--Schmidt extriangulated category and let $\mathscr{D}\subseteq\mathscr{C}$ be an extension-closed full subcategory which is closed by direct summands. Suppose that $C\in\mathscr{D}$ is non-projective with respect to the restriction $\mathbb{E}^{\prime}$ of $\mathbb{E}$ to $\mathscr{D}$. If there is an almost split extension $\delta\in\mathbb{E}(C,A)$ in $\mathscr{C}$ and if there is a minimal right $\mathscr{D}$-approximation $D^A\overset{d}{\longrightarrow}A$ with $D^A\in\mathscr{D}$, then there is a direct summand $D_0$ of $D^A$ and an almost split extension $\mu\in\mathbb{E}^{\prime}(C,D_0)$ in $\mathscr{D}$.
\end{proposition}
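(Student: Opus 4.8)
The plan is to produce the almost split extension $\mu$ over $D^A$ directly as a suitable pullback of $\delta$ along the approximation $d$, and then verify the axioms (AS1), (AS2) for $\mu$ inside $(\mathscr{D},\mathbb{E}',\mathfrak{s}')$ by using Lemma~\ref{LemEquivARExt}. Concretely, I would first use Lemma~\ref{LemSubARExt}: the map $d_\ast\colon\mathbb{E}(D,D^A)\to\mathbb{E}(D,A)$ is a monomorphism for every $D\in\mathscr{D}$. In particular, take $D=C\in\mathscr{D}$; since $\delta\in\mathbb{E}(C,A)$ and $\delta$ satisfies (AS1), the non-section $d$ (note $d$ cannot be a section, since $C=A$ would force $\delta$ to descend to $\mathscr{D}$, contradicting non-projectivity of $C$ there — I should check this carefully) has $d_\ast$ acting on $\delta$; but what I actually want is an element $\mu\in\mathbb{E}(C,D^A)$ with $d_\ast\mu=\delta$. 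This is exactly where (AS1) for $\delta$ enters, via Lemma~\ref{LemARSeq3}(b): if $d\notin\mathcal I$ then such a $\mu$ exists. So the first real step is to argue that $d\colon D^A\to A$ does not lie in $\mathcal I$, equivalently that $d_\ast\colon\mathbb{E}(C,D^A)\to\mathbb{E}(C,A)$ is nonzero for some test object — but Lemma~\ref{LemSubARExt} already gives injectivity of $d_\ast$ on $\mathbb{E}(D,D^A)$, so it suffices that $\mathbb{E}(C,D^A)\neq 0$, which should follow from $C$ being non-projective in $\mathscr{D}$ together with the approximation property of $d$. Having produced $\mu\in\mathbb{E}(C,D^A)$ with $d_\ast\mu=\delta$, note $\mu\neq 0$ because $\delta\neq 0$.

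Next I would verify the three conditions (a), (b), (c) of Lemma~\ref{LemEquivARExt} for $\mu$ in $(\mathscr{D},\mathbb{E}',\mathfrak{s}')$. For (a): $C$ is endo-local by hypothesis (it is endo-local in $\mathscr{C}$ by Proposition~\ref{PropARExt}(2), and $\mathrm{End}_{\mathscr{D}}(C)=\mathrm{End}_{\mathscr{C}}(C)$ since $\mathscr{D}$ is full); for $D^A$, the minimality of $d$ should force $\mathrm{End}_{\mathscr{D}}(D^A)$ to be local, using a standard argument: any $\phi\in\mathrm{End}_{\mathscr{D}}(D^A)$ with $d\circ\phi=d$ is an automorphism by right minimality, and more generally the radical condition follows from $d$ being a right $\mathscr{D}$-approximation — this is the classical fact that $D^A$ is "the" projective cover data and has local endomorphism ring once $A$ (hence $C$) is endo-local. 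This is the step I expect to be the main obstacle: carefully showing $D^A$ is endo-local and that $[\mathscr D]$-type radical identities hold, purely from minimality of the approximation $d$ together with endo-locality of $A$; in the classical setting (K{\'a}lmán, Jørgensen) this uses that $d$ is right almost split in an appropriate relative sense, and I would need to reproduce that argument in the extriangulated language, likely via $d_\ast$ being mono (Lemma~\ref{LemSubARExt}) and the long exactness in Fact~\ref{FactExact}.

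For condition (b) of Lemma~\ref{LemEquivARExt}: given $D\in\mathscr{D}$ and a non-split $\theta\in\mathbb{E}'(D,D^A)=\mathbb{E}(D,D^A)$, I must find $c\in\mathscr{D}(C,D)$ with $c^\ast\theta=\mu$. Push $\theta$ forward along $d$: $d_\ast\theta\in\mathbb{E}(D,A)$ is nonzero since $d_\ast$ is mono (Lemma~\ref{LemSubARExt}). Since $\delta$ satisfies (AS2) — equivalently, by Lemma~\ref{LemARSeq3}(c) applied to the nonzero extension $d_\ast\theta\in\mathbb{E}(D,A)$ — there is $c\in\mathscr{C}(D,?)$... wait, I need $c\colon C\to D$, so I should instead invoke Lemma~\ref{LemARSeq3}(a) for $\delta$ (which holds since $\delta$ satisfies (AS1)), giving $c\in\mathscr{C}(C,D)$ with $\delta=c^\ast(d_\ast\theta)=d_\ast(c^\ast\theta)$; since $d_\ast$ is injective on $\mathbb{E}(C,D^A)$ and $d_\ast\mu=\delta=d_\ast(c^\ast\theta)$, we conclude $\mu=c^\ast\theta$. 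Moreover $c\in\mathscr{D}(C,D)$ automatically since $C,D\in\mathscr{D}$ and $\mathscr{D}$ is full. The dual half of (b) — pushing forward a non-split $\nu\in\mathbb{E}(C,Y)$ with $Y\in\mathscr{D}$ — is handled by noting $Y$ maps to $A$... actually this direction is where I use that $D^A$ absorbs such approximations; I would run Lemma~\ref{LemARSeq3}(c) for $\delta$ with target $A$, obtaining $a\colon Y\to A$, then lift $a$ through $d$ using the approximation property to get $a'\colon Y\to D^A$, and check $a'_\ast\nu=\mu$ again via injectivity of $d_\ast$. Finally, condition (c) — that $\mu^\sharp$ and $\mu_\sharp$ kill the respective radicals — follows from (c) for $\delta$ together with $d_\ast\mu=\delta$ and, on the $C$-side, directly from (c) for $\delta$ since the relevant endomorphism ring $\mathrm{End}_{\mathscr D}(C)=\mathrm{End}_{\mathscr C}(C)$ is unchanged; on the $D^A$-side it follows from the local ring structure established in the main obstacle step. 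Assembling (a), (b), (c), Lemma~\ref{LemEquivARExt} yields that $\mu$ is almost split in $(\mathscr D,\mathbb E',\mathfrak s')$.
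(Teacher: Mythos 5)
Your overall strategy is close to the paper's, and several of your individual steps are exactly right: the element you want is $\mu=e_{\ast}\theta$, where $\theta\in\mathbb{E}^{\prime}(C,Y)$ is any non-split extension (it exists since $C$ is non-projective in $\mathscr{D}$), $a\in\mathscr{C}(Y,A)$ satisfies $a_{\ast}\theta=\delta$ by Lemma~\ref{LemARSeq3}(c), and $e$ is a lift of $a$ through the approximation $d$. This construction also settles your worry about $\mathbb{E}(C,D^A)\neq0$; note that non-projectivity of $C$ together with the approximation property alone do \emph{not} suffice for that claim --- you must route through $\delta$ as just described, so your ``should follow'' hides exactly the paper's first step. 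Your verification of the two halves of condition (b) of Lemma~\ref{LemEquivARExt}, using the monomorphy of $d_{\ast}$ (Lemma~\ref{LemSubARExt}) together with Lemma~\ref{LemARSeq3}(a) and (c), is correct and matches the paper's treatment of (AS2).

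The genuine gap is the one you flag yourself: the endo-locality of $D^A$, which your route through Lemma~\ref{LemEquivARExt} needs as an input. Your proposed fix --- deriving it ``purely from minimality of the approximation $d$ together with endo-locality of $A$'' --- cannot work: a minimal right $\mathscr{D}$-approximation of an endo-local object is in general decomposable (think of minimal right $\mathrm{add}(M)$-approximations of a simple module), so there is no classical fact of the kind you invoke. In this proposition the endo-locality of $D^A$ is a \emph{consequence}, via Proposition~\ref{PropARExt}(1), of $\mu$ being almost split; it uses the full strength of the hypotheses and comes out at the end, not as a preliminary. The paper sidesteps the issue by never passing through Lemma~\ref{LemEquivARExt}: it checks (AS1) and (AS2) for $\mu$ directly from Definition~\ref{DefARExt}. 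For (AS2), if $h\in\mathscr{D}(D,C)$ is a non-retraction then $d_{\ast}(h^{\ast}\mu)=h^{\ast}\delta=0$, and Lemma~\ref{LemSubARExt} forces $h^{\ast}\mu=0$. For (AS1), if $f\in\mathscr{C}(D^A,D)$ with $D\in\mathscr{D}$ satisfies $f_{\ast}\mu\neq0$, then Lemma~\ref{LemARSeq3}(c) yields $g\in\mathscr{C}(D,A)$ with $g_{\ast}f_{\ast}\mu=\delta$, and the minimality of $d$ is then used to conclude that $f$ is a section. To repair your argument you should either switch to this direct verification, or first establish (AS1) by that method and only afterwards read off the endo-locality of $D^A$ and the radical condition.
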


\begin{proof}
Since $C$ is non-projective with respect to $\mathbb{E}^{\prime}$, there exists $Y\in\mathscr{D}$ and a non-split $\theta\in\mathbb{E}^{\prime}(C,Y)$. By Lemma~\ref{LemARSeq3}{\rm (c)}, there is $a\in\mathscr{C}(Y,A)$ such that $a\theta=\delta$. Since $d$ is a right $\mathscr{D}$-approximation, there is $e\in\mathscr{C}(Y,D^A)$ which satisfies $de=a$. 
Since $d(e\theta)=\delta\ne0$, we have $e\theta\ne0$. 
By Lemma~\ref{Rem2.5}, it suffices to show that $e\theta$ satisfies {\rm (AS2)}.
Let $h\in\mathscr{D}(D,C)$ be any non-retraction. Since $\delta$ is almost split, the element $(e\theta)h\in\mathbb{E}^{\prime}(D,D^A)$ satisfies
\[ d(e\theta h)=\delta h=0. \]
By Lemma~\ref{LemSubARExt}, it follows that $e\theta h=0$.
\end{proof}

We are ready to prove Theorem \ref{CorSubARExt}.

\begin{proof}[Proof of Theorem \ref{CorSubARExt}]
If $\mathscr{D}$ is contravariantly finite in $\mathscr{C}$, then any object in $\mathscr{C}$ admits a minimal right $\mathscr{D}$-approximation, as in \cite[Corollary~1.4]{KS}. Thus Proposition~\ref{PropSubARExt} shows that $\mathscr{D}$ has right almost split extensions. Dually for the case where $\mathscr{D}$ is covariantly finite in $\mathscr{C}$.
\end{proof}

\subsection{Extension-closed subcategories of derived categories}\label{section:example from hereditary}

Throughout this subsection, let $A$ be a finite dimensional algebra over a field $k$. The following follows from our previous results.

\begin{proposition}
Let $\mathscr{C}$ be an extension-closed full subcategory of $\kbproj$.
If $\mathscr{C}$ is functorially finite, then $\mathscr{C}$ has almost split extensions.
\end{proposition}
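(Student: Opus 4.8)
The plan is to apply the general machinery developed in Sections \ref{section_ARtr} and \ref{section_ARsubcat}. First I would record that $\mathscr{D}:=\kbproj$, being the bounded homotopy category of projectives over a finite-dimensional algebra, is a $k$-linear, Hom-finite, Krull--Schmidt triangulated category which admits a Serre functor, namely $\nu=-\overset{\mathbf{L}}{\otimes}_A DA$ (the Nakayama functor), so $\mathscr{D}$ has a Serre duality $\mathscr{D}(X,Y)\simeq\Dd\mathscr{D}(Y,\nu X)$. In the triangulated setting, $\mathbb{E}(X,Y)=\mathscr{D}(X,Y[1])$ and both $\mathcal{P}$ and $\mathcal{I}$ are zero, so $\underline{\mathscr{D}}=\overline{\mathscr{D}}=\mathscr{D}$; the Serre duality then says precisely that $\mathscr{D}$ has an Auslander--Reiten--Serre duality in the sense of this paper, with $\tau=\nu\circ[-1]$. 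By Theorem \ref{main theorem} (or directly by the classical existence of Auslander--Reiten triangles in $\kbproj$, due to Happel), $\kbproj$ has almost split extensions.

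Next I would invoke the induction result for extension-closed subcategories. Since $\mathscr{C}\subseteq\kbproj$ is extension-closed and full additive, Remark \ref{RemETrExtClosed} equips $\mathscr{C}$ with the induced extriangulated structure $(\mathscr{C},\mathbb{E}',\mathfrak{s}')$. Now apply Corollary \ref{CorSubARExt}: since $\mathscr{C}\subseteq\kbproj$ is functorially finite and $\kbproj$ has almost split extensions, it follows that $\mathscr{C}$ has almost split extensions as well. This is essentially a one-line deduction once the ingredients are in place.

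The only genuine point requiring care — and the step I expect to be the main obstacle in writing this up cleanly — is verifying that the hypotheses of Corollary \ref{CorSubARExt}, which traces back to Proposition \ref{PropSubARExt}, are actually met. Proposition \ref{PropSubARExt} needs a \emph{minimal} right $\mathscr{D}$-approximation $D^A\overset{d}{\to}A$ to exist for the relevant objects $A=\tau C$; functorial finiteness of $\mathscr{C}$ guarantees a right approximation, and since $\kbproj$ is Krull--Schmidt one can always pass to a minimal one by splitting off the part of $d$ that is a summand-wise retraction. I would also note that one must check non-projectivity of $C$ with respect to $\mathbb{E}'$ rather than $\mathbb{E}$; but the statement of Corollary \ref{CorSubARExt} is phrased exactly so that this is handled internally — its proof only produces almost split extensions ending at objects that are non-projective in $\mathscr{D}$, which is all that ``$\mathscr{C}$ has almost split extensions'' requires. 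Thus no separate argument is needed and the proof reduces to citing Corollary \ref{CorSubARExt} together with the existence of almost split extensions (equivalently, Serre duality) in $\kbproj$.
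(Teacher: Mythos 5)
Your reduction has a genuine gap at its very first step: for a general finite-dimensional algebra $A$, the category $\kbproj$ does \emph{not} admit a Serre functor and does \emph{not} have almost split extensions. The Nakayama functor $\nu=-\otimes^{\mathbf{L}}_A DA$ is an equivalence $\kbproj\to\operatorname{K^b}(\inj A)$, but it is an endofunctor of $\kbproj$ only when $DA$ is a perfect complex, i.e.\ essentially in the Gorenstein case; for non-Gorenstein $A$ (e.g.\ $A=k[x,y]/(x^2,xy,y^2)$) the object $\nu X[-1]$, which would have to be the left-hand term of the almost split triangle ending at an indecomposable $X$, lies outside $\kbproj$, and no Auslander--Reiten--Serre duality $\tau\colon\underline{\kbproj}\to\overline{\kbproj}$ can exist. (Your appeal to Happel is a misreading: his AR triangles ending at perfect complexes live in $\operatorname{D^b}(\mod A)$ with third term in $\operatorname{K^b}(\inj A)$; $\kbproj$ itself has AR triangles only under Gorenstein-type hypotheses.) Consequently Theorem~\ref{main theorem} cannot be invoked for $\kbproj$, and Corollary~\ref{CorSubARExt} with ambient category $\kbproj$ is unavailable; since $\kbproj$ is itself extension-closed and functorially finite in itself, your argument would otherwise "prove" that $\kbproj$ always has almost split extensions, which is false.

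The paper's proof avoids this by never asserting global existence in $\kbproj$. It uses the \emph{relative} Serre duality $\operatorname{Hom}_{\operatorname{D^b}(\mod A)}(X,Y)\simeq\Dd\operatorname{Hom}_{\operatorname{D^b}(\mod A)}(Y,\nu X)$, valid for $X$ perfect and $Y$ arbitrary, together with the \emph{local} criterion of Proposition~\ref{local theorem}, to produce, for each indecomposable $X\in\mathscr{C}$ non-projective in $\mathscr{C}$, an almost split extension $\delta\in\operatorname{Ext}^1(X,\nu X[-1])$ whose left term lives in the larger ambient category; it then applies the pointwise Proposition~\ref{PropSubARExt} (not the global Corollary~\ref{CorSubARExt}) with a minimal right $\mathscr{C}$-approximation of $\nu X[-1]$ to push $\delta$ into $\mathscr{C}$. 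This only yields \emph{right} almost split extensions; the left-hand ones require a separate dual argument applied to the functorially finite extension-closed subcategory $\nu(\mathscr{C})\subseteq\operatorname{K^b}(\inj A)$, a step absent from your proposal. Your observation that minimal approximations can be extracted in a Krull--Schmidt setting is correct, but it does not repair the missing existence of almost split extensions in the ambient category.
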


\begin{proof}
It is well-known (e.g.\ \cite[Section 10.4]{KeDG}\cite[Theorem 3.7]{IR1}) that the Nakayama functor $\nu\colon\kbproj\simeq\operatorname{K^b}(\inj A)$ gives a relative Auslander--Reiten--Serre duality 
\[\mathrm{Hom}_{\operatorname{D^b}(\mod A)}(X,Y)\simeq\Dd\mathrm{Hom}_{\operatorname{D^b}(\mod A)}(Y,\nu X)\]
for $X\in\kbproj$ and $Y\in\operatorname{D^b}(\mod A)$.

Let $X\in\kbproj$ be an indecomposable object which is non-projective in $\mathscr{C}$.
Then there is an almost split extension $\delta\in\mathrm{Ext}^1_{\operatorname{D^b}(\mod A)}(X,\nu X[-1])$ in $\operatorname{D^b}(\mod A)$ by Proposition~\ref{local theorem}.
Applying Proposition~\ref{PropSubARExt} to $(\mathscr{D},\mathscr{C}):=(\mathscr{C},\operatorname{D^b}(\mod A))$, $X$ has an almost split extension in $\mathscr{C}$. Thus $\mathscr{C}$ has right almost split extensions.

On the other hand, since $\nu(\mathscr{C})$ is a functorially finite extension-closed subcategory of $\operatorname{K^b}(\inj A)$, the dual argument shows that $\mathscr{C}$ has left almost split extensions.
Thus the assertion holds.
\end{proof}

\begin{example}\label{example: n-term}
Let $A$ be a finite dimensional $k$-algebra, and $n$ a non-negative integer.
An object $P=(P^i,d^i)$ is called \emph{$n$-term} if $P^i=0$ holds for all $i>0$ and all $i\le -n$.
Then the full subcategory $\mathscr{C}$ of $\kbproj$ consisting of all $n$-term complexes is extension-closed and functorially finite in $\kbproj$.
Therefore $\mathscr{C}$ is an extriangulated category with almost split extensions.
\end{example}

\begin{proof}
Clearly $\mathscr{C}$ is extension-closed.
Since $\mathscr{C}$ can be written as $(\proj A)*(\proj A)[1]*\cdots*(\proj A)[n-1]$ and each $(\proj A)[i]$ is functorially finite in $\kbproj$,
so is $\mathscr{C}$ (e.g.\ \cite[Theorem 1.3]{C}).
\end{proof}


\renewcommand{\arraystretch}{0.6}
In the rest, we give examples to explain results in this section.
Let $k$ be a field, and let $\operatorname{D^b}(kA_3)$ be the bounded derived category of the path algebra of the quiver
\[ A_3:\qquad 1\longleftarrow2\longleftarrow3. \]
We know that $\mathrm{AR}_{\rm ET}(\operatorname{D^b}(kA_3))$ is as follows (\cite[5.6]{Ha1}).
\[\begin{tikzpicture}[scale=0.5, fl/.style={->,>=latex}]
\foreach \x in {-1,0,1,2} {
   \draw[fl] (3.6*\x,1.5) -- (3.6*\x+1,0.5) ;
   \draw[fl] (3.6*\x+1.8,-0.5) -- (3.6*\x+2.8,-1.5) ;
   \draw[fl] (3.6*\x,-1.5) -- (3.6*\x+1,-0.5) ;
   \draw[fl] (3.6*\x+1.8,0.5) -- (3.6*\x+2.8,1.5) ;
   \draw[fl, dashed] (3.6*\x+2.4,2) -- (3.6*\x+0.4,2) ;
   \draw[fl, dashed] (3.6*\x+0.6,0) -- (3.6*\x-1.4,0) ;
   \draw[fl, dashed] (3.6*\x+2.4,-2) -- (3.6*\x+0.4,-2) ;
};
   \draw[fl, dashed] (3.6*3+0.6,0) -- (3.6*3-1.4,0) ;
\draw (-5.8,2) node {$\cdots$} ;
\draw (-4,2) node {$3[-1]$} ;
\draw (-0.4,2) node {$\bsm3\\2\\1\esm$} ;
\draw (3.2,2) node {$1[1]$} ;
\draw (6.8,2) node {$2[1]$} ;
\draw (10.4,2) node {$3[1]$} ;
\draw (12.2,2) node {$\cdots$} ;
\draw (-5.8,0) node {$\cdots$} ;
\draw (-2.2,0) node {$\bsm2\\1\esm$} ;
\draw (1.4,0) node {$\bsm3\\2\esm$} ;
\draw (5,0) node {$\bsm2\\1\esm$[1]} ;
\draw (8.6,0) node {$\bsm3\\2\esm$[1]} ;
\draw (12.2,0) node {$\cdots$} ;
\draw (-5.8,-2) node {$\cdots$} ;
\draw (-4,-2) node {$1$} ;
\draw (-0.4,-2) node {$2$} ;
\draw (3.2,-2) node {$3$} ;
\draw (6.8,-2) node {$\bsm3\\2\\1\esm[1]$} ;
\draw (10.4,-2) node {$1[2]$} ;
\draw (12.2,-2) node {$\cdots$} ;
\end{tikzpicture}
\]

Let $(t^{\le0},t^{\ge0})$ be the standard $t$-structure on $\operatorname{D^b}(kA_3)$, and put $\mathscr{C}=t^{\le0}\cap t^{\ge-1}$. Since $\mathscr{C}\subseteq\operatorname{D^b}(kA_3)$ is extension-closed, it has an induced extriangulated structure, which we denote by $(\mathscr{C},\mathbb{E},\mathfrak{s})$. The indecomposables which belong to $\mathscr{C}$ are
\begin{itemize}
\item[{\rm (i)}] $\mathbb{E}$-projectives: $1,\bsm2\\1\esm,\bsm3\\2\\1\esm$,
\item[{\rm (ii)}] $\mathbb{E}$-injectives: $\bsm3\\2\\1\esm [1],\bsm3\\2\esm [1],3[1]$,
\item[{\rm (ii)}] the rest: $2,\bsm3\\2\esm,1[1],3,\bsm2\\1\esm [1],2[1]$.\end{itemize}
As in Remark~\ref{RemEasy}, we see that almost split sequences in $\operatorname{D^b}(kA_3)$ starting from objects in {\rm (i), (ii)} are also split sequences in $\mathscr{C}$. Similarly for almost split sequences ending at objects in {\rm (ii),(iii)}.
Thus $\mathrm{AR}_{\rm ET}(\mathscr{C},\mathbb{E},\mathfrak{s})$ is as follows. 
The $\mathbb{E}$-projective (resp.\ $\mathbb{E}$-injective) objects are highlighted by a vertical line to their left (resp.\ to their right).
\[
\begin{tikzpicture}[scale=0.5, fl/.style={->,>=latex}]
\foreach \x in {-1,0,1,2} {
   \draw[fl] (3.6*\x+1.8,0.5) -- (3.6*\x+2.8,1.5) ;
   \draw[fl] (3.6*\x,-1.5) -- (3.6*\x+1,-0.5) ;
};
\foreach \x in {0,1,2} {
   \draw[fl] (3.6*\x,1.5) -- (3.6*\x+1,0.5) ;
   \draw[fl] (3.6*\x-1.8,-0.5) -- (3.6*\x-0.8,-1.5) ;
   \draw[fl, dashed] (3.6*\x+2.4,2) -- (3.6*\x+0.4,2) ;
   \draw[fl, dashed] (3.6*\x+0.6,0) -- (3.6*\x-1.4,0) ;
   \draw[fl, dashed] (3.6*\x-1.2,-2) -- (3.6*\x-3.2,-2) ;
};
\draw (-0.4,2) node {$\bsm3\\2\\1\esm$} ;
\draw (3.2,2) node {$1[1]$} ;
\draw (6.8,2) node {$2[1]$} ;
\draw (10.4,2) node {$3[1]$} ;
\draw (-2.2,0) node {$\bsm2\\1\esm$} ;
\draw (1.4,0) node {$\bsm3\\2\esm$} ;
\draw (5,0) node {$\bsm2\\1\esm$[1]} ;
\draw (8.6,0) node {$\bsm3\\2\esm$[1]} ;
\draw (-4,-2) node {$1$} ;
\draw (-0.4,-2) node {$2$} ;
\draw (3.2,-2) node {$3$} ;
\draw (6.8,-2) node {$\bsm3\\2\\1\esm[1]$} ;
%
\draw[red](-0.7,2.46) -- (-0.7,1.54) ;
\draw[red](-2.5,0.42) -- (-2.5,-0.42) ;
\draw[red](-4.3,-1.6) -- (-4.3,-2.4) ;
\draw[red](10.95,2.4) -- (10.95,1.6) ;
\draw[red](9.2,0.42) -- (9.2,-0.42) ;
\draw[red](7.4,-1.56) -- (7.4,-2.44) ;
\end{tikzpicture}
\]

Put $\mathscr{D}=\mathrm{add}\bsm3\\2\\1\esm [1]$ and let $(\mathbb{F},\mathfrak{t})=(\mathbb{E}_{\mathscr{D}},\mathfrak{s}|_{\mathbb{E}_{\mathscr{D}}})$ be the induced relative extriangulated structure in $(\mathscr{C},\mathbb{E},\mathfrak{s})$. Then together with the objects in {\rm (i)}, the object $\bsm3\\2\\1\esm [1]$ becomes $\mathbb{F}$-projective. We have $\mathbb{F}(X,3)=0$ for any indecomposable $X\in\mathscr{C}$, and thus $3$ becomes $\mathbb{F}$-injective, together with objects in {\rm (ii)}.
We see that the almost split extension $\delta\in\mathbb{E}(\bsm3\\2\esm[1],\bsm2\\1\esm[1])$ belongs to $\mathbb{F}(\bsm3\\2\esm[1],\bsm2\\1\esm[1])$, and thus the almost split sequence in $(\mathscr{C},\mathbb{E},\mathfrak{s})$ ending at $\bsm3\\2\esm[1]$ is also an almost split sequence in $(\mathscr{C},\mathbb{F},\mathfrak{t})$. Similarly for the other almost split extensions in $\mathbb{E}(C,A)$ with $A,C\notin\mathscr{D}$. Thus $\mathrm{AR}_{\rm ET}(\mathscr{C},\mathbb{F},\mathfrak{t})$ is as follows.
\[
\begin{tikzpicture}[scale=0.5, fl/.style={->,>=latex}]
\foreach \x in {-1,0,1,2} {
   \draw[fl] (3.6*\x+1.8,0.5) -- (3.6*\x+2.8,1.5) ;
   \draw[fl] (3.6*\x,-1.5) -- (3.6*\x+1,-0.5) ;
};
\foreach \x in {0,1,2} {
   \draw[fl] (3.6*\x,1.5) -- (3.6*\x+1,0.5) ;
   \draw[fl] (3.6*\x-1.8,-0.5) -- (3.6*\x-0.8,-1.5) ;
   \draw[fl, dashed] (3.6*\x+2.4,2) -- (3.6*\x+0.4,2) ;
   \draw[fl, dashed] (3.6*\x+0.6,0) -- (3.6*\x-1.4,0) ;
};
\foreach \x in {0,1} {
   \draw[fl, dashed] (3.6*\x-1.2,-2) -- (3.6*\x-3.2,-2) ;
};
\draw (-0.4,2) node {$\bsm3\\2\\1\esm$} ;
\draw (3.2,2) node {$1[1]$} ;
\draw (6.8,2) node {$2[1]$} ;
\draw (10.4,2) node {$3[1]$} ;
\draw (-2.2,0) node {$\bsm2\\1\esm$} ;
\draw (1.4,0) node {$\bsm3\\2\esm$} ;
\draw (5,0) node {$\bsm2\\1\esm$[1]} ;
\draw (8.6,0) node {$\bsm3\\2\esm$[1]} ;
\draw (-4,-2) node {$1$} ;
\draw (-0.4,-2) node {$2$} ;
\draw (3.2,-2) node {$3$} ;
\draw (6.8,-2) node {$\bsm3\\2\\1\esm[1]$} ;
%
\draw[red](-0.7,2.46) -- (-0.7,1.54) ;
\draw[red](-2.5,0.42) -- (-2.5,-0.42) ;
\draw[red](-4.3,-1.6) -- (-4.3,-2.4) ;
\draw[red](10.95,2.4) -- (10.95,1.6) ;
\draw[red](9.2,0.42) -- (9.2,-0.42) ;
\draw[red](7.4,-1.56) -- (7.4,-2.44) ;
\draw[red](6.2,-1.56) -- (6.2,-2.44) ;
\draw[red](3.5,-1.57) -- (3.5,-2.43) ;
\end{tikzpicture}
\]

Since $\bsm3\\2\\1\esm [1]$ is both $\mathbb{F}$-projective and $\mathbb{F}$-injective, we obtain the quotient extriangulated category $(\mathscr{C}/\mathscr{D},\mathbb{F}/\mathscr{D},\mathfrak{t}/\mathscr{D})$.
The ideal $[\mathscr{D}]$ satisfies $[\mathscr{D}](C,C)=0$ for any indecomposable object $C\in\mathscr{C}$ which is not in $\mathscr{D}$. Thus by Proposition~\ref{PropQuotARExt}, each almost split extension in $(\mathscr{C},\mathbb{F},\mathfrak{t})$, which corresponds to a dashed arrow in the above quiver, gives an almost split extension in $(\mathscr{C}/\mathscr{D},\mathbb{F}/\mathscr{D},\mathfrak{t}/\mathscr{D})$. Hence $\mathrm{AR}_{\rm ET}(\mathscr{C}/\mathscr{D},\mathbb{F}/\mathscr{D},\mathfrak{t}/\mathscr{D})$ becomes as follows.
\[
\begin{tikzpicture}[scale=0.5, fl/.style={->,>=latex}]
\foreach \x in {-1,0,1,2} {
   \draw[fl] (3.6*\x+1.8,0.5) -- (3.6*\x+2.8,1.5) ;
};
\foreach \x in {-1,0,1} {
   \draw[fl] (3.6*\x,-1.5) -- (3.6*\x+1,-0.5) ;
};
\foreach \x in {0,1,2} {
   \draw[fl] (3.6*\x,1.5) -- (3.6*\x+1,0.5) ;
   \draw[fl, dashed] (3.6*\x+2.4,2) -- (3.6*\x+0.4,2) ;
   \draw[fl, dashed] (3.6*\x+0.6,0) -- (3.6*\x-1.4,0) ;
};
\foreach \x in {0,1} {
   \draw[fl] (3.6*\x-1.8,-0.5) -- (3.6*\x-0.8,-1.5) ;
   \draw[fl, dashed] (3.6*\x-1.2,-2) -- (3.6*\x-3.2,-2) ;
};
\draw (-0.4,2) node {$\bsm3\\2\\1\esm$} ;
\draw (3.2,2) node {$1[1]$} ;
\draw (6.8,2) node {$2[1]$} ;
\draw (10.4,2) node {$3[1]$} ;
\draw (-2.2,0) node {$\bsm2\\1\esm$} ;
\draw (1.4,0) node {$\bsm3\\2\esm$} ;
\draw (5,0) node {$\bsm2\\1\esm$[1]} ;
\draw (8.6,0) node {$\bsm3\\2\esm$[1]} ;
\draw (-4,-2) node {$1$} ;
\draw (-0.4,-2) node {$2$} ;
\draw (3.2,-2) node {$3$} ;
%
\draw[red](-0.7,2.46) -- (-0.7,1.54) ;
\draw[red](-2.5,0.42) -- (-2.5,-0.42) ;
\draw[red](-4.3,-1.6) -- (-4.3,-2.4) ;
\draw[red](10.95,2.4) -- (10.95,1.6) ;
\draw[red](9.2,0.42) -- (9.2,-0.42) ;
\draw[red](3.5,-1.57) -- (3.5,-2.43) ;
\end{tikzpicture}
\]

We remark that the almost split sequence
\[ \bsm2\\1\esm [1]\overset{x}{\longrightarrow}2[1]\oplus\bsm3\\2\\1\esm [1]\overset{y}{\longrightarrow}\bsm3\\2\esm [1]\overset{\delta}{\dashrightarrow}  \]
in $(\mathscr{C},\mathbb{F},\mathfrak{t})$ induces an almost split sequence
\[ \bsm2\\1\esm [1]\overset{\widetilde{x}}{\longrightarrow}2[1]\overset{\widetilde{y}}{\longrightarrow}\bsm3\\2\esm [1]\overset{\delta}{\dashrightarrow}  \]
in $(\mathscr{C}/\mathscr{D},\mathbb{F}/\mathscr{D},\mathfrak{t}/\mathscr{D})$.
\renewcommand{\arraystretch}{1}

\section{Sink and source sequences in extriangulated categories}\label{section:sink and source}

The following notion is basic to study the additive structure of an extriangulated category.

\begin{definition}\label{define sink sequence}
Let $\mathscr{C}$ be a Krull--Schmidt category. For an indecomposable object $X$ in $\mathscr{C}$, we call a complex 
\begin{equation}\label{sink sequence of X}
A_\ell\xrightarrow{f_\ell}\cdots\xrightarrow{f_3} A_2\xrightarrow{f_2} A_1\xrightarrow{f_1} X
\end{equation}
with $\ell\in\mathbb{Z}_{\ge1}\cup\{\infty\}$ a \emph{sink sequence} if each $f_i$ is right minimal and the following sequence is exact.
\begin{equation}\label{sink sequence of X2}
\mathscr{C}(-,A_\ell)\xrightarrow{f_\ell\circ-}\cdots\xrightarrow{f_3\circ-}\mathscr{C}(-,A_2)\xrightarrow{f_2\circ-}\mathscr{C}(-,A_1)\xrightarrow{f_1\circ-}({\rm rad}\mathscr{C})(-,X)\to0.
\end{equation}
It is called a \emph{sink resolution if either $A_\ell=0$ or $\ell=\infty$ holds. In this case, the sequence \eqref{sink sequence of X2} gives a minimal projective resolution in $\mathrm{Mod}\mathscr{C}$.}
Dually, we define a \emph{source sequence} (\emph{source resolution}) of an indecomposable object.
\end{definition}

Almost split sequences give sink sequences of indecomposable non-projective objects and source sequences of indecomposable non-injective objects at once.

\begin{proposition}\label{ass gives sink sequence}
Let $\mathscr{C}$ be a Krull--Schmidt extriangulated category.
If $A\to B\to C$ is an almost split sequence in $\mathscr{C}$, then this is a sink sequence of $C$ and a source sequence of $A$.
\end{proposition}

\begin{proof}
The assertion is immediate from Fact~\ref{FactExact}(1) and Proposition~\ref{PropARSeq1}.
\end{proof}

\begin{remark}
In Proposition~\ref{ass gives sink sequence}, it is difficult in general to describe sink (resp.\ source) resolutions of $C$ (resp.\ $A$) except for the following special cases.
\begin{enumerate}[\rm(1)]
\item If $\mathscr{C}$ is an exact category, then $0\to A\to B\to C$ is a sink resolution of $C$, and $A\to B\to C\to 0$ is a source resolution of $A$.
\item If $\mathscr{C}$ is a triangulated category with suspension functor $[1]$, then $\cdots\to C[-2]\to A[-1]\to B[-1]\to C[-1]\to A\to B\to C$ is a sink resolution of $C$, and 
$A\to B\to C\to A[1]\to B[1]\to C[1]\to A[2]\to\cdots$ is a source resolution of $A$.
\end{enumerate}
\end{remark}

Now we study sink (resp.\ source) sequences of projective (resp.\ injective) objects.

\begin{theorem}\label{weak kernel of right almost split}
Let $\mathscr{C}$ be a Krull--Schmidt extriangulated category.
\begin{enumerate}[\rm(1)]
\item Assume that $\mathscr{C}$ has enough injectives. If $B\xrightarrow{g} A\xrightarrow{f} P$ is a sink sequence of an indecomposable projective object $P$, then $B$ is injective.
\item Assume that $\mathscr{C}$ has enough projectives. If $I\to A\to B$ is a source sequence of an indecomposable injective object $I$, then $B$ is projective.
\end{enumerate}
\end{theorem}

\begin{proof}
(1) Take an $\mathfrak{s}$-conflation $B\overset{i}{\longrightarrow}I\overset{j}{\longrightarrow}X$ with $I$ injective. By \cite[Proposition 1.20]{LNa}, we have the following commutative diagram such that $B \xrightarrow{\left[\bsm i\\-g \esm\right]}I\oplus A\xrightarrow{[h\ x^{\prime}]} Z$ is an $\mathfrak{s}$-conflation.
\[
\xymatrix@C4em{
B\ar[r]^i\ar[d]^g&I\ar[r]^j\ar[d]^h&X\ar@{=}[d]\\
A\ar[r]^{x^{\prime}}&Z\ar[r]^{y^{\prime}}&X
}
\]
Since $fg=0$, there exists $b\in\mathscr{C}(Z,P)$ such that $b [h\ x^{\prime}]=[0\ f]$.

If $b$ is a retraction, then $[0\ f]=b [h\ x^{\prime}]\in\mathscr{C}(I\oplus A,P)$ is a deflation, and hence a retraction since $P$ is projective. This is a contradiction since $f$ belongs to ${\rm rad}\mathscr{C}$.
Therefore $b$ is not a retraction. Since $f$ is a sink morphism, there is $c\in\mathscr{C}(Z,A)$ satisfying $b = fc$. Since $f = bx^{\prime} = fcx^{\prime}$ holds and $f$ is right minimal, $cx^{\prime}$ is an isomorphism in $\mathscr{C}$. Since $g$ is a weak kernel of $f$ and $f(cx^{\prime})^{-1}ch=fch=bh=0$ holds, there is $d\in\mathscr{C}(I,B)$ such that $(cx^{\prime})^{-1}ch=gd$.
\[
\xymatrix@R1.5em@C4em{
B\ar[r]^i\ar[d]^g&I\ar[rr]^d\ar[d]^h&&B\ar[d]^g\\
A\ar[r]^{x^{\prime}}\ar[d]^f&Z\ar[r]^c\ar[d]^b&A\ar[d]^f\ar[r]^{(cx')^{-1}}&A\ar[d]^f\\
P\ar@{=}[r]&P\ar@{=}[r]&P\ar@{=}[r]&P
}
\]
Since $g=gdi$ holds and $g$ is right minimal, $di$ is an isomorphism in $\mathscr{C}$. Thus $i$ is a section and $B$ is injective. (2) is dual to (1).
\end{proof}

\begin{remark}\label{Ai with big i}
In contrast to Theorem \ref{weak kernel of right almost split},
the terms $A_i$ with $i\ge3$ in the sink sequence \eqref{sink sequence of X} of an indecomposable projective object are not necessarily injective.
For example, consider the extriangulated category $(\mathscr{C},\mathbb{F},\mathfrak{t})$ in Section~\ref{section:example from hereditary}. Then the projective object ${\bsm3\\2\\1\esm}[1]$ has the following sink resolution
\[0\to{\bsm2\\1\esm}\to{\bsm3\\2\\1\esm}\to 3\to{\bsm2\\1\esm}[1]\to {\bsm3\\2\\1\esm}[1],\]
where ${\bsm2\\1\esm}$ and ${\bsm3\\2\\1\esm}$ are non-injective.
\end{remark}

In Theorem~\ref{weak kernel of right almost split}, it is difficult in general to describe sink (resp.\ source) resolution of $P$ (resp.\ $I$), as pointed out in Remark \ref{Ai with big i}.
In the rest of this subsection, we study two special cases where we can describe sink (resp.\ source) resolutions of projective (resp.\ injective) objects.

The first case is given by cotilting modules.
Let $\Lambda$ be a finite dimensional $k$-algebra, and let $U\in\mod\Lambda$ be a cotilting $\Lambda$-module with ${\rm inj.dim}\,U=n$ (that is, $\Dd U$ is a tilting $\Lambda^\mathrm{op}$-module with ${\rm proj.dim}\,\Dd U=n$). The classical results given in \cite[Section 5]{AR5} imply that the full subcategory
\[{}^\perp U:=\{X\in\mod\Lambda\mid{\rm Ext}^i_\Lambda(X,U)=0\mbox{ for all $i>0$}\}\]
of $\mod\Lambda$ is an exact category with enough projectives $\mathsf{add}\Lambda$, enough injectives $\mathsf{add} U$ and almost split extensions.
The following result is an application of Auslander--Buchweitz theory \cite{AB}.

\begin{proposition}\label{sink for cotilting}
Under the above setting, the following assertions hold.
\begin{enumerate}[\rm(1)]
\item Each indecomposable projective object $P\in{}^\perp U$ has a sink resolution
\[0\to I_n\to\cdots\to I_2\to A_1\to P\]
in ${}^\perp U$ such that each $I_i$ is injective in ${}^\perp U$, where the sequence is $0\to A_1\to P$ if $n\le 1$.
\item Each indecomposable injective object $I\in{}^\perp U$ has a source resolution
\[I\to A^1\to P^2\to\cdots\to P^n\to0\]
in ${}^\perp U$ such that each $P^i$ is projective in ${}^\perp U$, where the sequence is $I\to A^1\to 0$ if $n\le 1$.
\end{enumerate}
\end{proposition}

\begin{proof}
(1) If $n\le 1$, then the assertion is clear.
In the rest, we assume $n\ge2$. 

Let $({}^\perp U)^\perp:=\{X\in\mod\Lambda\mid{\rm Ext}^i_\Lambda({}^\perp U,X)=0\ \mbox{for all $i>0$}\}$. The following results are basic in Auslander--Buchweitz theory \cite{AB}, see \cite[Theorem 5.5, Proposition 3.3]{AR5}.
\begin{itemize}
\item Any $C\in\mod\Lambda$ admits an exact sequence $0\to Y_C\xrightarrow{b} X_C\xrightarrow{a} C\to0$ in $\mod\Lambda$ such that $X_C\in{}^\perp U$, $Y_C\in({}^\perp U)^\perp$ and $a$ is right minimal.
\item $({}^\perp U)^\perp$ consists of $Y\in\mod\Lambda$ that admits an exact sequence $0\to U_\ell\xrightarrow{c_\ell}\cdots\xrightarrow{c_1} U_0\xrightarrow{c_0} Y\to0$ such that  $U_i\in\mathsf{add} U$, $c_i$ is right minimal and $\ell=\max\{i\ge1\mid{\rm Ext}^i_\Lambda(Y,U)\neq0\}$. 
\end{itemize}
Applying above results to $C:={\rm rad}\,P$ and $Y:=Y_{{\rm rad}\,P}$, we obtain exact sequences
\[0\to Y_{{\rm rad}\,P}\xrightarrow{b} X_{{\rm rad}\,P}\xrightarrow{a}{\rm rad}\,P\to0\ \mbox{ and }\ 0\to U_\ell\xrightarrow{c_\ell}\cdots\xrightarrow{c_1} U_0\xrightarrow{c_0} Y_{{\rm rad}\,P}\to0.\]
It is straightforward to check that these sequences are exact after applying ${\rm Hom}_\Lambda({}^\perp U,-)$.
Since ${\rm Ext}^i_\Lambda({\rm rad}\,P,U)={\rm Ext}^{i+1}_\Lambda(P/{\rm rad}\,P,U)=0$ holds for all $i\ge n$, we have ${\rm Ext}^i_\Lambda(Y_{{\rm rad}\,P},U)={\rm Ext}^{i+1}_\Lambda({\rm rad}\,P,U)=0$ holds for all $i\ge n-1$.
Thus we can assume $\ell=n-2$.
By combining the above sequences and the inclusion $\iota\colon{\rm rad}\,P\to P$, we obtain a sink resolution
\[0\to U_{n-2}\xrightarrow{c_{n-2}}\cdots\xrightarrow{c_1} U_0\xrightarrow{bc_0} X_{{\rm rad}\,P}\xrightarrow{\iota a} P.\]
Setting $I_{i+2}:=U_i$, we obtain a desired sequence.

(2) Let $\Gamma:={\rm End}_\Lambda(U)$. 
Then we have a duality $\mathbb{R}\mathrm{Hom}_{\Gamma}(-,U):\operatorname{D^b}(\mod\Gamma)\simeq \operatorname{D^b}(\mod\Lambda)$ (e.g.\ \cite[Corollary 2.11]{M}), which restricts to a duality
${\rm Hom}_\Gamma(-,U)\colon{}^\perp_\Gamma U\simeq {}^\perp_\Lambda U$ of exact categories which restricts to dualities $\mathsf{add}_\Gamma\Gamma\simeq\mathsf{add}_\Lambda U$ and $\mathsf{add}_\Gamma U\simeq\mathsf{add}_\Lambda\Lambda$.
Since it sends sink resolutions to source resolutions, the assertion follows from (1).
\end{proof}

\begin{example}\label{An example}
For $n\ge3$, let $A_n$ be the following quiver
\[\xymatrix@C1.5em{A_n:&1&2\ar[l]&3\ar[l]&\cdots\ar[l]&n-1\ar[l]&n\ar[l]}\]
and $\Lambda$ the quotient of $kA_n$ modulo the longest path.
Then $P_{n-1}=I_1$ and $P_n=I_2$ holds.
Fix $1\le \ell\le n-1$, and let
\[U:=(\bigoplus_{i=\ell}^nP_i)\oplus(\bigoplus_{i=3}^{\ell+1}I_i).\]
This is a cotilting $\Lambda$-module with ${\rm inj.dim}\,U\le2$, and we have
\[{}^\perp U=\mathsf{add}(U,\mod\Gamma)\ \mbox{ for }\ \Gamma:=\Lambda/(\sum_{i=\ell+1}^ne_i)\]
by an explicit calculation.
The almost split sequences in ${}^\perp U$ are those in $\mod\Gamma$ and
\[I_i^\Gamma\to I_{i+1}^\Gamma\oplus I_i\to I_{i+1}\ \mbox{ for }\ 2\le i\le\ell,\]
where $I_i^\Gamma$ is the injective hull of $S_i$ in $\mod\Gamma$. The sink resolutions for indecomposable projectives in ${}^\perp U$ are
\[0\to P_1,\ 0\to P_{i-1}\to P_i\ \mbox{for $2\le i\le n-1$, and}\ 0\to P_\ell\to P_{n-1}\oplus(P_\ell/S_1)\to P_n,\]
and the source resolutions for indecomposable injectives in ${}^\perp U$ are
\begin{eqnarray*}
&P_\ell\to P_{\ell+1}\oplus(P_\ell/S_1)\to P_n\to0,\ P_i\to P_{i+1}\to0\ \mbox{ for $\ell+1\le i\le n-2$,}&\\
&I_i\to I_{i+1}\to0\ \mbox{ for $1\le i\le\ell$, and}\ I_{\ell+1}\to0.&
\end{eqnarray*}
For example, $\mathrm{AR}_{\rm ET}({}^\perp U)$ for $n=7$ and $\ell=4$ is the following.
\[\begin{xy}
0;<7pt,0pt>:<0pt,4pt>::
(0,0)*{{\begin{smallmatrix}P_1\\ 1\end{smallmatrix}}}="1",
(5,5)*{{\begin{smallmatrix}P_2\\ 2\\ 1\end{smallmatrix}}}="2",
(10,0)*{{\begin{smallmatrix}2\end{smallmatrix}}}="3",
(10,10)*{{\begin{smallmatrix}P_3\\ 3\\ 2\\ 1\end{smallmatrix}}}="4",
(15,5)*{{\begin{smallmatrix}3\\ 2\end{smallmatrix}}}="5",
(15,15)*{{\begin{smallmatrix}P_4\\ 4\\ 3\\ 2\\ 1\end{smallmatrix}}}="6",
(20,0)*{{\begin{smallmatrix}3\end{smallmatrix}}}="7",
(20,10)*{{\begin{smallmatrix}4\\ 3\\ 2\end{smallmatrix}}}="8",
(25,5)*{{\begin{smallmatrix}4\\ 3\end{smallmatrix}}}="9",
(25,15)*{{\begin{smallmatrix}P_7\\ I_2\\ 7\\ 6\\ 5\\ 4\\ 3\\ 2\end{smallmatrix}}}="10",
(30,0)*{{\begin{smallmatrix}4\end{smallmatrix}}}="11",
(30,10)*{{\begin{smallmatrix}I_3\\ 7\\ 6\\ 5\\ 4\\ 3\end{smallmatrix}}}="12",
(35,5)*{{\begin{smallmatrix}I_4\\ 7\\ 6\\ 5\\ 4\end{smallmatrix}}}="13",
(40,0)*{{\begin{smallmatrix}I_5\\ 7\\ 6\\ 5\end{smallmatrix}}}="14",
(18.5,18)*{{\begin{smallmatrix}P_5\\ 5\\ 4\\ 3\\ 2\\ 1\end{smallmatrix}}}="p",
(21.5,18)*{{\begin{smallmatrix}P_6\\ I_1\\ 6\\ 5\\ 4\\ 3\\ 2\\ 1\end{smallmatrix}}}="q",
\ar"1";"2",
\ar"2";"3",
\ar"2";"4",
\ar"3";"5",
\ar"4";"5",
\ar"4";"6",
\ar"5";"7",
\ar"5";"8",
\ar"6";"8",
\ar"7";"9",
\ar"8";"9",
\ar"8";"10",
\ar"9";"11",
\ar"9";"12",
\ar"10";"12",
\ar"11";"13",
\ar"12";"13",
\ar"13";"14",
\ar"6";"p",
\ar"p";"q",
\ar"q";"10",
\ar@{.>}"3";"1",
\ar@{.>}"7";"3",
\ar@{.>}"11";"7",
\ar@{.>}"14";"11",
\ar@{.>}"5";"2",
\ar@{.>}"9";"5",
\ar@{.>}"13";"9",
\ar@{.>}"8";"4",
\ar@{.>}"12";"8",
\end{xy}\]
\end{example}

The second case is given by Cohen--Macaulay representations.

Let $R$ be a complete local Cohen--Macaulay ring with canonical module $\omega_R$.
Let $\Lambda$ be an \emph{$R$-order} (that is, an $R$-algebra which is maximal Cohen--Macaulay as an $R$-module), and let
\[\mathsf{CM}\Lambda:=\{X\in\mod\Lambda\mid\mbox{$X$ is maximal Cohen--Macaulay as an $R$-module}\}.\]
be the category of \emph{Cohen--Macaulay $\Lambda$-modules}.
This is an exact category with enough projectives $\mathsf{add}\,\Lambda$ and enough injectives $\mathsf{add}\,\omega_\Lambda$ for $\omega_\Lambda:={\rm Hom}_R(\Lambda,\omega_R)$. There is an equivalence
\[\nu:=\omega_\Lambda\otimes_\Lambda-:\mathsf{add}\,\Lambda\simeq\mathsf{add}\,\omega_\Lambda\]
called the \emph{Nakayama functor}. The category $\mathsf{CM}\Lambda$ has almost split extensions if and only if $\Lambda$ is an \emph{isolated singularity} (that is, $\Lambda\otimes_RR_{\mathfrak p}$ has global dimension $\dim R_{\mathfrak p}$ for any non-maximal prime ideal ${\mathfrak p}$ of $R$) \cite{A3,Y,LW}.
As in the case of Proposition~\ref{sink for cotilting}, we have the following observation.

\begin{proposition}\label{sink for CM}
Let $R$ be a complete local Cohen--Macaulay ring of $\dim R=d\ge2$, and $\Lambda$ an $R$-order.
\begin{enumerate}[\rm(1)]
\item Each indecomposable projective object $P\in\mathsf{CM}\Lambda$ has a sink resolution
\[0\to I_d\to\cdots\to I_2\to A_1\to P\]
such that each $I_i$ is injective in $\mathsf{CM}\Lambda$ and $I_d=\nu(P)$.
\item Each indecomposable injective object $I\in\mathsf{CM}\Lambda$ has a source resolution
\[I\to A^1\to P^2\to\cdots\to P^d\to0\]
such that each $P^i$ is projective in $\mathsf{CM}\Lambda$ and $P^d=\nu^{-1}(I)$.
\end{enumerate}
\end{proposition}

\begin{proof}
This is the case $n=1$ in \cite[Theorem 3.4.3]{I5}. Although the proof there is written in a slightly more special setting, it works without substantial changes.
\end{proof}

\begin{example}\label{A1 example}
Let $R=k[[x,y,u,v]]/(xy-uv)$ be a simple singularity of type $A_1$ of dimension three.
Then $\mathsf{CM} R$ has 3 indecomposable objects up to isomorphisms: $R$, and ideals $(x,u)$ and $(x,v)$ \cite{Y,LW},
and $\mathrm{AR}_{\rm ET}(\mathsf{CM} R)$ is the following.
\[\xymatrix@R1em@C2.5em{
(x,u)\ar@{=>}[dr]^{a=1}_{b=v/x}&&(x,v)\ar@{=>}[dr]^e_f\ar@{.>}[ll]&&(x,u)\ar@{.>}[ll]\\
&R\ar@{=>}[ur]^{c=v}_{d=x}\ar@{=>}[dr]^{g=u}_{h=x}&&R\ar@{=>}[ur]^g_h\ar@{=>}[dr]^c_d\\
(x,v)\ar@{=>}[ur]^{e=1}_{f=u/x}&&(x,u)\ar@{=>}[ur]^a_b\ar@{.>}[ll]&&(x,v)\ar@{.>}[ll]}\]
The almost split sequences are
\[0\to (x,u)\xrightarrow{{\left[\begin{smallmatrix}a\\ b\end{smallmatrix}\right]}}R^{\oplus 2}\xrightarrow{{\left[\begin{smallmatrix}c&-d\end{smallmatrix}\right]}}(x,v)\to0\ \mbox{ and }\ 0\to (x,v)\xrightarrow{{\left[\begin{smallmatrix}e\\ f\end{smallmatrix}\right]}}R^{\oplus 2}\xrightarrow{{\left[\begin{smallmatrix}g&-h\end{smallmatrix}\right]}}(x,u)\to0.\]
The sink resolution of $R$ is
\[0\to R\xrightarrow{{\left[\begin{smallmatrix}x\\ y\\ u\\ v\end{smallmatrix}\right]}}R^{\oplus 4}\xrightarrow{{\left[\begin{smallmatrix}0&-h&0&g\\ -g&0&h&0\\ 0&d&-c&0\\ c&0&0&-d\end{smallmatrix}\right]}}(x,u)^{\oplus2}\oplus(x,v)^{\oplus2}\xrightarrow{{\left[\begin{smallmatrix}a&b&e&f\end{smallmatrix}\right]}} R,\]
and the source resolution of $R$ is
\[0\to R\xrightarrow{{\left[\begin{smallmatrix}d\\ c\\ h\\ g\end{smallmatrix}\right]}}(x,v)^{\oplus2}\oplus(x,u)^{\oplus2}\xrightarrow{{\left[\begin{smallmatrix}0&-f&0&b\\ -e&0&a&0\\ 0&e&-b&0\\ f&0&0&-a\end{smallmatrix}\right]}}R^{\oplus4}\xrightarrow{{\left[\begin{smallmatrix}x&y&u&v\end{smallmatrix}\right]}} R.\]
The category $\mathsf{CM} R$ is presented by the Auslander--Reiten quiver with mesh relations
\[ca=db\ \mbox{ and }\ ge=hf\]
and non-mesh relations from $R$ to $R$
\[ec=bh,\ fc=bg,\ ed=ah\ \mbox{ and }\ fd=ag.\]
\end{example}

\section{Stable categories of extriangulated categories}\label{section_tau-categories}

In this section, we study the structure of (co)stable categories of extriangulated categories $\mathscr{C}$ as additive categories. 
Our approach is based on results on $\tau$-categories which we recall in the next subsection.

\subsection{Reminders on $\tau$-categories}

We start with recalling the notion of $\tau$-categories introduced in \cite[2.1]{I1}.

\begin{definition}\cite[2.1]{I1}\label{define tau category}
Let $\mathscr{D}$ be a Krull--Schmidt additive category.
A \emph{right $\tau$-sequence} of $C$ is a sink sequence (Definition~\ref{define sink sequence})
\begin{equation}\label{right tau}
A\xrightarrow{x} B\xrightarrow{y} C
\end{equation}
of $C$ such that either $x$ is left almost split or $A=0$ holds.
We call $\mathscr{D}$ a \emph{right $\tau$-category} if each indecomposable object in $\mathscr{D}$ has a right $\tau$-sequence.
Dually we define a \emph{left $\tau$-sequence} and a \emph{left $\tau$-category}.

We call $\mathscr{D}$ a \emph{$\tau$-category} if it is a left and right $\tau$-category.
In this case, each right $\tau$-sequence \eqref{right tau} is either a left $\tau$-sequence or satisfies $A=0$ \cite[Theorem 2.3]{I1}. We call \eqref{right tau} a \emph{$\tau$-sequence} in the former case, and call $C$ \emph{$\tau$-projective} in the latter case. Dually we define a \emph{$\tau$-injective} object.
We call a $\tau$-category $\mathscr{D}$ \emph{strict} if, for each $\tau$-sequence \eqref{right tau} in $\mathscr{D}$, $x$ is a monomorphism and $y$ is an epimorphism.

The \emph{Auslander--Reiten quiver} $\mathrm{AR}_\tau(\mathscr{D})=(Q_0,d,d',\tau)$ of a $\tau$-category $\mathscr{D}$ is a $\tau$-quiver defined as follows (cf.\ Proposition \ref{AR quiver is translation}).
\begin{enumerate}[$\bullet$]
\item $(Q_0,d,d'):=\mathrm{AR}(\mathscr{D})$ (Definition \ref{define AR quiver}).
\item $Q_0^p:=\{X\in Q_0\mid \mbox{$X$ is $\tau$-projective}\}$, $Q_0^i:=\{X\in Q_0\mid \mbox{$X$ is $\tau$-injective}\}$.
\item $\tau C:=A$ if there exists a $\tau$-sequence $A\to B\to C$ with $A,C\in Q_0$.
\end{enumerate}
\end{definition}


The class of $\tau$-categories contains various important additive categories.

\begin{example}\label{example of tau-category}
\begin{enumerate}[\rm(1)]
\item Let $\Lambda$ be a finite dimensional algebra over a field $k$. Then $\mod\Lambda$ is a strict $\tau$-category.
In fact, almost split extensions are $\tau$-sequences, the complex $0\to {\rm rad} P\to P$ with indecomposable projective $P$ is a right $\tau$-sequence, and the complex $I\to I/{\rm soc} I\to 0$ with indecomposable injective $I$ is a left $\tau$-sequence.
\item A triangulated category with almost split extensions is a $\tau$-category. In fact, almost split sequences with non-zero middle terms give $\tau$-sequences. On the other hand, an exact category with almost split extensions is not necessarily a $\tau$-category, see (4)(5) below.
\item \cite[Proposition 8.4]{I1} The complete mesh category of a $\tau$-species (Definition ~\ref{define tau-species}) is a $\tau$-category (see Proposition \ref{property of mesh category} for details).
\item \cite[Theorem 2.1]{I4} Let $\Lambda$ be a finite dimensional $k$-algebra, and let $U$ be a cotilting $\Lambda$-module. If ${\rm inj.dim}\,U\le 1$, then ${}^\perp U$ is a strict $\tau$-category.
\item Let $R$ be a complete local Cohen--Macaulay ring of $\dim R=d$, and $\Lambda$ an $R$-order which is an isolated singularity. Then $\mathsf{CM}\Lambda$ is a $\tau$-category if and only if it is a strict $\tau$-category if and only if $d\le 2$ (see \cite[Example 2.2(2)]{I1} and Proposition  \ref{sink for CM}).
On the other hand, the stable and costable categories $\underline{\mathsf{CM}}\Lambda$ and $\overline{\mathsf{CM}}\Lambda$ are always $\tau$-categories \cite[Theorem 3.4.5]{I5}.
\end{enumerate}
\end{example}

Note that $\mathrm{AR}_\tau(\mathscr{D})$ and $\mathrm{AR}_{\rm ET}(\mathscr{D})$ given in Definition \ref{define valued translation quiver} are related as follows.

\begin{remark}
Assume that $\mathscr{D}=(\mathscr{D},\mathbb{E},\mathfrak{s})$ is an extriangulated category which is a $\tau$-category. Then $\mathrm{AR}_\tau(\mathscr{D})$ and $\mathrm{AR}_{\rm ET}(\mathscr{D})$ are usually the same, but not always. Their valued quiver parts are of course the same. Also the maps $\tau:Q_0\setminus Q_0^p\to Q_0\setminus Q_0^i$ in $\mathrm{AR}_\tau(\mathscr{D})$ and $\tau_{\rm ET}:Q_0\setminus Q_{0,{\rm ET}}^p\to Q_0\setminus Q_{0,{\rm ET}}^i$ in $\mathrm{AR}_{\rm ET}(\mathscr{D})$ coincide over $Q_0\setminus(Q_0^p\cup Q_{0,{\rm ET}}^p)$ since almost split sequences with non-zero middle terms give $\tau$-sequences.
But $Q_0^p$ and $Q_{0,{\rm ET}}^p$ are sometimes different, as illustrated in the examples below.
\begin{enumerate}[\rm(1)]
\item In a triangulated category $\mathscr{D}=\operatorname{K^b}(\proj k)$, we have $Q_0^p=Q_0$ and $Q_{0,{\rm ET}}^p=\emptyset$.
\item In an exact category $\mathscr{D}=\mathsf{CM} R$ with $R=k[[x,y]]$, we have $Q_0^p=\emptyset$ and $Q_{0,{\rm ET}}^p=Q_0$.
\end{enumerate}
\end{remark}

The key notion of ladders was introduced by Igusa--Todorov \cite[Definition 2.14]{IT}. Here we need the following modified version \cite{I1}.

\begin{definition}\cite[3.2]{I1}
Let $\mathscr{D}$ be a right $\tau$-category. A commutative diagram
\begin{equation}\label{ladder diagram}
\xymatrix{
Y_0&Y_1\ar[l]_{f_1}&Y_2\ar[l]_{f_2}&Y_3\ar[l]_{f_3}&\cdots\ar[l]_{f_4}\\
X_0\ar[u]_{a_0}&X_1\ar[l]_{g_1}\ar[u]_{a_1}&X_2\ar[l]_{g_2}\ar[u]_{a_2}&X_3\ar[l]_{g_3}\ar[u]_{a_3}&\cdots\ar[l]_{g_4}
}\end{equation}
in $\mathscr{D}$ is called a \emph{right ladder} of $a_0\in\mathscr{D}(X_0,Y_0)$ if the following condition is satisfied for all $i\ge0$.
\begin{enumerate}[\rm$\bullet$]
\item There exists $h_{i+1}\in\mathscr{D}(U_{i+1},X_i)$ such that
\[X_{i+1}\oplus U_{i+1}\xrightarrow{\left[\bsm a_{i+1}&0\\ -g_{i+1}&h_{i+1} \esm\right]}Y_{i+1}\oplus X_i\xrightarrow{[f_{i+1}\ a_i]}Y_i\]
is a direct sum of right $\tau$-sequences.
\end{enumerate}
\end{definition}

The existence theorem of ladders was first shown by Igusa--Todorov \cite[Theorem 2.15]{IT} for artin algebras under certain technical assumptions. 
Later it was proved for arbitrary $\tau$-categories by a much simpler method in the following form.

\begin{theorem}[Existence Theorem of Ladders]\label{existence theorem}${}$\cite[Theorem 3.3(2)]{I1}
Let $\mathscr{D}$ be a $\tau$-category and $A\in\mathscr{D}$ an object.
\begin{enumerate}[\rm(1)]
\item The source morphism of $A$ has a right ladder.
\item The zero morphism $0\in\mathscr{D}(0,A)$ has a right ladder.
\end{enumerate}
\end{theorem}

As an application of Theorem~\ref{existence theorem}, Igusa--Todorov's Radical Layers Theorem \cite[Theorem 4.3]{IT} was proved for an arbitrary $\tau$-category in \cite[Theorem 4.2]{I1} (cf.\ Corollary \ref{radical layer} below).

Next we introduce certain functions $\theta_i$, which are central in Auslander--Reiten Combinatorics (cf.\ Corollary \ref{calculate dimension} below). Notice that the sequence $(\theta_iX)_{i\ge0}$ is a modification of an \emph{additive function stopping at $X$} due to Gabriel \cite[Section 6.5]{G} and a \emph{hammock} due to Brenner \cite{B,RV}. 
 
\begin{definition}\label{define theta_i}
Let $\mathscr{D}$ be a $\tau$-category.
\begin{enumerate}[\rm(1)]
\item We denote by $K_0(\mathscr{D})$ the Grothendieck group of the additive category $\mathscr{D}$. Since $\mathscr{D}$ is Krull--Schmidt, $K_0(\mathscr{D})$ is the free abelian group with basis $\ind\mathscr{D}$.
We identify the set of isomorphism classes of objects in $\mathscr{C}$ with the submonoid $K_0(\mathscr{D})_+$ of $K_0(\mathscr{D})$ generated by $\ind\mathscr{D}$.
Any element $X\in K_0(\mathscr{D})$ can be written uniquely as 
\[X=X_+-X_-\]
for some objects $X_+,X_-\in K_0(\mathscr{D})_+$ which do not have common non-zero direct summands.
\item \cite[7.2]{I1}\cite[1.3.3]{I4} We denote a right $\tau$-sequence of $X\in\ind\mathscr{D}$ by
\begin{equation}\label{right tau of X}
\tau X\to\theta X\to X.
\end{equation}
We extend $\theta$ and $\tau$ canonically to the monoid endomorphisms of $K_0(\mathscr{D})_+$.
For each $i\ge0$, we define the map $\theta_i:K_0(\mathscr{D})_+\to K_0(\mathscr{D})_+$ inductively as follows:
Let $\theta_0={\rm id}$ and $\theta_1=\theta$. For $i\ge2$ and $X\in\mathscr{D}$, let
\begin{equation}\label{theta_i}
\theta_iX=(\theta(\theta_{i-1}X)-\tau(\theta_{i-2}X))_+.
\end{equation}
\item Let $Q=(Q_0,d,d',\tau)$ be a $\tau$-quiver (Definition \ref{define valued translation quiver}), $\mathbb{Z}Q$ (resp.\ $\mathbb{N}Q_0$) the free abelian group (resp.\ monoid) with basis $Q_0$. Each element $X\in\mathbb{Z}Q_0$ can be written uniquely as $X=X_+-X_-$ for some elements $X_+,X_-\in\mathbb{N}Q_0$ whose supports are disjoint. 

For each $X\in Q_0$, let
\[\theta X:=\bigoplus_{W\in Q_0}d_{WX}W\in\mathbb{N}Q_0.\]
We extend $\theta$ and $\tau$ linearly to the monoid endomorphisms of $\mathbb{N}Q_0$.
For each $i\ge0$, we define the map $\theta_i:\mathbb{N}Q_0\to\mathbb{N}Q_0$ inductively as follows: Let $\theta_0={\rm id}$ and $\theta_1=\theta$. For $i\ge2$ and $X\in\mathbb{N}Q_0$, define $\theta_iX$ by the equality \eqref{theta_i}.

We call $Q$ \emph{strict} if $\theta_iX=\theta(\theta_{i-1}X)-\tau(\theta_{i-2}X)$ holds for all $i\ge2$ and $X\in Q_0$ (in other words, one can drop $(-)_+$ in \eqref{theta_i}).
\end{enumerate}
\end{definition}

Let $\mathscr{D}$ be a $\tau$-category with $Q:=\mathrm{AR}_\tau(\mathscr{D})$ (Definition \ref{define tau category}). Then $K_0(\mathscr{D})_+$ is identified with $\mathbb{N}Q_0$, and the two definitions of $\theta_i$ given in (2) and (3) above clearly coincide. An important result is that
the map $\theta_i$ is a monoid endomorphism \cite[7.2(1)]{I1} (cf.\ Corollary \ref{calculate dimension}(2) below). This is not completely obvious from the definition because of the piecewise linear property of the map $(-)_+$.

\begin{example}\label{example of theta_i}
\begin{enumerate}[\rm(1)]
\item Let $Q$ be a connected valued quiver, and $\mathbb{Z}Q$ the corresponding $\tau$-quiver. Then $\mathbb{Z}Q$ is strict if and only if $Q$ is non-Dynkin. We illustrate this by the following two examples.
\item Consider the $\tau$-quiver $\mathbb{Z}E_6$. For example, $\mathbb{Z}E_6=\mathrm{AR}_\tau(\operatorname{K^b}(\proj\Lambda))$ for the path algebra $\Lambda$ of a Dynkin quiver of type $E_6$ (Example \ref{example of tau-category}(2)).
\[
\resizebox{\textwidth}{!}{
\begin{xy} 0;<14pt,0pt>:<0pt,14pt>::
(6,6) *+{\cdots},
(6,2) *+{\cdots},
(46,6) *+{\cdots},
(46,2) *+{\cdots},
(8,8) *+{\bullet} ="40",
%
(8,4) *+{\bullet} ="21",
(10,6) *+{\bullet} ="31",
(12,8) *+{\bullet} ="41",
(10,4) *+{\bullet} ="51",
%
(8,0) *+{\bullet} ="02",
(10,2) *+{\bullet} ="12",
(12,4) *+{\bullet} ="22",
(14,6) *+{\bullet} ="32",
(16,8) *+{\bullet} ="42",
(14,4) *+{\bullet} ="52",
%
(12,0) *+{\bullet} ="03",
(14,2) *+{\bullet} ="13",
(16,4) *+{\bullet} ="23",
(18,6) *+{\bullet} ="33",
(20,8) *+{\bullet} ="43",
(18,4) *+{\bullet} ="53",
%
(16,0) *+{\bullet} ="04",
(18,2) *+{\bullet} ="14",
(20,4) *+{\bullet} ="24",
(22,6) *+{\bullet} ="34",
(24,8) *+{\bullet} ="44",
(22,4) *+{\bullet} ="54",
%
(20,0) *+{\bullet} ="05",
(22,2) *+{\bullet} ="15",
(24,4) *+{\bullet} ="25",
(26,6) *+{\bullet} ="35",
(28,8) *+{\bullet} ="45",
(26,4) *+{\bullet} ="55",
%
(24,0) *+{\bullet} ="06",
(26,2) *+{\bullet} ="16",
(28,4) *+{\bullet} ="26",
(30,6) *+{\bullet} ="36",
(32,8) *+{\bullet} ="46",
(30,4) *+{\bullet} ="56",
%
(28,0) *+{\bullet} ="07",
(30,2) *+{\bullet} ="17",
(32,4) *+{\bullet} ="27",
(34,6) *+{\bullet} ="37",
(36,8) *+{\bullet} ="47",
(34,4) *+{\bullet} ="57",
%
(32,0) *+{\bullet} ="08",
(34,2) *+{\bullet} ="18",
(36,4) *+{\bullet} ="28",
(38,6) *+{Z_3} ="38",
(40,8) *+{\bullet} ="48",
(38,4) *+{Z_2} ="58",
%
(36,0) *+{\bullet} ="09",
(38,2) *+{Z_1} ="19",
(40,4) *+{Y_2} ="29",
(42,6) *+{\bullet} ="39",
(44,8) *+{\bullet} ="49",
(42,4) *+{\bullet} ="59",
%
(40,0) *+{Y_1} ="010",
(42,2) *+{X} ="110",
(44,4) *+{\bullet} ="210",
(44,0) *+{\bullet} ="011",
"40", {\ar"31"},
%
"21", {\ar"31"},
"31", {\ar"41"},
"21", {\ar"51"},
"21", {\ar"12"},
"31", {\ar"22"},
"41", {\ar"32"},
"51", {\ar"22"},
%
"02", {\ar"12"},
"12", {\ar"22"},
"22", {\ar"32"},
"32", {\ar"42"},
"22", {\ar"52"},
"12", {\ar"03"},
"22", {\ar"13"},
"32", {\ar"23"},
"42", {\ar"33"},
"52", {\ar"23"},
%
"03", {\ar"13"},
"13", {\ar"23"},
"23", {\ar"33"},
"33", {\ar"43"},
"23", {\ar"53"},
"13", {\ar"04"},
"23", {\ar"14"},
"33", {\ar"24"},
"43", {\ar"34"},
"53", {\ar"24"},
%
"04", {\ar"14"},
"14", {\ar"24"},
"24", {\ar"34"},
"34", {\ar"44"},
"24", {\ar"54"},
"14", {\ar"05"},
"24", {\ar"15"},
"34", {\ar"25"},
"44", {\ar"35"},
"54", {\ar"25"},
%
"05", {\ar"15"},
"15", {\ar"25"},
"25", {\ar"35"},
"35", {\ar"45"},
"25", {\ar"55"},
"15", {\ar"06"},
"25", {\ar"16"},
"35", {\ar"26"},
"45", {\ar"36"},
"55", {\ar"26"},
%
"06", {\ar"16"},
"16", {\ar"26"},
"26", {\ar"36"},
"36", {\ar"46"},
"26", {\ar"56"},
"16", {\ar"07"},
"26", {\ar"17"},
"36", {\ar"27"},
"46", {\ar"37"},
"56", {\ar"27"},
%
"07", {\ar"17"},
"17", {\ar"27"},
"27", {\ar"37"},
"37", {\ar"47"},
"27", {\ar"57"},
"17", {\ar"08"},
"27", {\ar"18"},
"37", {\ar"28"},
"47", {\ar"38"},
"57", {\ar"28"},
%
"08", {\ar"18"},
"18", {\ar"28"},
"28", {\ar"38"},
"38", {\ar"48"},
"28", {\ar"58"},
"18", {\ar"09"},
"28", {\ar"19"},
"38", {\ar"29"},
"48", {\ar"39"},
"58", {\ar"29"},
%
"09", {\ar"19"},
"19", {\ar"29"},
"29", {\ar"39"},
"39", {\ar"49"},
"29", {\ar"59"},
"19", {\ar"010"},
"29", {\ar"110"},
"39", {\ar"210"},
"59", {\ar"210"},
"010", {\ar"110"},
"110", {\ar"210"},
"110", {\ar"011"},
\end{xy}
}
\]
For $X$ given in the previous picture, we have $\theta X=Y_1\oplus Y_2$, $\theta_2X=Z_1\oplus Z_2\oplus Z_3$ and so on. The following picture shows $\theta_iX$ for $i\ge0$.
\[
\resizebox{\textwidth}{!}{
\begin{xy} 0;<14pt,0pt>:<0pt,14pt>::
(6,6) *+{\cdots},
(6,2) *+{\cdots},
(46,6) *+{\cdots},
(46,2) *+{\cdots},
(8,-1) *+{\theta_{17}X},
(10,-1) *+{\theta_{16}X},
(12,-1) *+{\theta_{15}X},
(14,-1) *+{\theta_{14}X},
(16,-1) *+{\theta_{13}X},
(18,-1) *+{\theta_{12}X},
(20,-1) *+{\theta_{11}X},
(22,-1) *+{\theta_{10}X},
(24,-1) *+{\theta_9X},
(26,-1) *+{\theta_8X},
(28,-1) *+{\theta_7X},
(30,-1) *+{\theta_6X},
(32,-1) *+{\theta_5X},
(34,-1) *+{\theta_4X},
(36,-1) *+{\theta_3X},
(38,-1) *+{\theta_2X},
(40,-1) *+{\theta X},
(42,-1) *+{X},
(8,8) *+{0} ="40",
%
(8,4) *+{0} ="21",
(10,6) *+{0} ="31",
(12,8) *+{0} ="41",
(10,4) *+{0} ="51",
%
(8,0) *+{0} ="02",
(10,2) *+{0} ="12",
(12,4) *+{0} ="22",
(14,6) *+{0} ="32",
(16,8) *+{0} ="42",
(14,4) *+{0} ="52",
%
(12,0) *+{0} ="03",
(14,2) *+{0} ="13",
(16,4) *+{0} ="23",
(18,6) *+{0} ="33",
(20,8) *+{0} ="43",
(18,4) *+{0} ="53",
%
(16,0) *+{0} ="04",
(18,2) *+{0} ="14",
(20,4) *+{0} ="24",
(22,6) *+{1} ="34",
(24,8) *+{1} ="44",
(22,4) *+{0} ="54",
%
(20,0) *+{0} ="05",
(22,2) *+{0} ="15",
(24,4) *+{1} ="25",
(26,6) *+{1} ="35",
(28,8) *+{0} ="45",
(26,4) *+{1} ="55",
%
(24,0) *+{0} ="06",
(26,2) *+{1} ="16",
(28,4) *+{2} ="26",
(30,6) *+{1} ="36",
(32,8) *+{1} ="46",
(30,4) *+{1} ="56",
%
(28,0) *+{1} ="07",
(30,2) *+{2} ="17",
(32,4) *+{2} ="27",
(34,6) *+{2} ="37",
(36,8) *+{1} ="47",
(34,4) *+{1} ="57",
%
(32,0) *+{1} ="08",
(34,2) *+{1} ="18",
(36,4) *+{2} ="28",
(38,6) *+{1} ="38",
(40,8) *+{0} ="48",
(38,4) *+{1} ="58",
%
(36,0) *+{0} ="09",
(38,2) *+{1} ="19",
(40,4) *+{1} ="29",
(42,6) *+{0} ="39",
(44,8) *+{\bullet} ="49",
(42,4) *+{0} ="59",
%
(40,0) *+{1} ="010",
(42,2) *+{1} ="110",
(44,4) *+{\bullet} ="210",
(44,0) *+{\bullet} ="011",
"40", {\ar"31"},
%
"21", {\ar"31"},
"31", {\ar"41"},
"21", {\ar"51"},
"21", {\ar"12"},
"31", {\ar"22"},
"41", {\ar"32"},
"51", {\ar"22"},
%
"02", {\ar"12"},
"12", {\ar"22"},
"22", {\ar"32"},
"32", {\ar"42"},
"22", {\ar"52"},
"12", {\ar"03"},
"22", {\ar"13"},
"32", {\ar"23"},
"42", {\ar"33"},
"52", {\ar"23"},
%
"03", {\ar"13"},
"13", {\ar"23"},
"23", {\ar"33"},
"33", {\ar"43"},
"23", {\ar"53"},
"13", {\ar"04"},
"23", {\ar"14"},
"33", {\ar"24"},
"43", {\ar"34"},
"53", {\ar"24"},
%
"04", {\ar"14"},
"14", {\ar"24"},
"24", {\ar"34"},
"34", {\ar"44"},
"24", {\ar"54"},
"14", {\ar"05"},
"24", {\ar"15"},
"34", {\ar"25"},
"44", {\ar"35"},
"54", {\ar"25"},
%
"05", {\ar"15"},
"15", {\ar"25"},
"25", {\ar"35"},
"35", {\ar"45"},
"25", {\ar"55"},
"15", {\ar"06"},
"25", {\ar"16"},
"35", {\ar"26"},
"45", {\ar"36"},
"55", {\ar"26"},
%
"06", {\ar"16"},
"16", {\ar"26"},
"26", {\ar"36"},
"36", {\ar"46"},
"26", {\ar"56"},
"16", {\ar"07"},
"26", {\ar"17"},
"36", {\ar"27"},
"46", {\ar"37"},
"56", {\ar"27"},
%
"07", {\ar"17"},
"17", {\ar"27"},
"27", {\ar"37"},
"37", {\ar"47"},
"27", {\ar"57"},
"17", {\ar"08"},
"27", {\ar"18"},
"37", {\ar"28"},
"47", {\ar"38"},
"57", {\ar"28"},
%
"08", {\ar"18"},
"18", {\ar"28"},
"28", {\ar"38"},
"38", {\ar"48"},
"28", {\ar"58"},
"18", {\ar"09"},
"28", {\ar"19"},
"38", {\ar"29"},
"48", {\ar"39"},
"58", {\ar"29"},
%
"09", {\ar"19"},
"19", {\ar"29"},
"29", {\ar"39"},
"39", {\ar"49"},
"29", {\ar"59"},
"19", {\ar"010"},
"29", {\ar"110"},
"39", {\ar"210"},
"59", {\ar"210"},
"010", {\ar"110"},
"110", {\ar"210"},
"110", {\ar"011"},
\end{xy}
}
\]
Thus $\theta_iX=0$ holds for all $i\ge 11$. Moreover, for $i\ge2$, the equality
\[\theta(\theta_{i-1}X)-\tau(\theta_{i-2}X)=\left\{\begin{array}{cc}
\theta_iX&i\neq12\\ -\nu^{-1}X&i=12
\end{array}\right.\]
holds, where $12$ is the Coxeter number of the root system of type $E_6$. Thus $\mathbb{Z}E_6$ is not strict.
It is well-known that these observations hold for all Dynkin quivers, e.g.\ \cite[Section 6.5]{G}.
\item Consider the $\tau$-quiver $\mathbb{Z}\widetilde{E}_6$. For example, $\mathbb{Z}\widetilde{E}_6=\mathrm{AR}_\tau(\mathsf{CM}^{\mathbb{Z}}R)$ for the category $\mathsf{CM}^{\mathbb{Z}}R$ of $\mathbb{Z}$-graded Cohen--Macaulay modules over a simple singularity $R=\mathbb{C}[x,y,z]/(x^4+y^3+z^2)$ of type $E_6$ with $(\mathrm{deg}\,x,\mathrm{deg}\,y,\mathrm{deg}\,z)=(3,4,6)$  (Example \ref{example of tau-category}(5), \cite[Section 8]{GL2}).
\[
\resizebox{\textwidth}{!}{
\begin{xy} 0;<14pt,0pt>:<0pt,14pt>::
(6,6) *+{\cdots},
(6,2) *+{\cdots},
(46,6) *+{\cdots},
(46,2) *+{\cdots},
(8,8) *+{\bullet} ="40",
(8,6) *+{\bullet} ="60",
(8,4) *+{\bullet} ="21",
(10,6) *+{\bullet} ="31",
(12,8) *+{\bullet} ="41",
(10,5) *+{\bullet} ="51",
(12,6) *+{\bullet} ="61",
(8,0) *+{\bullet} ="02",
(10,2) *+{\bullet} ="12",
(12,4) *+{\bullet} ="22",
(14,6) *+{\bullet} ="32",
(16,8) *+{\bullet} ="42",
(14,5) *+{\bullet} ="52",
(16,6) *+{\bullet} ="62",
(12,0) *+{\bullet} ="03",
(14,2) *+{\bullet} ="13",
(16,4) *+{\bullet} ="23",
(18,6) *+{\bullet} ="33",
(20,8) *+{\bullet} ="43",
(18,5) *+{\bullet} ="53",
(20,6) *+{\bullet} ="63",
(16,0) *+{\bullet} ="04",
(18,2) *+{\bullet} ="14",
(20,4) *+{\bullet} ="24",
(22,6) *+{\bullet} ="34",
(24,8) *+{\bullet} ="44",
(22,5) *+{\bullet} ="54",
(24,6) *+{\bullet} ="64",
(20,0) *+{\bullet} ="05",
(22,2) *+{\bullet} ="15",
(24,4) *+{\bullet} ="25",
(26,6) *+{\bullet} ="35",
(28,8) *+{\bullet} ="45",
(26,5) *+{\bullet} ="55",
(28,6) *+{\bullet} ="65",
(24,0) *+{\bullet} ="06",
(26,2) *+{\bullet} ="16",
(28,4) *+{\bullet} ="26",
(30,6) *+{\bullet} ="36",
(32,8) *+{\bullet} ="46",
(30,5) *+{\bullet} ="56",
(32,6) *+{\bullet} ="66",
(28,0) *+{\bullet} ="07",
(30,2) *+{\bullet} ="17",
(32,4) *+{\bullet} ="27",
(34,6) *+{\bullet} ="37",
(36,8) *+{\bullet} ="47",
(34,5) *+{\bullet} ="57",
(36,6) *+{\bullet} ="67",
(32,0) *+{\bullet} ="08",
(34,2) *+{\bullet} ="18",
(36,4) *+{\bullet} ="28",
(38,6) *+{Z_3} ="38",
(40,8) *+{\bullet} ="48",
(38,5) *+{Z_2} ="58",
(40,6) *+{\bullet} ="68",
(36,0) *+{\bullet} ="09",
(38,2) *+{Z_1} ="19",
(40,4) *+{Y_2} ="29",
(42,6) *+{\bullet} ="39",
(44,8) *+{\bullet} ="49",
(42,5) *+{\bullet} ="59",
(44,6) *+{\bullet} ="69",
(40,0) *+{Y_1} ="010",
(42,2) *+{X} ="110",
(44,4) *+{\bullet} ="210",
(44,0) *+{\bullet} ="011",
"40", {\ar"31"},
"60", {\ar"51"},
"21", {\ar"31"},
"31", {\ar"41"},
"21", {\ar"51"},
"51", {\ar"61"},
"21", {\ar"12"},
"31", {\ar"22"},
"41", {\ar"32"},
"51", {\ar"22"},
"61", {\ar"52"},
"02", {\ar"12"},
"12", {\ar"22"},
"22", {\ar"32"},
"32", {\ar"42"},
"22", {\ar"52"},
"52", {\ar"62"},
"12", {\ar"03"},
"22", {\ar"13"},
"32", {\ar"23"},
"42", {\ar"33"},
"52", {\ar"23"},
"62", {\ar"53"},
"03", {\ar"13"},
"13", {\ar"23"},
"23", {\ar"33"},
"33", {\ar"43"},
"23", {\ar"53"},
"53", {\ar"63"},
"13", {\ar"04"},
"23", {\ar"14"},
"33", {\ar"24"},
"43", {\ar"34"},
"53", {\ar"24"},
"63", {\ar"54"},
"04", {\ar"14"},
"14", {\ar"24"},
"24", {\ar"34"},
"34", {\ar"44"},
"24", {\ar"54"},
"54", {\ar"64"},
"14", {\ar"05"},
"24", {\ar"15"},
"34", {\ar"25"},
"44", {\ar"35"},
"54", {\ar"25"},
"64", {\ar"55"},
"05", {\ar"15"},
"15", {\ar"25"},
"25", {\ar"35"},
"35", {\ar"45"},
"25", {\ar"55"},
"55", {\ar"65"},
"15", {\ar"06"},
"25", {\ar"16"},
"35", {\ar"26"},
"45", {\ar"36"},
"55", {\ar"26"},
"65", {\ar"56"},
"06", {\ar"16"},
"16", {\ar"26"},
"26", {\ar"36"},
"36", {\ar"46"},
"26", {\ar"56"},
"56", {\ar"66"},
"16", {\ar"07"},
"26", {\ar"17"},
"36", {\ar"27"},
"46", {\ar"37"},
"56", {\ar"27"},
"66", {\ar"57"},
"07", {\ar"17"},
"17", {\ar"27"},
"27", {\ar"37"},
"37", {\ar"47"},
"27", {\ar"57"},
"57", {\ar"67"},
"17", {\ar"08"},
"27", {\ar"18"},
"37", {\ar"28"},
"47", {\ar"38"},
"57", {\ar"28"},
"67", {\ar"58"},
"08", {\ar"18"},
"18", {\ar"28"},
"28", {\ar"38"},
"38", {\ar"48"},
"28", {\ar"58"},
"58", {\ar"68"},
"18", {\ar"09"},
"28", {\ar"19"},
"38", {\ar"29"},
"48", {\ar"39"},
"58", {\ar"29"},
"68", {\ar"59"},
"09", {\ar"19"},
"19", {\ar"29"},
"29", {\ar"39"},
"39", {\ar"49"},
"29", {\ar"59"},
"59", {\ar"69"},
"19", {\ar"010"},
"29", {\ar"110"},
"39", {\ar"210"},
"59", {\ar"210"},
"010", {\ar"110"},
"110", {\ar"210"},
"110", {\ar"011"},
\end{xy}
}
\]
For $X$ given in the previous picture, we have $\theta X=Y_1\oplus Y_2$, $\theta_2X=Z_1\oplus Z_2\oplus Z_3$ and so on. The following picture shows $\theta_iX$ for $i\ge0$.
\[
\resizebox{\textwidth}{!}{
\begin{xy} 0;<14pt,0pt>:<0pt,14pt>::
(6,6) *+{\cdots},
(6,2) *+{\cdots},
(46,6) *+{\cdots},
(46,2) *+{\cdots},
(8,-2) *+{\theta_{17}X},
(10,-2) *+{\theta_{16}X},
(12,-2) *+{\theta_{15}X},
(14,-2) *+{\theta_{14}X},
(16,-2) *+{\theta_{13}X},
(18,-2) *+{\theta_{12}X},
(20,-2) *+{\theta_{11}X},
(22,-2) *+{\theta_{10}X},
(24,-2) *+{\theta_9X},
(26,-2) *+{\theta_8X},
(28,-2) *+{\theta_7X},
(30,-2) *+{\theta_6X},
(32,-2) *+{\theta_5X},
(34,-2) *+{\theta_4X},
(36,-2) *+{\theta_3X},
(38,-2) *+{\theta_2X},
(40,-2) *+{\theta X},
(42,-2) *+{X},
(8,8) *+{3} ="40",
(8,6) *+{3} ="60",
(8,4) *+{9} ="21",
(10,6) *+{6} ="31",
(12,8) *+{3} ="41",
(10,5) *+{6} ="51",
(12,6) *+{3} ="61",
(8,0) *+{3} ="02",
(10,2) *+{5} ="12",
(12,4) *+{8} ="22",
(14,6) *+{5} ="32",
(16,8) *+{2} ="42",
(14,5) *+{5} ="52",
(16,6) *+{2} ="62",
(12,0) *+{2} ="03",
(14,2) *+{5} ="13",
(16,4) *+{7} ="23",
(18,6) *+{4} ="33",
(20,8) *+{2} ="43",
(18,5) *+{4} ="53",
(20,6) *+{2} ="63",
(16,0) *+{3} ="04",
(18,2) *+{5} ="14",
(20,4) *+{6} ="24",
(22,6) *+{4} ="34",
(24,8) *+{2} ="44",
(22,5) *+{4} ="54",
(24,6) *+{2} ="64",
(20,0) *+{2} ="05",
(22,2) *+{3} ="15",
(24,4) *+{5} ="25",
(26,6) *+{3} ="35",
(28,8) *+{1} ="45",
(26,5) *+{3} ="55",
(28,6) *+{1} ="65",
(24,0) *+{1} ="06",
(26,2) *+{3} ="16",
(28,4) *+{4} ="26",
(30,6) *+{2} ="36",
(32,8) *+{1} ="46",
(30,5) *+{2} ="56",
(32,6) *+{1} ="66",
(28,0) *+{2} ="07",
(30,2) *+{3} ="17",
(32,4) *+{3} ="27",
(34,6) *+{2} ="37",
(36,8) *+{1} ="47",
(34,5) *+{2} ="57",
(36,6) *+{1} ="67",
(32,0) *+{1} ="08",
(34,2) *+{1} ="18",
(36,4) *+{2} ="28",
(38,6) *+{1} ="38",
(40,8) *+{0} ="48",
(38,5) *+{1} ="58",
(40,6) *+{0} ="68",
(36,0) *+{0} ="09",
(38,2) *+{1} ="19",
(40,4) *+{1} ="29",
(42,6) *+{0} ="39",
(44,8) *+{\bullet} ="49",
(42,5) *+{0} ="59",
(44,6) *+{\bullet} ="69",
(40,0) *+{1} ="010",
(42,2) *+{1} ="110",
(44,4) *+{\bullet} ="210",
(44,0) *+{\bullet} ="011",
"40", {\ar"31"},
"60", {\ar"51"},
"21", {\ar"31"},
"31", {\ar"41"},
"21", {\ar"51"},
"51", {\ar"61"},
"21", {\ar"12"},
"31", {\ar"22"},
"41", {\ar"32"},
"51", {\ar"22"},
"61", {\ar"52"},
"02", {\ar"12"},
"12", {\ar"22"},
"22", {\ar"32"},
"32", {\ar"42"},
"22", {\ar"52"},
"52", {\ar"62"},
"12", {\ar"03"},
"22", {\ar"13"},
"32", {\ar"23"},
"42", {\ar"33"},
"52", {\ar"23"},
"62", {\ar"53"},
"03", {\ar"13"},
"13", {\ar"23"},
"23", {\ar"33"},
"33", {\ar"43"},
"23", {\ar"53"},
"53", {\ar"63"},
"13", {\ar"04"},
"23", {\ar"14"},
"33", {\ar"24"},
"43", {\ar"34"},
"53", {\ar"24"},
"63", {\ar"54"},
"04", {\ar"14"},
"14", {\ar"24"},
"24", {\ar"34"},
"34", {\ar"44"},
"24", {\ar"54"},
"54", {\ar"64"},
"14", {\ar"05"},
"24", {\ar"15"},
"34", {\ar"25"},
"44", {\ar"35"},
"54", {\ar"25"},
"64", {\ar"55"},
"05", {\ar"15"},
"15", {\ar"25"},
"25", {\ar"35"},
"35", {\ar"45"},
"25", {\ar"55"},
"55", {\ar"65"},
"15", {\ar"06"},
"25", {\ar"16"},
"35", {\ar"26"},
"45", {\ar"36"},
"55", {\ar"26"},
"65", {\ar"56"},
"06", {\ar"16"},
"16", {\ar"26"},
"26", {\ar"36"},
"36", {\ar"46"},
"26", {\ar"56"},
"56", {\ar"66"},
"16", {\ar"07"},
"26", {\ar"17"},
"36", {\ar"27"},
"46", {\ar"37"},
"56", {\ar"27"},
"66", {\ar"57"},
"07", {\ar"17"},
"17", {\ar"27"},
"27", {\ar"37"},
"37", {\ar"47"},
"27", {\ar"57"},
"57", {\ar"67"},
"17", {\ar"08"},
"27", {\ar"18"},
"37", {\ar"28"},
"47", {\ar"38"},
"57", {\ar"28"},
"67", {\ar"58"},
"08", {\ar"18"},
"18", {\ar"28"},
"28", {\ar"38"},
"38", {\ar"48"},
"28", {\ar"58"},
"58", {\ar"68"},
"18", {\ar"09"},
"28", {\ar"19"},
"38", {\ar"29"},
"48", {\ar"39"},
"58", {\ar"29"},
"68", {\ar"59"},
"09", {\ar"19"},
"19", {\ar"29"},
"29", {\ar"39"},
"39", {\ar"49"},
"29", {\ar"59"},
"59", {\ar"69"},
"19", {\ar"010"},
"29", {\ar"110"},
"39", {\ar"210"},
"59", {\ar"210"},
"010", {\ar"110"},
"110", {\ar"210"},
"110", {\ar"011"},
\end{xy}
}
\]
Thus $\theta_iX\neq0$ for all $i$, and moreover $\mathbb{Z}\widetilde{E_6}$ is strict.
This is a consequence of the equality
\[\theta_{2i-1}X+\theta_{2i}X=c^iX\]
for each $i\ge1$, where $c$ is the Coxeter transformation.  These observations hold true for all non-Dynkin quivers.
\end{enumerate}
\end{example}

In the next section, we will use the following characterizations of strictness.

\begin{proposition}\label{strict}
Let $\mathscr{D}$ be a $\tau$-category with $\bigcap_{i\ge0}{\rm rad}^{i}\mathscr{D}=0$. Then {\rm(1)$\Leftrightarrow$(2)$\Leftrightarrow$(3)$\Leftarrow$(4)} hold for the following conditions.
\begin{enumerate}[\rm(1)]
\item $\mathscr{D}$ is a strict $\tau$-category.
\item $\theta_iX=\theta(\theta_{i-1}X)-\tau(\theta_{i-2}X)$ holds for all $i\ge2$ and $X\in\ind\mathscr{D}$.
\item $\mathrm{AR}_\tau(\mathscr{D})$ is a strict $\tau$-quiver (Definition \ref{define theta_i}).
\item For each $X\in\ind\mathscr{D}$, there exists a $\tau$-projective object $P\in\ind\mathscr{D}$ such that $\mathscr{D}(P,X)\neq0$.
\end{enumerate}
\end{proposition}

\begin{proof}
(1)$\Leftrightarrow$(2) is \cite[7.4(1)]{I1}. (2)$\Leftrightarrow$(3) is clear. (4)$\Rightarrow$(1) is dual to \cite[7.4(2)]{I1}.
\end{proof}


To state Reconstruction Theorem (cf.\ Corollary \ref{gr=mesh} below), we need the notions below introduced in \cite{IT2} as a `modulated translation quiver' and its mesh category.

\begin{definition}\cite[Definition 8.3]{I1}\cite[Definition 1.7]{IT2}\label{define tau-species}
\begin{enumerate}[\rm(1)]
\item A \emph{species} is ${\mathcal Q}=(Q_0,D(X),M(X,Y))$ consisting of the following data.
\begin{enumerate}[$\bullet$]
\item $Q_0$ is a set.
\item For $X,Y\in Q_0$, $D(X)$ is a division ring, and $M(X,Y)$ is a $(D(Y),D(X))$-bimodule. 
\end{enumerate}
It is called \emph{locally finite} if for each $X\in Q_0$,
\[\underset{Y\in Q_0}{\sum}\dim M(Y,X)_{D(Y)}<\infty\ \mbox{ and }\ \underset{Y\in Q_0}\sum\dim_{D(Y)}M(X,Y)<\infty.\]
In this case, we call $(Q_0,d,d')$ the \emph{underlying valued quiver} of ${\mathcal Q}$, where $d_{XY}:=\dim M(X,Y)_{D(X)}$ and $d'_{XY}:=\dim_{D(Y)}M(X,Y)$ for each $X,Y\in Q_0$.
\item A \emph{$\tau$-species} is ${\mathcal Q}=(Q_0,D(X),M(X,Y),\tau,a_X,b_{XY})$ consisting of the following data.
\begin{enumerate}[$\bullet$]
\item $(Q_0,D(X),M(X,Y))$ is a species.
\item $\tau\colon Q_0\setminus Q_0^p\to Q_0\setminus Q_0^i$ is a bijection for subsets $Q_0^p$ and $Q_0^i$ of $Q_0$.
\item For $X\in Q_0\setminus Q_0^p$, $a_X\colon D(X)\simeq D(\tau X)$ is an isomorphism of rings.
\item For $X\in Q_0\setminus Q_0^p$ and $Y\in Q_0$, $b_{XY}\colon{\rm Hom}_{D(Y)}(M(\tau X,Y),D(Y))\simeq M(Y,X)$ is an isomorphism of $(D(X),D(Y))$-bimodules, where $M(\tau X,Y)$ is regarded as a $D(X)$-module via $a_X$.
\end{enumerate}
It is called \emph{locally finite} if the species $(Q_0,D(X),M(X,Y))$ is locally finite. In this case, we call $(Q_0,d,d',\tau)$ the \emph{underlying $\tau$-quiver} of ${\mathcal Q}$, where $(Q_0,d,d')$ is the underlying valued quiver of the species $(Q_0,D(X),M(X,Y))$.
\end{enumerate}
\end{definition}

We need the following elementary observation.

\begin{lemma}\label{lift quiver to species}\cite[4.2.1]{I3}
For each symmetrizable $\tau$-quiver $Q$, there exists a $\tau$-species $\mathcal{Q}$ whose underlying $\tau$-quiver is $Q$.
\end{lemma}


The Auslander--Reiten quiver of a $\tau$-category gives rise to a $\tau$-species.

\begin{example}\cite[Definition 9.1]{I1}\cite[Definition 2.4]{IT2}\label{define AR species}
Let $\mathscr{D}$ be a $\tau$-category. Then the \emph{Auslander--Reiten species} $\mathrm{AR}_\tau^{\rm sp}(\mathscr{D})$ of $\mathscr{D}$ defined as follows is a $\tau$-species.
\begin{enumerate}[$\bullet$]
\item $Q_0$, $Q_0^p$, $Q_0^i$ and $\tau$ are given in Definition \ref{define tau category}.
\item For $X,Y\in Q_0$, $D(X):=({\mathscr D}/{\rm rad}{\mathscr D})(X,X)$ and $M(X,Y):=({\rm rad}{\mathscr D}/{\rm rad}^2{\mathscr D})(X,Y)$.
\item For $X\in Q_0\setminus Q_0^p$, take a $\tau$-sequence $\tau X\to\theta X\to X$. Then $a_X\colon D(X)\to D(\tau X)$ is a map sending the class of $f\in\mathrm{End}_{\mathscr{D}}(X)$ to the class of $f^{\prime\prime}\in\mathrm{End}_{\mathscr{D}}(\tau X)$ making the following diagram commutative.
\[\xymatrix{
\tau X\ar[r]\ar[d]^{f^{\prime\prime}}&\theta X\ar[r]\ar[d]^{f^{\prime}}&X\ar[d]^f\\
\tau X\ar[r]&\theta X\ar[r]&X
}\]
\item For $Y\in Q_0$, $b_{XY}$ is the composition
\begin{eqnarray*}
{\rm Hom}_{D(Y)}(M(\tau X,Y),D(Y))&\simeq&{\rm Hom}_{D(Y)}(({\mathscr D}/{\rm rad}{\mathscr D})(\theta X,Y),D(Y))
\simeq({\mathscr D}/{\rm rad}{\mathscr D})(Y,\theta X)\\
&\simeq&M(Y,X),
\end{eqnarray*}
where the first and the third isomorphisms are given by Proposition~\ref{AR describe sink} and the second one is induced by the composition
$({\mathscr D}/{\rm rad}{\mathscr D})(Y,\theta X)\times({\mathscr D}/{\rm rad}{\mathscr D})(\theta X,Y)\to D(Y)$.
\end{enumerate}
\end{example}

Conversely, each $\tau$-species gives rise to a $\tau$-category as follows.

\begin{definition}\cite[Definition 8.1,\ 8.3.1]{I1}\label{define mesh category}
\begin{enumerate}[\rm(1)]
\item Let ${\mathcal Q}=(Q_0,D(X),M(X,Y))$ be a locally finite species. The \emph{complete path category} $\mathsf{P}({\mathcal Q})$ of ${\mathcal Q}$ is a Krull--Schmidt category with $\ind\mathsf{P}({\mathcal Q})=Q_0$ defined as follows: Fix $X,Y\in Q_0$, let $\displaystyle \mathsf{P}(X,Y):=\prod_{i\ge0}\mathsf{P}_i(X,Y)$, where $\mathsf{P}_0(X,Y):=D_X$ if $X=Y$ and $0$ otherwise, and
\[\mathsf{P}_i(X,Y):=\bigoplus_{Z_1,\ldots,Z_{i-1}\in Q_0}M(Z_{i-1},Y)\otimes_{D(Z_{i-1})}\cdots\otimes_{D(Z_1)}M(X,Z_1)\ \mbox{ for }i\ge1.\]
\item Let ${\mathcal Q}=(Q_0,D(X),M(X,Y),\tau,a_X,b_{XY})$ be a locally finite $\tau$-species. The \emph{complete mesh category} $\mathsf{M}({\mathcal Q})$ of ${\mathcal Q}$ is defined as follows.
\begin{enumerate}[$\bullet$]
\item Let $\mathsf{P}({\mathcal Q})$ be the complete path category of the species $(Q_0,D(X),M(X,Y))$.
\item For $X\in Q_0\setminus Q_0^p$, the \emph{mesh relation} is the element $\gamma_X\in M(Y,X)\otimes_{D(Y)}M(\tau X,Y)\simeq{\rm End}_{D(Y)}(M(\tau X,Y))$ corresponding to $1_{M(\tau X,Y)}$, where the isomorphism is given by $b_{XY}$.
\item $\mathsf{M}({\mathcal Q})$ is the quotient of $\mathsf{P}({\mathcal Q})$ by the closure of the ideal generated by all mesh relations with respect to the ${\rm rad}\mathsf{P}({\mathcal Q})$-adic topology.
\end{enumerate}
\end{enumerate}
\end{definition}

We have the following important observation.

\begin{proposition}\label{property of mesh category}\cite[Proposition 8.4, Theorem 10.2]{I1}
Let ${\mathcal Q}$ be a locally finite $\tau$-species.
Then $\mathscr{D}:=\mathsf{M}({\mathcal Q})$ is a $\tau$-category satisfying $\bigcap_{i\ge0}{\rm rad}^i\mathscr{D}=0$ and $\mathrm{AR}_\tau^{\rm sp}(\mathscr{D})=\mathcal{Q}$.
\end{proposition}


\subsection{Stable categories of extriangulated categories}
We apply results on $\tau$-categories to study the structure of stable categories $\underline{\mathscr{C}}$ of extriangulated categories $\mathscr{C}$ as additive categories. 
The following result shows that almost split sequences in $\mathscr{C}$ give sink (resp.\ source) resolutions in $\underline{\mathscr{C}}$ (resp.\ $\overline{\mathscr{C}}$) (cf.\ Proposition~\ref{ass gives sink sequence}). 

\begin{proposition}
Let $\mathscr{C}$ be a Krull--Schmidt extriangulated category and $A\to B\to C$ an almost split sequence in $\mathscr{C}$.
\begin{enumerate}[\rm(1)]
\item If $\mathscr{C}$ has enough projectives, then a sink resolution of $C$ in $\underline{\mathscr{C}}$ is given by a direct summand of
$\cdots\to\Omega^2C\to\Omega A\to\Omega B\to\Omega C\to A\to B\to C$.
\item If $\mathscr{C}$ has enough injectives, then a source resolution of $A$ in $\overline{\mathscr{C}}$ is given by a direct summand of
$A\to B\to C\to\Sigma A\to\Sigma B\to\Sigma C\to\Sigma^2A\to\cdots$.
\end{enumerate}
\end{proposition}

\begin{proof}
The assertions follows from the exact sequences
given in Theorem~\ref{long exact sequence}.
\end{proof}

Motivated by Example~\ref{example of tau-category}(5), we prove the following much more general result. We refer to Definition~\ref{have ASE} and Remark~\ref{C/P and C/P} for necessary definitions and to Proposition~\ref{Proposition6.3} for a more detailed result.

\begin{theorem}\label{stable categories are tau}
Let $\mathscr{C}$ be a Krull--Schmidt extriangulated category with almost split extensions.
\begin{enumerate}[\rm(1)]
\item If $\mathscr{C}$ has enough injectives and sink morphisms, then $\overline{\mathscr{C}}$ is a $\tau$-category.
\item If $\mathscr{C}$ has enough projectives and source morphisms, then $\underline{\mathscr{C}}$ is a $\tau$-category.
\end{enumerate}
Moreover $\mathrm{AR}_{\rm ET}(\mathscr{C})$ determines $\mathrm{AR}_\tau(\overline{\mathscr{C}})$ in {\rm(1)} (resp. $\mathrm{AR}_\tau(\underline{\mathscr{C}})$ in {\rm(2)}).
\end{theorem}

Before proving Theorem~\ref{stable categories are tau}, we give some examples.

\begin{example}
\begin{enumerate}[\rm(1)]
\item We consider the exact category ${}^\perp U$ in Example~\ref{An example}. One can check that ${}^\perp U$ is a $\tau$-category if and only if $\ell=n-1$ or $n-2$, while $\underline{{}^\perp U}$ and $\overline{{}^\perp U}$ are $\tau$-categories for each $\ell$ by Theorem~\ref{stable categories are tau}.
For example, $\mathrm{AR}_\tau(\underline{{}^\perp U})$ and $\mathrm{AR}_\tau(\overline{{}^\perp U})$ for $n=7$ and $\ell=4$ are the following.
\[\begin{xy}
0;<5pt,0pt>:<0pt,4pt>::
(10,0)*{{\begin{smallmatrix}2\end{smallmatrix}}}="3",
(15,5)*{{\begin{smallmatrix}3\\ 2\end{smallmatrix}}}="5",
(20,0)*{{\begin{smallmatrix}3\end{smallmatrix}}}="7",
(20,10)*{{\begin{smallmatrix}4\\ 3\\ 2\end{smallmatrix}}}="8",
(25,5)*{{\begin{smallmatrix}4\\ 3\end{smallmatrix}}}="9",
(30,0)*{{\begin{smallmatrix}4\end{smallmatrix}}}="11",
(30,10)*{{\begin{smallmatrix}I_3\\ 7\\ 6\\ 5\\ 4\\ 3\end{smallmatrix}}}="12",
(35,5)*{{\begin{smallmatrix}I_4\\ 7\\ 6\\ 5\\ 4\end{smallmatrix}}}="13",
(40,0)*{{\begin{smallmatrix}I_5\\ 7\\ 6\\ 5\end{smallmatrix}}}="14",
\ar"3";"5",
\ar"5";"7",
\ar"5";"8",
\ar"7";"9",
\ar"8";"9",
\ar"9";"11",
\ar"9";"12",
\ar"11";"13",
\ar"12";"13",
\ar"13";"14",
\ar@{.>}"7";"3",
\ar@{.>}"11";"7",
\ar@{.>}"14";"11",
\ar@{.>}"9";"5",
\ar@{.>}"13";"9",
\ar@{.>}"12";"8",
\end{xy}\ \ \ \begin{xy}
0;<5pt,0pt>:<0pt,4pt>::
(0,0)*{{\begin{smallmatrix}P_1\\ 1\end{smallmatrix}}}="1",
(5,5)*{{\begin{smallmatrix}P_2\\ 2\\ 1\end{smallmatrix}}}="2",
(10,0)*{{\begin{smallmatrix}2\end{smallmatrix}}}="3",
(10,10)*{{\begin{smallmatrix}P_3\\ 3\\ 2\\ 1\end{smallmatrix}}}="4",
(15,5)*{{\begin{smallmatrix}3\\ 2\end{smallmatrix}}}="5",
(20,0)*{{\begin{smallmatrix}3\end{smallmatrix}}}="7",
(20,10)*{{\begin{smallmatrix}4\\ 3\\ 2\end{smallmatrix}}}="8",
(25,5)*{{\begin{smallmatrix}4\\ 3\end{smallmatrix}}}="9",
(30,0)*{{\begin{smallmatrix}4\end{smallmatrix}}}="11",
\ar"1";"2",
\ar"2";"3",
\ar"2";"4",
\ar"3";"5",
\ar"4";"5",
\ar"5";"7",
\ar"5";"8",
\ar"7";"9",
\ar"8";"9",
\ar"9";"11",
\ar@{.>}"3";"1",
\ar@{.>}"7";"3",
\ar@{.>}"11";"7",
\ar@{.>}"5";"2",
\ar@{.>}"9";"5",
\ar@{.>}"8";"4",
\end{xy}\]
\item We consider the exact category $\mathsf{CM} R$ in Example~\ref{A1 example}. This is not a $\tau$-category by Example~\ref{example of tau-category}(5), while $\underline{\mathsf{CM}} R=\overline{\mathsf{CM}} R$ is a $\tau$-category by Theorem~\ref{stable categories are tau}.
Moreover, $\mathrm{AR}_\tau(\underline{\mathsf{CM}} R)$ is the following.
\[\xymatrix@R1em@C2em{
(x,u)&&(x,v)\ar@{.>}[ll]&&(x,u)\ar@{.>}[ll]}\]
\end{enumerate}
\end{example}

To prove Theorem~\ref{stable categories are tau}, we need the following modification of \cite[1.4(2)]{I2}.

\begin{lemma}\label{Lemma6.1}
Let $\mathscr{C}$ be a Krull--Schmidt extriangulated category and $\mathscr{D}$ an additive full subcategory of $\mathscr{C}$.
\begin{enumerate}[\rm(1)]
\item If $x\in\mathscr{C}(A,B)$ is a right (resp.\ left) almost split morphism in $\mathscr{C}$ and if $B\notin\mathscr{D}$ (resp. $A\notin\mathscr{D}$), then $\overline{x}\in(\mathscr{C}/\mathscr{D})(A,B)$ is a right (resp.\ left) almost split morphism in $\mathscr{C}/\mathscr{D}$.
\item Let $A\overset{x}{\longrightarrow}B\overset{y}{\longrightarrow}C$ be an almost split sequence such that $C\notin\mathscr{D}$. Then a right $\tau$-sequence of $C$ in $\mathscr{C}/\mathscr{D}$ is given by $A\overset{\overline{x}}{\longrightarrow}B\overset{\overline{y}}{\longrightarrow}C$ if $B\notin\mathscr{D}$, and by $0\overset{}{\longrightarrow}0\overset{}{\longrightarrow}C$ if $B\in\mathscr{D}$.
\item Let $A\overset{x}{\longrightarrow}B\overset{y}{\longrightarrow}C$ be an almost split sequence such that $A\notin\mathscr{D}$. Then a left $\tau$-sequence of $A$ in $\mathscr{C}/\mathscr{D}$ is given by $A\overset{\overline{x}}{\longrightarrow}B\overset{\overline{y}}{\longrightarrow}C$ if $B\notin\mathscr{D}$, and by $A\overset{}{\longrightarrow}0\overset{}{\longrightarrow}0$ if $B\in\mathscr{D}$.
\end{enumerate}
\end{lemma}

\begin{proof}
(1) This is clear since ${\rm rad}(\mathscr{C}/\mathscr{D})(X,B)=(({\rm rad}\mathscr{C})/[\mathscr{D}])(X,B)$ holds for any $X\in\mathscr{C}$.

(2) $\overline{y}$ is right almost split by (1), and $\overline{x}$ is a weak kernel of $\overline{y}$ by Lemma~\ref{tensor}. Since $A\in\ind\mathscr{C}$, a sink sequence of $C$ in $\mathscr{C}/\mathscr{D}$ is given by  $A\overset{\overline{x}}{\longrightarrow}B\overset{\overline{y}}{\longrightarrow}C$ if $B\notin\mathscr{D}$, and $0\overset{}{\longrightarrow}0\overset{}{\longrightarrow}C$ if $B\in\mathscr{D}$. Clearly the latter sequence gives a right $\tau$-sequence of $C$ in $\mathscr{C}/\mathscr{D}$, and so is the former one since the dual argument shows that either $\overline{x}$ is left almost split or $A\in\mathscr{D}$ holds.

(3) is dual to (2).
\end{proof}

Another key observation is the following.

\begin{proposition}\label{Proposition6.2}
Let $\mathscr{C}$ be an extriangulated category with enough injectives.
Let $P\in\mathscr{C}$ be an indecomposable projective non-injective object, and $f\in\mathscr{C}(A,P)$ a sink morphism.
Then $0\to A\xrightarrow{\overline{f}}P$ is a right $\tau$-sequence in $\overline{\mathscr{C}}$.
\end{proposition}

\begin{proof}
The proof is parallel to that of Theorem~\ref{weak kernel of right almost split}. It suffices to show that $\overline{f}$ is a monomorphism in $\overline{\mathscr{C}}$.
We will show that any $g\in\mathscr{C}(B,A)$ satisfying $\overline{fg}=0$ satisfies $\overline{g}=0$. Take an $\mathfrak{s}$-conflation $B\overset{i}{\longrightarrow}I\overset{j}{\longrightarrow}X$ with $I$ injective. By \cite[Proposition 1.20]{LNa}, we have the following commutative diagram such that $B \xrightarrow{\left[\bsm i\\-g \esm\right]}I\oplus A\xrightarrow{[h\ x^{\prime}]} Z$ is an $\mathfrak{s}$-conflation.
\[
\xymatrix@R1.5em@C4em{
B\ar[r]^i\ar[d]^g&I\ar[r]^j\ar[d]^h&X\ar@{=}[d]\\
A\ar[r]^{x^{\prime}}&Z\ar[r]^{y^{\prime}}&X
}
\]
By the assumption of $\overline{fg}=0$, there exists $a\in\mathscr{C}(I,P)$ such that $fg=ai$. Thus there exists $b\in\mathscr{C}(Z,P)$ such that $b [h\ x^{\prime}]=[a\ f]$.

If $b$ is a retraction, then $[a\ f]=b [h\ x^{\prime}]\in\mathscr{C}(I\oplus A,P)$ is a deflation, and hence a retraction since $P$ is projective. This is a contradiction since both $a$ and $f$ belong to ${\rm rad}\mathscr{C}$ by non-injectivity of $P$.
Therefore $b$ is not a retraction. Since $f$ is a sink morphism, there is $c\in\mathscr{C}(Z,A)$ satisfying $b = fc$. Since $f = bx^{\prime} = fcx^{\prime}$ holds and $f$ is right minimal, $cx^{\prime}$ is an isomorphism in $\mathscr{C}$.
Thus there is a left inverse $z\in\mathscr{C}(Z,A)$ of $x^{\prime}$. This gives $g=z h i$, hence $\overline{g}=0$ as desired.
\end{proof}

Now we are able to prove the following main observations.

\begin{proposition}\label{Proposition6.3}
Let $\mathscr{C}$ be a Krull--Schmidt extriangulated category.
\begin{enumerate}[\rm(1)]
\item If $\mathscr{C}$ has enough injectives and left almost split extensions, then $\overline{\mathscr{C}}$ is a left $\tau$-category.
\item If $\mathscr{C}$ has enough injectives and sink morphisms, then
$\overline{\mathscr{C}}$ is a right $\tau$-category.
\item If $\mathscr{C}$ has enough projectives and right almost split extensions, then $\underline{\mathscr{C}}$ is a right $\tau$-category.
\item If $\mathscr{C}$ has enough projectives and source morphisms, then
$\underline{\mathscr{C}}$ is a left $\tau$-category.
\end{enumerate}
\end{proposition}

\begin{proof}
(1) Since $\mathscr{C}$ has enough injectives, we have $\overline{\mathscr{C}}=\mathscr{C}/\mathscr{I}$ for $\mathscr{I}:=\mathrm{Inj}_{\mathbb{E}}\mathscr{C}$ by Remark \ref{C/P and C/P}. Fix any $A\in\ind\overline{\mathscr{C}}=\ind\mathscr{C}\setminus\ind\mathscr{I}$. 
Since $\mathscr{C}$ has left almost split extensions, there exists an almost split sequence $A\overset{x}{\longrightarrow}B\overset{y}{\longrightarrow}C$. By Lemma \ref{Lemma6.1}(3), it gives a left $\tau$-sequence of $A$ in $\overline{\mathscr{C}}$.

(2) Fix any $C\in\ind\overline{\mathscr{C}}=\ind\mathscr{C}\setminus\ind\mathscr{I}$. If $C$ is projective, then a sink morphism $B\overset{y}{\longrightarrow}C$ gives a right $\tau$-sequence $0\overset{}{\longrightarrow}B\overset{\overline{y}}{\longrightarrow}C$ by Proposition \ref{Proposition6.2}.
If $C$ is non-projective, then Lemma \ref{sink imply right ASE} shows that there exists an almost split sequence $A\overset{x}{\longrightarrow}B\overset{y}{\longrightarrow}C$. By Lemma \ref{Lemma6.1}(2), it gives a right $\tau$-sequence of $C$ in $\overline{\mathscr{C}}$.

(3) and (4) are dual to (1) and (2) respectively.
\end{proof}

We are ready to prove Theorem~\ref{stable categories are tau}.

\begin{proof}[Proof of Theorem~\ref{stable categories are tau}]
The assertions are immediate from Proposition~\ref{Proposition6.3}.
\end{proof}

Theorem~\ref{stable categories are tau} enables us to apply results on $\tau$-categories to (co)stable categories of extriangulated categories. In the rest of this subsection, we give some applications.

\begin{corollary}[Radical Layers Theorem]\label{radical layer}
Let $\mathscr{C}$ be a Krull--Schmidt extriangulated category which has enough projectives, almost split extensions and source morphisms, and let $\mathscr{D}=\underline{\mathscr{C}}$.
\begin{enumerate}[\rm(1)]
\item Let $A\overset{x}{\longrightarrow}B\overset{y}{\longrightarrow}C$ be an almost split sequence in $\mathscr{C}$ such that $A$ is non-projective. Then the following sequences are exact for each $i\ge0$, where ${\rm rad}^{-1}\mathscr{D}:=\mathscr{D}$.
\begin{eqnarray*}
&{\rm rad}^{i-1}\mathscr{D}(-,A)\xrightarrow{x\circ-}{\rm rad}^i\mathscr{D}(-,B)\xrightarrow{y\circ-}{\rm rad}^{i+1}\mathscr{D}(-,C)\to0,&\\
&{\rm rad}^{i-1}\mathscr{D}(C,-)\xrightarrow{-\circ y}{\rm rad}^i\mathscr{D}(B,-)\xrightarrow{-\circ x}{\rm rad}^{i+1}\mathscr{D}(A,-)\to0.&
\end{eqnarray*}
\item Let $I\xrightarrow{x}A$ be a source morphism in $\mathscr{C}$ of an indecomposable injective non-projective object~$I$. Then we have an isomorphism of functors for each $i\ge0$.
\[ -\circ x\colon{\rm rad}^i\mathscr{D}(A,-)\xrightarrow{\sim}{\rm rad}^{i+1}\mathscr{D}(I,-).\]
\end{enumerate}
\end{corollary}

\begin{proof}
(1) 
By Theorem \ref{existence theorem}, the morphism $x\in\mathscr{C}(A,B)$ has a right ladder. We apply \cite[Theorem 4.2]{I1} to $a_0:=x$ and $n:=0$. Since $L={\rm rad}\mathscr{D}(-,C)$, we obtain the first exact sequence
\[{\rm rad}^{i-1}\mathscr{D}(-,A)\xrightarrow{x\circ-}{\rm rad}^i\mathscr{D}(-,B)\xrightarrow{y\circ-}{\rm rad}^{i+1}\mathscr{D}(-,C)\to0.\]
Dually, we obtain the second exact sequence
\[{\rm rad}^{i-1}\mathscr{D}(C,-)\xrightarrow{-\circ y}{\rm rad}^i\mathscr{D}(B,-)\xrightarrow{-\circ x}{\rm rad}^{i+1}\mathscr{D}(A,-)\to0.\]

(2) By the dual of Proposition \ref{Proposition6.2}, we have an isomorphism $-\circ x\colon\mathscr{D}(A,-)\xrightarrow{\sim}{\rm rad}\mathscr{D}(I,-)$. Taking ${\rm rad}^i$ of these functors, we obtain the desired isomorphism.
\end{proof}

We leave it to the reader to state the dual results for the costable category $\overline{\mathscr{C}}$. From Corollary~\ref{radical layer}, we obtain the following exact sequences and an isomorphism, which explain the term ``Radical Layers Theorem'':
\begin{eqnarray}\label{radical layer2}
&&({\rm rad}^{i-1}\mathscr{D}/{\rm rad}^i\mathscr{D})(-,A)\xrightarrow{x\circ -}({\rm rad}^i\mathscr{D}/{\rm rad}^{i+1}\mathscr{D})(-,B)\xrightarrow{y\circ-}({\rm rad}^{i+1}\mathscr{D}/{\rm rad}^{i+2}\mathscr{D})(-,C)\to0,\nonumber \\
&&({\rm rad}^{i-1}\mathscr{D}/{\rm rad}^i\mathscr{D})(C,-)\xrightarrow{-\circ y}({\rm rad}^i\mathscr{D}/{\rm rad}^{i+1}\mathscr{D})(B,-)\xrightarrow{-\circ x}({\rm rad}^{i+1}\mathscr{D}/{\rm rad}^{i+2}\mathscr{D})(A,-)\to0,\nonumber\\
&&-\circ x\colon({\rm rad}^{i}\mathscr{D}/{\rm rad}^{i+1}\mathscr{D})(A,-)\xrightarrow{\sim}({\rm rad}^{i+1}\mathscr{D}/{\rm rad}^{i+2}\mathscr{D})(I,-).\nonumber
\end{eqnarray}



Let $\mathscr{C}$ be a Krull--Schmidt extriangulated category which has enough projectives, almost split extensions and source morphisms, and let $\mathscr{D}=\underline{\mathscr{C}}$.
Then $\mathscr{D}$ is a $\tau$-category and $\mathrm{AR}_\tau(\mathscr{D})$ is determined by $\mathrm{AR}_{\rm ET}(\mathscr{C})$ (Theorem \ref{stable categories are tau}). On the other hand,
the functions $(\theta_i)_{i\ge0}$ for $\mathscr{D}$ given in Definition~\ref{define theta_i} are determined by $\mathrm{AR}_\tau(\mathscr{D})$.
Therefore the next result enables us to calculate the dimensions of $\mathscr{D}(X,Y)$ and $\mathbb{E}(X,Y)$ from $\mathrm{AR}_\tau(\mathscr{D})$. An example will be given in Example~\ref{calculate dimension example}. We denote by
\[\langle-,-\rangle:K_0(\mathscr{D})\times K_0(\mathscr{D})\to\mathbb{Z}\]
the bilinear form such that $\langle X,Y\rangle$ is the Kronecker delta $\delta_{XY}$ for $X,Y\in\ind\mathscr{D}$.

\begin{corollary}[Auslander--Reiten Combinatorics]\label{calculate dimension}
Let $\mathscr{C}$ be a Krull--Schmidt extriangulated category which has enough projectives (resp.\ injectives), almost split extensions and source (resp.\ sink) morphisms, and let $\mathscr{D}=\underline{\mathscr{C}}$ (resp.\ $\overline{\mathscr{C}}$).
\begin{enumerate}[\rm(1)]
\item For each object $X\in\mathscr{D}$ and $i\ge0$, there is an isomorphism of functors
\[(\mathscr{D}/{\rm rad}\mathscr{D})(-,\theta_iX)\simeq({\rm rad}^{i}\mathscr{D}/{\rm rad}^{i+1}\mathscr{D})(-,X).\]
\item (Sign-coherence) $\theta_i$ is a monoid endomorphism of $K_0(\mathscr{D})_+$, that is, for each $X,Y\in\mathscr{D}$, we have $\theta_i(X\oplus Y)\simeq\theta_iX\oplus\theta_iY$.
\item Assume $\bigcap_{i\ge0}{\rm rad}^{i}\mathscr{D}=0$. Then for each $X\in\mathscr{D}$ and $Y\in\ind\mathscr{D}$, we have
\[ {\rm length}\mathscr{D}(Y,X)_{{\rm End}_{\mathscr{D}}(Y)}=\sum_{i\ge0}\langle Y,\theta_iX\rangle.\] 
This is equal to ${\rm length}_{{\rm End}_{\mathscr{C}}(\tau Y)}\mathbb{E}(X,\tau Y)$ if $\mathscr{C}$ has Auslander--Reiten--Serre duality. 
\end{enumerate}
\end{corollary}

\begin{proof}
(1) Let $a_0:=0\in\mathscr{D}(0,X)$. By Theorem \ref{existence theorem}, $a_0$ has a right ladder \eqref{ladder diagram}. By \cite[Theorem 4.1]{I1}, we have an exact sequence of functors
\[\mathscr{D}(-,X_i)\xrightarrow{a_i\circ-}\mathscr{D}(-,Y_i)\to{\rm rad}^{i}\mathscr{D}(-,X)\to0.\]
Thus there is an isomorphism of functors
\[(\mathscr{D}/{\rm rad}\mathscr{D})(-,Y_i)\simeq({\rm rad}^{i}\mathscr{D}/{\rm rad}^{i+1}\mathscr{D})(-,X).\]
By \cite[Theorem 7.1]{I1}, $\theta_iX=Y_i$ holds for each $i\ge0$.
Thus the assertion follows.

(2) The assertion follows from isomorphisms of functors
\begin{align*}
&(\mathscr{D}/{\rm rad}\mathscr{D})(-,\theta_i(X\oplus Y))\stackrel{{\rm(1)}}{\simeq}({\rm rad}^{i}\mathscr{D}/{\rm rad}^{i+1}\mathscr{D})(-,X\oplus Y)\\
\simeq&({\rm rad}^{i}\mathscr{D}/{\rm rad}^{i+1}\mathscr{D})(-,X)\oplus({\rm rad}^{i}\mathscr{D}/{\rm rad}^{i+1}\mathscr{D})(-,Y)\\
\stackrel{{\rm(1)}}{\simeq}&(\mathscr{D}/{\rm rad}\mathscr{D})(-,\theta_iX)\oplus(\mathscr{D}/{\rm rad}\mathscr{D})(-,\theta_iY)\simeq(\mathscr{D}/{\rm rad}\mathscr{D})(-,\theta_iX\oplus\theta_iY).
\end{align*}

(3) The assertion follows from
\begin{align*}
{\rm length}\mathscr{D}(Y,X)_{{\rm End}_{\mathscr{D}}(Y)}=&\sum_{i\ge0}{\rm length}({\rm rad}^{i}\mathscr{D}/{\rm rad}^{i+1}\mathscr{D})(Y,X)_{{\rm End}_{\mathscr{D}}(Y)}\\
\stackrel{{\rm(1)}}{=}&\sum_{i\ge0}{\rm length}(\mathscr{D}/{\rm rad}\mathscr{D})(Y,\theta_iX)_{{\rm End}_{\mathscr{D}}(Y)}=\sum_{i\ge0}\langle Y,\theta_iX\rangle.\qedhere
\end{align*}
\end{proof}

\begin{example}\label{calculate dimension example}
Using the extriangulated category $(\mathscr{C},\mathbb{F},\mathfrak{t})$ with the following Auslander--Reiten quiver given at the end of Section \ref{section_ARsubcat}, we explain Corollary~\ref{calculate dimension}.
\[
\begin{tikzpicture}[scale=0.3, fl/.style={->,>=latex}]
\foreach \x in {-1,0,1,2} {
   \draw[fl] (3.6*\x+1.8,0.5) -- (3.6*\x+2.8,1.5) ;
   \draw[fl] (3.6*\x,-1.5) -- (3.6*\x+1,-0.5) ;
};
\foreach \x in {0,1,2} {
   \draw[fl] (3.6*\x,1.5) -- (3.6*\x+1,0.5) ;
   \draw[fl] (3.6*\x-1.8,-0.5) -- (3.6*\x-0.8,-1.5) ;
   \draw[fl, dotted] (3.6*\x+2.4,2) -- (3.6*\x+0.4,2) ;
   \draw[fl, dotted] (3.6*\x+0.6,0) -- (3.6*\x-1.4,0) ;
};
\foreach \x in {0,1} {
   \draw[fl, dotted] (3.6*\x-1.2,-2) -- (3.6*\x-3.2,-2) ;
};
\draw (-0.4,2) node {$\bullet$} ;
\draw (3.2,2) node {$\bullet$} ;
\draw (6.8,2) node {$\bullet$} ;
\draw (10.4,2) node {$\bullet$} ;
\draw (-2.2,0) node {$\bullet$} ;
\draw (1.4,0) node {$\bullet$} ;
\draw (5,0) node {$\bullet$} ;
\draw (8.6,0) node {$\bullet$} ;
\draw (-4,-2) node {$\bullet$} ;
\draw (-0.4,-2) node {$\bullet$} ;
\draw (3.2,-2) node {$\bullet$} ;
\draw (6.8,-2) node {$\bullet$} ;
\end{tikzpicture}
\]
Then $\mathrm{AR}_\tau(\underline{\mathscr{C}})$ is given by the following.
\[
\begin{tikzpicture}[scale=0.3, fl/.style={->,>=latex}]
\foreach \x in {0,1,2} {
   \draw[fl] (3.6*\x+1.8,0.5) -- (3.6*\x+2.8,1.5) ;
};
\foreach \x in {0,1} {
   \draw[fl] (3.6*\x,-1.5) -- (3.6*\x+1,-0.5) ;
};
\foreach \x in {1,2} {
   \draw[fl] (3.6*\x,1.5) -- (3.6*\x+1,0.5) ;
   \draw[fl, dotted] (3.6*\x+2.4,2) -- (3.6*\x+0.4,2) ;
   \draw[fl, dotted] (3.6*\x+0.6,0) -- (3.6*\x-1.4,0) ;
};
\foreach \x in {1} {
   \draw[fl] (3.6*\x-1.8,-0.5) -- (3.6*\x-0.8,-1.5) ;
   \draw[fl, dotted] (3.6*\x-1.2,-2) -- (3.6*\x-3.2,-2) ;
};
\draw (3.2,2) node {$\bullet$} ;
\draw (6.8,2) node {$\bullet$} ;
\draw (10.4,2) node {$\bullet$} ;
%
\draw (1.4,0) node {$\bullet$} ;
\draw (5,0) node {$X$} ;
\draw (8.6,0) node {$\bullet$} ;
%
\draw (-0.4,-2) node {$\bullet$} ;
\draw (3.2,-2) node {$\bullet$} ;
\end{tikzpicture}
\]
For $X$ in the previous diagram, $\theta_iX$ is given as follows:
\[
\begin{tikzpicture}[scale=0.3, fl/.style={->,>=latex}]
\foreach \x in {0,1,2} {
   \draw[fl] (3.6*\x+1.8,0.5) -- (3.6*\x+2.8,1.5) ;
};
\foreach \x in {0,1} {
   \draw[fl] (3.6*\x,-1.5) -- (3.6*\x+1,-0.5) ;
};
\foreach \x in {1,2} {
   \draw[fl] (3.6*\x,1.5) -- (3.6*\x+1,0.5) ;
   \draw[fl, dotted] (3.6*\x+2.4,2) -- (3.6*\x+0.4,2) ;
   \draw[fl, dotted] (3.6*\x+0.6,0) -- (3.6*\x-1.4,0) ;
};
\foreach \x in {1} {
   \draw[fl] (3.6*\x-1.8,-0.5) -- (3.6*\x-0.8,-1.5) ;
   \draw[fl, dotted] (3.6*\x-1.2,-2) -- (3.6*\x-3.2,-2) ;
};
\draw (3.2,2) node {$0$} ;
\draw (6.8,2) node {$0$} ;
\draw (10.4,2) node {$0$} ;
%
\draw (1.4,0) node {$0$} ;
\draw (5,0) node {$1$} ;
\draw (8.6,0) node {$0$} ;
%
\draw (-0.4,-2) node {$0$} ;
\draw (3.2,-2) node {$0$} ;
\draw (5,-4) node {$\theta_0X$} ;
\end{tikzpicture}
\begin{tikzpicture}[scale=0.3, fl/.style={->,>=latex}]
\foreach \x in {0,1,2} {
   \draw[fl] (3.6*\x+1.8,0.5) -- (3.6*\x+2.8,1.5) ;
};
\foreach \x in {0,1} {
   \draw[fl] (3.6*\x,-1.5) -- (3.6*\x+1,-0.5) ;
};
\foreach \x in {1,2} {
   \draw[fl] (3.6*\x,1.5) -- (3.6*\x+1,0.5) ;
   \draw[fl, dotted] (3.6*\x+2.4,2) -- (3.6*\x+0.4,2) ;
   \draw[fl, dotted] (3.6*\x+0.6,0) -- (3.6*\x-1.4,0) ;
};
\foreach \x in {1} {
   \draw[fl] (3.6*\x-1.8,-0.5) -- (3.6*\x-0.8,-1.5) ;
   \draw[fl, dotted] (3.6*\x-1.2,-2) -- (3.6*\x-3.2,-2) ;
};
\draw (3.2,2) node {$1$} ;
\draw (6.8,2) node {$0$} ;
\draw (10.4,2) node {$0$} ;
%
\draw (1.4,0) node {$0$} ;
\draw (5,0) node {$0$} ;
\draw (8.6,0) node {$0$} ;
%
\draw (-0.4,-2) node {$0$} ;
\draw (3.2,-2) node {$1$} ;
\draw (5,-4) node {$\theta_1X$} ;
\end{tikzpicture}
\begin{tikzpicture}[scale=0.3, fl/.style={->,>=latex}]
\foreach \x in {0,1,2} {
   \draw[fl] (3.6*\x+1.8,0.5) -- (3.6*\x+2.8,1.5) ;
};
\foreach \x in {0,1} {
   \draw[fl] (3.6*\x,-1.5) -- (3.6*\x+1,-0.5) ;
};
\foreach \x in {1,2} {
   \draw[fl] (3.6*\x,1.5) -- (3.6*\x+1,0.5) ;
   \draw[fl, dotted] (3.6*\x+2.4,2) -- (3.6*\x+0.4,2) ;
   \draw[fl, dotted] (3.6*\x+0.6,0) -- (3.6*\x-1.4,0) ;
};
\foreach \x in {1} {
   \draw[fl] (3.6*\x-1.8,-0.5) -- (3.6*\x-0.8,-1.5) ;
   \draw[fl, dotted] (3.6*\x-1.2,-2) -- (3.6*\x-3.2,-2) ;
};
\draw (3.2,2) node {$0$} ;
\draw (6.8,2) node {$0$} ;
\draw (10.4,2) node {$0$} ;
%
\draw (1.4,0) node {$1$} ;
\draw (5,0) node {$0$} ;
\draw (8.6,0) node {$0$} ;
%
\draw (-0.4,-2) node {$0$} ;
\draw (3.2,-2) node {$0$} ;
\draw (5,-4) node {$\theta_2X$} ;
\end{tikzpicture}
\begin{tikzpicture}[scale=0.3, fl/.style={->,>=latex}]
\foreach \x in {0,1,2} {
   \draw[fl] (3.6*\x+1.8,0.5) -- (3.6*\x+2.8,1.5) ;
};
\foreach \x in {0,1} {
   \draw[fl] (3.6*\x,-1.5) -- (3.6*\x+1,-0.5) ;
};
\foreach \x in {1,2} {
   \draw[fl] (3.6*\x,1.5) -- (3.6*\x+1,0.5) ;
   \draw[fl, dotted] (3.6*\x+2.4,2) -- (3.6*\x+0.4,2) ;
   \draw[fl, dotted] (3.6*\x+0.6,0) -- (3.6*\x-1.4,0) ;
};
\foreach \x in {1} {
   \draw[fl] (3.6*\x-1.8,-0.5) -- (3.6*\x-0.8,-1.5) ;
   \draw[fl, dotted] (3.6*\x-1.2,-2) -- (3.6*\x-3.2,-2) ;
};
\draw (3.2,2) node {$0$} ;
\draw (6.8,2) node {$0$} ;
\draw (10.4,2) node {$0$} ;
%
\draw (1.4,0) node {$0$} ;
\draw (5,0) node {$0$} ;
\draw (8.6,0) node {$0$} ;
%
\draw (-0.4,-2) node {$0$} ;
\draw (3.2,-2) node {$0$} ;
\draw (5,-4) node {$\theta_iX\ (i\ge3)$} ;
\end{tikzpicture}\]
Summing up them, the map $Y\mapsto\mathrm{length}\underline{\mathscr{C}}(Y,X)_{{\rm End}_{\mathscr{C}}(Y)}$ is given by the following.
\[
\begin{tikzpicture}[scale=0.3, fl/.style={->,>=latex}]
\foreach \x in {0,1,2} {
   \draw[fl] (3.6*\x+1.8,0.5) -- (3.6*\x+2.8,1.5) ;
};
\foreach \x in {0,1} {
   \draw[fl] (3.6*\x,-1.5) -- (3.6*\x+1,-0.5) ;
};
\foreach \x in {1,2} {
   \draw[fl] (3.6*\x,1.5) -- (3.6*\x+1,0.5) ;
   \draw[fl, dotted] (3.6*\x+2.4,2) -- (3.6*\x+0.4,2) ;
   \draw[fl, dotted] (3.6*\x+0.6,0) -- (3.6*\x-1.4,0) ;
};
\foreach \x in {1} {
   \draw[fl] (3.6*\x-1.8,-0.5) -- (3.6*\x-0.8,-1.5) ;
   \draw[fl, dotted] (3.6*\x-1.2,-2) -- (3.6*\x-3.2,-2) ;
};
\draw (3.2,2) node {$1$} ;
\draw (6.8,2) node {$0$} ;
\draw (10.4,2) node {$0$} ;
%
\draw (1.4,0) node {$1$} ;
\draw (5,0) node {$1$} ;
\draw (8.6,0) node {$0$} ;
%
\draw (-0.4,-2) node {$0$} ;
\draw (3.2,-2) node {$1$} ;
\end{tikzpicture}
\]
\end{example}


We end this section with an extriangulated version of Reconstruction Theorem \cite[Proposition 5.1]{BG}\cite[Lemma 3.1]{IT2}. For a Krull--Schmidt category $\mathscr{D}$, its \emph{associated completely graded category} ${\rm Gr}\,\mathscr{D}$ has the same objects as $\mathscr{D}$, and the morphisms are given by
\[{\rm Gr}\,\mathscr{D}(X,Y)=\prod_{i\ge0}({\rm rad}^i\mathscr{D}/{\rm rad}^{i+1}\mathscr{D})(X,Y),\]
where the compositions are defined naturally.

\begin{corollary}[Reconstruction Theorem]\label{gr=mesh}\cite[Theorem 9.2]{I1}
Let $\mathscr{C}$ be a Krull--Schmidt extriangulated category which has enough projectives (resp.\ injectives), almost split extensions and source (resp.\ sink) morphisms, and let $\mathscr{D}=\underline{\mathscr{C}}$ (resp.\ $\overline{\mathscr{C}}$).
Then the category ${\rm Gr}\,\mathscr{D}$ is equivalent to the complete mesh category of $\mathrm{AR}_\tau^{\rm sp}(\mathscr{D})$ (see Example \ref{define AR species}, Definition \ref{define mesh category}).
\end{corollary}


\section{Inverse Problem for extriangulated categories}\label{section_inverse}

Let $\mathscr{C}$ be a Krull--Schmidt extriangulated category with almost split extensions. Then the Auslander--Reiten quiver of $\mathscr{C}$ has a structure of a $\tau$-quiver (Proposition \ref{AR quiver is translation}).
This section is devoted to studying the following Inverse Problem.

\begin{problem}[Inverse Problem]\label{inverse problem}
Let $Q$ be a locally finite symmetrizable $\tau$-quiver (see Definition \ref{define valued translation quiver}).
Does there exist a Krull--Schmidt extriangulated category $\mathscr{D}$ with almost split extensions satisfying $\mathrm{AR}_{\rm ET}(\mathscr{D})=Q$?
\end{problem}

Our main result below in this section gives a positive answer to Problem \ref{inverse problem}. Notice that an extriangulated category which is a $\tau$-category has sink morphisms and source morphisms by definition, and hence has almost split extensions by Lemma \ref{sink imply right ASE}.

\begin{theorem}\label{answer for inverse}
Let $Q$ be a locally finite symmetrizable $\tau$-quiver.
\begin{enumerate}[\rm(1)]
\item There exists a Krull--Schmidt extriangulated category $\mathscr{D}$ which is a $\tau$-category satisfying  $\mathrm{AR}_{\rm ET}(\mathscr{D})=Q$.
\item If $Q$ is strict, then there exists a Krull--Schmidt exact category $\mathscr{D}=(\mathscr{D},\mathbb{E},\mathfrak{s})$ which is a strict $\tau$-category satisfying $\mathrm{AR}_{\rm ET}(\mathscr{D})=\mathrm{AR}_\tau(\mathscr{D})=Q$ and the following conditions.
\begin{enumerate}[\rm(a)]
\item The $\mathbb{E}$-projective objects coincide with the $\tau$-projective objects, and the $\mathbb{E}$-injective objects coincide with the $\tau$-injective objects.
\item The almost split sequences are precisely the $\tau$-sequences.
\end{enumerate}
\end{enumerate}
\end{theorem}

Theorem \ref{answer for inverse}(1) follows easily from Theorem \ref{answer for inverse}(2) {by adding more projective-injective objects. This strategy is detailed below. Notice that a similar idea was used in \cite{KS1,KS2}.

The following result is a main step to prove Theorem \ref{answer for inverse}(2).



\begin{theorem}\label{strict case}
Let $\mathscr{D}$ be a strict $\tau$-category.
\begin{enumerate}[\rm(1)]
\item $\mathscr{D}$ has a structure $(\mathscr{D},\mathbb{E},\mathfrak{s})$ of an exact category whose conflations are the sequences $0\to A\xrightarrow{x}B\xrightarrow{y}C\to0$ satisfying the following equivalent conditions.
\begin{enumerate}[\rm(a)]
\item $0\to\mathscr{D}(-,A)\xrightarrow{x\circ-}\mathscr{D}(-,B)\xrightarrow{y\circ-}\mathscr{D}(-,C)\to F\to0$ is an exact sequence in $\Mod\mathscr{D}$ such that $F$ has finite length and $F(X)=0$ hold for each $\tau$-projective object $X\in\mathscr{D}$.
\item $0\to\mathscr{D}(C,-)\xrightarrow{-\circ y}\mathscr{D}(B,-)\xrightarrow{-\circ x}\mathscr{D}(A,-)\to G\to0$ is an exact sequence in $\Mod\mathscr{D}^\mathrm{op}$ such that $G$ has finite length and $G(X)=0$ hold for each $\tau$-injective object $X\in\mathscr{D}$.
\end{enumerate}
\item This exact structure satisfies the following conditions and hence $\mathrm{AR}_{\rm ET}(\mathscr{D})=\mathrm{AR}_\tau(\mathscr{D})$.
\begin{enumerate}[\rm(a)]
\item The $\mathbb{E}$-projective objects coincide with the $\tau$-projective objects, and the $\mathbb{E}$-injective objects coincide with the $\tau$-injective objects.
\item The almost split sequences are precisely the $\tau$-sequences.
\end{enumerate}
\end{enumerate}
\end{theorem}

To prove these results, we need preparations on the functor category.
Let $\mathscr{D}$ be a $\tau$-category. We consider the functors
\[{\rm Hom}_{\mathscr{D}}(-,\mathscr{D})\colon\Mod\mathscr{D}\longleftrightarrow\Mod\mathscr{D}^\mathrm{op}\colon{\rm Hom}_{\mathscr{D}^\mathrm{op}}(-,\mathscr{D}).\]
For $F\in\Mod\mathscr{D}$, ${\rm Hom}_{\mathscr{D}}(F,\mathscr{D})\in\Mod\mathscr{D}^\mathrm{op}$ is given by $({\rm Hom}_{\mathscr{D}}(F,\mathscr{D}))(X)={\rm Hom}_{\mathscr{D}}(F,\mathscr{D}(-,X))$ for each $X\in\mathscr{D}$. For each $i\ge0$, we also consider their derived functors
\[{\rm Ext}^i_{\mathscr{D}}(-,\mathscr{D})\colon\Mod\mathscr{D}\longleftrightarrow\Mod\mathscr{D}^\mathrm{op}\colon{\rm Ext}^i_{\mathscr{D}^\mathrm{op}}(-,\mathscr{D}).\]

Let $\mathscr{S}$ be the full subcategory of $\Mod\mathscr{D}$ consisting of all finite length $\mathscr{D}$-modules $F$ such that $F(X)=0$ for all $\tau$-projective objects $X\in\mathscr{D}$.
Dually, let $\mathscr{T}$ be the full subcategory of $\Mod\mathscr{D}^\mathrm{op}$ consisting of all finite length $\mathscr{D}^\mathrm{op}$-modules $G$ such that $G(X)=0$ for all $\tau$-injective objects $X\in\mathscr{D}$.

The following homological properties of strict $\tau$-categories play a key role.

\begin{proposition}\label{homological algebra}
Let $\mathscr{D}$ be a strict $\tau$-category.
\begin{enumerate}[\rm(1)]
\item $\mathscr{S}$ is a Serre subcategory of $\Mod\mathscr{D}$, and $\mathscr{T}$ is a Serre subcategory of $\Mod\mathscr{D}^\mathrm{op}$.
\item Each object $F\in\mathscr{S}$ has a projective resolution $0\to P_2\xrightarrow{} P_1\xrightarrow{} P_0\to F\to0$ such that $P_i\in\proj\mathscr{D}$ for each $i$ and satisfies ${\rm Ext}^i_{\mathscr{D}}(F,\mathscr{D})=0$ for each $i\neq2$ and ${\rm Ext}^2_{\mathscr{D}}(F,\mathscr{D})\in\mathscr{T}$.
\item Each object $G\in\mathscr{T}$ has a projective resolution $0\to P_2\xrightarrow{} P_1\xrightarrow{} P_0\to G\to0$ such that $P_i\in\proj\mathscr{D}^\mathrm{op}$ for each $i$ and satisfies ${\rm Ext}^i_{\mathscr{D}^\mathrm{op}}(G,\mathscr{D})=0$ for each $i\neq2$ and ${\rm Ext}^2_{\mathscr{D}^\mathrm{op}}(G,\mathscr{D})\in\mathscr{S}$. 
\item We have dualities
\[{\rm Ext}^2_{\mathscr{D}}(-,\mathscr{D})\colon\mathscr{S}\longleftrightarrow\mathscr{T}\colon{\rm Ext}^2_{\mathscr{D}^\mathrm{op}}(-,\mathscr{D}).\]
\end{enumerate}
\end{proposition}

\begin{proof}
(1) is clear.

(2) It suffices to consider the case when $F$ is a simple $\mathscr{D}$-module since each object in $\mathscr{S}$ has a finite filtration by simple $\mathscr{D}$-modules in $\mathscr{S}$.
Thus we can assume $F=S_C:=(\mathscr{D}/{\rm rad}\mathscr{D})(-,C)$ for an indecomposable non-$\tau$-projective object $C$.
Take a $\tau$-sequence $A\xrightarrow{}B\xrightarrow{}C$ in $\mathscr{D}$. Then we have a projective resolution
\[0\to\mathscr{D}(-,A)\to\mathscr{D}(-,B)\to\mathscr{D}(-,C)\to S_C\to0.\]
For $S^A:=(\mathscr{D}/{\rm rad}\mathscr{D})(A,-)$, the definition of $\tau$-sequences implies
\[{\rm Ext}^i_{\mathscr{D}}(S_C,\mathscr{D})=\left\{\begin{array}{ll}0&i\neq2\\ S^A&i=2 \end{array}\right.\ \mbox{ and }\ 
{\rm Ext}^i_{\mathscr{D}^\mathrm{op}}(S^A,\mathscr{D})=\left\{\begin{array}{ll}0&i\neq2\\ S_C&i=2. \end{array}\right.\]
Thus $F=S_C$ satisfies the desired conditions.

(3) is dual to (2), and (4) follows immediately from (2) and (3).
\end{proof}

\begin{proof}[Proof of Theorem \ref{strict case}]
(1) Immediate from Enomoto's construction of exact structure \cite[Theorem 2.7]{E} and Proposition \ref{homological algebra}.

(2) Immediate from the definition of conflations.
\end{proof}

Now we are ready to prove Theorem \ref{answer for inverse}.

\begin{proof}[Proof of Theorem \ref{answer for inverse}]
(2) By Lemma \ref{lift quiver to species}, there exists a $\tau$-species ${\mathcal Q}$ whose underlying $\tau$-quiver is $Q$. By Propositions \ref{property of mesh category} and \ref{strict}(3)$\Rightarrow$(1), $\mathscr{D}:=\mathsf{M}({\mathcal Q})$ is a strict $\tau$-category satisfying 
$\mathrm{AR}_\tau(\mathscr{D})=Q$. By Theorem \ref{strict case}, $\mathscr{D}$ has a structure of an exact category satisfying $\mathrm{AR}_{\rm ET}(\mathscr{D})=\mathrm{AR}_\tau(\mathscr{D})=Q$ and the conditions (a)(b).

(1) Using $Q=(Q_0,d,d',\tau)$, define a new $\tau$-quiver $\widetilde{Q}=(\widetilde{Q}_0,\widetilde{d},\widetilde{d}',\widetilde{\tau})$ as follows:
Let $S:=Q_0\setminus Q_0^p$, and let
\[\widetilde{Q}_0:=Q_0\sqcup S,\ \widetilde{Q}_0^p:=Q_0^p\sqcup S,\ \widetilde{Q}_0^i:=Q_0^i\sqcup S\  \mbox{ and }\ \widetilde{\tau}:=\tau.\]
For $X\in Q_0\setminus Q_0^p$, we denote by $\widetilde{X}\in S$ the corresponding element. Define maps $\widetilde{d},\widetilde{d}':\widetilde{Q}_0\times\widetilde{Q}_0\to\mathbb{Z}_{\ge0}$ by
\begin{enumerate}[$\bullet$]
\item $\widetilde{d}|_{Q_0\times Q_0}:=d$, $\widetilde{d}'|_{Q_0\times Q_0}:=d'$.
\item $\widetilde{d}_{\tau X,\widetilde{X}}=\widetilde{d}'_{\tau X,\widetilde{X}}=\widetilde{d}_{\widetilde{X}X}=\widetilde{d}'_{\widetilde{X}X}:=1$ for each $X\in Q_0\setminus Q_0^p$.
\item $\widetilde{d}_{XY}=\widetilde{d}'_{XY}:=0$ for all other pairs $(X,Y)\in\widetilde{Q}_0\times\widetilde{Q}_0$.
\end{enumerate}
Then any element in $\widetilde{Q}_0\setminus \widetilde{Q}_0^p$ is a target of an arrow starting at an element in $\widetilde{Q}_0^p$. Thus $\widetilde{Q}$ is strict by applying Proposition \ref{strict}(4)$\Rightarrow$(3) to $\widetilde{Q}$ and its mesh category. 
By (2), there exists an exact category $\mathscr{C}$ which is a strict $\tau$-category satisfying $\mathrm{AR}_{\rm ET}(\mathscr{C})=\mathrm{AR}_\tau(\mathscr{C})=\widetilde{Q}$.
Let $\mathscr{B}:={\rm add} S$ and $\mathscr{D}:=\mathscr{C}/\mathscr{B}$. By Proposition \ref{PropQuotARExt}, $\mathscr{D}$ is an extriangulated category satisfying $\mathrm{AR}_{\rm ET}(\mathscr{D})=Q$. The proof is completed.
\end{proof}

\begin{remark}
Notice that the exact categories given in Theorems \ref{answer for inverse} and \ref{strict case} do not necessarily have enough projectives and enough injectives.
\end{remark}

For a symmetrizable $\tau$-quiver which is stable (see Definition \ref{define valued translation quiver}), there is a different approach to Problem \ref{inverse problem}.
Recall that by Riedtmann's Structure Theorem \cite[p.\ 206]{Rie}, each stable $\tau$-quiver $Q$ can be written as $\mathbb{Z} Q'/G$, where $Q'$ is a valued quiver which is a tree and $G$ is a weakly admissible automorphism group of the $\tau$-quiver $\mathbb{Z} Q'$.
We call $Q'$ the \emph{tree type} of $Q$.

\begin{example}\label{stable case}
Let $Q$ be a locally finite stable symmetrizable $\tau$-quiver.
\begin{enumerate}[\rm(1)]
\item There exists a Krull--Schmidt extriangulated category $\mathscr{D}$ such that $\mathrm{AR}_{\rm ET}(\mathscr{D})=Q$.
\item If the tree type of $Q$ is a disjoint union of non-Dynkin quivers, then there exists an exact category $\mathscr{D}$ such that $\mathrm{AR}_{\rm ET}(\mathscr{D})=Q$.
\item If the tree type of $Q$ is a disjoint union of Dynkin quivers, then there exists a triangulated category $\mathscr{D}$ such that $\mathrm{AR}_{\rm ET}(\mathscr{D})=Q$.
\end{enumerate}
\end{example}

\begin{proof}
(2) Since $Q$ is strict by Example \ref{example of theta_i}(1), the assertion follows from Theorem \ref{answer for inverse}.

(3) Write $Q=\mathbb{Z} Q'/G$ for a disjoint union $Q'$ of Dynkin quivers and a weakly admissible automorphism group $G$ of $\mathbb{Z} Q'$. Let $\mathcal{Q}'$ be a species whose underlying valued quiver is $Q'$, and $\Lambda$ the opposite of the tensor algebra of $\mathcal{Q}'$. Then $\mathrm{AR}_{\rm ET}(\operatorname{D^b}(\mod\Lambda))=\mathbb{Z} Q'$ holds. If $G=\{1\}$, then the claim follows. Otherwise, $G$ is generated by the action of an autoequivalence $F:\operatorname{D^b}(\mod\Lambda)\to\operatorname{D^b}(\mod\Lambda)$ on $\mathbb{Z} Q'$. By \cite[Theorem 1]{Ke2}, the orbit category $\mathscr{D}:=\operatorname{D^b}(\mod\Lambda)/F$ has a structure of a triangulated category. Clearly $\mathrm{AR}_{\rm ET}(\mathscr{D})=\mathbb{Z} Q'/G=Q$ holds.

(1) follows immediately from (2) and (3).
\end{proof}


We end this section with the following general question.

\begin{problem}\label{lifting problem}
Let $\mathscr{D}$ be a $\tau$-category. Does there exist an extriangulated structure on $\mathscr{D}$ such that the projective objects coincide with the $\tau$-projective objects, and the injective objects coincide with the $\tau$-injective objects?
\end{problem}

\section{An example from gentle algebras}\label{subsection: example3}

This example is motivated by~\cite[Figure 30]{PPP}.
Let $k$ be a field, let $A$ be the quotient of the path algebra of the $\mathrm{A}_3$ quiver $1\rightarrow 2\rightarrow 3$ with ideal of relations $\operatorname{rad}^2$, and let $A\blossom$ be the algebra given by the quiver: 
\begin{equation}\label{Quiver_Example3}
\xymatrix@-1pc{
& & & f & \\
& & d \ar@{->}[r]_h & 3 \ar@{->}[r]_h \ar@{->}[u]^v  & h \\
& c & & & \\
a \ar@{->}[r]^h & 1 \ar@{->}[rr]^h \ar@{->}[u]_v & & 2 \ar@{->}[r]^h  \ar@{->}[uu]_v & g \\
& b \ar@{->}[u]_v & & e  \ar@{->}[u]^v &
}
\end{equation}
with relations $hv$, $vh$.

We let:
\begin{enumerate}[$\bullet$]
 \item $S=P_c\oplus P_f\oplus P_g\oplus P_h$ be the sum of the simple projective modules over $A\blossom$;
 \item $Q = P_a\oplus P_b\oplus P_d\oplus P_e$ be the sum of the indecomposable projective-injective modules over $A\blossom$;
 \item $\mathscr{E}$ be the full subcategory of $\mod A\blossom$ whose objects are all those modules $M$ such that both $\mathrm{Hom}_{A\blossom}(S,M)$ and $\mathrm{Hom}_{A\blossom}(Q,\tau M)$ vanish.
\end{enumerate}

Then $\mathrm{AR}_{\rm ET}(\mod A\blossom)$ is shown in Figure~\ref{figure: ARquiverModEx3}.
Since, for any modules $M,N$ over $A\blossom$, $\mathrm{Hom}_{A\blossom}(M,\tau N)=0$ if and only if $\mathrm{Ext}^1_{A\blossom}(N,\operatorname{Fac} M)=0$ \cite[Proposition 5.8]{AS}, the subcategory $\mathscr{E}$ is extension-closed in $\mod A\blossom$.

If we denote the full subcategory of projective-injective (resp.\ projective, injective) objects in $\mathscr{E}$ by $\mathscr{B}$ (resp.\ $\mathscr{P}$, $\mathscr{I}$) then by \cite[Proposition 3.30]{NP}, the ideal quotient $\mathscr{E}/\mathscr{B}$ is extriangulated. By Proposition~\ref{PropQuotARExt}, $\mathscr{E}/\mathscr{B}$ has almost split extensions, and $\mathrm{AR}_{\rm ET}(\mathscr{E}/\mathscr{B})$ is depicted in Figure~\ref{figure: ARquiverEx3}.
The Auslander--Reiten translation is represented by dashed arrows.
We note that the extriangulated category $\mathscr{E}/\mathscr{B}$ is not an exact category since the morphism $\bsm2\\3\esm \longrightarrow \bsm d\phantom{3}2\\3\esm$ is (both) an inflation  which is not monic (and a deflation which is not epic).
We also note that the isoclasses of indecomposable objects of $\mathscr{E}/\mathscr{B}$ are in bijection with the isoclasses of indecomposable $A$-modules or shifted projectives.
Moreover, the injective objects in $\mathscr{E}/\mathscr{B}$ are the shifted projectives and the projective objects in $\mathscr{E}/\mathscr{B}$ are the projective $A$-modules.
Let $A\ast A[1]$ be the full subcategory of the homotopy category $\kbproj$ whose objects are the complexes concentrated in degrees 0 and -1.
Via the bijection of \cite[Theorem 4.1]{AIR}, the modules $P_3$, $P_2$, $S_2$, $P_1$, and $S_1$ are respectively sent to the complexes $0\rightarrow P_3$, $0\rightarrow P_2$, $P_3\rightarrow P_2$, $0 \rightarrow P_1$ and $P_2 \rightarrow P_1$ so that there is a bijection between isoclasses of indecomposable objects in $\mathscr{E}/\mathscr{B}$ and isoclasses of indecomposable objects in $A\ast A[1]$.

\begin{center}
\begin{figure}
\begin{tikzpicture}[scale=.5, fl/.style={->,>=latex}]
\draw[fl] (-6,-1) -- (-5,-2) ;
\draw[fl] (-4,-2) -- (-3,-1) ;
\draw[fl] (-2,0) -- (-1,1) ;
\draw[fl] (-2,-1) -- (-1,-2) ;
\begin{scope}[xshift=-1cm, yshift=-1cm, rotate=180, fl/.style={<-,>=latex}] 
\draw[fl] (-6,-1) -- (-5,-2) ;
\draw[fl] (-4,-2) -- (-3,-1) ;
\draw[fl] (-2,0) -- (-1,1) ;
\draw[fl] (-2,-1) -- (-1,-2) ;
\end{scope}
\begin{scope}[yshift=-.5cm]
\draw[fl, dashed] (4.5,0) -- (2.5,0) ;
\draw[fl, dashed] (.5,0) -- (-1.5,0) ;
\draw[fl, dashed] (-3.5,0) -- (-5.5,0) ;
\draw[fl, dashed] (2.5,2) -- (.5,2) ;
\draw[fl, dashed] (-1.5,-2) -- (-3.5,-2) ;
\end{scope}
\begin{scope}[xshift=-.5cm, yshift=-.5cm]
\draw (-6,0) node {$3$} ;
\draw (-4,-2) node {$\bsm2\\3\esm$} ;
\draw (-2,0) node {$2$} ;
\draw (0,-2) node {$3[1]$} ;
\draw (0,2) node {$\bsm1\\2\esm$} ;
\draw (2,0) node {$1$} ;
\draw (4,2) node {$\bsm2\\3\esm[1]$} ;
\draw (6,0) node {$\bsm1\\2\esm[1]$} ;
\end{scope}
\end{tikzpicture}
\caption{$\mathrm{AR}_{\rm ET}(\mathscr{E}/\mathscr{B})$: The vertices are labelled by the corresponding indecomposable $A$-modules or shifted projectives.}\label{figure: ARquiverEx3}
\end{figure}
\end{center}

We give a brief explanation on how $\mathscr{E}/\mathscr{B}$ can be related to $\mod A$. Let $\varepsilon_p\in A\blossom$ be the idempotent element corresponding to each vertex $p$ in $(\ref{Quiver_Example3})$. Put
\[ \varepsilon=1-(\varepsilon_1+\varepsilon_2+\varepsilon_3)=\varepsilon_a+\varepsilon_b+\varepsilon_c+\varepsilon_d+\varepsilon_e+\varepsilon_f+\varepsilon_g+\varepsilon_h, \]and denote the two-sided ideal $A\blossom\varepsilon A\blossom\subseteq A\blossom$ by $\mathfrak{a}$.
The functor tensoring with $A=A\blossom/\mathfrak{a}$
\[ G\colon\mod A\blossom\to\mod A\ ;\ M\mapsto M/\mathfrak{a} M \]
satisfies $G(\mathscr{I})=0$, and thus induces an additive functor $\overline{G}\colon\mathscr{E}/\mathscr{I}\to\mod A$ (recall that $\mathscr{I}$ is the full subcategory of those objects that are injective in $\mathscr{E}$).

For any indecomposable object $N\in\mathscr{E}$ which is not injective in $\mathscr{E}$, we can observe on $\mathrm{AR}_{\rm ET}(\mod A\blossom)$ that there are exact sequences in $\mod A\blossom$
\[ 0\to\mathfrak{a} N\to N\to N/\mathfrak{a} N\to0,\quad 0\to K\to I\to \mathfrak{a} N\to0 \]
satisfying $I\in\mathscr{I}$ and $K,\mathfrak{a} N\in\operatorname{Fac} Q$.
More explicitly, the only non-injective objects $N$ in $\mathscr{E}$ for which $\mathfrak{a}N$ is non-zero are $\bsm d\phantom{3}2\\3\esm$ and $\bsm 1 \phantom{2} e\\ 2\esm$.
Moreover, we have short exact sequences:
\[
0\to \bsm d\\3\esm \to \bsm d\phantom{3}2\\3\esm \to 2\to 0,\;\;\;
0\to \bsm e\\2\esm \to \bsm 1\phantom{2}e\\2\esm \to 1\to 0,\;\;\;
0\to \bsm d\\3\esm \to \bsm \phantom{d3}e\\d\phantom{3}2\\3\esm \to \bsm e\\2\esm \to 0,
\]
where $\bsm d\\3\esm$ and $\bsm \phantom{d3}e\\d\phantom{3}2\\3\esm$ are injective in $\mathscr{E}$,
$\bsm d\\3\esm\in\operatorname{Fac}\bsm d\\3\\h\esm$ and
$\bsm e\\2\esm\in\operatorname{Fac}\bsm e\\2\\3\\f\esm$.

For any $M\in\mathscr{E}$, by the Auslander--Reiten duality, we have $\operatorname{Ext}^1_{A\blossom}(M,\mathfrak{a}N)\cong \Dd\overline{\operatorname{Hom}}_{A\blossom}(\mathfrak{a}N,\tau M) =0$
and $\operatorname{Ext}^1_{A\blossom}(M,K)\cong \Dd\overline{\operatorname{Hom}}_{A\blossom}(K,\tau M) =0$ because $K,\mathfrak{a}N\in\operatorname{Fac}Q$ and $\operatorname{Hom}(Q,\tau M)=0$.
This shows that the short exact sequences above induce surjections:

\begin{eqnarray*}
&\mathrm{Hom}_{A\blossom}(M,N)\to\mathrm{Hom}_{A\blossom}(M,N/\mathfrak{a} N)\simeq\mathrm{Hom}_A(G(M),G(N)),&\\
&\mathrm{Hom}_{A\blossom}(M,I)\to\mathrm{Hom}_{A\blossom}(M,\mathfrak{a} N)\simeq\mathrm{Ker}\big(\mathrm{Hom}_{A\blossom}(M,N)\overset{G}{\longrightarrow}\mathrm{Hom}_A(G(M),G(N))\big),&
\end{eqnarray*}
which show that
\[ (\mathscr{E}/\mathscr{I})(M,N)\overset{\overline{G}}{\longrightarrow}\mathrm{Hom}_A(\overline{G}(M),\overline{G}(N)) \]
is bijective. Thus $\overline{G}$ is fully faithful.

Moreover, since non-injective indecomposable objects in $\mathscr{E}$ satisfy
\[ 3\simeq G(3),\ \ \bsm2\\3\esm\simeq G(\bsm2\\3\esm),\ \ 2\simeq G(\bsm d\phantom{3}2\\3\esm),\ \ \bsm1\\2\esm\simeq G(\bsm1\\2\esm),\ \ 1\simeq G(\bsm 1\phantom{2}e\\2\esm), \]
we can observe that $\overline{G}$ is essentially surjective, and thus $\overline{G}\colon\mathscr{E}/\mathscr{I}\to\mod A$ is an equivalence of categories.

Explicit computations detailed below show the following:

\medskip\noindent {\bf Claim}: \emph{The bijection in Figure~\ref{figure: ARquiverEx3} induces a bijection between basic support $\tau$-tilting modules (equivalently: basic $\tau$-tilting pairs) over $A$ and basic maximal $\mathbb{E}$-rigid objects in the extriangulated category $\mathscr{E}/\mathscr{B}$. Moreover, $\tau$-tilting mutation can be computed in $\mathscr{E}/\mathscr{B}$ by means of approximation extriangles.}

\begin{proof}
Up to isomorphism, the extriangles in $\mathscr{E}/\mathscr{B}$ are the trivial ones, the ten extriangles listed below and all their direct sums. One can then check that $\tau$-compatibility of $\tau$-rigid pairs in $\mod A$ corresponds to $\mathbb{E}$-rigidity in $\mathscr{E}/\mathscr{B}$. As shown in Figure~\ref{figure: mutation}, maximal $\mathbb{E}$-rigid objects in $\mathscr{E}/\mathscr{B}$ have a well-behaved theory of mutation and mutation of $\tau$-tilting pairs in $\mod A$ correspond to mutation of maximal $\mathbb{E}$-rigid objects in $\mathscr{E}/\mathscr{B}$.

\[
\xymatrix@R=3pt{
(1) & 3 \ar@{->}[r] & ^2_3 \ar@{->}[r] & 2 \ar@{-->}[r] &&&
(2) & ^2_3 \ar@{->}[r] & 2 \ar@{->}[r] & 3[1] \ar@{-->}[r] & \\
(3) & 2 \ar@{->}[r] & ^1_2\oplus 3[1] \ar@{->}[r] & 1 \ar@{-->}[r] &&&
(4) & 2 \ar@{->}[r] & 3[1] \ar@{->}[r] & ^2_3[1] \ar@{-->}[r] & \\
(5) & ^1_2 \ar@{->}[r] & 1 \ar@{->}[r] & ^2_3[1] \ar@{-->}[r] &&&
(6) & ^1_2 \ar@{->}[r] & 0 \ar@{->}[r] & ^1_2[1] \ar@{-->}[r] & \\
(7) & 3 \ar@{->}[r] & 0 \ar@{->}[r] & 3[1] \ar@{-->}[r] &&&
(8) & ^2_3 \ar@{->}[r] & ^1_2 \ar@{->}[r] & 1 \ar@{-->}[r] & \\
(9) & ^2_3 \ar@{->}[r] & 0 \ar@{->}[r] & ^2_3[1] \ar@{-->}[r] &&&
(10) & 1 \ar@{->}[r] & ^2_3[1] \ar@{->}[r] & ^1_2[1] \ar@{-->}[r] &
}
\]
\end{proof}

\begin{figure}
\begin{tikzpicture}
\coordinate (0) at (0,-1) ;
\coordinate (1) at (0,2) ;
\coordinate (2) at (-5,3) ;
\coordinate (3) at (4.5,3) ;
\coordinate (4) at (-2,4) ;
\coordinate (5) at (2,4) ;
\coordinate (6) at (4.5,5.5) ;
\coordinate (7) at (0,6) ;
\coordinate (8) at (6.5,6) ;
\coordinate (9) at (6.5,8.5) ;
\coordinate (10) at (-5,7.5) ;
\coordinate (11) at (0,10) ;
\draw (0) -- (2) ;
\draw (0) -- (1) ;
\draw (0) -- (3) ;
\draw (2) -- (10) ;
\draw (2) -- (4) ;
\draw (1) -- (4) ;
\draw (1) -- (5) ;
\draw (3) -- (6) ;
\draw (3) -- (8) ;
\draw (4) -- (7) ;
\draw (5) -- (7) ;
\draw (5) -- (6) ;
\draw (6) -- (9) ;
\draw (7) -- (11) ;
\draw (8) -- (9) ;
\draw (9) -- (11) ;
\draw (10) -- (11) ;
\draw[dashed] (10) -- (8) ;
\foreach \x in {0,1,...,11} {
\draw[white, fill=white] (\x) circle (.6cm) ;
} ;
\draw (0) node {$\bsm1\\2\esm[1]\oplus\bsm2\\3\esm[1]\oplus 3[1]$} ;
\draw (1) node {$\bsm2\\3\esm[1]\oplus 1\oplus 3[1]$} ;
\draw (2) node {$\bsm1\\2\esm[1]\oplus\bsm2\\3\esm[1]\oplus 3$} ;
\draw (3) node {$\bsm1\\2\esm[1]\oplus 3[1]\oplus 2$} ;
\draw (4) node {$\bsm2\\3\esm[1]\oplus 1\oplus 3$} ;
\draw (5) node {$\bsm1\\2\esm\oplus 1\oplus 3[1]$} ;
\draw (6) node {$\bsm1\\2\esm\oplus 3[1]\oplus 2$} ;
\draw (7) node {$\bsm1\\2\esm\oplus 1\oplus 3$} ;
\draw (8) node {$\bsm1\\2\esm[1]\oplus\bsm2\\3\esm\oplus 2$} ;
\draw (9) node {$\bsm1\\2\esm\oplus\bsm2\\3\esm\oplus 2$} ;
\draw (10) node {$\bsm1\\2\esm[1]\oplus\bsm2\\3\esm\oplus 3$} ;
\draw (11) node {$\bsm1\\2\esm\oplus\bsm2\\3\esm\oplus 3$} ;
\draw[white, fill=white] (0,0.5) circle (.3cm) ;
\draw[blue] (0,0.5) node {(10)} ;
\draw[white, fill=white] (-5/2,1) circle (.3cm) ;
\draw[blue] (-5/2,1) node {(1)} ;
\draw[white, fill=white] (2.25,1) circle (.3cm) ;
\draw[blue] (2.25,1) node {(4)} ;
\draw[white, fill=white] (-5,5.25) circle (.3cm) ;
\draw[blue] (-5,5.25) node {(9)} ;
\draw[white, fill=white] (-3.5,3.5) circle (.3cm) ;
\draw[blue] (-3.5,3.5) node {(10)} ;
\draw[white, fill=white] (-1,3) circle (.3cm) ;
\draw[blue] (-1,3) node {(7)} ;
\draw[white, fill=white] (1,3) circle (.3cm) ;
\draw[blue] (1,3) node {(5)} ;
\draw[white, fill=white] (4.5,4.25) circle (.3cm) ;
\draw[blue] (4.5,4.25) node {(6)} ;
\draw[white, fill=white] (5.5,4.5) circle (.3cm) ;
\draw[blue] (5.5,4.5) node {(2)} ;
\draw[white, fill=white] (-1,5) circle (.3cm) ;
\draw[blue] (-1,5) node {(5)} ;
\draw[white, fill=white] (1,5) circle (.3cm) ;
\draw[blue] (1,5) node {(7)} ;
\draw[white, fill=white] (3.25,4.75) circle (.3cm) ;
\draw[blue] (3.25,4.75) node {(3)} ;
\draw[white, fill=white] (5.5,7) circle (.3cm) ;
\draw[blue] (5.5,7) node {(2)} ;
\draw[white, fill=white] (0,8) circle (.3cm) ;
\draw[blue] (0,8) node {(8)} ;
\draw[white, fill=white] (6.5,7.25) circle (.3cm) ;
\draw[blue] (6.5,7.25) node {(6)} ;
\draw[white, fill=white] (3.25,9.25) circle (.3cm) ;
\draw[blue] (3.25,9.25) node {(1)} ;
\draw[white, fill=white] (-2.5,8.75) circle (.3cm) ;
\draw[blue] (-2.5,8.75) node {(6)} ;
\draw[white, fill=white] (1.25,6.7) circle (.3cm) ;
\draw[blue] (1.25,6.7) node {(1)} ;
\end{tikzpicture}
\caption{The poset of basic maximal $\mathbb{E}$-rigid objects in the extriangulated category $\mathscr{E}/\mathscr{B}$. Vertices are representatives for the isoclasses of basic maximal $\mathbb{E}$-rigid objects in $\mathscr{E}/\mathscr{B}$. Two vertices are linked by an edge if and only if the corresponding $\mathbb{E}$-rigid objects differ by one indecomposable summand. Edges are labelled with the corresponding approximation extriangle.}\label{figure: mutation}
\end{figure}

\begin{landscape}
\begin{figure}
\begin{tikzpicture}[scale=0.65, fl/.style={->,>=latex}]
\foreach \x in {0,1,...,4} { 
  \draw[fl] (2*\x-14,-2*\x+2) -- (2*\x-13,-2*\x+1) ;
  \draw[fl] (2*\x-8,-2*\x+4) -- (2*\x-7,-2*\x+3) ;
};
\foreach \x in {0,1,2,3} {
  \draw[fl] (2*\x-10,-2*\x+2) -- (2*\x-9,-2*\x+1) ;
  \draw[fl] (2*\x-10,2*\x-5) -- (2*\x-9,2*\x-4) ;
};
\foreach \x in {0,1,2} {
  \draw[fl] (2*\x-14,2*\x-1) -- (2*\x-13,2*\x) ;
  \draw[fl] (2*\x-6,2*\x-5) -- (2*\x-5,2*\x-4) ;
  \draw[fl] (2*\x-4,2*\x-7) -- (2*\x-3,2*\x-6) ;
};
\foreach \x in {0,1} {
  \draw[fl] (2*\x-10,2*\x-1) -- (2*\x-9,2*\x) ;
  \draw[fl] (2*\x-2,-2*\x+2) -- (2*\x-1,-2*\x+1) ;
};
\begin{scope}[yshift=1cm, xshift=-1cm, rotate=180, fl/.style={<-,>=latex}]
 \foreach \x in {0,1,...,4} { 
  \draw[fl] (2*\x-14,-2*\x+2) -- (2*\x-13,-2*\x+1) ;
  \draw[fl] (2*\x-8,-2*\x+4) -- (2*\x-7,-2*\x+3) ;
};
\foreach \x in {0,1,2,3} {
  \draw[fl] (2*\x-10,-2*\x+2) -- (2*\x-9,-2*\x+1) ;
  \draw[fl] (2*\x-10,2*\x-5) -- (2*\x-9,2*\x-4) ;
};
\foreach \x in {0,1,2} {
  \draw[fl] (2*\x-14,2*\x-1) -- (2*\x-13,2*\x) ;
  \draw[fl] (2*\x-6,2*\x-5) -- (2*\x-5,2*\x-4) ;
  \draw[fl] (2*\x-4,2*\x-7) -- (2*\x-3,2*\x-6) ;
};
\foreach \x in {0,1} {
  \draw[fl] (2*\x-10,2*\x-1) -- (2*\x-9,2*\x) ;
};
\end{scope}
\begin{scope}[xshift=-0.5cm, yshift=0.5cm]
\draw (0,0) node {$2$} ;
\draw (-14,-2) node {$h$} ;
\draw (14,2) node {$b$} ;
\draw (-14,2) node {$f$} ;
\draw (14,-2) node {$a$} ;
\draw (-12,0) node {$\bsm3\\f\;h\esm$} ;
\draw (12,0) node {$\bsm a\;b\\1\esm$} ;
\draw[blue] (12,0) circle (.5cm) ;
\draw (-10,-6) node {$g$} ;
\draw (10,6) node {$e$} ;
\draw (-10,-2) node {$\bsm3\\f\esm$} ;
\draw (10,2) node {$\bsm a\\1\esm$} ;
\draw (-10,2) node {$\bsm3\\h\esm$} ;
\draw (10,-2) node {$\bsm b\\1\esm$} ;
\draw[blue] (10,-2) ellipse (.4cm and .6cm) ;
\draw (-8,-4) node {$\bsm 2\\3\;g\\f\;\phantom{\;g}\esm$} ;
\draw (8,4) node {$\bsm a\;\phantom{\;e}\\1\;e\\2\esm$} ;
\draw[blue] (8,4.05) ellipse (.55cm and .7cm) ;
\draw (-8,0) node {$3$} ;
\draw[blue] (-8,0) circle (.5cm) ;
\draw (8,0) node {$1$} ;
\draw (-8,4) node {$\bsm d\\3\\h\esm$} ;
\draw (8,-4) node {$\bsm b\\1\\c\esm$} ;
\draw (-6,-6) node {$\bsm2\\3\\f\esm$} ;
\draw (6,6) node {$\bsm a\\1\\2\esm$} ;
\draw[blue] (6,6) ellipse (.3cm and .7cm) ;
\draw (-6,-2) node {$\bsm2\\3\;g\esm$} ;
\draw (6,2) node {$\bsm1\;e\\2\esm$} ;
\draw[blue] (6,2) circle (.55cm) ;
\draw (-6,2) node {$\bsm d\\3\esm$} ;
\draw[blue] (-6,2) ellipse (.4cm and .6cm) ;
\draw (6,-2) node {$\bsm1\\c\esm$} ;
\draw (-4,-8) node {$\bsm e\\2\\3\\f\esm$} ;
\draw (4,8) node {$\bsm a\\1\\2\\g\esm$} ;
\draw (-4,-4) node {$\bsm 2\\3\esm$} ;
\draw[blue] (-4,-4) ellipse (.4cm and .6cm) ;
\draw (4,4) node {$\bsm 1\\2\esm$} ;
\draw[blue] (4,4) ellipse (.4cm and .6cm) ;
\draw (-4,0) node {$\bsm d\phantom{3}2\\\phantom{d\;}\,3\phantom{2}g\esm$} ;
\draw (4,0) node {$\bsm \phantom{c\;}1\phantom{2}e\\c\phantom{1}2\esm$} ;
\draw (-2,-6) node {$\bsm e\\2\\3\esm$} ;
\draw[blue] (-2,-6) ellipse (.3cm and .7cm) ;
\draw (2,6) node {$\bsm 1\\2\\g\esm$} ;
\draw (-2,-2) node {$\bsm d\phantom{3}2\\3\esm$} ;
\draw[blue] (-2,-2) circle (.55cm) ;
\draw (2,2) node {$\bsm1\\c\;\,2\esm$} ;
\draw (-2,2) node {$\bsm 2\\g\esm$} ;
\draw (2,-2) node {$\bsm e\\2\esm$} ;
\draw (-2,6) node {$c$} ;
\draw (2,-6) node {$d$} ;
\draw (0,-4) node {$\bsm \phantom{d3}e\\d\phantom{3}2\\3\esm$} ;
\draw[blue] (0,-4) ellipse (.55cm and .7cm) ;
\draw (0,4) node {$\bsm 1\\c\phantom{3}2\\\phantom{c3}g\esm$} ;
\draw[red] (14.3,2.5) -- (14.3,1.5) ;
\draw[red] (14.3,-1.5) -- (14.3,-2.5) ;
\draw[red] (12.7,0.5) -- (12.7,-0.5) ;
\draw[red] (8.8,4.5) -- (8.8,3.5) ;
\draw[red] (10.3,6.5) -- (10.3,5.5) ;
\draw[red] (8.3,-3.4) -- (8.3,-4.6) ;
\draw[red] (7.7,-3.4) -- (7.7,-4.6) ;
\draw[red] (3.7,8.7) -- (3.7,7.3) ;
\draw[red] (4.3,8.7) -- (4.3,7.3) ;
\draw[red] (2.3,-5.5) -- (2.3,-6.5) ;
\draw[red] (-0.4,4.7) -- (-0.4,3.3) ;
\begin{scope}[rotate=180]
\draw[red] (14.3,2.5) -- (14.3,1.5) ;
\draw[red] (14.3,-1.5) -- (14.3,-2.5) ;
\draw[red] (12.37,0.6) -- (12.37,-0.6) ;
\draw[red] (8.4,4.7) -- (8.4,3.3) ;
\draw[red] (10.3,6.5) -- (10.3,5.5) ;
\draw[red] (8.3,-3.4) -- (8.3,-4.6) ;
\draw[red] (7.7,-3.4) -- (7.7,-4.6) ;
\draw[red] (3.7,8.7) -- (3.7,7.3) ;
\draw[red] (4.3,8.7) -- (4.3,7.3) ;
\draw[red] (2.3,-5.5) -- (2.3,-6.5) ;
\draw[red] (-0.7,4.5) -- (-0.7,3.5) ;
\end{scope}
\draw[dashed, thick, blue] (3.4,4.45) -- (3.4,3.45) ;
\draw[dashed, thick, blue] (5.6,6.6) -- (5.6,5.3) ;
\draw[dashed, thick, blue] (6.4,6.6) -- (6.4,5.3) ;
\draw[dashed, thick, blue] (9.45,-1.6) -- (9.45,-2.7) ;
\draw[dashed, thick, blue] (9,4.5) -- (9,3.4) ;
\draw[dashed, thick, blue] (10.55,-1.6) -- (10.55,-2.7) ;
\draw[dashed, thick, blue] (12.9,0.5) -- (12.9,-0.6) ;
\begin{scope}[xshift=4cm, rotate=180]
\draw[dashed, thick, blue] (3.1,4.5) -- (3.1,3.4) ;
\draw[dashed, thick, blue] (5.6,6.6) -- (5.6,5.3) ;
\draw[dashed, thick, blue] (6.4,6.6) -- (6.4,5.3) ;
\draw[dashed, thick, blue] (9.45,-1.6) -- (9.45,-2.7) ;
\draw[dashed, thick, blue] (8.6,4.5) -- (8.6,3.4) ;
\draw[dashed, thick, blue] (10.55,-1.6) -- (10.55,-2.7) ;
\draw[dashed, thick, blue] (12.7,0.5) -- (12.7,-0.5) ;
\end{scope}
\end{scope}
\end{tikzpicture}
\caption{$\mathrm{AR}_{\rm ET}(\mod A\blossom)$: The indecomposable objects in $\mathscr{E}$ are circled in blue; the projective (resp.\ injective) modules are highlighted by a red vertical line to their left (resp.\ to their right) and similarly for the relative projectives (resp.\ injectives) in $\mathscr{E}$ by using dashed blue vertical lines.}\label{figure: ARquiverModEx3}
\end{figure}
\restoregeometry
\end{landscape}


\begin{thebibliography}{DRSSK}
\bibitem[AIR]{AIR} Adachi, T.; Iyama, O.; Reiten, I.: \emph{$\tau$-tilting theory}. Compos. Math. \textbf{150} (2014), no. 3, 415--452.

\bibitem[AF]{AF} Anderson, F. W.; Fuller, K. R.: \emph{Rings and categories of modules}, Second edition. Graduate Texts in Mathematics, 13. Springer-Verlag, New York, 1992.

\bibitem[A1]{A} Auslander, M.: \emph{Coherent functors} in Proc. Conf. Categorical Algebra (1965), 189--231.

\bibitem[A2]{A2} Auslander, M.: \emph{Representation theory of Artin algebras. I, II}, Comm. Algebra 1 (1974), 177--268; ibid. {\bf 1} (1974), 269--310.

\bibitem[A3]{A3} Auslander, M.: \emph{Functors and morphisms determined by objects}. Representation theory of algebras (Proc. Conf., Temple Univ., Philadelphia, Pa., 1976), pp. 1--244. Lecture Notes in Pure Appl. Math., Vol. 37, Dekker, New York, 1978.

\bibitem[AB]{AB} Auslander, M.; Buchweitz, R.:\emph{The homological theory of maximal Cohen--Macaulay approximations}, Colloque en l'honneur de Pierre Samuel (Orsay, 1987). Mem. Soc. Math. France (N.S.) No. 38 (1989), 5--37.

\bibitem[AR1]{AR1} Auslander, M.; Reiten, I.: \emph{Representation theory of Artin algebras. III. Almost split sequences}. Comm. Algebra \textbf{3} (1975), 239--294. 

\bibitem[AR2]{AR2} Auslander, M.; Reiten, I.: \emph{Representation theory of Artin algebras. IV. Invariants given by almost split sequences}. Comm. Algebra \textbf{5} (1977), no. 5, 443--518.

\bibitem[AR3]{AR3} Auslander, M.; Reiten, I.: \emph{Stable equivalence of dualizing $R$-varieties}. Advances in Math. \textbf{12} (1974), 306--366.

\bibitem[AR4]{AR4} Auslander, M.; Reiten, I.: \emph{Almost split sequences in dimension two}. Adv. in Math. 66 (1987), no. 1, 88--118.

\bibitem[AR5]{AR5} Auslander, M.; Reiten, I.: \emph{Applications of contravariantly finite subcategories}, Adv. Math. 86 (1991), no. 1, 111--152.

\bibitem[ARS]{ARS} Auslander, M.; Reiten, I.; Smal\o, S. O.: \emph{Representation theory of Artin algebras}. Corrected reprint of the 1995 original. Cambridge Studies in Advanced Mathematics, 36. Cambridge University Press, Cambridge, 1997.

\bibitem[ASm]{AS} Auslander, M.; Smal\o, S. O.: \emph{Almost split sequences in subcategories}. J. Algebra 69 (1981), no. 2, 426--454.

\bibitem[ASo]{ASo} Auslander, M.; Solberg, \O.: \emph{Relative homology and representation theory. I. Relative homology and homologically finite subcategories}. Comm. Algebra 21 (1993), no. 9, 2995--3031.

\bibitem[ASS]{ASS} Assem, I.; Simson, D.; Skowro\'nski, A.: \emph{Elements of the representation theory of associative  algebras. Vol. 1. Techniques of representation theory}. London Mathematical Society Student Texts, 65. Cambridge University Press, Cambridge, 2006.

\bibitem[Bel]{Bel} Beligiannis, A.: \emph{Relative homological algebra and purity in triangulated categories}. J. Algebra, \textbf{227} (2000), 268--361.

\bibitem[BG]{BG} Bongartz K.; Gabriel, P.: \emph{Covering spaces in representation-theory}, Invent. Math. 65 (1981/82), no. 3, 331--378.

\bibitem[B]{B} Brenner, S.: \emph{A combinatorial characterisation of finite Auslander--Reiten quivers}, Representation theory, I (Ottawa, Ont., 1984), 13--49, Lecture Notes in Math., 1177, Springer, Berlin, 1986.

\bibitem[CZZ]{CZZ} Chang, W.; Zhou, P.; Zhu, B.: \emph{Cluster subalgebras and cotorsion pairs in Frobenius extriangulated categories},
Algebr. Represent. Theory 22 (2019), no. 5, 1051--1081.

\bibitem[C]{C} Chen, X.: \emph{Extensions of covariantly finite subcategories}. Arch. Math., \textbf{93} (2009), 29--35.

\bibitem[DRSSK]{DRSSK} Dr\"{a}xler, P.; Reiten, I.; Smal\o, O.; Solberg, \O.: \emph{Exact categories and vector space categories}. With an appendix by B. Keller. Trans. Amer. Math. Soc. \textbf{351} (1999), no. 2, 647--682.

\bibitem[E]{E} Enomoto, H.: \emph{Classifications of exact structures and Cohen--Macaulay-finite algebras}. Adv. Math. 335 (2018), 838--877.

\bibitem[G]{G} Gabriel, P.: Auslander--Reiten sequences and representation-finite algebras. Representation theory, I (Proc. Workshop, Carleton Univ., Ottawa, Ont., 1979), pp. 1--71, Lecture Notes in Math., 831, Springer, Berlin, 1980.

\bibitem[GR]{GR} Gabriel, P.; Roiter, A. V.: \emph{Representations of finite-dimensional algebras. With a chapter by B. Keller}. Encyclopaedia Math. Sci., 73, Algebra, VIII, 1--177, Springer, Berlin, 1992.

\bibitem[GL1]{GL} Geigle, W.; Lenzing, H.: \emph{A class of weighted projective curves arising in representation theory of finite-dimensional algebras}.
Singularities, representation of algebras, and vector bundles (Lambrecht, 1985), 265--297, Lecture Notes in Math., 1273, Springer, Berlin, 1987.

\bibitem[GL2]{GL2} Geigle, W.; Lenzing, H.: \emph{Perpendicular categories with applications to representations and sheaves}, J. Algebra 144 (1991), no. 2, 273--343. 

\bibitem[Ha1]{Ha1} Happel, D.: \emph{Triangulated categories in the representation theory of finite-dimensional algebras}. London Mathematical Society Lecture Note Series, \textbf{119}. Cambridge University Press, Cambridge, 1988. x+208 pp.

\bibitem[Ha2]{Ha2} Happel, D.: \emph{Auslander--Reiten triangles in derived categories of finite-dimensional algebras}. Proc. Amer. Math. Soc. \textbf{112} (1991), no. 3, 641--648. 


\bibitem[HLN1]{HLN1} Herschend, M.; Liu, Y.; Nakaoka, H.: \emph{$n$-exangulated categories (I): Definitions and fundamental properties}. J. Algebra \textbf{570} (2021), 531--586.

\bibitem[HLN2]{HLN2} Herschend, M.; Liu, Y.; Nakaoka, H.: \emph{$n$-exangulated categories (II): Constructions from $n$-cluster tilting subcategories}. J. Algebra \textbf{594} (2022), 636--684.

\bibitem[IT1]{IT} Igusa, K.; Todorov, G.: \emph{Radical layers of representable functors}. J. Algebra 89 (1984), no. 1, 105--147.

\bibitem[IT2]{IT2} Igusa, K.; Todorov, G.: \emph{A characterization of finite Auslander--Reiten quivers}. J. Algebra 89 (1984), no. 1, 148--177.

\bibitem[I1]{I1} Iyama, O.: \emph{$\tau$-categories I: Ladders}. Algebr. Represent. Theory 8 (2005), no. 3, 297--321.

\bibitem[I2]{I2} Iyama, O.: \emph{$\tau$-categories II: Nakayama pairs and Rejective subcategories}. Algebr. Represent. Theory 8 (2005), no. 4, 449--477.

\bibitem[I3]{I3} Iyama, O.: \emph{$\tau$-categories III: Auslander orders and Auslander--Reiten quivers}, Algebr. Represent. Theory 8 (2005), no. 5, 601--619.

\bibitem[I4]{I4} Iyama, O.: \emph{The relationship between homological properties and representation theoretic realization of artin algebras}. Trans. Amer. Math. Soc. 357 (2005), no. 2, 709--734.

\bibitem[I5]{I5} Iyama, O.: \emph{Higher-dimensional Auslander--Reiten theory on maximal orthogonal subcategories}. Adv. Math. 210 (2007), no. 1, 22--50.

\bibitem[I6]{I6} Iyama, O.: \emph{Auslander correspondence}. Adv. Math. 210 (2007), no. 1, 51--82.

\bibitem[IR1]{IR1} Iyama, O.; Reiten, I.: \emph{Fomin--Zelevinsky mutation and tilting modules over Calabi--Yau algebras}, Amer. J. Math. 130 (2008), no. 4, 1087--1149.

\bibitem[IR2]{IR} Iyama, O.; Reiten, I.: \emph{Introduction to {$\tau$}-tilting theory}. Proc. Natl. Acad. Sci. USA \textbf{111} (2014), no. 27, 9704--9711.

\bibitem[Jia]{Ji} Jiao, P.: \emph{The generalized Auslander--Reiten duality on an exact category}. J. Algebra Appl. 17 (2018), no. 12, 1850227.

\bibitem[Jin]{Jin} Jin, H.: \emph{Cohen--Macaulay differential graded modules and negative Calabi--Yau configurations}, Adv. Math. 374 (2020), 107338, 59 pp.

\bibitem[Jo1]{Jo1} J\o rgensen, P.: \emph{Auslander--Reiten theory over topological spaces}. Comment. Math. Helv. \textbf{79} (2004), no. 1, 160--182.

\bibitem[Jo2]{J} J\o rgensen, P.: \emph{Auslander--Reiten triangles in subcategories}. J. K-Theory \textbf{3} (2009), no. 3, 583--601. 

\bibitem[Ke1]{Ke} Keller, B.: \emph{Chain complexes and stable categories}. Manuscripta Math. 67 (1990), no. 4, 379--417.

\bibitem[Ke2]{KeDG} Keller, B.: \emph{Deriving DG categories}, Ann. Sci. \'{E}cole Norm. Sup. (4) 27 (1994), no. 1, 63--102.

\bibitem[Ke3]{Ke2} Keller, B.: \emph{On triangulated orbit categories}, Doc. Math. 10 (2005), 551--581.

\bibitem[KS1]{KS1} Keller, B.; Scherotzke, S.: \emph{Desingularizations of quiver Grassmannians via graded quiver varieties}. Adv. Math. 256 (2014), 318--347.

\bibitem[KS2]{KS2} Keller, B.; Scherotzke, S.: \emph{Graded quiver varieties and derived categories}, J. Reine Angew. Math. 713 (2016), 85--127.

\bibitem[Kl]{K} Kleiner, M.: \emph{Approximations and almost split sequences in homologically finite subcategories}. J. Algebra \textbf{198} (1997), no. 1, 135--163.

\bibitem[Kr]{Kr} Krause, H.: \emph{Auslander-Reiten theory via Brown representability}, K-Theory 20 (2000), no. 4, 331--344.

\bibitem[KS]{KS} Krause, H.; Saor\'{\i}n, M.: \emph{On minimal approximations of modules}. Trends in the representation theory of finite-dimensional algebras (Seattle, WA, 1997), 227--236, Contemp. Math., 229, Amer. Math. Soc., Providence, RI, 1998.

\bibitem[LW]{LW} Leuschke, G.; Wiegand, R.: \emph{Cohen--Macaulay representations}.
Mathematical Surveys and Monographs, 181. American Mathematical Society, Providence, RI, 2012.

\bibitem[Li]{Liu} Liu, S.: \emph{Auslander--Reiten theory in a Krull--Schmidt category}. S\~{a}o Paulo J. Math. Sci. 4 (2010), no. 3, 425--472.

\bibitem[LNa]{LNa} Liu, Y.; Nakaoka, H.: \emph{Hearts of twin cotorsion pairs on extriangulated categories}. J. Algebra 528 (2019), 96--149.

\bibitem[LNi]{LNi} Liu, S.; Niu, K.: \emph{Almost split sequences in tri-exact categories}. J. Pure Appl. Algebra 226 (2022) no. 11.

\bibitem[LNP]{LNP} Liu, S.; Ng, P.; Paquette, C.: \emph{Almost split sequences and approximations}. Algebr. Represent. Theory 16 (2013), no. 6, 1809--1827. 

\bibitem[Lu]{Luo} Luo, X.: \emph{$0$-Calabi-Yau configurations and finite Auslander--Reiten quivers of Gorenstein orders}, J. Pure Appl. Algebra 219 (2015), no. 12, 5590--5630.

\bibitem[M]{M} Miyachi, J.: \emph{Duality for derived categories and cotilting bimodules}, J. Algebra 185 (1996), no. 2, 583--603.

\bibitem[NP]{NP} Nakaoka, H.; Palu,Y.:
\emph{Extriangulated categories, Hovey twin cotorsion pairs and model structures}, Cah. Topol. G\'{e}om. Diff\'{e}r. Cat\'{e}g. 60 (2019), no. 2, 117--193.


\bibitem[Niu]{Niu} Niu, H.: \emph{Auslander--Reiten theory in triangulated categories}. M\'{e}moire pr\'{e}sent\'{e} au D\'{e}partement de math\'{e}matiques en vue de l'obtention du grade de ma\^{\i}trise \`{e}s sciences, Facult\'{e} des sciences, Universit\'{e} de Sherbrooke, 2014.

\bibitem[PPP]{PPP} Palu, Y.; Pilaud, V.; Plamondon, P.-G.: \emph{Non-kissing complexes and $\tau$-tilting for gentle algebras}. arXiv:1707.07574 (to appear in Memoirs AMS).

\bibitem[RV1]{RV1} Reiten, I.; Van den Bergh, M.: \emph{Two-dimensional tame and maximal orders of finite representation type}, Mem. Amer. Math. Soc. 80 (1989), no. 408.

\bibitem[RV2]{RV2} Reiten, I.; Van den Bergh, M.: \emph{Noetherian hereditary abelian categories satisfying Serre duality}. J. Amer. Math. Soc. 15 (2002), no. 2, 295--366. 

\bibitem[Rie1]{Rie} Riedtmann, C.: \emph{Algebren, Darstellungsk\"ocher, \"Uberlagerungen und zur\"uck}, Comment. Math. Helv. 55 (1980), no. 2, 199--224.

\bibitem[Rie2]{Rie2} Riedtmann, C.: \emph{Representation-finite self-injective algebras of class $A\sb{n}$}, Representation theory, II (Proc. Second Internat. Conf., Carleton Univ., Ottawa, Ont., 1979), pp. 449--520, Lecture Notes in Math., 832, Springer, Berlin, 1980.

\bibitem[Rin]{R} Ringel, C. M.: The category of modules with good filtrations over a quasi-hereditary algebra has almost split sequences. Math. Z. 208 (1991), no. 2, 209--223.

\bibitem[RV]{RV} Ringel, C. M.; Vossieck, D.: \emph{Hammocks}, Proc. London Math. Soc. (3) 54 (1987), no. 2, 216--246.

\bibitem[RS]{RS} Roggenkamp, K. W.; Schmidt, J. W.: \emph{Almost split sequences for integral group rings and orders}. Comm. Algebra 4 (1976), no. 10, 893--917.

\bibitem[Ru1]{Ru1} Rump, W.: \emph{Ladder functors with an application to representation-finite Artinian rings.} 
To Mirela Stefanescu, at her 60's. An. Stiint. Univ. Ovidius Constanta Ser. Mat. 9 (2001), no. 1, 107--123.

\bibitem[Ru2]{Ru2} Rump, W.: \emph{Triads}. J. Algebra 280 (2004), no. 2, 435--462.

\bibitem[Ru3]{Ru3} Rump, W.: \emph{The category of lattices over a lattice-finite ring}. Algebr. Represent. Theory 8 (2005), no. 3, 323--345.

\bibitem[Sc]{Sc} Schmidt, K.: \emph{Families of Auslander--Reiten components for simply connected differential graded algebras}. Math. Z. 264 (2010), no. 1, 43--62. 

\bibitem[Sh]{Sh} Shah, A.: \emph{Auslander--Reiten theory in quasi-abelian and Krull--Schmidt categories},
J. Pure Appl. Algebra 224 (2020), no. 1, 98--124.

\bibitem[W]{W} Wiedemann, A.: \emph{Die Auslander--Reiten K\"{o}cher der gitterendlichen Gorensteinordnungen}, Bayreuth. Math. Schr. No. 23 (1987), 1--134.

\bibitem[Y]{Y} Yoshino, Y.: \emph{Cohen--Macaulay modules over Cohen--Macaulay rings}.
London Mathematical Society Lecture Note Series, 146, Cambridge University Press, Cambridge, 1990.

\bibitem[ZH]{ZH} Zhao, T.; Huang, Z.: \emph{Phantom Ideals and Cotorsion Pairs in Extriangulated Categories}. Taiwanese J. Math. 23 (2019), no. 1, 29--61.

\bibitem[ZZ]{ZZ} Zhou, P.; Zhu, B.: \emph{Triangulated quotient categories revisited}. J. Algebra 502 (2018), 196--232.

\end{thebibliography}
\end{document}